\documentclass{article}
\usepackage{amsthm, amscd, amsmath, amsfonts, mathrsfs, bm}
\newtheorem{theo}{Theorem}[section]
\newtheorem{proposition}[theo]{Proposition}
\newtheorem{lemma}[theo]{Lemma}
\newtheorem{corollary}[theo]{Corollary}
\newtheorem{rem}[theo]{Remark}
\newtheorem{definition}[theo]{Definition}

\input xy
\xyoption{all}

\begin{document}

\title{On a Noncommutative Deformation of Holomorphic Line Bundles on Complex Tori and the SYZ Transform} 

\author{Kazushi Kobayashi\footnote{Department of Mathematics, Faculty of Education, University of Teacher Education Fukuoka, 1-1 Akamabunkyo-machi, Munakata, Fukuoka, 811-4192, Japan. E-mail: kobayashi-k@fukuoka-edu.ac.jp. 2020 Mathematics Subject Classification: 14F08, 14J33, 58B34 (primary), 53D37, 53C08 (secondary). Keywords: torus, homological mirror symmetry, SYZ transform, noncommutative geometry, gerbe.}}

\date{}

\maketitle

\begin{abstract}
By regarding a given $n$-dimensional complex torus $X^n$ as the trivial torus fibration $X^n \to \mathbb{R}^n/\mathbb{Z}^n$, we can obtain a mirror dual complexified symplectic torus $\check{X}^n$ based on the SYZ construction. In the middle 2000s, as a part of the study on noncommutative deformations of $X^n$, Kajiura examined the noncommutative complex torus $X_{\theta}^n$ obtained via the (real) nonformal deformation quantization of $X^n \to \mathbb{R}^n/\mathbb{Z}^n$ by a Poisson bivector $\theta$ defined along the fibers. In particular, he constructed the noncommutative deformations $L_{\theta} \to X_{\theta}^n$ of holomorphic line bundles on $X^n$ and a curved dg-category consisting of them. On the other hand, associated to this noncommutative deformation, we can construct a non-trivial deformation of the trivial holomorphic line bundle on $X^n$ by twisting it with a suitable isomorphism. In this paper, from this point of view, we extend the construction of $L_{\theta}$ to the more general setting. Moreover, we also consider objects defined on a mirror partner of $X_{\theta}^n$ which are mirror dual to such extended noncommutative objects.
\end{abstract}

\tableofcontents

\section{Introduction}
\subsection{Background}
\subsubsection{The homological mirror symmetry and the SYZ construction}
Let $X^n$ be an $n$-dimensional complex torus and $\check{X}^n$ a mirror partner of $X^n$. The homological mirror symmetry \cite{Kon} predicts that there exists an equivalence between the bounded derived category $D^b(Coh(X^n))$ of coherent sheaves over $X^n$ and the enhanced triangulated category $Tr(Fuk(\check{X}^n))$ of the Fukaya category $Fuk(\check{X}^n)$ over $\check{X}^n$ \cite{Fukaya category} in the sense of the Bondal-Kapranov-Kontsevich construction \cite{bondal, Kon}:
\begin{equation}
D^b(Coh(X^n))\cong Tr(Fuk(\check{X}^n)). \label{hms}
\end{equation}
So far, various studies on the homological mirror symmetry for tori have been conducted (\cite{dg, Fuk, kap1, kap2, abouzaid}, etc.) starting with the case of elliptic curves \cite{elliptic}. On the other hand, considering a noncommutative analogue of the statement (\ref{hms}) is also an interesting problem as discussed in \cite{hms-nc-two-tori, kim, kajiura} for instance. Our main object of interest in this paper is the noncommutative deformation $X_{\theta}^n$ of $X^n$ associated to the (real) nonformal deformation quantization of $X^n$ by a Poisson bivector $\Pi_{\theta}$ which corresponds to the noncommutative deformation of type $\theta_3$ in \cite{nc}. Of course, $\theta$ in this context is a noncommutative parameter. The purpose of this paper is to discuss a noncommutative analogue of the statement (\ref{hms}) between $X_{\theta}^n$ and its mirror partner $\check{X}_{\theta}^n$ on the object level.

It is valid to consider the SYZ construction \cite{SYZ} when we discuss the homological mirror symmetry. Below, we explain why this approach is valid briefly. 

The SYZ construction conjectured by Strominger-Yau-Zaslow states that each mirror pair $(M, \check{M})$ of Calabi-Yau manifolds is realized as the special Lagrangian torus fibrations $\pi : M \to B$, $\check{\pi} : \check{M} \to B$ on the same base space $B$. Moreover, in this construction, the fibers $\pi^{-1}(b)$ and $\check{\pi}^{-1}(b)$ on each point $b\in B$ are related by the T-duality, i.e., the torus $\pi^{-1}(b)$ is the dual of the torus $\check{\pi}^{-1}(b)$. For example, we can regard the mirror pair $(X^n, \check{X}^n)$ as the trivial torus fibrations $X^n \to \mathbb{R}^n/\mathbb{Z}^n$, $\check{X}^n \to \mathbb{R}^n/\mathbb{Z}^n$ in this context. Here, let us recall that the Fourier-Mukai transform proposed in \cite{fm} gives an equivalence 
\begin{equation}
D^b(Coh(X^n))\cong D^b(Coh(\hat{X}^n)), \label{mukai}
\end{equation}
where $\hat{X}^n$ is the dual of a given abelian variety $X^n$. Similarly, the T-duality causes the so-called SYZ transform which is an analogue of the Fourier-Mukai transform along the fibers of $\pi : M \to B$, $\check{\pi} : \check{M} \to B$ (see \cite{leung, A-P}). By the SYZ transform, it becomes easier to describe the mirror of each object of the Fukaya category explicitly, and vice versa.

On the other hand, it is expected that $D^b(Coh(X^n))$ is generated by a dg-category $DG_{X^n}$ consisting of holomorphic line bundles on $X^n$:
\begin{equation*}
Tr(DG_{X^n})\cong D^b(Coh(X^n)).
\end{equation*}
At least, it is known that it split generates $D^b(Coh(X^n))$ when $X^n$ is an abelian variety (see \cite{orlov, abouzaid}).

Summarizing the above discussions, as an example of an idea in order to prove the homological mirror symmetry (\ref{hms}) for $(X^n, \check{X}^n)$ based on the SYZ construction, we obtain the following (see \cite{Fuk, kajiura} for instance). Firstly, seek mirrors of holomorphic line bundles in $DG_{X^n}$ by considering the SYZ transform, etc. Secondly, focusing on $DG_{X^n}$ and the full subcategory $Fuk_{\rm sub}(\check{X}^n)$ of $Fuk(\check{X}^n)$ consisting of objects obtained in the first step, verify that $DG_{X^n}$ and $Fuk_{\rm sub}(\check{X}^n)$ actually generate $D^b(Coh(X^n))$ and $Tr(Fuk(\check{X}^n))$, respectively:
\begin{align}
&Tr(DG_{X^n})\cong D^b(Coh(X^n)), \label{hms_1} \\
&Tr(Fuk_{\rm sub}(\check{X}^n))\cong Tr(Fuk(\check{X}^n)). \label{hms_2}
\end{align}
In this context, we should take these generators $DG_{X^n}$ and $Fuk_{\rm sub}(\check{X}^n)$ as small as possible. Finally, prove that there exists an equivalence
\begin{equation}
DG_{X^n}\cong Fuk_{\rm sub}(\check{X}^n) \label{hms_3}
\end{equation}
as $A_{\infty}$-categories. In particular, the homological mirror symmetry (\ref{hms}) for $(X^n, \check{X}^n)$ follows from the statements (\ref{hms_1}), (\ref{hms_2}), (\ref{hms_3}) automatically.

\subsubsection{A problem on noncommutative deformations of $X^n$}
In this paper, we also discuss the following problem on noncommutative deformations of $X^n$. In general, associated to the noncommutative deformation of $X^n$ by $\Pi_{\theta}$ in the above sense, we can construct a non-trivial deformation of the trivial holomorphic line bundle $\mathcal{O}_{X^n} \to X^n$ by twisting it with a suitable isomorphism. Actually, let us assume that the trivial transition function $1$ of $\mathcal{O}_{X^n}$ changes to $\varphi(x_1,\ldots,x_n,y_1,\ldots,y_n)$ by an isomorphism, where $\varphi$ is not a constant function, and $(x_1,\ldots,x_n,y_1,\ldots,y_n)\in \mathbb{R}^{2n}$ denotes the (real) local coordinate system of $X^n$. Then, although $1$ is not affected by the Moyal star product associated to $\Pi_{\theta}$, $\varphi(x_1,\ldots,x_n,y_1,\ldots,y_n)$ is affected by the Moyal star product non-trivially. This fact causes ambiguity of the noncommutative deformation of objects of $DG_{X^n}$ explained below. Let $E$ be an object of $DG_{X^n}$, i.e., $E \to X^n$ is a holomorphic line bundle. We denote the noncommutative deformation of $E$ associated to the deformation from $X^n$ to $X_{\theta}^n$ by $E_{\theta}$ (we use this notation including $\theta$ for other holomorphic line bundles on $X^n$). Then, for a certain holomorphic line bundle $L \to X^n$ which is isomorphic to $\mathcal{O}_{X^n}$, even though there always exists an isomorphism $E\cong E\otimes L$, $E_{\theta}$ and $(E\otimes L)_{\theta}$ are not isomorphic to each other. 

\subsection{Motivation}
One of classical examples of deformations of $X^n$ is a deformation of the complex structure of $X^n$. As other more complicated examples, we can also consider noncommutative and gerby deformations of $X^n$. Since we focus on the noncommutative deformation of $X^n$ associated to a (real) nonformal deformation quantization of $X^n$, let us consider a Poisson bivector which is expressed locally as
\begin{equation}
\sum_{i, j=1}^n \left( \frac{1}{2}(\theta_1)_{ij}\frac{\partial}{\partial x_i}\wedge \frac{\partial}{\partial x_j}+(\theta_2)_{ij}\frac{\partial}{\partial x_i}\wedge \frac{\partial}{\partial y_j}+\frac{1}{2}(\theta_3)_{ij}\frac{\partial}{\partial y_i}\wedge \frac{\partial}{\partial y_j} \right). \label{P_bivector_intro}
\end{equation}  
Here, $\theta_1=((\theta_1)_{ij})$, $\theta_3=((\theta_3)_{ij})$ are real alternating matrices of order $n$, $\theta_2=((\theta_2)_{ij})$ is a real matrix of order $n$, and $(x, y)\in \mathbb{R}^{2n}$ denotes the coordinate system defined locally, where
\begin{equation*}
x:=\left( \begin{array}{ccc} x_1 \\ \vdots \\ x_n \end{array} \right), \ \ \ y:=\left( \begin{array}{ccc} y_1 \\ \vdots \\ y_n \end{array} \right)\in \mathbb{R}^n.
\end{equation*}
Moreover, in fact, note that the Poisson bivector (\ref{P_bivector_intro}) is defined globally. For simplicity, for each $i=1$, $2$, $3$, we call the noncommutative deformation of $X^n$ associated to the (real) nonformal deformation quantization of $X^n$ by the Poisson bivector (\ref{P_bivector_intro}) in the case that the parameters except for $\theta_i$ are trivial the noncommutative deformation of type $\theta_i$. 

We now explain some prior works. Historically, first, the noncommutative deformation of type $\theta_2$ is studied in the case of elliptic curves in \cite{p-s-nc}. Note that the noncommutative deformations of type $\theta_1$ and type $\theta_3$ are trivial on elliptic curves. Afterwards, in \cite{hms-nc-two-tori}, the homological mirror symmetry for noncommutative two-tori (the noncommutative deformation of type $\theta_2$) is discussed. As a generalization of such works to the higher dimensional cases, in \cite{nc}, for each $i=1$, $2$, $3$, the deformation of $DG_{X^n}$ associated to the noncommutative deformation of type $\theta_i$ is constructed as a curved dg-category (see also \cite{star}). The (non-trivial) curvature in this context essentially comes from the fact that the holomorphicity of line bundles in $DG_{X^n}$ does not necessarily preserved under those noncommutative deformations of $X^n$. In particular, including the higher dimensional cases, the description which is mirror dual to the noncommutative deformation of type $\theta_2$ is studied in \cite{kajiura}\footnote{Precisely speaking, in \cite{kajiura}, based on the SYZ construction, general mirror pairs of Calabi-Yau manifolds with no singular fibers are treated.}. On the other hand, at the moment, the descriptions which are mirror dual to the noncommutative deformations of type $\theta_1$ and type $\theta_3$ are not investigated.

Concerning these prior works, in this paper, we focus on the noncommutative deformation of type $\theta_3$, i.e., consider the Poisson bivector
\begin{equation*}
\Pi_{\theta}:=\frac{1}{2}\sum_{i, j=1}^n \theta_{ij}\frac{\partial}{\partial y_i}\wedge \frac{\partial}{\partial y_j} 
\end{equation*}
by using a fixed real alternating matrix $\theta=(\theta_{ij})$ of order $n$ (we will comment on the noncommutative deformation of type $\theta_1$ a little bit at the last of subsection 1.3). We further denote the pair consisting of $X^n$ and the Poisson bivector $\Pi_{\theta}$ by $X_{\theta}^n$. Of corse, this is the precise definition of $X_{\theta}^n$ which is mentioned in subsection 1.1.

Here, let us explicitly explain the main purposes of this paper once again. Firstly, by focusing on the problem associated to ambiguity explained in subsection 1.1, to extend the noncommutative deformation of holomorphic line bundles on $X^n$ which is constructed in \cite{nc} to the more general setting. Secondly, to specify the moduli spaces of such extended noncommutative objects and their mirrors (Theorem \ref{main_theorem_1}, Theorem \ref{main_theorem_2}). In particular, we verify that the moduli space in the symplectic geometry side is naturally identified with the moduli space in the complex geometry side in Theorem \ref{main_theorem_3}. We can also regard this result as a generalization of the SYZ transform between $Fuk_{\rm sub}(\check{X}^n)$ consisting of affine Lagrangian submanifolds in $\check{X}^n$ with unitary local systems and $DG_{X^n}$ to the setting which includes the noncommutative paramater $\theta$. 
 
\subsection{Other related works}
Although our main object of interest in this paper is a noncommutative deformation of $X^n$, in general, noncommutative deformations of $X^n$ are closely related to gerby deformations of $X^n$. Frankly speaking, it is expected that such correspondences are described as the Fourier-Mukai transforms. In particular, this is also a generalization of the equivalence (\ref{mukai}) to the setting which includes a deformation parameter. For example, in \cite{nc-fm}, by focusing on the noncommutative deformations associated to the formal deformation quantizations, the Fourier-Mukai partners of noncommutative complex tori are investigated. On the other hand, in light of \cite{nc-fm}, the Fourier-Mukai partners of noncommutative complex tori for nonformal parameters are discussed in \cite{okuda} (see also \cite{block-1, block-2}). 

Here, in order to comment on the relation between this paper and our previous paper \cite{b-field}, etc., let us consider a B-field which is expressed locally as
\begin{equation}
\sum_{i, j=1}^n \left( \frac{1}{2} (\tau_1)_{ij} dx_i \wedge dx_j +(\tau_2)_{ij} dx_i \wedge dy_j +\frac{1}{2} (\tau_3)_{ij} dy_i \wedge dy_j \right), \label{b_field_intro} 
\end{equation}
where $\tau_1=((\tau_1)_{ij})$, $\tau_3=((\tau_3)_{ij})$ are real alternating matrices of order $n$, and $\tau_2=((\tau_2)_{ij})$ is a real matrix of order $n$. Note that the B-field (\ref{b_field_intro}) is actually defined globally. Similarly as in the case of the Poisson bivector $\Pi_{\theta}$, for each $i=1$, $2$, $3$, we call the gerby deformation of $X^n$ associated to the B-field (\ref{b_field_intro}) in the case that the parameters except for $\tau_i$ are trivial the gerby deformation of type $\tau_i$. For example, the gerby deformations treated in our prior papers \cite{b-field} and \cite{gerby} are the gerby deformations of type $\tau_1$ and type $\tau_2$, respectively. In particular, we can expect that the Fourier-Mukai partner of the noncommutative complex torus $X_{\theta}^n$ (the noncommutative deformation of type $\theta_3$) is the gerby deformed complex torus treated in our previous paper \cite{b-field} (the gerby deformation of type $\tau_1$). 

However, during this study period, similar errors on the construction of such gerby deformations (the holomorphicity of deformed objects) are found in both the articles \cite{b-field, gerby} even though they are already published. In this paper, we also explain how to improve those errors (see section 6).

Also, the descriptions which are mirror dual to the gerby deformations of type $\tau_1$ and type $\tau_2$ are already studied in \cite{b-field} and \cite{gerby}, respectively. On the other hand, although it is not difficult to construct the deformation of holomorphic line bundles on $X^n$ associated to the gerby deformation of type $\tau_3$, anyway, at the moment, the counterpart in the symplectic geometry side is not discussed. Hence, including the noncommutative deformations of type $\theta_1$, type $\theta_2$, and type $\theta_3$, the cases that the understanding for the counterparts in the symplectic geometry side is insufficient are the noncommutative deformation of type $\theta_1$ and the gerby deformation of type $\tau_3$. In fact, in order to understand them, we need to consider the deformation of the Poisson structure associated to the symplectic structure of $\check{X}^n$. In general, this problem is closely related to a deformation quantization of $\check{X}^n$, i.e., a certain kind of a deformation quantization of objects of $Fuk(\check{X}^n)$. This will be discussed in \cite{beta} separately.

\subsection{Main results and the plan of this paper}
As an object of $Fuk(\check{X}^n)$, let us take a pair $(L,\mathcal{L})$ of an affine Lagrangian submanifold $L$ in $\check{X}^n$ and a unitary local system $\mathcal{L}$ along it. By the SYZ transform, we can obtain a holomorphic line bundle $E(L,\mathcal{L}) \to X^n$, and these $E(L,\mathcal{L})$ forms a dg-category $DG_{X^n}$ which is mentioned in subsection 1.1. In this paper, focusing on the problem caused by ambiguity explained in subsection 1.1, we first construct the noncommutative deformation $E(L,\mathcal{L})_{\theta}$ of each $E(L,\mathcal{L})$ associated to the deformation from $X^n$ to $X_{\theta}^n$. Then, although we can expect that the moduli space $\mathcal{M}_{\theta}$ of $E(L,\mathcal{L})_{\theta}$ is homeomorphic to a real torus, in this paper, we specify such a real torus explicitly. This result is given in Theorem \ref{main_theorem_1}. 

On the other hand, we need to be careful when we consider the object which is mirror dual to each $E(L,\mathcal{L})_{\theta}$. Now, under the deformation from $\check{X}^n$ to the mirror partner $\check{X}_{\theta}^n$ of $X_{\theta}^n$, the symplectic form is preserved. In other words, it is natural to consider that each $L$ is preserved under this deformation. By contrast, the B-field is twisted by a B-field $B_{\theta}$ depending on $\theta$, and unfortunately, there exists a case such that a given $L$ does not satisfy the requirement written in \cite[Definition 1.1]{Fuk} (\cite[Definition 1.1]{Fuk} gives the definition of objects of the Fukaya categories):
\begin{equation*}
[B_{\theta}]\not \in H^2(L, \mathbb{Z}).
\end{equation*} 
This implies that we can not discuss the deformation of objects $(L,\mathcal{L})$ within the realm of the usual Fukaya categories. In order to overcome this problem, in this paper, we employ the following idea which is proposed in our previous paper \cite{b-field}: instead of ``usual'' line bundles (unitary local systems), employ a ``twisted'' line bundle on each $L$ associated to the flat gerbe whose 1-connection is determined by $B_{\theta}|_L$. Actually, based on this idea, we can construct the deformation $(L,\mathcal{L}_{\theta})$ of each $(L,\mathcal{L})$ associated to the deformation from $\check{X}^n$ to $\check{X}_{\theta}^n$. Moreover, similarly as in the complex geometry side, we specify the moduli space $\mathcal{M}_{\theta}^{\vee}$ of $(L,\mathcal{L}_{\theta})$. This result is given in Theorem \ref{main_theorem_2}. Finally, in Theorem \ref{main_theorem_3}, by comparing $\mathcal{M}_{\theta}^{\vee}$ with $\mathcal{M}_{\theta}$, we give a generalization of the SYZ transform on $(X^n,\check{X}^n)$ to the setting which includes the noncommutative parameter $\theta$.  

This paper is organized as follows. In section 2, we review the basics of the homological mirror symmetry for tori. Main discussions in the complex geometry side are provided in section 3. In subsection 3.1, we recall the noncommutative deformation of holomorphic line bundles $E(L,\mathcal{L})$ which is constructed in \cite{nc} associated to the deformation from $X^n$ to $X_{\theta}^n$. In subsection 3.2, we intensively discuss the problem caused by ambiguity explained in subsection 1.1. In particular, concerning this problem, we extend the noncommutative deformation of holomorphic line bundles $E(L,\mathcal{L})$ which is explained in subsection 3.1 to the more general setting. In subsection 3.3, we specify the moduli space of those extended noncommutative objects $E(L,\mathcal{L})_{\theta}$, and this result is given in Theorem \ref{main_theorem_1}. Main discussions in the symplectic geometry side are provided in section 4. In subsection 4.1, we construct the deformation $(L,\mathcal{L}_{\theta})$ of each $(L,\mathcal{L})$ based on the idea which is proposed in our previous paper \cite{b-field}. These $(L,\mathcal{L}_{\theta})$ are also the mirrors of noncommutative objects $E(L,\mathcal{L})_{\theta}$. In subsection 4.2, we specify the moduli space of the deformed objects $(L,\mathcal{L}_{\theta})$, and this result is given in Theorem \ref{main_theorem_2}. Moreover, in Theorem \ref{main_theorem_3}, we give a generalization of the SYZ transform on $(X^n,\check{X}^n)$ to the setting which includes the noncommutative parameter $\theta$ as a corollary of Theorem \ref{main_theorem_2} and Theorem \ref{main_theorem_1}. In section 5, we comment on the relation between this paper and our previous paper \cite{b-field} from the viewpoint of the Fourier-Mukai duality and generalized complex geometry. In section 6, we explain how to improve the errors included in our prior works \cite{b-field, gerby}. In particular, subsection 6,1 and subsection 6.2 are devoted to revise the errors in \cite{b-field} and \cite{gerby}, respectively.

\subsection{Notations}
We use the following notations throughout this paper. 

Let $M$ be a smooth manifold and $E$ a vector bundle on $M$. Then $TM$ and $T^*M$ denote the tangent bundle on $M$ and the cotangent bundle on $M$, respectively. Also, we denote the space of global sections of $E$ by $\Gamma(M, E)$ or $\Gamma(E)$ for short. However, exceptionally, there are cases that we also denote the space of sections of $E$ defined locally on an open set in $M$ by $\Gamma(E)$. 

On the other hand, $M(n;\mathbb{R})$, $\mathrm{Sym}(n;\mathbb{R})$, and $\mathrm{Alt}(n;\mathbb{R})$ denote the set of real matrices of order $n$, the set of real symmetric matrices of order $n$, and the set of real alternating matrices of order $n$, respectively (we use the notations $M(n;\mathbb{Z})$, etc. in this sense). Moreover, for each $i=1, \ldots, n$, we denote the elementary column vector of order $n$ whose $i$-th component is $1$ by $\bm{e}_i$. Similarly as in the cases of $\bm{e}_1, \ldots, \bm{e}_n$, unless otherwise noted, when we use some kind of vectors, we treat them as column vectors.

\section{Preliminaries}
In this section, we review the basics of the homological mirror symmetry for tori. In particular, we use the framework of generalized complex geometry \cite{hitchin, Gual} auxiliary when we define a mirror partner of a given complex torus.

\subsection{Complex tori and their mirror partners}
The purpose of this subsection is to explain how to define a mirror partner of a given complex torus via the framework of generalized complex geometry.

Let $T$ be a complex matrix of order $n\in \mathbb{N}$ such that $\mathrm{Im}T$ is positive definite and $\mathrm{det}T\not=0$\footnote{Although we do not need to assume the condition $\mathrm{det}T\not=0$ when we define an $n$-dimensional complex torus $\mathbb{C}^n/(\mathbb{Z}^n\oplus T\mathbb{Z}^n)$, in our setting described below, its mirror partner does not exist if $\mathrm{det}T=0$. However, at least, two ways to avoid this problem are actually proposed. For details, see \cite{kazushi}.}. We denote the $n$-dimensional complex torus $\mathbb{C}^n/(\mathbb{Z}^n\oplus T\mathbb{Z}^n)$ by $X^n$:
\begin{equation*}
X^n:=\mathbb{C}^n/(\mathbb{Z}^n\oplus T\mathbb{Z}^n),
\end{equation*}
and in fact, this $X^n$ can be regarded as an $n$-dimensional complex manifold as follows. We fix an $\epsilon>0$ small enough and let
\begin{align*}
O_{m_1\cdots m_n}^{l_1\cdots l_n}:=\biggl\{ \ [x+Ty]\in X^n \ \Bigl. \Bigr| \ &\frac{l_j-1}{3}-\epsilon <x_j <\frac{l_j}{3}+\epsilon, \\
&\frac{m_k-1}{3}-\epsilon <y_k <\frac{m_k}{3}+\epsilon, \ j,k=1,\ldots, n \biggr\}
\end{align*}
be subsets in $X^n$, where $l_j$, $m_k=1$, $2$, $3$, and
\begin{equation*}
x:=\left( \begin{array}{ccc} x_1 \\ \vdots \\ x_n \end{array} \right), \ \ \ y:=\left( \begin{array}{ccc} y_1 \\ \vdots \\ y_n \end{array} \right)\in \mathbb{R}^n.
\end{equation*}
Also, in order to simplify the notations, for each open set $O_{m_1\cdots m_n}^{l_1\cdots l_n}$ in $X^n$, we put
\begin{equation*}
l:=(l_1\cdots l_n), \ \ \ m:=(m_1\cdots m_n),
\end{equation*}
and denote $O_{m_1\cdots m_n}^{l_1\cdots l_n}$ by $O_m^l$:
\begin{equation*}
O_m^l:=O_{m_1\cdots m_n}^{l_1\cdots l_n}.
\end{equation*}
The family $\{ O_m^l \}_{(l;m)\in I}$ of these open sets $O_m^l$ gives an open covering of $X^n$:
\begin{equation*}
X^n=\bigcup_{(l;m)\in I}O_m^l,
\end{equation*}
where
\begin{equation*}
I:=\Bigl\{ (l;m)=(l_1\cdots l_n ; m_1\cdots m_n) \ | \ l_1,\cdots, l_n, m_1,\cdots, m_n=1,2,3 \Bigr\}.
\end{equation*}
Then we locally define the complex coordinate system for each $O_m^l$ by $z:=x+Ty\in \mathbb{C}^n$, where
\begin{equation*}
z:=\left( \begin{array}{ccc} z_1 \\ \vdots \\ z_n \end{array} \right).
\end{equation*}
In particular, the non-trivial coordinate transforms are locally given by the translations $z\mapsto z+\bm{e}_1, \ldots, z\mapsto z+\bm{e}_n$, $z\mapsto z+T\bm{e}_1, \ldots, z\mapsto z+T\bm{e}_n$, and it is clear that they are holomorphic. Moreover, we can regard $X^n$ as a $2n$-dimensional real torus $\mathbb{R}^{2n}/\mathbb{Z}^{2n}\approx (\mathbb{R}^n/\mathbb{Z}^n)\times (\mathbb{R}^n/\mathbb{Z}^n)$ by forgetting its complex structure. In this context, since each point $[z=x+Ty]$ in an open set $O_m^l$ can be identified with $([x], [y])$, we can interpret $(x, y)\in \mathbb{R}^{2n}$ as the coordinate system for each $O_m^l$.

Now, we would like to define a mirror partner of $X^n$. However, in general, it is difficult to find a mirror partner of a given Calabi-Yau manifold, and even more so when we need to consider deformations (noncommutative deformations, gerby deformations, etc.) of it. Concerning this fact, in this paper, we employ the framework of generalized complex geometry in order to find mirror partners of $X^n$ and a noncommutative deformation of it. More precisely, according to \cite{Kaj}, we consider the mirror transforms of the generalized complex structure induced from the complex structure of $X^n$ and a $\beta$-field transform of it by regarding $X^n$ and its noncommutative deformation as generalized complex manifolds, and determine the complexified symplectic tori which are mirror dual to them.

We consider a linear map
\begin{equation*}
\mathcal{I}_T : \Gamma(TX^n\oplus T^*X^n) \to \Gamma(TX^n\oplus T^*X^n)
\end{equation*}
which is expressed locally as 
\begin{align}
&\mathcal{I}_T \left( \frac{\partial}{\partial x}^t, \frac{\partial}{\partial y}^t, dx^t, dy^t \right) \notag \\
&=\left( \frac{\partial}{\partial x}^t, \frac{\partial}{\partial y}^t, dx^t, dy^t \right) \notag \\
&\hspace{3.5mm} \left( \begin{array}{cccc} -XY^{-1} & -Y-XY^{-1}X & O & O \\ Y^{-1} & Y^{-1}X & O & O \\ O & O & (Y^{-1})^tX^t & -(Y^{-1})^t \\ O & O & Y^t+X^t(Y^{-1})^tX^t & -X^t(Y^{-1})^t \end{array} \right), \label{g_c_c}
\end{align}
where
\begin{gather*}
X:=\mathrm{Re}T, \ \ \ Y:=\mathrm{Im}T, \\ 
\frac{\partial}{\partial x}:=\left( \begin{array}{ccc} \frac{\partial}{\partial x_1} \\ \vdots \\ \frac{\partial}{\partial x_n} \end{array} \right), \ \ \ \frac{\partial}{\partial y}:=\left( \begin{array}{ccc} \frac{\partial}{\partial y_1} \\ \vdots \\ \frac{\partial}{\partial y_n} \end{array} \right), \ \ \ 
dx:=\left( \begin{array}{ccc} dx_1 \\ \vdots \\ dx_n \end{array} \right), \ \ \ dy:=\left( \begin{array}{ccc} dy_1 \\ \vdots \\ dy_n \end{array} \right),
\end{gather*}
and $M^t$ denotes the transpose of a given matrix $M$. We see that the linear map $\mathcal{I}_T$ is a generalized complex structure induced from the complex structure of $X^n$.

Let us define a mirror partner $\check{X}^n$ of $X^n$. In general, for a $2n$-dimensional real torus $T^{2n}$ equipped with a generalized complex structure $\mathcal{I}$ over $T^{2n}$, a generalized complex structure $\check{\mathcal{I}}$ over $T^{2n}$ which is mirror dual to $\mathcal{I}$ is defined by
\begin{equation}
\check{\mathcal{I}}=M_{(n)} \hspace{0.5mm} \mathcal{I} \hspace{0.5mm} M_{(n)}, \label{mirror}
\end{equation}
where
\begin{equation*}
M_{(n)}:=\left( \begin{array}{cccc} I_n & O & O & O \\ O & O & O & I_n \\ O & O & I_n & O \\ O & I_n & O & O \end{array} \right), 
\end{equation*}
$I_n$ is the identity matrix of order $n$, and note that $M_{(n)}^{-1}=M_{(n)}$ (see \cite{part1, part2, Kaj, kazushi})\footnote{Precisely speaking, objects which are treated in \cite{Kaj} are $2n$-dimensional flat generalized K$\ddot{\mathrm{a}}$hler tori, and see \cite{Kaj} for details of the definition of this notion. We can regard the tori which are treated in this paper as examples of such objects.}. We will explain the meaning of this definition at the last of this subsection. According to the mirror transform (\ref{mirror}), we can calculate the generalized complex structure $\check{\mathcal{I}}_T$ which is mirror dual to $\mathcal{I}_T$ locally as 
\begin{align}
&\check{\mathcal{I}}_T \left( \frac{\partial}{\partial \check{x}}^t, \frac{\partial}{\partial \check{y}}^t, d\check{x}^t, d\check{y}^t \right) \notag \\
&=\left( \frac{\partial}{\partial \check{x}}^t, \frac{\partial}{\partial \check{y}}^t, d\check{x}^t, d\check{y}^t \right) \notag \\
&\hspace{3.5mm} \left( \begin{array}{cccc} -XY^{-1} & O & O & -Y-XY^{-1}X \\ O & -X^t(Y^{-1})^t & Y^t+X^t(Y^{-1})^tX^t & O \\ O & -(Y^{-1})^t & (Y^{-1})^tX^t & O \\ Y^{-1} & O & O & Y^{-1}X \end{array} \right). \label{g_c_s}
\end{align}
Here,
\begin{equation*}
\frac{\partial}{\partial \check{x}}:=\left( \begin{array}{ccc} \frac{\partial}{\partial x^1} \\ \vdots \\ \frac{\partial}{\partial x^n} \end{array} \right), \ \ \ \frac{\partial}{\partial \check{y}}:=\left( \begin{array}{ccc} \frac{\partial}{\partial y^1} \\ \vdots \\ \frac{\partial}{\partial y^n} \end{array} \right), \ \ \ d\check{x}:=\left( \begin{array}{ccc} dx^1 \\ \vdots \\ dx^n \end{array} \right), \ \ \ d\check{y}:=\left( \begin{array}{ccc} dy^1 \\ \vdots \\ dy^n \end{array} \right)
\end{equation*}
are associated to
\begin{equation*}
\check{x}:=\left( \begin{array}{ccc} x^1 \\ \vdots \\ x^n \end{array} \right), \ \ \ \check{y}:=\left( \begin{array}{ccc} y^1 \\ \vdots \\ y^n \end{array} \right)\in \mathbb{R}^n 
\end{equation*}
which are the local coordinates of a $2n$-dimensional real torus $\mathbb{R}^{2n}/\mathbb{Z}^{2n}\approx (\mathbb{R}^n/\mathbb{Z}^n)\times (\mathbb{R}^n/\mathbb{Z}^n)$ in the following sense. Similarly as in the complex geometry side, we fix an $\epsilon>0$ small enough and take an open covering $\{\check{O}_m^l \}_{(l;m)\in I}$ of $\mathbb{R}^{2n}/\mathbb{Z}^{2n}$ which is obtained by replacing $(x,y)$ with $(\check{x}, \check{y})$, i.e.,
\begin{align*}
\check{O}_m^l:=\biggl\{ \ \left( \begin{array}{cc} \lbrack \check{x} \rbrack \\ \lbrack \check{y} \rbrack \end{array} \right)\in (\mathbb{R}^n/\mathbb{Z}^n)\times (\mathbb{R}^n/\mathbb{Z}^n) \ \Bigl. \Bigr| \ &\frac{l_j-1}{3}-\epsilon <x^j <\frac{l_j}{3}+\epsilon, \\
&\frac{m_k-1}{3}-\epsilon <y^k <\frac{m_k}{3}+\epsilon, \ j,k=1,\ldots, n \biggr\}.
\end{align*}
The family $\{ \check{O}_m^l \}_{(l;m)\in I}$ give an open covering of $\mathbb{R}^{2n}/\mathbb{Z}^{2n}$:
\begin{equation*}
\mathbb{R}^{2n}/\mathbb{Z}^{2n}=\bigcup_{(l;m)\in I} \check{O}_m^l.
\end{equation*}
Then we locally define the coordinate system for each $\check{O}_m^l$ by $(\check{x}, \check{y})\in \mathbb{R}^{2n}$, and the non-trivial coordinate transforms are locally given by the translations by the elementary column vectors of order $2n$. On the other hand, the representation matrix in (\ref{g_c_s}) can be rewritten to
\begin{align}
&\left( \begin{array}{cccc} I_n & O & O & O \\ O & I_n & O & O \\ O & -\mathrm{Re}(-(T^{-1})^t) & I_n & O \\ (\mathrm{Re}(-(T^{-1})^t))^t & O & O & I_n \end{array} \right) \notag \\
&\left( \begin{array}{cccc} O & O & O & -((\mathrm{Im}(-(T^{-1})^t))^{-1})^t \\ O & O & (\mathrm{Im}(-(T^{-1})^t))^{-1} & O \\ O & -\mathrm{Im}(-(T^{-1})^t) & O & O \\ (\mathrm{Im}(-(T^{-1})^t))^t & O & O & O \end{array} \right) \notag \\
&\left( \begin{array}{cccc} I_n & O & O & O \\ O & I_n & O & O \\ O & \mathrm{Re}(-(T^{-1})^t) & I_n & O \\ -(\mathrm{Re}(-(T^{-1})^t))^t & O & O & I_n \end{array} \right), \label{g_c_s_m}
\end{align}
where the assumption $\mathrm{det}T\not=0$ implies $\mathrm{det}(Y+XY^{-1}X)\not=0$, and
\begin{align*}
&\mathrm{Re}(-(T^{-1})^t)=-((Y+XY^{-1}X)^{-1})^tX^t(Y^{-1})^t, \\ 
&\mathrm{Im}(-(T^{-1})^t)=((Y+XY^{-1}X)^{-1})^t.
\end{align*}
Thus, when we define a mirror partner of $X^n$ according to the mirror transform (\ref{mirror}), it is given by the $2n$-dimensional real torus $\mathbb{R}^{2n}/\mathbb{Z}^{2n}$ equipped with the following complexified symplectic structure. Let us define two matrices $\underline{\omega}^{\vee}$ and $\underline{B}^{\vee}$ by
\begin{equation*}
\underline{\omega}^{\vee}=\mathrm{Im}(-(T^{-1})^t)
\end{equation*}
and
\begin{equation*}
\underline{B}^{\vee}=\mathrm{Re}(-(T^{-1})^t),
\end{equation*}
respectively. Since the representation matrix in (\ref{g_c_s}) turns out to be the matrix (\ref{g_c_s_m}), we see that the generalized complex structure (\ref{g_c_s}) is induced from the symplectic form
\begin{equation*}
\omega^{\vee}:=d\check{x}^t \underline{\omega}^{\vee} d\check{y}
\end{equation*} 
and the B-field
\begin{equation*}
B^{\vee}:=d\check{x}^t \underline{B}^{\vee} d\check{y}.
\end{equation*}
By using these $\omega^{\vee}$ and $B^{\vee}$, we define a complexified symplectic form $\tilde{\omega}^{\vee}$ by
\begin{equation*}
\tilde{\omega}^{\vee}=\mathbf{i}\omega^{\vee}+B^{\vee}=d\check{x}^t (-(T^{-1})^t) d\check{y},
\end{equation*} 
where $\mathbf{i}:=\sqrt{-1}$. Hereafter, we denote this complexified symplectic torus by
\begin{equation*}
\check{X}^n:=\Bigl( \mathbb{R}^{2n}/\mathbb{Z}^{2n}, \ \tilde{\omega}^{\vee}=d\check{x}^t (-(T^{-1})^t) d\check{y} \Bigr).
\end{equation*}

We now explain the meaning of the mirror transform (\ref{mirror}). We can regard $\check{X}^n$ as the trivial torus fibration $\check{X}^n \to \mathbb{R}^n/\mathbb{Z}^n$ ; $([\check{x}], [\check{y}]) \mapsto [\check{x}]$, and similarly, $X^n$ can also be regarded as the trivial torus fibration $X^n\approx (\mathbb{R}^n/\mathbb{Z}^n)\times (\mathbb{R}^n/\mathbb{Z}^n) \to \mathbb{R}^n/\mathbb{Z}^n$ ; $([x], [y]) \mapsto [x]$ via the identification $X^n\approx \mathbb{R}^{2n}/\mathbb{Z}^{2n}\approx (\mathbb{R}^n/\mathbb{Z}^n)\times (\mathbb{R}^n/\mathbb{Z}^n)$. In particular, $y$ and $\check{y}$ are the local coordinates of the fibers of these trivial torus fibrations. Although we use the notations $x$ and $\check{x}$ in this paper, from the viewpoint of the SYZ construction \cite{SYZ}, we may regard the trivial torus fibrations $X^n \to \mathbb{R}^n/\mathbb{Z}^n$ and $\check{X}^n \to \mathbb{R}^n/\mathbb{Z}^n$ as the trivial torus fibrations on the same base space $\mathbb{R}^n/\mathbb{Z}^n$, namely, we may identify $x$ with $\check{x}$. The mirror transform (\ref{mirror}) describes the T-duality between the fibers of these trivial torus fibrations.

\subsection{Holomorphic line bundles on $X^n$}
The purpose of this subsection is to explain a class of holomorphic line bundles which are expected to generate the bounded derived category of coherent sheaves over $X^n$.

We define a class of holomorphic line bundles $E_A \to X^n$ with integrable connections $\nabla_{(A,p,q)}$ as follows. In fact, we first construct these objects as smooth complex line bundles on $X^n$ with connections, and discuss the holomorphicity of them later. Let us take $A\in M(n;\mathbb{Z})$,
\begin{equation*}
p=\left( \begin{array}{ccc} p_1 \\ \vdots \\ p_n \end{array} \right), \ \ \ q=\left( \begin{array}{ccc} q_1 \\ \vdots \\ q_n \end{array} \right)\in \mathbb{R}^n
\end{equation*}
arbitrary, and fix them. We define a map $j_A : (\mathbb{Z}^n\oplus T\mathbb{Z}^n)\times \mathbb{C}^n \to \mathbb{C}^{\times}$ (it is enough to define such a map for $\bm{e}_1, \ldots, \bm{e}_n$, $T\bm{e}_1, \ldots, T\bm{e}_n$ and elements in $\mathbb{C}^n$ since $\mathbb{Z}^n\oplus T\mathbb{Z}^n$ is generated by $\bm{e}_1, \ldots, \bm{e}_n$, $T\bm{e}_1, \ldots, T\bm{e}_n$) by
\begin{align*}
&j_A(\bm{e}_i, z)=\mathrm{exp}\left( 2\pi \mathbf{i} \bm{e}_i^t A^t \left( T-\bar{T} \right)^{-1} (z-\bar{z}) \right)=\mathrm{exp}\left( 2\pi \mathbf{i} \bm{e}_i^t A^t y \right), \\
&j_A(T\bm{e}_i, z)=1,
\end{align*} 
where $i=1, \ldots, n$, $\mathbb{C}^{\times}:=\mathbb{C}\backslash \{ 0 \}$, and note that $z=x+Ty$. For any $k$, $l=1, \ldots, n$, it is easy to check that the map $j_A$ satisfies the relations
\begin{align}
&j_A(\bm{e}_l, z+\bm{e}_k) j_A(\bm{e}_k, z)=j_A(\bm{e}_k, z+\bm{e}_l) j_A(\bm{e}_l, z), \label{cocycle_1} \\
&j_A(T\bm{e}_l, z+\bm{e}_k) j_A(\bm{e}_k, z)=j_A(\bm{e}_k, z+T\bm{e}_l) j_A(T\bm{e}_l, z), \label{cocycle_2} \\
&j_A(T\bm{e}_l, z+T\bm{e}_k) j_A(T\bm{e}_k, z)=j_A(T\bm{e}_k, z+T\bm{e}_l) j_A(T\bm{e}_l, z), \label{cocycle_3}
\end{align}
namely, the following are well-defined:
\begin{align*}
&j_A(\bm{e}_k+\bm{e}_l, z):=j_A(\bm{e}_l, z+\bm{e}_k) j_A(\bm{e}_k, z), \\
&j_A(\bm{e}_k+T\bm{e}_l, z):=j_A(T\bm{e}_l, z+\bm{e}_k) j_A(\bm{e}_k, z), \\
&j_A(T\bm{e}_k+T\bm{e}_l, z):=j_A(T\bm{e}_l, z+T\bm{e}_k) j_A(T\bm{e}_k, z).
\end{align*}
In general, the map $j_A$ is called the factor of automorphy for the smooth complex line bundle $E_A$ that we should treat, and the smooth complex functions $j_A(\bm{e}_1, z), \ldots, j_A(\bm{e}_n, z)$, $j_A(T\bm{e}_1, z), \ldots, j_A(T\bm{e}_n, z)$ are essentially the transition functions of it. In particular, the relations (\ref{cocycle_1}), (\ref{cocycle_2}), (\ref{cocycle_3}) correspond to the cocycle condition. By using this $j_A$, let us define an action of $\mathbb{Z}^n\oplus T\mathbb{Z}^n$ on $\mathbb{C}^n\times \mathbb{C}$ by
\begin{align*}
&\bigl( z, t \bigr)\in \mathbb{C}^n\times \mathbb{C} \ \longmapsto \ \bigl( z+\bm{e}_i, j_A(\bm{e}_i, z) t \bigr)\in \mathbb{C}^n\times \mathbb{C}, \\
&\bigl( z, t \bigr)\in \mathbb{C}^n\times \mathbb{C} \ \longmapsto \ \bigl( z+T\bm{e}_i, j_A(T\bm{e}_i, z) t \bigr)\in \mathbb{C}^n\times \mathbb{C}
\end{align*}
for each $i=1, \ldots, n$. Then we can obtain the smooth complex line bundle $E_A \to X^n$ as the quotient of $\mathbb{C}^n\times \mathbb{C}$ by the action of $\mathbb{Z}^n\oplus T\mathbb{Z}^n$:
\begin{equation*}
\mathrm{Tot}(E_A):=\big( \mathbb{C}^n\times \mathbb{C} \big)/\bigl( \mathbb{Z}^n\oplus T\mathbb{Z}^n \bigr),
\end{equation*}
where $\mathrm{Tot}(\mathcal{E})$ denotes the total space of a vector bundle $\mathcal{E}$ on a smooth manifold. Hereafter, for each $i=1, \ldots, n$, via the identification $\mathbb{C}^n\cong \mathbb{R}^{2n}$ ; $z=x+Ty \mapsto (x,y)$, we denote $j_A(\bm{e}_i, z)$ and $j_A(T\bm{e}_i, z)$ by $j_A(\bm{e}_i, x, y)$ and $j_A(T\bm{e}_i, x, y)$, respectively, because using the real coordinate system $(x,y)$ is more convenient than using the complex coordinate system $z=x+Ty$ from the viewpoint of SYZ picture:
\begin{equation*}
j_A(\bm{e}_i, x, y):=j_A(\bm{e}_i, z), \ \ \ j_A(T\bm{e}_i, x, y):=j_A(T\bm{e}_i, z).
\end{equation*}
Moreover, by using $p$, $q\in \mathbb{R}^n$, we locally define a differential operator $\nabla_{(A,p,q)}$ on $\Gamma(E_A)$ by
\begin{equation}
\nabla_{(A,p,q)}=d+\omega_{(A,p,q)}, \ \ \ \omega_{(A,p,q)}=-2\pi \mathbf{i} \Bigl( \Bigl( x^t A^t+p^t \Bigr)+q^t T \Bigr)dy, \label{conn}
\end{equation}
where $d$ denotes the exterior derivative. We can easily check that this $\nabla_{(A,p,q)}$ is compatible with the transition functions of $E_A$, i.e., the factor of automorphy $j_A$ of $E_A$, so indeed, the differential operator $\nabla_{(A,p,q)}$ defines a connection on $E_A$. We denote the smooth complex line bundle $E_A \to X^n$ with the connection $\nabla_{(A,p,q)}$ by $E_{(A,p,q)}^{\nabla}$, i.e., $E_{(A,p,q)}^{\nabla}:=(E_A, \nabla_{(A,p,q)})$. 

In order to investigate the holomorphicity of each $E_{(A,p,q)}^{\nabla}$, we recall the definition of the integrability of connections following \cite{steven}. Let $X$ be a compact K$\ddot{\mathrm{a}}$hler manifold, and we consider a smooth complex vector bundle $E$ with a connection $D$ on $X$. In this context, $D^{(0,1)}$ denotes the (0,1)-part of $D$. 
\begin{definition}[{\cite[Definition 1.4.1]{steven}}] 
A connection $D$ is called integrable if $(D^{(0,1)})^2=0$.
\end{definition}
Therefore, $(E,D)$ can be considered as a holomorphic vector bundle on $X$ if $D$ is integrable.

Now, we give the following proposition without its proof since a similar statement is proved in \cite[Proposition 3.1]{bijection} for instance.
\begin{proposition} \label{hol}
For a given triple $(A,p,q)\in M(n;\mathbb{Z})\times \mathbb{R}^n\times \mathbb{R}^n$, the connection $\nabla_{(A,p,q)}$ is an integrable connection on $E_A \to X^n$ if and only if 
\begin{equation*}
AT\in \mathrm{Sym}(n;\mathbb{C}).
\end{equation*}
\end{proposition}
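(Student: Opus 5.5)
The plan is to compute the curvature of $\nabla_{(A,p,q)}$ and show that its $(0,2)$-part, which equals $(\nabla^{(0,1)}_{(A,p,q)})^2$, vanishes exactly when $AT$ is symmetric. Since the connection form is $\omega_{(A,p,q)}=-2\pi\mathbf{i}((x^tA^t+p^t)+q^tT)dy$ and the terms in $p,q$ are constant, the curvature is
\begin{equation*}
\Omega=d\omega_{(A,p,q)}=-2\pi\mathbf{i}\, dx^t A^t dy=-2\pi\mathbf{i}\sum_{i,j}A_{ji}\,dx_i\wedge dy_j .
\end{equation*}
It is independent of $p,q$, which explains why the condition involves only $A$ and $T$. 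Compatibility with the factor of automorphy $j_A$ is already asserted in the text, so no separate check of well-definedness is needed.

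Next I rewrite $\Omega$ in terms of $dz,d\bar z$. From $z=x+Ty$ we get $dy=(T-\bar T)^{-1}(dz-d\bar z)$ and $dx=dz-T\,dy$. Substituting and using $dy^t M dy=dy^t\frac{M-M^t}{2}dy$ for any matrix $M$, we get
\begin{equation*}
\Omega=-2\pi\mathbf{i}\,(dz-T dy)^tA^tdy=-2\pi\mathbf{i}\Bigl(dz^tA^tdy-dy^t\,\tfrac{1}{2}\bigl(T^tA^t-AT\bigr)dy\Bigr).
\end{equation*}
Here $dz^tA^tdy$ contributes only $(2,0)$ and $(1,1)$ components, because $dy$ has both $dz$ and $d\bar z$ parts but the first factor is holomorphic. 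The term $dy^tS\,dy$ with $S:=\frac12(T^tA^t-AT)$, which is alternating, has $(0,2)$-part
\begin{equation*}
d\bar z^t\bigl((T-\bar T)^{-1}\bigr)^tS\,(T-\bar T)^{-1}d\bar z .
\end{equation*}

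Then I argue both directions. Since $T-\bar T=2\mathbf{i}Y$ is invertible and $S$ is alternating, this $(0,2)$-form vanishes if and only if $S=0$, that is, if and only if $AT=(AT)^t$, i.e.\ $AT\in\mathrm{Sym}(n;\mathbb{C})$. Finally, $(\nabla^{(0,1)})^2=\Omega^{(0,2)}$, because $\nabla^{(0,1)}=\bar\partial+\omega^{(0,1)}$ on a line bundle and $\bar\partial^2=0$. Combined with the definition of integrability, this gives the equivalence.

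The only delicate step is the bookkeeping of the $(0,2)$-component, in particular making sure no $(0,2)$ contribution hides in $dz^tA^tdy$. That is immediate since every term there contains a $dz$. The nondegeneracy argument that turns vanishing of the form into $S=0$ uses the invertibility of $Y=\mathrm{Im}T$, which holds because $\mathrm{Im}T$ is positive definite.
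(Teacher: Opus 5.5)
Your argument is correct and follows essentially the route the paper relies on. The paper omits the proof and defers to \cite[Proposition 3.1]{bijection}, but its proofs of the $\theta$- and $\tau$-deformed analogues use exactly your computation: the $(0,2)$-part of $-2\pi\mathbf{i}\,dx^tA^tdy$ is $2\pi\mathbf{i}\,d\bar z^t((T-\bar T)^{-1})^t(AT)^t(T-\bar T)^{-1}d\bar z$. After antisymmetrizing, as you do with $S$, this vanishes if and only if $AT\in\mathrm{Sym}(n;\mathbb{C})$, because $T-\bar T$ is invertible.
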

Under the assumption $AT\in \mathrm{Sym}(n;\mathbb{C})$, we see that holomorphic line bundles $E_{(A,p,q)}^{\nabla}$ forms a dg-category $DG_{X^n}$ as a corollary of \cite[Theorem 5.7]{b-field} (see also \cite{kajiura}). In general, for any $A_{\infty}$-category $\mathscr{A}$, we can construct the enhanced triangulated category $Tr(\mathscr{A})$ by using the Bondal-Kapranov-Kontsevich construction \cite{bondal, Kon}. We expect that the dg-category $DG_{X^n}$ generates the bounded derived category $D^b(Coh(X^n))$ of coherent sheaves over $X^n$ in the sense of the Bondal-Kapranov-Kontsevich construction:
\begin{equation*}
Tr(DG_{X^n})\cong D^b(Coh(X^n)).
\end{equation*}
At least, it is known that it split generates $D^b(Coh(X^n))$ when $X^n$ is an abelian variety (see \cite{orlov, abouzaid}).

Here, based on subsection 3.2 in \cite{star}, we give the local expression of a global holomorphic section of $E_{(A,p,q)}^{\nabla}$ in the case that $T=\mathbf{i}\cdot I_n$ and $A$ is positive definite (it is clear that these $A$ and $T$ satisfies the condition $AT\in \mathrm{Sym}(n;\mathbb{C})$). Note that $\# (\mathbb{Z}^n/A\mathbb{Z}^n)=\mathrm{det}A\in \mathbb{N}$. In this case, a global holomorphic section $\xi_{(A,p,q)}$ of $E_{(A,p,q)}^{\nabla}$ is expressed locally as 
\begin{align*}
&\xi_{(A,p,q)}(x,y):=\sum_{m\in \mathbb{Z}^n/A\mathbb{Z}^n} C_m \sum_{l\in \mathbb{Z}^n} \xi_{(A,p,q,l,m)}(x) \ \mathrm{exp}\left( 2\pi \mathbf{i} (-Al+m)^t y \right), \\
&\xi_{(A,p,q,l,m)}(x):=\mathrm{exp}\left( -\pi (x^t-(-Al+m-p)^t(A^{-1})^t)A(x-A^{-1}(-Al+m-p))-2\pi \mathbf{i} (x+l)^t q \right),
\end{align*}
where $C_m\in \mathbb{C}$ for each $m\in \mathbb{Z}^n/A\mathbb{Z}^n$. In other words, now, the space $\Gamma(X^n, E_{(A,p,q)}^{\nabla})$ is a $(\mathrm{det}A)$-dimensional $\mathbb{C}$-vector space, and a basis of this $\mathbb{C}$-vector space $\Gamma(X^n, E_{(A,p,q)}^{\nabla})$ is given by
\begin{equation*}
\left\{ \sum_{l\in \mathbb{Z}^n} \xi_{(A,p,q,l,m)}(x) \ \mathrm{exp}\left( 2\pi \mathbf{i} (-Al+m)^t y \right) \ \Bigl. \Bigr| \ m\in \mathbb{Z}^n/A\mathbb{Z}^n \right\}.
\end{equation*}
Actually, for each $m\in \mathbb{Z}^n/A\mathbb{Z}^n$, we can verify that the locally defined smooth function $\sum_{l\in \mathbb{Z}^n} \xi_{(A,p,q,l,m)}(x) \ \mathrm{exp}\left( 2\pi \mathbf{i} (-Al+m)^t y \right)$ satisfies the relations
\begin{align*}
&j_A(\bm{e}_i, x, y) \left( \sum_{l\in \mathbb{Z}^n} \xi_{(A,p,q,l,m)}(x) \ \mathrm{exp}\left( 2\pi \mathbf{i} (-Al+m)^t y \right) \right) \\
&=\sum_{l\in \mathbb{Z}^n} \xi_{(A,p,q,l,m)}(x+\gamma_i) \ \mathrm{exp}\left( 2\pi \mathbf{i} (-Al+m)^t y \right), \\
&j_A(T\bm{e}_i, x, y) \left( \sum_{l\in \mathbb{Z}^n} \xi_{(A,p,q,l,m)}(x) \ \mathrm{exp}\left( 2\pi \mathbf{i} (-Al+m)^t y \right) \right) \\
&=\sum_{l\in \mathbb{Z}^n} \xi_{(A,p,q,l,m)}(x) \ \mathrm{exp}\left( 2\pi \mathbf{i} (-Al+m)^t (y+\bm{e}_i) \right), \\
&\left( \bar{\partial}+\omega_{(A,p,q)}^{(0,1)} \right) \left( \sum_{l\in \mathbb{Z}^n} \xi_{(A,p,q,l,m)}(x) \ \mathrm{exp}\left( 2\pi \mathbf{i} (-Al+m)^t y \right) \right)=0,
\end{align*}
where $\bar{\partial}$, $\omega_{(A,p,q)}^{(0,1)}$ denote the (0,1)-part of $d$, the (0,1)-part of $\omega_{(A,p,q)}$, respectively, and $i=1, \ldots, n$. Moreover, $\xi_{(A,p,q,l,m)}$ belongs to the Schwarz space $\mathcal{S}(\mathbb{R}^n)$ over $\mathbb{R}^n$ for each pair $(l,m)\in \mathbb{Z}^n \times (\mathbb{Z}^n/A\mathbb{Z}^n)$ since each $\xi_{(A,p,q,l,m)}$ is a Gaussian.

\subsection{Affine Lagrangian submanifolds in $\check{X}^n$ with unitary local systems}
The purpose of this subsection is to give the objects defined on $\check{X}^n$ which are mirror dual to holomorphic line bundles $E_{(A,p,q)}^{\nabla}$. 

Before starting main discussions, we recall the definition of objects of the Fukaya category over a given complexified symplectic manifold (see \cite[Definition 1.1]{Fuk} for example). Let $M_{\omega, B}:=(M, \mathbf{i}\omega+B)$ be a complexified symplectic manifold, where $M$ is a smooth even-dimensional real manifold, $\omega$ is a symplectic form on $M$, and $B$ is a B-field on $M$. For this $M_{\omega, B}$, an object of the Fukaya category over $M_{\omega, B}$ consists of a Lagrangian submanifold $L$ in $M_{\omega, B}$, i.e., a submanifold $L$ in $M_{\omega, B}$ which satisfies
\begin{equation*}
\mathrm{dim}_{\mathbb{R}}L=\frac{1}{2}\mathrm{dim}_{\mathbb{R}}M, \ \ \ \Bigl. \omega \Bigr|_L=0,
\end{equation*}
and a smooth complex line bundle $\mathcal{L} \to L$ with a unitary connection $\nabla_{\mathcal{L}}$ such that 
\begin{equation}
\Omega_{\nabla_{\mathcal{L}}}=\Bigl. 2\pi \mathbf{i}B \Bigr|_L, \label{curv_b_field}
\end{equation}
where $\Omega_{\nabla_{\mathcal{L}}}$ denotes the curvature form of $\nabla_{\mathcal{L}}$.

\begin{rem} \label{fuk_ob_tori}
As remarked in \textup{\cite[Remark 1.19]{Fuk}}, actually, it is enough to consider the conditions
\begin{align}
&\Omega_{\nabla_{\mathcal{L}}}=0, \label{curv_b_field_1} \\
&\Bigl. B \Bigr|_L=0 \label{curv_b_field_2}
\end{align}
instead of the requirement \textup{(\ref{curv_b_field})} on $\check{X}^n$. Concerning this fact, below, we use the conditions \textup{(\ref{curv_b_field_1})}, \textup{(\ref{curv_b_field_2})} when we consider objects of the Fukaya category over $\check{X}^n$.
\end{rem}

Let us take $A\in M(n;\mathbb{Z})$, 
\begin{equation*}
p=\left( \begin{array}{ccc} p_1 \\ \vdots \\ p_n \end{array} \right), \ \ \ q=\left( \begin{array}{ccc} q_1 \\ \vdots \\ q_n \end{array} \right)\in \mathbb{R}^n
\end{equation*}
arbitrary, and fix them. We consider an $n$-dimensional affine submanifold
\begin{equation*}
\tilde{L}_{(A,p)}:=\left\{ \left( \begin{array}{cc} \check{x} \\ A\check{x}+p \end{array} \right)\in \mathbb{R}^n\times \mathbb{R}^n\approx \mathbb{R}^{2n} \right\}
\end{equation*}
in the covering space $\mathbb{R}^{2n}$ of $\check{X}^n$ (we may regard $(\check{x}, \check{y})$ as the coordinates of the covering space $\mathbb{R}^{2n}$ of $\check{X}^n$), and define an $n$-dimensional affine submanifold $L_{(A,p)}$ in $\check{X}^n$ by
\begin{equation*}
L_{(A,p)}=\pi \left( \tilde{L}_{(A,p)} \right),
\end{equation*}
where $\pi : \mathbb{R}^{2n} \to \check{X}^n$ is the covering map. We further take the trivial complex line bundle $\mathcal{O}_{(A,p)} \to L_{(A,p)}$ with a flat connection
\begin{equation*}
\nabla_{(A,p,q)}^{\vee}:=d+\omega_{(A,p,q)}^{\vee}, \ \ \ \omega_{(A,p,q)}^{\vee}:=2\pi \mathbf{i} q^t d\check{x}
\end{equation*}
according to Remark \ref{fuk_ob_tori}, and set $\mathcal{O}_{(A,p,q)}^{\nabla}:=(\mathcal{O}_{(A,p)}, \nabla_{(A,p,q)}^{\vee})$. Let us denote the pair of the $n$-dimensional affine submanifold $L_{(A,p)}$ in $\check{X}^n$ and the trivial complex line bundle with the flat connection $\mathcal{O}_{(A,p,q)}^{\nabla}$ by $\mathcal{L}_{(A,p,q)}^{\nabla}$, i.e., $\mathcal{L}_{(A,p,q)}^{\nabla}:=(L_{(A,p)}, \mathcal{O}_{(A,p,q)}^{\nabla})$. Then the following proposition holds. Here, we omit its proof since a similar statement is proved in subsection 4.1 in \cite{bijection} for instance.
\begin{proposition} \label{fuk_ob} 
For a given triple $(A,p,q)\in M(n;\mathbb{Z})\times \mathbb{R}^n\times \mathbb{R}^n$, the pair $\mathcal{L}_{(A,p,q)}^{\nabla}$ gives an object of the Fukaya category over $\check{X}^n$ if and only if
\begin{equation*}
AT\in \mathrm{Sym}(n;\mathbb{C}).
\end{equation*}
\end{proposition}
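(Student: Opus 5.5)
Following Remark \ref{fuk_ob_tori}, being an object of the Fukaya category over $\check{X}^n$ amounts to three conditions: $L_{(A,p)}$ is Lagrangian for $\omega^{\vee}$, $\nabla_{(A,p,q)}^{\vee}$ is flat, and $B^{\vee}|_{L_{(A,p)}}=0$. I will check each one by restricting to $L_{(A,p)}$ and then match the result to $AT\in\mathrm{Sym}(n;\mathbb{C})$.

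First, some easy remarks. Since $A\in M(n;\mathbb{Z})$, the affine subspace $\tilde{L}_{(A,p)}$ is invariant under the lattice translations $(\check{x},\check{y})\mapsto(\check{x}+\bm{e}_i,\check{y}+A\bm{e}_i)$. Hence $L_{(A,p)}$ is a closed embedded $n$-dimensional torus with half the dimension of $\check{X}^n$. The connection form $2\pi\mathbf{i}q^t d\check{x}$ is closed with constant coefficients and is invariant under these translations. So $\nabla_{(A,p,q)}^{\vee}$ is a well-defined flat unitary connection on the trivial bundle, and condition (\ref{curv_b_field_1}) holds with no constraint on $A,p,q$.

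Next, parametrize $L_{(A,p)}$ by $\check{x}$, so that $d\check{y}|_{L}=A\,d\check{x}$. Then
\begin{equation*}
\tilde{\omega}^{\vee}|_{L_{(A,p)}}=d\check{x}^t\bigl(-(T^{-1})^t\bigr)A\,d\check{x}.
\end{equation*}
A 2-form $d\check{x}^tM\,d\check{x}$ vanishes exactly when $M$ is symmetric. Taking imaginary and real parts separately, the two conditions $\omega^{\vee}|_L=0$ and $B^{\vee}|_L=0$ together say that $(T^{-1})^tA$ is a complex symmetric matrix, i.e. $(T^{-1})^tA=A^tT^{-1}$.

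The remaining step is pure matrix algebra: $(T^{-1})^tA=A^tT^{-1}$ holds if and only if $AT\in\mathrm{Sym}(n;\mathbb{C})$. Here the standing assumption $\det T\neq0$ is used. Multiplying the identity on the left by $T^t$ and on the right by $T$ gives $AT=T^tA^t=(AT)^t$. The converse follows by reversing these steps.

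The only point needing care is to confirm that the Fukaya-category requirements are exactly the pointwise vanishing of $\omega^{\vee}|_L$ and $B^{\vee}|_L$, as Remark \ref{fuk_ob_tori} allows. With that settled, all forms have constant coefficients and no global issue arises.
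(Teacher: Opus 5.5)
Your proof is correct and follows the intended argument. The paper omits the proof and defers to \cite{bijection}, but in subsection 4.1 it uses the same restriction $d\check{y}|_{L_{(A,p)}}=A\,d\check{x}$, reducing to symmetry of $(T^{-1})^tA=(T^{-1})^t(AT)T^{-1}$ and hence of $AT$, to get $B^{\vee}|_{L_{(A,p)}}=0$ from $AT\in\mathrm{Sym}(n;\mathbb{C})$. One small simplification: since $\nabla_{(A,p,q)}^{\vee}$ is flat, the original requirement (\ref{curv_b_field}) already reduces to $B^{\vee}|_{L_{(A,p)}}=0$ for this pair, so you do not need Remark \ref{fuk_ob_tori} for that step.
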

We denote the full subcategory of the Fukaya category over $\check{X}^n$ consisting of objects $\mathcal{L}_{(A,p,q)}^{\nabla}$ satisfying the condition $AT\in \mathrm{Sym}(n;\mathbb{C})$ by $Fuk_{\rm sub}(\check{X}^n)$.

By comparing Proposition \ref{hol} with Proposition \ref{fuk_ob}, we immediately obtain the following proposition which indicates that the object 
\begin{equation*}
E_{(A,p,q)}^{\nabla}
\end{equation*}
of $DG_{X^n}$ and the object 
\begin{equation*}
\mathcal{L}_{(A,p,q)}^{\nabla}
\end{equation*}
of $Fuk_{\rm sub}(\check{X}^n)$ are mirror dual to each other for a given triple $(A,p,q)\in M(n;\mathbb{Z})\times \mathbb{R}^n\times \mathbb{R}^n$.
\begin{proposition} \label{ob_mirror}
For a given triple $(A,p,q)\in M(n;\mathbb{Z})\times \mathbb{R}^n\times \mathbb{R}^n$, the connection $\nabla_{(A,p,q)}$ is an integrable connection on $E_A \to X^n$ if and only if the pair $\mathcal{L}_{(A,p,q)}^{\nabla}$ gives an object of the Fukaya category over $\check{X}^n$.
\end{proposition}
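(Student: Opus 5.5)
Proposition \ref{ob_mirror} is a direct combination of the two previous propositions, so the plan is simply to chain them through their common criterion. Fix $(A,p,q)\in M(n;\mathbb{Z})\times \mathbb{R}^n\times \mathbb{R}^n$. By Proposition \ref{hol}, the connection $\nabla_{(A,p,q)}$ on $E_A\to X^n$ is integrable if and only if $AT\in \mathrm{Sym}(n;\mathbb{C})$. By Proposition \ref{fuk_ob}, the pair $\mathcal{L}_{(A,p,q)}^{\nabla}$ is an object of the Fukaya category over $\check{X}^n$ if and only if the same condition $AT\in \mathrm{Sym}(n;\mathbb{C})$ holds. Both conditions are therefore equivalent to one and the same matrix condition, and hence to each other. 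Notice that $p$ and $q$ play no role in either criterion.

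To make the argument less of a black box, I would also indicate briefly why each side produces the condition $AT\in \mathrm{Sym}(n;\mathbb{C})$.

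On the complex side, one computes the curvature of $\omega_{(A,p,q)}$:
\begin{equation*}
d\omega_{(A,p,q)}=-2\pi \mathbf{i}\, dx^t A^t dy .
\end{equation*}
One then rewrites $dx$ and $dy$ in terms of $dz$ and $d\bar{z}$, using $z=x+Ty$, so that $dy=(T-\bar{T})^{-1}(dz-d\bar{z})$ and $dx=dz-T\,dy$. Extracting the $(0,2)$-part, its vanishing reduces to the vanishing of the antisymmetric part of a matrix congruent to $AT$ (up to the invertible factor $(T-\bar{T})^{-1}$). This is exactly $AT=(AT)^t$.

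On the symplectic side, the restriction to $L_{(A,p)}$ is obtained by substituting $d\check{y}=A\,d\check{x}$. This gives
\begin{equation*}
\tilde{\omega}^{\vee}|_L=d\check{x}^t\,(-(T^{-1})^t)A\,d\check{x},
\end{equation*}
which vanishes if and only if $(T^{-1})^tA$ is symmetric. Multiplying on the left by $T^t$ and on the right by $T$ shows this is equivalent to $AT$ being symmetric. Together with flatness of $\nabla_{(A,p,q)}^{\vee}$, which is automatic, Remark \ref{fuk_ob_tori} then yields Proposition \ref{fuk_ob}.

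There is no genuine obstacle, since the statement is immediate once Propositions \ref{hol} and \ref{fuk_ob} are granted. The only point requiring care is the bookkeeping in the $(0,2)$-part computation, where one has to track transposes correctly. For the purposes of this proof, citing the two propositions suffices.
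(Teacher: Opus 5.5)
Your proposal is correct and matches the paper, which obtains this proposition immediately by comparing Proposition \ref{hol} and Proposition \ref{fuk_ob}, since both reduce to the single condition $AT\in \mathrm{Sym}(n;\mathbb{C})$. Your supplementary sketches are also correct but not needed: the $(0,2)$-part of $-2\pi \mathbf{i}\,dx^tA^tdy$ on the complex side, and the restriction $d\check{x}^t(-(T^{-1})^t)A\,d\check{x}$ together with the congruence by $T$ on the symplectic side.
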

Below, we assume that
\begin{equation*}
AT\in \mathrm{Sym}(n;\mathbb{C}).
\end{equation*}

We give two additional explanations for the correspondence between $E_{(A,p,q)}^{\nabla}$ and $\mathcal{L}_{(A,p,q)}^{\nabla}$. 

As the first point, we comment on the SYZ transform. We can regard the complexified symplectic torus $\check{X}^n$ as the trivial torus fibration $\check{X}^n \to \mathbb{R}^n/\mathbb{Z}^n$ ; $([\check{x}], [\check{y}]) \mapsto [\check{x}]$ as explained in subsection 2.1. Then we can regard each affine Lagrangian submanifold $L_{(A,p)}$ in $\check{X}^n$ as the graph of the section $\mathbb{R}^n/\mathbb{Z}^n \to \check{X}^n$ ; $[\check{x}] \mapsto ([\check{x}], [A\check{x}+p])$ of $\check{X}^n \to \mathbb{R}^n/\mathbb{Z}^n$. In particular, the correspondence 
\begin{equation*}
\mathcal{L}_{(A,p,q)}^{\nabla} \ \longmapsto \ E_{(A,p,q)}^{\nabla}
\end{equation*}
is sometimes called the SYZ transform (see \cite{leung, A-P}). 

As the second point, we explain an interpretation for the global holomorphic section $\xi_{(A,p,q)}$ which is given in subsection 2.2 from the viewpoint of the homological mirror symmetry, so let us assume that $T=\mathbf{i}\cdot I_n$ and $A$ is positive definite. Here, for simplicity, we put $q:=0\in \mathbb{R}^n$. We can regard the set $\pi^{-1}(L_{(A,p)})=(\pi^{-1}\circ \pi)(\tilde{L}_{(A,p)})$ as 
\begin{equation*}
\pi^{-1} \left( L_{(A,p)} \right)=\left\{ \left( \begin{array}{cc} \check{x} \\ A\check{x}+p+Al-m \end{array} \right)\in \mathbb{R}^n\times \mathbb{R}^n\approx \mathbb{R}^{2n} \right\},
\end{equation*}
where $l\in \mathbb{Z}^n$ and $m\in \mathbb{Z}^n/A\mathbb{Z}^n$ correspond to the ones appeared in the local expression of $\xi_{(A,p,0)}$. On the other hand, in the complex geometry side, the space $\Gamma(X^n, E_{(A,p,0)}^{\nabla})$ can be identified with the space of holomorphic maps from the trivial holomorphic line bundle $E_{(O,0,0)}^{\nabla} \to X^n$ to $E_{(A,p,0)}^{\nabla}$, and $E_{(O,0,0)}^{\nabla}$ corresponds to the affine Lagrangian submanifold
\begin{equation*}
L_{(O,0)}=\pi \left(\tilde{L}_{(O,0)} \right)=\pi \left( \left\{ \left( \begin{array}{cc} \check{x} \\ 0 \end{array} \right)\in \mathbb{R}^n\times \mathbb{R}^n\approx \mathbb{R}^{2n} \right\} \right)
\end{equation*} 
in $\check{X}^n$. Under this situation, frankly speaking, the axes $x=A^{-1}(m-p)$ ($l=0\in \mathbb{Z}^n$, $m\in \mathbb{Z}^n/A\mathbb{Z}^n$) of the Gaussians $\xi_{(A,p,0,0,m)}(x)$ correspond to the intersection points of $L_{(A,p)}$ and $L_{(O,0)}$ in $\check{X}^n$:
\begin{align*}
&L_{(A,p)}\cap L_{(O,0)}=\left\{ \left( \begin{array}{cc} \lbrack A^{-1}(m-p) \rbrack \\ \lbrack 0 \rbrack \end{array} \right)\in (\mathbb{R}^n/\mathbb{Z}^n)\times (\mathbb{R}^n/\mathbb{Z}^n)\approx \mathbb{R}^{2n}/\mathbb{Z}^{2n}\approx \check{X}^n \ \Bigl. \Bigr| \ m\in \mathbb{Z}^n/A\mathbb{Z}^n \right\}, \\
&\# \left( L_{(A,p)}\cap L_{(O,0)} \right)=\# \left( \mathbb{Z}^n/A\mathbb{Z}^n \right)=\mathrm{det}A=\mathrm{dim}_{\mathbb{C}}\Gamma\left( X^n, E_{(A,p,0)}^{\nabla} \right). 
\end{align*} 

\section{The deformation of holomorphic line bundles associated to a noncommutative deformation of $X^n$}
Before explaining the purpose of this section, we mention the Poisson bivector on $X^n$ briefly. Let us take matrices $\theta_1=((\theta_1)_{ij})$, $\theta_3=((\theta_3)_{ij})\in \mathrm{Alt}(n;\mathbb{R})$, $\theta_2=((\theta_2)_{ij})\in M(n;\mathbb{R})$. By using them, we consider a Poisson bivector which is expressed locally as
\begin{equation}
\sum_{i, j=1}^n \left( \frac{1}{2}(\theta_1)_{ij}\frac{\partial}{\partial x_i}\wedge \frac{\partial}{\partial x_j}+(\theta_2)_{ij}\frac{\partial}{\partial x_i}\wedge \frac{\partial}{\partial y_j}+\frac{1}{2}(\theta_3)_{ij}\frac{\partial}{\partial y_i}\wedge \frac{\partial}{\partial y_j} \right). \label{P_bivector}
\end{equation}  
In particular, the matrices $\theta_1$, $\theta_2$, $\theta_3$ are constant matrices and the non-trivial coordinate transforms are the translations by the elementary column vectors. Therefore, in fact, the Poisson bivector (\ref{P_bivector}) is defined globally on $\mathbb{R}^{2n}/\mathbb{Z}^{2n}\approx (\mathbb{R}^n/\mathbb{Z}^n)\times (\mathbb{R}^n/\mathbb{Z}^n)\approx X^n$. 

Let us take an arbitrary matrix $\theta=(\theta_{ij})\in \mathrm{Alt}(n;\mathbb{R})$, and fix it. Below, we focus on the noncommutative deformation of $X^n$ associated to the nonformal deformation quantization of $X^n$ by the Poisson bivector (\ref{P_bivector}) in the case that $\theta_3=\theta$ and $\theta_1=\theta_2=O$:
\begin{equation}
\Pi_{\theta}:=\frac{1}{2}\sum_{i, j=1}^n \theta_{ij}\frac{\partial}{\partial y_i}\wedge \frac{\partial}{\partial y_j} \label{theta}
\end{equation} 
which is treated in \cite{nc} (this deformation is called Type $\theta_3$ in \cite{nc}). 

In general, associated to the deformation of $X^n$ in the above sense, we can construct a non-trivial deformation of the trivial holomorphic line bundle $E_{(O,0,0)}^{\nabla} \to X^n$ by twisting it with a suitable isomorphism. In this section, from this point of view, we extend the noncommutative deformation of holomorphic line bundles $E_{(A,p,q)}^{\nabla}$ which is constructed in \cite{nc} to the more general setting. In particular, we specify the moduli space of such extended noncommutative objects in Theorem \ref{main_theorem_1}.

\subsection{A noncommutative deformation of $E_{(A,p,q)}^{\nabla}$}
Let us denote the pair $(X^n, \Pi_{\theta})$ by $X_{\theta}^n$: $X_{\theta}^n:=(X^n, \Pi_{\theta})$, namely, $X_{\theta}^n$ is a Poisson manifold. This subsection is essentially devoted to explain the noncommutative deformation of holomorphic line bundles $E_{(A,p,q)}^{\nabla}$ which is constructed in \cite{nc} associated to the deformation from $X^n$ to $X_{\theta}^n$. However, in this paper, we do not assume that $X^n$ is the standard complex torus although the standard complex torus $\mathbb{C}^n/(\mathbb{Z}^n\oplus \mathbf{i}\mathbb{Z}^n)$ is only treated in \cite{nc}\footnote{Precisely speaking, our discussions are mainly based on section 4 in \cite{nc}, and there, the standard complex structure is only treated.}.

We first recall the formal deformation quantization of $X_{\theta}^n$ and nonformal deformation quantizations of $X_{\theta}^n$ briefly. We consider the following formal power series ring such that its formal parameter is $\hbar$:
\begin{equation*}
C^{\infty}(X^n)[[\hbar]]:=\left\{ \sum_{i=0}^{\infty}f_i \hbar^i \ \Bigl. \Bigr| \ f_0, \ f_1, \ \cdots \in C^{\infty}(X^n) \right\}.
\end{equation*}
Then, the formal deformation quantization of the Poisson manifold $X_{\theta}^n$ is a pair (a noncommutative algebra) $(C^{\infty}(X^n)[[\hbar]], \star)$, where $\star : C^{\infty}(X^n)[[\hbar]]\times C^{\infty}(X^n)[[\hbar]] \to C^{\infty}(X^n)[[\hbar]]$ is an $\hbar$-bilinear associative product satisfying the following: when we denote 
\begin{equation*}
f\star g=\sum_{i=0}^{\infty}C_i(f,g)\hbar^i
\end{equation*}
for $f$, $g\in C^{\infty}(X^n)$, the relations
\begin{equation*}
C_0(f,g)=fg, \ \ \ C_1(f,g)=\frac{\hbar}{2\pi}\{ f, g \}
\end{equation*}
hold. Here, $\{ \cdot, \cdot \} : C^{\infty}(X^n)\times C^{\infty}(X^n) \to C^{\infty}(X^n)$ is the Poisson bracket which is defined by the Poisson bivector $\Pi_{\theta}$, and this product $\star$ is called the star product. Furthermore, now, since $\theta$ is a constant matrix, we can express the star product $\star$ as the Moyal star product explicitly:
\begin{equation}
f\star g=f\mathrm{exp}\left( \frac{\hbar}{2\pi} \frac{1}{2}\sum_{i,j=1}^n \theta_{ij} \overleftarrow{\frac{\partial}{\partial y_i}}\overrightarrow{\frac{\partial}{\partial y_j}} \right)g, \label{formal_moyal}
\end{equation}
where $f$, $g\in C^{\infty}(X^n)$. Note that the Moyal star product (\ref{formal_moyal}) is defined globally since the Poisson bivector $\Pi_{\theta}$ associated to the constant matrix $\theta$ is defined globally. On the other hand, the models which are obtained by substituting $\mathbf{i}$ or $-\mathbf{i}$ to $\hbar$ in the Moyal star product (\ref{formal_moyal}) correspond to typical examples of nonformal deformation quantizations of $X_{\theta}^n$. In this paper, we focus on the case $\hbar=-\mathbf{i}$ according to \cite{nc}:
\begin{equation}
f\star g=f\mathrm{exp}\left( -\frac{\mathbf{i}}{2\pi} \frac{1}{2}\sum_{i,j=1}^n \theta_{ij} \overleftarrow{\frac{\partial}{\partial y_i}}\overrightarrow{\frac{\partial}{\partial y_j}} \right)g, \label{nonformal_moyal}
\end{equation}
where $f$, $g\in C^{\infty}(X^n)$. This nonformal deformation quantization of $X_{\theta}^n$ can be regarded as a noncommutative torus such as in \cite{con-rie}. In this sense, hereafter, we interpret the Poisson manifold $X_{\theta}^n$ equipped with the noncommutative product (\ref{nonformal_moyal}) as the noncommutative torus.

In general, it is valid to use the following formula (this is sometimes called ``Bopp shifts'') when we compute the Moyal star product. We assume that $f$, $g\in C^{\infty}(X^n)$ are expressed as follows (the Taylor expansions), where $\epsilon$, $\delta \in \mathbb{R}^n$ are suitable constant vectors:
\begin{equation*}
f(y+\epsilon)=\sum_{k=0}^{\infty} \frac{1}{k!} \left( \frac{\partial}{\partial y}^t \epsilon \right)^k f(y), \ \ \ g(y+\delta)=\sum_{l=0}^{\infty} \frac{1}{l!} \left( \frac{\partial}{\partial y}^t \delta \right)^l g(y).
\end{equation*}
Then we have
\begin{equation}
f(y)\star g(y)=f \left( y-\frac{\mathbf{i}}{4\pi}\theta \overrightarrow{\frac{\partial}{\partial y}} \right) g(y)=f(y) g \left( y+\frac{\mathbf{i}}{4\pi}\theta \overleftarrow{\frac{\partial}{\partial y}} \right) \label{bopp} 
\end{equation}
since we can regard $f(y)\star g(y)$ as
\begin{equation*}
f(y)\mathrm{exp} \left( \overleftarrow{\frac{\partial}{\partial y}^t} \left( -\frac{\mathbf{i}}{4\pi} \theta \overrightarrow{\frac{\partial}{\partial y}} \right) \right) g(y)=f(y)\mathrm{exp} \left( \overrightarrow{\frac{\partial}{\partial y}^t} \left( \frac{\mathbf{i}}{4\pi} \theta \overleftarrow{\frac{\partial}{\partial y}} \right) \right) g(y)
\end{equation*}
and $f(y+\epsilon)$, $g(y+\delta)$ can be interpreted as
\begin{equation*}
f(y+\epsilon)=\mathrm{exp} \left( \frac{\partial}{\partial y}^t \epsilon \right) f(y), \ \ \ g(y+\delta)=\mathrm{exp} \left( \frac{\partial}{\partial y}^t \delta \right) g(y),
\end{equation*}
respectively.

According to \cite{nc}, we consider the noncommutative deformation of holomorphic line bundles $E_{(A,p,q)}^{\nabla}$ associated to the deformation from $X^n$ to $X_{\theta}^n$. In general, the space $\Gamma(X^n, E_{(A,p,q)}^{\nabla})$ can be regarded as a module over $C^{\infty}(X^n)$. Therefore, it is natural to consider $\Gamma(X^n, E_{(A,p,q)}^{\nabla})$ is also quantized associated to the nonformal deformation quantization of $X_{\theta}^n$ which is discribed by the Moyal star product (\ref{nonformal_moyal}). Let us consider the following deformation of the factor of automorphy $j_A$ (see section 4 in \cite{nc}), where $i=1, \ldots, n$ (each $j_A(T\bm{e}_i, x, y)$ is preserved under this noncommutative deformation):
\begin{align*}
&j_A(\bm{e}_i, x, y) \\
&\mapsto j_{\theta,A}(\bm{e}_i, x, y):=\mathrm{exp}\left( -\pi \mathbf{i} \bm{e}_i^t A^t \theta A x \right)j_A(\bm{e}_i, x, y)=\mathrm{exp}\left( -\pi \mathbf{i} \bm{e}_i^t A^t \theta A x+2\pi \mathbf{i}\bm{e}_i^t A^t y \right), \\
&j_A(T\bm{e}_i, x, y) \\
&\mapsto j_{\theta,A}(T\bm{e}_i, x, y):=j_A(T\bm{e}_i, x, y)=1.
\end{align*}  
In fact, for each $i=1, \ldots, n$, we see
\begin{equation*}
j_{\theta,A}(\bm{e}_i, x, y)\star \mathrm{exp}\left( \pi \mathbf{i}\bm{e}_i^t A^t \theta A x \right)j_A(\bm{e}_i, x, y)^{-1}=1,
\end{equation*}
namely, the inverse of each $j_{\theta,A}(\bm{e}_i, x, y)$ with respect to the Moyal star product (\ref{nonformal_moyal}) is $\mathrm{exp}( \pi \mathbf{i}\bm{e}_i^t A^t \theta A x )j_A(\bm{e}_i, x, y)^{-1}$: 
\begin{equation*}
j_{\theta,A}(\bm{e}_i, x, y)^{-1}=\mathrm{exp}\left( \pi \mathbf{i}\bm{e}_i^t A^t \theta A x \right)j_A(\bm{e}_i, x, y)^{-1}.
\end{equation*}
Furthermore, we have
\begin{equation}
j_{\theta,A}(\bm{e}_i, x+\bm{e}_j, y)\star j_{\theta,A}(\bm{e}_j, x, y)=j_{\theta,A}(\bm{e}_j, x+\bm{e}_i, y)\star j_{\theta,A}(\bm{e}_i, x, y), \label{cocycle_theta_0}
\end{equation}
where $i$, $j=1, \ldots, n$, and this result implies that the cocycle condition is satisfied. Hence the map $j_{\theta,A}$ defines a smooth complex line bundle on $X_{\theta}^n$, so let us denote it by $E_{\theta,A}\to X_{\theta}^n$ or $E_{\theta,A}$ for short. On the other hand, as the deformation of the connection 1-form $\omega_{(A,p,q)}$, we should consider the 1-form $\omega_{\theta,(A,p,q)}$ which is locally defined by
\begin{align*}
\omega_{\theta,(A,p,q)}&=\omega_{(A,p,q)}+\pi \mathbf{i} x^t A^t \theta A dx \\
&=-2\pi \mathbf{i} \Bigl( \Bigl( x^t A^t+p^t \Bigr)+q^t T \Bigr)dy+\pi \mathbf{i} x^t A^t \theta A dx.
\end{align*}  
Actually, this 1-form $\omega_{\theta,(A,p,q)}$ satisfies the relations
\begin{align*}
&\omega_{\theta,(A,p,q)}(x+\bm{e}_i, y) \\
&=j_{\theta,A}(\bm{e}_i, x, y) \overset{\star}{\wedge} \omega_{\theta,(A,p,q)}(x, y) \overset{\star}{\wedge} j_{\theta,A}(\bm{e}_i, x, y)^{-1}+j_{\theta,A}(\bm{e}_i, x, y) \overset{\star}{\wedge} d\hspace{0.3mm} j_{\theta,A}(\bm{e}_i, x, y)^{-1}, \\
&\omega_{\theta,(A,p,q)}(x, y+\bm{e}_i) \\
&=j_{\theta,A}(T\bm{e}_i, x, y) \overset{\star}{\wedge} \omega_{\theta,(A,p,q)}(x, y) \overset{\star}{\wedge} j_{\theta,A}(T\bm{e}_i, x, y)^{-1}+j_{\theta,A}(T\bm{e}_i, x, y) \overset{\star}{\wedge} d\hspace{0.3mm} j_{\theta,A}(T\bm{e}_i, x, y)^{-1}
\end{align*}
for each $i=1, \ldots, n$. Here, we use the notation $\omega_{\theta,(A,p,q)}(x, y)$ instead of $\omega_{\theta,(A,p,q)}$ in order to specify the coordinate system $(x, y)\in \mathbb{R}^{2n}$, and $\overset{\star}{\wedge}$ is the wedge product with the Moyal star product (\ref{nonformal_moyal}). Thus, the differential operator $\nabla_{\theta,(A,p,q)}$ on $\Gamma(E_{\theta,A})$ which is defined by
\begin{equation*}
\nabla_{\theta,(A,p,q)}=d+\omega_{\theta,(A,p,q)}
\end{equation*} 
gives a connection on $E_{\theta,A}$. Let us denote a smooth complex line bundle $E_{\theta,A}\to X_{\theta}^n$ with a connection $\nabla_{\theta,(A,p,q)}$ by $E_{\theta,(A,p,q)}^{\nabla}$, i.e., $E_{\theta,(A,p,q)}^{\nabla}:=(E_{\theta,A}, \nabla_{\theta,(A,p,q)})$.

We consider the holomorphicity of $E_{\theta,(A,p,q)}^{\nabla}$. The curvature form $\Omega_{\theta,(A,p,q)}$ of the connection $\nabla_{\theta,(A,p,q)}$ is expressed locally as
\begin{align*}
\Omega_{\theta,(A,p,q)}&=d\omega_{\theta,(A,p,q)}+\omega_{\theta,(A,p,q)} \overset{\star}{\wedge} \omega_{\theta,(A,p,q)} \\
&=-2\pi \mathbf{i}dx^t A^t dy+\pi \mathbf{i} dx^t A^t \theta A dx.
\end{align*}
Now, we give the following proposition (cf. \cite[Proposition 4.1]{nc}).
\begin{proposition}
The \textup{(}0,2\textup{)}-part $\Omega_{\theta,(A,p,q)}^{(0,2)}$ vanishes if and only if $A^t \theta A=O$. 
\end{proposition}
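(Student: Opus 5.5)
We compute the $(0,2)$-part of the curvature $\Omega_{\theta,(A,p,q)}=-2\pi\mathbf{i}\,dx^tA^tdy+\pi\mathbf{i}\,dx^tA^t\theta A\,dx$ by rewriting $dx,dy$ in terms of $dz,d\bar z$. Since $z=x+Ty$, we have $dz=dx+T\,dy$ and $d\bar z=dx+\bar T\,dy$. Hence
\begin{equation*}
dy=(T-\bar T)^{-1}(dz-d\bar z),\qquad dx=(T-\bar T)^{-1}\bigl(T\,d\bar z-\bar T\,dz\bigr),
\end{equation*}
so the $(0,1)$-parts are $dy^{(0,1)}=-(T-\bar T)^{-1}d\bar z$ and $dx^{(0,1)}=(T-\bar T)^{-1}T\,d\bar z$. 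Write $S:=(T-\bar T)^{-1}=(2\mathbf{i}Y)^{-1}$, which is invertible because $Y$ is positive definite. A $2$-form $dx^tMdx$ or $dx^tMdy$ has $(0,2)$-part $(dx^{(0,1)})^tM\,dx^{(0,1)}$, respectively $(dx^{(0,1)})^tM\,dy^{(0,1)}$. This follows from bilinearity of $\wedge$, since only the products of two $(0,1)$-pieces land in type $(0,2)$.

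Substituting gives
\begin{equation*}
\Omega_{\theta,(A,p,q)}^{(0,2)}=d\bar z^t\,T^tS^t\Bigl(2\pi\mathbf{i}\,A^t+\pi\mathbf{i}\,A^t\theta A\,T\Bigr)S\,d\bar z .
\end{equation*}
Because $d\bar z^t N d\bar z=0$ exactly when $N$ is symmetric, the vanishing of $\Omega^{(0,2)}$ is equivalent to the matrix $T^tS^t(2A^t+A^t\theta AT)S$ being symmetric. Conjugating by the invertible matrix $S$ preserves symmetry, so this reduces to the symmetry of $T^tA^t+T^tA^t\theta AT$.

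Throughout we use the standing assumption $AT\in\mathrm{Sym}(n;\mathbb{C})$, i.e. $T^tA^t=AT$, so the first term $T^tA^t$ is already symmetric. This is consistent with Proposition \ref{hol}, where the $\theta=0$ case recovers exactly this integrability condition. The second term $T^tA^t\theta AT$ is alternating, since $\theta$ is alternating. A matrix that is both symmetric and alternating must vanish. Therefore $\Omega^{(0,2)}=0$ if and only if $T^t(A^t\theta A)T=O$. Since $\det T\neq0$ by assumption, this holds if and only if $A^t\theta A=O$.

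The main point needing care is the bookkeeping of the $(0,1)$-projection: transposes and signs must be tracked correctly, and the symmetric and antisymmetric contributions must be separated cleanly. The argument also relies on two inputs that should be made explicit: that $AT\in\mathrm{Sym}(n;\mathbb{C})$ is in force at this point of the paper, and that $\det T\neq 0$, which is needed for the converse direction.
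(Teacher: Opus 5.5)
Your strategy is the paper's: project to type $(0,2)$, use $AT\in\mathrm{Sym}(n;\mathbb{C})$ to discard the $dx\wedge dy$ contribution as symmetric, and conclude that the remaining alternating matrix vanishes iff $A^t\theta A=O$ (using invertibility of $T$ and $T-\bar T$). However, your change of coordinates is wrong. You reach the right reduced matrix only because a second slip cancels the first.

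From $d\bar z=dx+\bar T\,dy$ and $dy=S(dz-d\bar z)$ one gets
\begin{equation*}
dx=d\bar z-\bar TS(dz-d\bar z)=TS\,d\bar z-\bar TS\,dz ,
\end{equation*}
because $I_n+\bar TS=(T-\bar T+\bar T)S=TS$. So $dx^{(0,1)}=T(T-\bar T)^{-1}d\bar z$, not $(T-\bar T)^{-1}T\,d\bar z$. Your identity $dx=S(T\,d\bar z-\bar T\,dz)$ actually equals $dx+S(T\bar T-\bar TT)\,dy$. Since $T\bar T-\bar TT=-2\mathbf{i}(XY-YX)$, this is wrong whenever $\mathrm{Re}T$ and $\mathrm{Im}T$ do not commute, and the paper allows general $T$, not only $T=\mathbf{i}I_n$.

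This error propagates. Your displayed formula for $\Omega^{(0,2)}_{\theta,(A,p,q)}$, with $T^tS^t$ in front, is incorrect. Its $\theta$-term is not even consistent with your own $dx^{(0,1)}$: it uses $TS$ on the right and $ST$ on the left. The step ``conjugating by $S$'' is also not a congruence for the matrix $T^tS^t(\cdots)S$ as written; it silently commutes $T^t$ past $S^t$.

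With the correct projection you get
\begin{equation*}
\Omega_{\theta,(A,p,q)}^{(0,2)}=\pi\mathbf{i}\,d\bar z^t S^t\bigl(2T^tA^t+T^tA^t\theta AT\bigr)S\,d\bar z .
\end{equation*}
This is an honest congruence by $S$. After dropping the symmetric term $2T^tA^t=2AT$, it is exactly the paper's expression $\pi\mathbf{i}\,d\bar z^t((T-\bar T)^{-1})^tT^tA^t\theta AT(T-\bar T)^{-1}d\bar z$. From there your symmetric-versus-alternating argument and the use of $\det T\neq 0$ go through unchanged.
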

\begin{proof}
The (0,2)-part $\Omega_{\theta,(A,p,q)}^{(0,2)}$ turns out to be
\begin{equation*}
\Omega_{\theta,(A,p,q)}^{(0,2)}=\pi \mathbf{i}d\bar{z}^t ((T-\bar{T})^{-1})^t T^t A^t \theta A T (T-\bar{T})^{-1} d\bar{z}
\end{equation*}
since we assume that $AT\in \mathrm{Sym}(n;\mathbb{C})$. Note that $\theta\in \mathrm{Alt}(n;\mathbb{R})$ and $\mathrm{det}T(T-\bar{T})^{-1}\not=0$. Hence, $((T-\bar{T})^{-1})^t T^t A^t \theta A T (T-\bar{T})^{-1}\in \mathrm{Sym}(n;\mathbb{C})$ if and only if $A^t \theta A=O$. This completes the proof.
\end{proof}
As an example, we focus on $E_{\theta,(A,p,q)}^{\nabla}$ with $\mathrm{det}A\not=0$. By the assumption $\mathrm{det}A\not=0$, it is clear that the condition $A^t \theta A=O$ is equivalent to the condition $\theta=O$. In other words, if $\theta$ is non-trivial, then the holomorpicity of $E_{(A,p,q)}^{\nabla}$ is not preserved under the noncommutative deformation associated to $\theta$. Concerning such cases, in \cite{nc}, as a deformation of $DG_{X^n}$ associated to $\theta$ (in the case $T=\mathbf{i}\cdot I_n$, i.e., $X^n=\mathbb{C}^n/(\mathbb{Z}^n\oplus \mathbf{i}\mathbb{Z}^n)$), it is proposed that considering a curved dg-category consisting of $E_{\theta,(A,p,q)}^{\nabla}$ rather than a usual dg-category.

Let us discuss the deformation of a global holomorphic section $\xi_{(A,p,q)}$ of $E_{(A,p,q)}^{\nabla}$ associated to the deformation from $X^n$ to $X_{\theta}^n$ in the case that $T=\mathbf{i}\cdot I_n$ and $A$ is positive definite (see also subsection 3.2 in \cite{star}). In conclusion, $\xi_{(A,p,q)}$ is deformed to the following, where $C_m\in \mathbb{C}$ for each $m\in \mathbb{Z}^n/A\mathbb{Z}^n$:
\begin{align*}
&\xi_{\theta,(A,p,q)}(x,y):=\sum_{m\in \mathbb{Z}^n/A\mathbb{Z}^n} C_m \sum_{l\in \mathbb{Z}^n} \xi_{\theta,(A,p,q,l,m)}(x) \ \mathrm{exp}\left( 2\pi \mathbf{i} (-Al+m)^t y \right), \\
&\xi_{\theta,(A,p,q,l,m)}(x):=\mathrm{exp}\left( -\pi \mathbf{i} (-Al+m)^t \theta Ax \right)\xi_{(A,p,q,l,m)}(x).
\end{align*}
We can verify that this $\xi_{\theta,(A,p,q)}$ satisfies the relations
\begin{align*}
&j_{\theta,A}(\bm{e}_i, x, y)\star \xi_{\theta,(A,p,q)}(x, y)=\xi_{\theta,(A,p,q)}(x+\bm{e}_i, y), \\
&j_{\theta,A}(T\bm{e}_i, x, y)\star \xi_{\theta,(A,p,q)}(x, y)=\xi_{\theta,(A,p,q)}(x, y+\bm{e}_i), 
\end{align*}
where $i=1, \ldots, n$, and note that $\xi_{\theta,(A,p,q,l,m)}\in \mathcal{S}(\mathbb{R}^n)$ for each pair $(l,m)\in \mathbb{Z}^n\times (\mathbb{Z}^n/A\mathbb{Z}^n)$. Moreover, it is clear that $\xi_{\theta,(A,p,q)}$ coincides with $\xi_{(A,p,q)}$ in the case $\theta=O$: $\xi_{\theta=O,(A,p,q)}=\xi_{(A,p,q)}$. Thus, indeed, $\xi_{\theta,(A,p,q)}$ is a global section of $E_{\theta,(A,p,q)}^{\nabla}$.

Below, as a preparation of arguments in subsection 3.2, let us twist $E_{(A,p,q)}^{\nabla}$ by using an isomorphism. Let us take an arbitrary $\mathcal{A}\in \mathrm{Sym}(n;\mathbb{R})$, and fix it. We consider the following factor of automorphy $j_{(A;\mathcal{A})} : (\mathbb{Z}^n\oplus T\mathbb{Z}^n)\times \mathbb{C}^n \cong (\mathbb{Z}^n\oplus T\mathbb{Z}^n)\times \mathbb{R}^{2n} \to \mathbb{C}^{\times}$, where $i=1, \ldots, n$:
\begin{align*}
&j_{(A;\mathcal{A})}(\bm{e}_i, x, y):=j_A(\bm{e}_i, x, y), \\
&j_{(A;\mathcal{A})}(T\bm{e}_i, x, y):=\mathrm{exp}\left( \frac{i}{2}\bm{e}_i^t \mathcal{A} \bm{e}_i+\mathbf{i}\bm{e}_i^t \mathcal{A}y \right) j_A(T\bm{e}_i, x, y).
\end{align*} 
By using this $j_{(A;\mathcal{A})}$, let us define an action of $\mathbb{Z}^n\oplus T\mathbb{Z}^n$ on $\mathbb{C}^n\times \mathbb{C}$ by
\begin{align*}
&\bigl( x+Ty, t \bigr)\in \mathbb{C}^n\times \mathbb{C} \ \longmapsto \ \bigl( (x+Ty)+\bm{e}_i, j_{(A;\mathcal{A})}(\bm{e}_i, x, y) t \bigr)\in \mathbb{C}^n\times \mathbb{C}, \\
&\bigl( x+Ty, t \bigr)\in \mathbb{C}^n\times \mathbb{C} \ \longmapsto \ \bigl( (x+Ty)+T\bm{e}_i, j_{(A;\mathcal{A})}(T\bm{e}_i, x, y) t \bigr)\in \mathbb{C}^n\times \mathbb{C}
\end{align*}
for each $i=1, \ldots, n$. Then we can obtain the smooth complex line bundle $E_{(A;\mathcal{A})} \to X^n$ as the quotient of $\mathbb{C}^n\times \mathbb{C}$ by the action of $\mathbb{Z}^n\oplus T\mathbb{Z}^n$:
\begin{equation*}
\mathrm{Tot}(E_{(A;\mathcal{A})}):=(\mathbb{C}^n\times \mathbb{C})/(\mathbb{Z}^n\oplus T\mathbb{Z}^n).
\end{equation*} 
Furthermore, the following locally defined differential operator $\nabla_{(A,p,q;\mathcal{A})}$ on $\Gamma(E_{(A;\mathcal{A})})$ is compatible with the above action of $\mathbb{Z}^n\oplus T\mathbb{Z}^n$, namely, it defines a connection on $E_{(A;\mathcal{A})}$:
\begin{equation*}
\nabla_{(A,p,q;\mathcal{A})}=d+\omega_{(A,p,q;\mathcal{A})}, \ \ \ \omega_{(A,p,q;\mathcal{A})}=\omega_{(A,p,q)}-\mathbf{i}y^t \mathcal{A}dy.
\end{equation*}
We can easily check that this $\nabla_{(A,p,q;\mathcal{A})}$ is an integrable connection on $E_{(A;\mathcal{A})}$ since $AT\in \mathrm{Sym}(n;\mathbb{C})$, $\mathcal{A}\in \mathrm{Sym}(n;\mathbb{R})$. Let us denote a holomorphic line bundle $E_{(A;\mathcal{A})} \to X^n$ with an integrable connection $\nabla_{(A,p,q;\mathcal{A})}$ by $E_{(A,p,q;\mathcal{A})}^{\nabla}$, i.e., $E_{(A,p,q;\mathcal{A})}^{\nabla}:=(E_{(A;\mathcal{A})}, \nabla_{(A,p,q;\mathcal{A})})$. 

In fact, there exists an isomorphism $\varphi_{\mathcal{A}} : E_{(O,0,0)}^{\nabla} \stackrel{\sim}{\to} E_{(O,0,0;\mathcal{A})}^{\nabla}$ which is expressed locally as
\begin{equation}
\varphi_{\mathcal{A}}(y)=\mathrm{exp}\left( \frac{\mathbf{i}}{2}y^t \mathcal{A} y \right), \label{trivialization}
\end{equation}
so we have
\begin{equation*}
E_{(A,p,q;\mathcal{A})}^{\nabla}\cong E_{(A,p,q)}^{\nabla}\otimes E_{(O,0,0;\mathcal{A})}^{\nabla}\cong E_{(A,p,q)}^{\nabla}\otimes E_{(O,0,0)}^{\nabla}\cong E_{(A,p,q)}^{\nabla}.
\end{equation*}
In this argument, note that $E_{(O,0,0)}^{\nabla}$ is the trivial holomorphic line bundle on $X^n$.

\subsection{Ambiguity of a noncommutative deformation of $E_{(A,p,q)}^{\nabla}$}
The purpose of this subsection is to study ambiguity of the noncommutative deformation of $E_{(A,p,q)}^{\nabla}$ arising from the isomorphism (\ref{trivialization}). As verified at the last of subsection 3.1, we have $E_{(A,p,q)}^{\nabla}\cong E_{(A,p,q;\mathcal{A})}^{\nabla}$ for a given $\mathcal{A}\in \mathrm{Sym}(n;\mathbb{R})$, and similarly as in the case of $E_{\theta,(A,p,q)}^{\nabla}$, we can construct the deformation $E_{\theta,(A,p,q;\mathcal{A})}^{\nabla}$ of $E_{(A,p,q;\mathcal{A})}^{\nabla}$ associated to the deformation from $X^n$ to $X_{\theta}^n$. On the other hand, in general, even though there always exists an isomorphism $E_{(A,p,q)}^{\nabla}\cong E_{(A,p,q;\mathcal{A})}^{\nabla}$, $E_{\theta,(A,p,q)}^{\nabla}$ and $E_{\theta,(A,p,q;\mathcal{A})}^{\nabla}$ are not necessarily isomorphic to each other. In this subsection, we intensively discuss this ambiguity.

Below, we assume that
\begin{equation*}
\mathrm{det}\left( I_n+\frac{1}{2\pi}\theta \mathcal{A} \right)\not=0
\end{equation*}
in order to discuss the problem explained in the above\footnote{For example, if $\theta\in M(n;\mathbb{R})\backslash M(n;\mathbb{Q})$ and $\mathcal{A}$ is described as $\mathcal{A}=2\pi M$ by using $M\in \mathrm{Sym}(n;\mathbb{Z})$, the assumption $\mathrm{det}\left( I_n+\frac{1}{2\pi}\theta \mathcal{A} \right)\not=0$ is satisfied automatically.}, and put
\begin{equation*}
\mathcal{A}^{\theta}:=\mathcal{A} \left( I_n+\frac{1}{2\pi}\theta \mathcal{A} \right)^{-1}=\left( I_n+\frac{1}{2\pi}\mathcal{A} \theta \right)^{-1} \mathcal{A}
\end{equation*}
for simplicity\footnote{Note that $\mathrm{det}\left( I_n+\frac{1}{2\pi}\mathcal{A} \theta \right)=\mathrm{det}\left( I_n+\frac{1}{2\pi}\theta \mathcal{A} \right)\not=0$.}. In this subsection, we first consider the deformation $j_{\theta,(A;\mathcal{A})}$ of each $j_{(A;\mathcal{A})}$ as a generalization of $j_{\theta,A}$ to the setting which includes the non-trivial parameter $\mathcal{A}\in \mathrm{Sym}(n;\mathbb{R})$. At least, under the assumption $\mathrm{det}\left( I_n+\frac{1}{2\pi}\theta \mathcal{A} \right)\not=0$, the deformation of each $\nabla_{(A,p,q;\mathcal{A})}$ which is compatible with $j_{\theta,(A;\mathcal{A})}$ is uniquely determined except for the flat connection part (see also Lemma \ref{connection_theta}). It seems that the situation of other cases is more complicated, so in this paper, we do not (cannot) treat such cases.

We give the strict definition of $E_{\theta,(A,p,q;\mathcal{A})}^{\nabla} \to X_{\theta}^n$ which is the quantization of $E_{(A,p,q;\mathcal{A})}^{\nabla} \to X^n$ associated to the deformation from $X^n$ to $X_{\theta}^n$. Let us define a map $j_{\theta,(A;\mathcal{A})} : (\mathbb{Z}^n\oplus T\mathbb{Z}^n)\times \mathbb{C}^n\cong (\mathbb{Z}^n\oplus T\mathbb{Z}^n)\times \mathbb{R}^{2n} \to \mathbb{C}^{\times}$ by
\begin{align*}
&j_{\theta,(A;\mathcal{A})}(\bm{e}_i, x, y)=\mathrm{exp}\left( -\pi \mathbf{i}\bm{e}_i^t A^t \theta Ax+2\pi \mathbf{i}\bm{e}_i^t A^t y \right), \\
&j_{\theta,(A;\mathcal{A})}(T\bm{e}_i, x, y)=\mathrm{exp}\left( \frac{\mathbf{i}}{2}\bm{e}_i^t \mathcal{A} \bm{e}_i-\mathbf{i}\bm{e}_i^t \mathcal{A}^{\theta} \theta Ax+\mathbf{i}\bm{e}_i^t \mathcal{A}^{\theta} y \right),  
\end{align*}
where $i=1, \ldots, n$. Clearly, $j_{\theta,(A;\mathcal{A})}$ coincides with $j_{(A;\mathcal{A})}$ in the case $\theta=O$, i.e., $j_{\theta=O,(A;\mathcal{A})}=j_{(A;\mathcal{A})}$, and $j_{\theta,(A;\mathcal{A})}$ coincides with $j_{\theta,A}$ in the case $\mathcal{A}=O$, i.e., $j_{\theta,(A;\mathcal{A}=O)}=j_{\theta,A}$. By direct calculations, we see
\begin{align*}
&j_{\theta,(A;\mathcal{A})}(\bm{e}_i, x, y)\star \mathrm{exp}\left( \pi \mathbf{i}\bm{e}_i^t A^t \theta Ax-2\pi \mathbf{i}\bm{e}_i^t A^t y \right)=1, \\
&j_{\theta,(A;\mathcal{A})}(T\bm{e}_i, x, y)\star \mathrm{exp}\left( -\frac{\mathbf{i}}{2}\bm{e}_i^t \mathcal{A} \bm{e}_i+\mathbf{i}\bm{e}_i^t \mathcal{A}^{\theta} \theta Ax-\mathbf{i}\bm{e}_i^t \mathcal{A}^{\theta} y \right)=1
\end{align*}
for each $i=1, \ldots, n$. This implies that the inverse of each $j_{\theta,(A;\mathcal{A})}(\bm{e}_i, x, y)$ and the inverse of each $j_{\theta,(A;\mathcal{A})}(T\bm{e}_i, x, y)$ with respect to the Moyal star product (\ref{nonformal_moyal}) are
\begin{equation*}
\mathrm{exp}\left( \pi \mathbf{i}\bm{e}_i^t A^t \theta Ax-2\pi \mathbf{i}\bm{e}_i^t A^t y \right)
\end{equation*}
and
\begin{equation*}
\mathrm{exp}\left( -\frac{\mathbf{i}}{2}\bm{e}_i^t \mathcal{A} \bm{e}_i+\mathbf{i}\bm{e}_i^t \mathcal{A}^{\theta} \theta Ax-\mathbf{i}\bm{e}_i^t \mathcal{A}^{\theta} y \right),
\end{equation*}
respectively:
\begin{align*}
&j_{\theta,(A;\mathcal{A})}(\bm{e}_i, x, y)^{-1}=\mathrm{exp}\left( \pi \mathbf{i}\bm{e}_i^t A^t \theta Ax-2\pi \mathbf{i}\bm{e}_i^t A^t y \right), \\
&j_{\theta,(A;\mathcal{A})}(T\bm{e}_i, x, y)^{-1}=\mathrm{exp}\left( -\frac{\mathbf{i}}{2}\bm{e}_i^t \mathcal{A} \bm{e}_i+\mathbf{i}\bm{e}_i^t \mathcal{A}^{\theta} \theta Ax-\mathbf{i}\bm{e}_i^t \mathcal{A}^{\theta} y \right).
\end{align*}
Now, we give the following lemma.
\begin{lemma} \label{cocycle_theta}
We have
\begin{align}
&j_{\theta,(A;\mathcal{A})}(\bm{e}_i, x+\bm{e}_j, y)\star j_{\theta,(A;\mathcal{A})}(\bm{e}_j, x, y)=j_{\theta,(A;\mathcal{A})}(\bm{e}_j, x+\bm{e}_i, y)\star j_{\theta,(A;\mathcal{A})}(\bm{e}_i, x, y), \label{cocycle_theta_1} \\
&j_{\theta,(A;\mathcal{A})}(\bm{e}_i, x, y+\bm{e}_j)\star j_{\theta,(A;\mathcal{A})}(T\bm{e}_j, x, y)=j_{\theta,(A;\mathcal{A})}(T\bm{e}_j, x+\bm{e}_i, y)\star j_{\theta,(A;\mathcal{A})}(\bm{e}_i, x, y), \label{cocycle_theta_2}
\end{align}
where $i$, $j=1, \ldots, n$. Moreover, the equality
\begin{equation}
j_{\theta,(A;\mathcal{A})}(T\bm{e}_i, x, y+\bm{e}_j)\star j_{\theta,(A;\mathcal{A})}(T\bm{e}_j, x, y)=j_{\theta,(A;\mathcal{A})}(T\bm{e}_j, x, y+\bm{e}_i)\star j_{\theta,(A;\mathcal{A})}(T\bm{e}_i, x, y) \label{cocycle_theta_3}
\end{equation}
holds for any $i$, $j=1, \ldots, n$ if and only if
\begin{equation*}
\left( \frac{1}{2\pi} \right)^2 \mathcal{A}^{\theta} \theta \left( \mathcal{A}^{\theta} \right)^t \in M(n;\mathbb{Z}).
\end{equation*}
\end{lemma}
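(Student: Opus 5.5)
All the transition functions $j_{\theta,(A;\mathcal{A})}$ are exponentials of affine functions in $(x,y)$, plus constants. So the whole computation reduces to one Moyal product formula for such exponentials. For $f=\mathrm{exp}(\mathbf{i}(a^t x+b^t y)+c)$ and $g=\mathrm{exp}(\mathbf{i}(a'^t x+b'^t y)+c')$, the Bopp shift formula (\ref{bopp}) (or directly the exponential form of (\ref{nonformal_moyal})) should give
\begin{equation*}
f\star g=\mathrm{exp}\left( -\frac{\mathbf{i}}{4\pi}(\mathbf{i}b)^t\theta(\mathbf{i}b') \right)fg=\mathrm{exp}\left( \frac{\mathbf{i}}{4\pi}b^t\theta b' \right)fg.
\end{equation*}
Only the $y$-coefficients $b,b'$ enter, and the correction is a pure phase. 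Hence $f\star g=\mathrm{exp}\bigl(\frac{\mathbf{i}}{2\pi}b^t\theta b'\bigr)\,g\star f$. The plan is to compare, for each of (\ref{cocycle_theta_1}), (\ref{cocycle_theta_2}) and (\ref{cocycle_theta_3}), the ordinary products on both sides together with the two Moyal phases.

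For (\ref{cocycle_theta_1}), the $y$-coefficients are $2\pi A\bm{e}_i$ and $2\pi A\bm{e}_j$. On the left, the Moyal phase is $\mathrm{exp}(\pi\mathbf{i}\bm{e}_i^tA^t\theta A\bm{e}_j)$. The $x$-shift in the first factor produces the ordinary factor $\mathrm{exp}(-\pi\mathbf{i}\bm{e}_i^tA^t\theta A\bm{e}_j)$, so the left side equals the plain product $j(\bm{e}_i)j(\bm{e}_j)$ evaluated at $(x,y)$. By the antisymmetry of $A^t\theta A$ the same holds on the right, so (\ref{cocycle_theta_1}) follows. For (\ref{cocycle_theta_2}), I would collect the following terms:
\begin{itemize}
\item the shift $y\mapsto y+\bm{e}_j$ in $j(\bm{e}_i)$, giving $\mathrm{exp}(2\pi\mathbf{i}\bm{e}_i^tA^t\bm{e}_j)=1$ since $A$ is integral;
\item the shift $x\mapsto x+\bm{e}_i$ in $j(T\bm{e}_j)$, giving $\mathrm{exp}(-\mathbf{i}\bm{e}_j^t\mathcal{A}^\theta\theta A\bm{e}_i)$;
\item the left Moyal phase $\mathrm{exp}(\frac{\mathbf{i}}{4\pi}2\pi\bm{e}_i^tA^t\theta(\mathcal{A}^\theta)^t\bm{e}_j)$;
\item the right Moyal phase $\mathrm{exp}(\frac{\mathbf{i}}{4\pi}\bm{e}_j^t\mathcal{A}^\theta\theta\, 2\pi A\bm{e}_i)$.
\end{itemize}
By antisymmetry of $\theta$ the left phase is the inverse of the right phase. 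So the identity reduces to $\mathrm{exp}(-\mathbf{i}\bm{e}_j^t\mathcal{A}^\theta\theta A\bm{e}_i)$ being cancelled by the two phases, that is, to the exponent identity $-\bm{e}_j^t\mathcal{A}^\theta\theta A\bm{e}_i=-\bm{e}_j^t\mathcal{A}^\theta\theta A\bm{e}_i$ once the phases are combined with the correct sign. I will recheck the sign conventions carefully here, because the $-\mathbf{i}\mathcal{A}^\theta\theta Ax$ term in $j(T\bm{e}_i)$ was evidently designed for exactly this cancellation.

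The main obstacle is (\ref{cocycle_theta_3}). Here both factors have $y$-coefficient $\mathcal{A}^\theta\bm{e}_k$, that is, $b=(\mathcal{A}^\theta)^t\bm{e}_k$. Note that $\mathcal{A}^\theta$ need not be symmetric: $(\mathcal{A}^\theta)^t=\mathcal{A}(I_n-\frac{1}{2\pi}\theta\mathcal{A})^{-1}$. The shifts $y\mapsto y+\bm{e}_j$ and $y\mapsto y+\bm{e}_i$ contribute $\mathrm{exp}(\mathbf{i}\bm{e}_i^t\mathcal{A}^\theta\bm{e}_j)$ and $\mathrm{exp}(\mathbf{i}\bm{e}_j^t\mathcal{A}^\theta\bm{e}_i)$ respectively. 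The Moyal phases contribute $\mathrm{exp}(\pm\frac{\mathbf{i}}{4\pi}\bm{e}_i^t\mathcal{A}^\theta\theta(\mathcal{A}^\theta)^t\bm{e}_j)$. Equality therefore becomes
\begin{equation*}
\mathrm{exp}\Bigl(\mathbf{i}\,\bm{e}_i^t\bigl(\mathcal{A}^\theta-(\mathcal{A}^\theta)^t+\tfrac{1}{2\pi}\mathcal{A}^\theta\theta(\mathcal{A}^\theta)^t\bigr)\bm{e}_j\Bigr)\cdot\mathrm{exp}\Bigl(\tfrac{\mathbf{i}}{2\pi}\cdot\text{(remaining phase)}\Bigr)=1.
\end{equation*}
The key algebraic step is to show that the antisymmetric part $\mathcal{A}^\theta-(\mathcal{A}^\theta)^t$ is expressible through $\mathcal{A}^\theta\theta(\mathcal{A}^\theta)^t$. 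Writing $S=I_n+\frac{1}{2\pi}\mathcal{A}\theta$, we have $\mathcal{A}^\theta=S^{-1}\mathcal{A}$ and $(\mathcal{A}^\theta)^t=\mathcal{A}S^{t\,-1}$. Then
\begin{equation*}
\mathcal{A}^\theta-(\mathcal{A}^\theta)^t=S^{-1}(\mathcal{A}S^t-S\mathcal{A})S^{t\,-1}=-\tfrac{1}{\pi}\mathcal{A}^\theta\theta(\mathcal{A}^\theta)^t.
\end{equation*}
After this substitution, everything should collapse to the single condition $\mathrm{exp}\bigl(-2\pi\mathbf{i}\,(\frac{1}{2\pi})^2\bm{e}_i^t\mathcal{A}^\theta\theta(\mathcal{A}^\theta)^t\bm{e}_j\cdot c\bigr)=1$ for all $i,j$, with a numerical constant $c=\pm1$. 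The $\frac{\mathbf{i}}{2}\bm{e}_k^t\mathcal{A}\bm{e}_k$ constants cancel between the two sides. This condition is precisely $(\frac{1}{2\pi})^2\mathcal{A}^\theta\theta(\mathcal{A}^\theta)^t\in M(n;\mathbb{Z})$, which gives both directions of the equivalence at once. Tracking the numerical coefficients so that they combine to exactly $2\pi$ times this matrix is the delicate point.
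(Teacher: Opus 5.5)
Your proposal is correct and follows essentially the same route as the paper. Both evaluate the Moyal products of these exponentials via the Bopp shift, find that (\ref{cocycle_theta_1}) and (\ref{cocycle_theta_2}) hold identically, and reduce (\ref{cocycle_theta_3}) to integrality of $\frac{1}{2\pi}(\mathcal{A}^{\theta}-(\mathcal{A}^{\theta})^t)+(\frac{1}{2\pi})^2\mathcal{A}^{\theta}\theta(\mathcal{A}^{\theta})^t$, which the identity $\mathcal{A}^{\theta}-(\mathcal{A}^{\theta})^t=-\frac{1}{\pi}\mathcal{A}^{\theta}\theta(\mathcal{A}^{\theta})^t$ turns into $-(\frac{1}{2\pi})^2\mathcal{A}^{\theta}\theta(\mathcal{A}^{\theta})^t$. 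The points you hedge on all check out: the cancellation in (\ref{cocycle_theta_2}) is exact, there is no ``remaining phase'' beyond the factor you already wrote, and the final condition is $\mathrm{exp}(-\frac{\mathbf{i}}{2\pi}\bm{e}_i^t\mathcal{A}^{\theta}\theta(\mathcal{A}^{\theta})^t\bm{e}_j)=1$, i.e.\ $c=1$.
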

\begin{proof}
We take $i$, $j=1, \ldots, n$ arbitrary, and fix them. 

The equality (\ref{cocycle_theta_1}) is the equality (\ref{cocycle_theta_0}) itself since 
\begin{equation*}
j_{\theta,(A;\mathcal{A})}(\bm{e}_1, x, y)=j_{\theta,A}(\bm{e}_1, x, y), \ \ldots, \ j_{\theta,(A;\mathcal{A})}(\bm{e}_n, x, y)=j_{\theta,A}(\bm{e}_n, x, y). 
\end{equation*}
We can prove the equality (\ref{cocycle_theta_2}) as follows. In particular, here, we also explain how to use the ``Bopp shifts'' taking its left hand side's computation as an example (we can similarly compute other cases). By the formula (\ref{bopp}), we have
\begin{align}
&j_{\theta,(A;\mathcal{A})}(\bm{e}_i, x, y+\bm{e}_j)\star j_{\theta,(A;\mathcal{A})}(T\bm{e}_j, x, y) \notag \\
&=\mathrm{exp}\Biggl( -\pi \mathbf{i}\bm{e}_i^t A^t \theta Ax+2\pi \mathbf{i}\bm{e}_i^t A^t \left( y-\frac{\mathbf{i}}{4\pi}\theta \overrightarrow{\frac{\partial}{\partial y}} \right) \Biggr) \mathrm{exp}\Biggl( \frac{\mathbf{i}}{2}\bm{e}_j^t \mathcal{A} \bm{e}_j-\mathbf{i}\bm{e}_j^t \mathcal{A}^{\theta} \theta Ax+\mathbf{i}\bm{e}_j^t \mathcal{A}^{\theta} y \Biggr), \label{bopp_example}
\end{align} 
and since 
\begin{equation*}
2\pi \mathbf{i}\bm{e}_i^t A^t \left(-\frac{\mathbf{i}}{4\pi}\theta \overrightarrow{\frac{\partial}{\partial y}} \right)=\overrightarrow{\frac{\partial}{\partial y}^t} \left( -\frac{1}{2}\theta A\bm{e}_i \right),
\end{equation*}
the local expression (\ref{bopp_example}) turns out to be
\begin{align*}
&\mathrm{exp}\Biggl( \left( -\pi \mathbf{i} \bm{e}_i^t A^t \theta Ax+2\pi \mathbf{i} \bm{e}_i^t A^t y \right)+\left( \frac{\mathbf{i}}{2}\bm{e}_j^t \mathcal{A} \bm{e}_j-\mathbf{i}\bm{e}_j^t \mathcal{A}^{\theta} \theta Ax+\mathbf{i}\bm{e}_j^t \mathcal{A}^{\theta} \left( y-\frac{1}{2}\theta A\bm{e}_i \right) \right) \Biggr) \\
&=\mathrm{exp}\Biggl( \frac{\mathbf{i}}{2}\bm{e}_i^t A^t \theta \left( \mathcal{A}^{\theta} \right)^t \bm{e}_j+\left( -\pi \mathbf{i} \bm{e}_i^t A^t \theta Ax+2\pi \mathbf{i} \bm{e}_i^t A^t y \right) \\
&\hspace{3.8mm} +\left( \frac{\mathbf{i}}{2}\bm{e}_j^t \mathcal{A} \bm{e}_j-\mathbf{i}\bm{e}_j^t \mathcal{A}^{\theta} \theta Ax+\mathbf{i}\bm{e}_j^t \mathcal{A}^{\theta} y \right) \Biggr).
\end{align*} 
On the other hand, the right hand side turns out to be
\begin{align*}
&j_{\theta,(A;\mathcal{A})}(T\bm{e}_j, x+\bm{e}_i, y)\star j_{\theta,(A;\mathcal{A})}(\bm{e}_i, x, y) \\
&=\mathrm{exp}\Biggl( \frac{\mathbf{i}}{2}\bm{e}_j^t \mathcal{A}^{\theta} \theta A\bm{e}_i +\left( \frac{\mathbf{i}}{2}\bm{e}_j^t \mathcal{A}\bm{e}_j-\mathbf{i}\bm{e}_j^t \mathcal{A}^{\theta} \theta A\bm{e}_i-\mathbf{i}\bm{e}_j^t \mathcal{A}^{\theta} \theta Ax+\mathbf{i}\bm{e}_j^t \mathcal{A}^{\theta} y \right) \\
&\hspace{3.8mm} +\left( -\pi \mathbf{i} \bm{e}_i^t A^t \theta Ax+2\pi \mathbf{i} \bm{e}_i^t A^t y \right) \Biggr) \\
&=\mathrm{exp}\Biggl( \frac{\mathbf{i}}{2}\bm{e}_i^t A^t \theta \left( \mathcal{A}^{\theta} \right)^t \bm{e}_j+\left( -\pi \mathbf{i} \bm{e}_i^t A^t \theta Ax+2\pi \mathbf{i} \bm{e}_i^t A^t y \right) \\
&\hspace{3.8mm} +\left( \frac{\mathbf{i}}{2}\bm{e}_j^t \mathcal{A} \bm{e}_j-\mathbf{i}\bm{e}_j^t \mathcal{A}^{\theta} \theta Ax+\mathbf{i}\bm{e}_j^t \mathcal{A}^{\theta} y \right) \Biggr).
\end{align*}
Therefore, the left hand side coincides with the right hand side, so we see that the equality (\ref{cocycle_theta_2}) holds.

Let us consider when the equality (\ref{cocycle_theta_3}) holds. By direct calculations, the left hand side can be rewritten to
\begin{align*}
&j_{\theta,(A;\mathcal{A})}(T\bm{e}_i, x, y+\bm{e}_j)\star j_{\theta,(A;\mathcal{A})}(T\bm{e}_j, x, y) \\
&=\mathrm{exp}\Biggl( \Biggl( \frac{\mathbf{i}}{4\pi} \bm{e}_i^t \mathcal{A}^{\theta} \theta \left( \mathcal{A}^{\theta} \right)^t \bm{e}_j+\mathbf{i} \bm{e}_i^t \mathcal{A}^{\theta} \bm{e}_j \Biggr)+\left( \frac{\mathbf{i}}{2} \bm{e}_i^t \mathcal{A} \bm{e}_i -\mathbf{i} \bm{e}_i^t \mathcal{A}^{\theta} \theta Ax+\mathbf{i} \bm{e}_i^t \mathcal{A}^{\theta} y \right) \\
&\hspace{3.8mm} +\left( \frac{\mathbf{i}}{2} \bm{e}_j^t \mathcal{A} \bm{e}_j -\mathbf{i} \bm{e}_j^t \mathcal{A}^{\theta} \theta Ax+\mathbf{i} \bm{e}_j^t \mathcal{A}^{\theta} y \right) \Biggr). 
\end{align*}
On the other hand, the right hand side can be rewritten to
\begin{align*}
&j_{\theta,(A;\mathcal{A})}(T\bm{e}_j, x, y+\bm{e}_i)\star j_{\theta,(A;\mathcal{A})}(T\bm{e}_i, x, y) \\
&=\mathrm{exp}\Biggl( \Biggl( \frac{\mathbf{i}}{4\pi} \bm{e}_j^t \mathcal{A}^{\theta} \theta \left( \mathcal{A}^{\theta} \right)^t \bm{e}_i+\mathbf{i} \bm{e}_j^t \mathcal{A}^{\theta} \bm{e}_i \Biggr)+\left( \frac{\mathbf{i}}{2} \bm{e}_j^t \mathcal{A} \bm{e}_j -\mathbf{i} \bm{e}_j^t \mathcal{A}^{\theta} \theta Ax+\mathbf{i} \bm{e}_j^t \mathcal{A}^{\theta} y \right) \\ 
&\hspace{3.8mm} +\left( \frac{\mathbf{i}}{2} \bm{e}_i^t \mathcal{A} \bm{e}_i -\mathbf{i} \bm{e}_i^t \mathcal{A}^{\theta} \theta Ax+\mathbf{i} \bm{e}_i^t \mathcal{A}^{\theta} y \right) \Biggr) \\
&=\mathrm{exp}\Biggl( \Biggl( -\frac{\mathbf{i}}{4\pi} \bm{e}_i^t \mathcal{A}^{\theta} \theta \left( \mathcal{A}^{\theta} \right)^t \bm{e}_j+\mathbf{i} \bm{e}_i^t \left( \mathcal{A}^{\theta} \right)^t \bm{e}_j \Biggr)+\left( \frac{\mathbf{i}}{2} \bm{e}_i^t \mathcal{A} \bm{e}_i -\mathbf{i} \bm{e}_i^t \mathcal{A}^{\theta} \theta Ax+\mathbf{i} \bm{e}_i^t \mathcal{A}^{\theta} y \right) \\
&\hspace{3.8mm} +\left( \frac{\mathbf{i}}{2} \bm{e}_j^t \mathcal{A} \bm{e}_j -\mathbf{i} \bm{e}_j^t \mathcal{A}^{\theta} \theta Ax+\mathbf{i} \bm{e}_j^t \mathcal{A}^{\theta} y \right) \Biggr).  
\end{align*}
Hence, the equality (\ref{cocycle_theta_3}) holds if and only if there exists an $m_{ij}\in \mathbb{Z}$ such that
\begin{equation*}
\frac{\mathbf{i}}{4\pi} \bm{e}_i^t \mathcal{A}^{\theta} \theta \left( \mathcal{A}^{\theta} \right)^t \bm{e}_j+\mathbf{i} \bm{e}_i^t \mathcal{A}^{\theta} \bm{e}_j =-\frac{\mathbf{i}}{4\pi} \bm{e}_i^t \mathcal{A}^{\theta} \theta \left( \mathcal{A}^{\theta} \right)^t \bm{e}_j+\mathbf{i} \bm{e}_i^t \left( \mathcal{A}^{\theta} \right)^t \bm{e}_j+2\pi \mathbf{i}m_{ij},
\end{equation*}
i.e., 
\begin{equation*}
\left( \frac{1}{2\pi} \right)^2 \bm{e}_i^t \mathcal{A}^{\theta} \theta \left( \mathcal{A}^{\theta} \right)^t \bm{e}_j+\frac{1}{2\pi} \bm{e}_i^t \mathcal{A}^{\theta} \bm{e}_j -\frac{1}{2\pi} \bm{e}_i^t \left( \mathcal{A}^{\theta} \right)^t \bm{e}_j=m_{ij}.
\end{equation*}
This condition is equivalent to 
\begin{equation*}
\left( \frac{1}{2\pi} \right)^2 \mathcal{A}^{\theta} \theta \left( \mathcal{A}^{\theta} \right)^t +\frac{1}{2\pi} \mathcal{A}^{\theta}-\frac{1}{2\pi} \left( \mathcal{A}^{\theta} \right)^t \in M(n;\mathbb{Z})
\end{equation*}
since how to take the pair $(i,j)$ was arbitrary. Here, note that
\begin{align*}
&\left( \frac{1}{2\pi} \right)^2 \mathcal{A}^{\theta} \theta \left( \mathcal{A}^{\theta} \right)^t +\frac{1}{2\pi} \mathcal{A}^{\theta}-\frac{1}{2\pi} \left( \mathcal{A}^{\theta} \right)^t \\
&=\left( \frac{1}{2\pi} \right)^2 \left( I_n+\frac{1}{2\pi}\mathcal{A} \theta \right)^{-1} \mathcal{A} \theta \mathcal{A} \left( \left( I_n+\frac{1}{2\pi}\mathcal{A} \theta \right)^{-1} \right)^t \\
&\hspace{3.8mm} +\frac{1}{2\pi} \left( I_n+\frac{1}{2\pi}\mathcal{A} \theta \right)^{-1}\mathcal{A} \left( I_n+\frac{1}{2\pi}\mathcal{A} \theta \right)^t \left( \left( I_n+\frac{1}{2\pi}\mathcal{A} \theta \right)^{-1} \right)^t \\
&\hspace{3.8mm} -\frac{1}{2\pi} \left( I_n+\frac{1}{2\pi}\mathcal{A} \theta \right)^{-1} \left( I_n+\frac{1}{2\pi}\mathcal{A} \theta \right) \mathcal{A} \left( \left( I_n+\frac{1}{2\pi}\mathcal{A} \theta \right)^{-1} \right)^t \\
&=\left( \frac{1}{2\pi} \right)^2 \left( I_n+\frac{1}{2\pi}\mathcal{A}\theta \right)^{-1} \left( \mathcal{A} \theta \mathcal{A}+2\pi \mathcal{A} \left( I_n+\frac{1}{2\pi}\mathcal{A}\theta \right)^t-2\pi \left( I_n+\frac{1}{2\pi}\mathcal{A}\theta \right)\mathcal{A} \right) \left( \left( I_n+\frac{1}{2\pi}\mathcal{A} \theta \right)^{-1} \right)^t \\
&=-\left( \frac{1}{2\pi} \right)^2 \mathcal{A}^{\theta} \theta \left( \mathcal{A}^{\theta} \right)^t.
\end{align*}
Thus, in fact, the equality (\ref{cocycle_theta_3}) holds if and only if 
\begin{equation*}
\left( \frac{1}{2\pi} \right)^2 \mathcal{A}^{\theta} \theta \left( \mathcal{A}^{\theta} \right)^t \in M(n;\mathbb{Z}).
\end{equation*}
\end{proof}
\begin{rem}
By the definition of $\mathcal{A}^{\theta}$, it is clear that $\left( \frac{1}{2\pi} \right)^2 \mathcal{A}^{\theta} \theta \left( \mathcal{A}^{\theta} \right)^t=O$ when $\mathcal{A}\theta \mathcal{A}=O$. On the other hand, there also exists an example of a pair $(\theta, \mathcal{A})\in \mathrm{Alt}(n;\mathbb{R})\times \mathrm{Sym}(n;\mathbb{R})$ which satisfies
\begin{equation*}
\mathrm{det}\left( I_n+\frac{1}{2\pi}\theta \mathcal{A} \right)\not=0, \ \ \ \left( \frac{1}{2\pi} \right)^2 \mathcal{A}^{\theta} \theta \left( \mathcal{A}^{\theta} \right)^t \in M(n;\mathbb{Z})\backslash \{ O \}.
\end{equation*}
One of such examples is the following. We consider the case $n=2$, and let us take
\begin{equation*}
\theta:=\frac{1+\sqrt{1+m^2}}{2m}\left( \begin{array}{cc} 0 & 1 \\ -1 & 0 \end{array} \right)\in \mathrm{Alt}(2;\mathbb{R}), \ \ \ \mathcal{A}:=4\pi \left( \begin{array}{cc} 0 & 1 \\ 1 & 0 \end{array} \right)\in \mathrm{Sym}(2;\mathbb{R}),
\end{equation*} 
where $m\in \mathbb{Z}\backslash \{ 0 \}$. Then we have
\begin{align*}
&\mathrm{det}\left( I_2+\frac{1}{2\pi}\theta \mathcal{A} \right)=\frac{-2-2\sqrt{1+m^2}}{m^2}\not=0, \\
&\left( \frac{1}{2\pi} \right)^2 \mathcal{A}^{\theta} \theta \left( \mathcal{A}^{\theta} \right)^t=\left( \begin{array}{cc} 0 & m \\ -m & 0 \end{array} \right) \in M(2;\mathbb{Z})\backslash \{ O \}.
\end{align*}
\end{rem}
Frankly speaking, Lemma \ref{cocycle_theta} states that the map $j_{\theta,(A;\mathcal{A})}$ defines a factor of automorphy if and only if 
\begin{equation*}
\left( \frac{1}{2\pi} \right)^2 \mathcal{A}^{\theta} \theta \left( \mathcal{A}^{\theta} \right)^t \in M(n;\mathbb{Z}).
\end{equation*}
Hereafter, we assume that
\begin{equation}
\left( \frac{1}{2\pi} \right)^2 \mathcal{A}^{\theta} \theta \left( \mathcal{A}^{\theta} \right)^t \in M(n;\mathbb{Z}). \label{cocycle_theta_mat}
\end{equation}
By using $j_{\theta,(A;\mathcal{A})}$, let us define an action of $\mathbb{Z}^n\oplus T\mathbb{Z}^n$ on $\mathbb{C}^n\times \mathbb{C}$ by
\begin{align*}
&\bigl( x+Ty, t \bigr)\in \mathbb{C}^n\times \mathbb{C} \ \longmapsto \ \bigl( (x+Ty)+\bm{e}_i, j_{\theta,(A;\mathcal{A})}(\bm{e}_i, x, y) t \bigr)\in \mathbb{C}^n\times \mathbb{C}, \\
&\bigl( x+Ty, t \bigr)\in \mathbb{C}^n\times \mathbb{C} \ \longmapsto \ \bigl( (x+Ty)+T\bm{e}_i, j_{\theta,(A;\mathcal{A})}(T\bm{e}_i, x, y) t \bigr)\in \mathbb{C}^n\times \mathbb{C}
\end{align*}
for each $i=1, \ldots, n$. Then we can obtain the smooth complex line bundle $E_{\theta,(A;\mathcal{A})} \to X_{\theta}^n$ as the quotient of $\mathbb{C}^n\times \mathbb{C}$ by the action of $\mathbb{Z}^n\oplus T\mathbb{Z}^n$:
\begin{equation*}
\mathrm{Tot}(E_{\theta,(A;\mathcal{A})}):=(\mathbb{C}^n\times \mathbb{C})/(\mathbb{Z}^n\oplus T\mathbb{Z}^n).
\end{equation*} 
We locally define a differential operator $\nabla_{\theta,(A,p,q;\mathcal{A})}$ on $\Gamma(E_{\theta,(A;\mathcal{A})})$ by
\begin{align*}
&\nabla_{\theta,(A,p,q;\mathcal{A})}=d+\omega_{\theta,(A,p,q;\mathcal{A})}, \\
&\omega_{\theta,(A,p,q;\mathcal{A})}=\pi \mathbf{i}x^t A^t \left( I_n+\frac{1}{\pi}\theta \mathcal{A} \right) \theta Adx-2\pi \mathbf{i}x^t A^t \left( I_n+\frac{1}{2\pi}\theta \mathcal{A} \right)dy+\mathbf{i}y^t \mathcal{A} \theta Adx -\mathbf{i}y^t \mathcal{A}dy.
\end{align*}
Similarly as in subsection 3.1, we sometimes use the notation $\omega_{\theta,(A,p,q;\mathcal{A})}(x, y)$ in order to specify the coordinate system $(x, y)\in \mathbb{R}^{2n}$. Then the following lemma holds.
\begin{lemma} \label{connection_theta}
The factor of automorphy $j_{\theta,(A;\mathcal{A})}$ and the 1-form $\omega_{\theta,(A,p,q;\mathcal{A})}$ satisfy the relations
\begin{align}
&\omega_{\theta,(A,p,q;\mathcal{A})}(x+\bm{e}_i, y)=j_{\theta,(A;\mathcal{A})}(\bm{e}_i, x, y) \overset{\star}{\wedge} \omega_{\theta,(A,p,q;\mathcal{A})}(x, y) \overset{\star}{\wedge} j_{\theta,(A;\mathcal{A})}(\bm{e}_i, x, y)^{-1} \notag \\
&\hspace{35.7mm} +j_{\theta,(A;\mathcal{A})}(\bm{e}_i, x, y) \overset{\star}{\wedge} d\hspace{0.3mm} j_{\theta,(A;\mathcal{A})}(\bm{e}_i, x, y)^{-1}, \label{connection_theta_1} \\
&\omega_{\theta,(A,p,q;\mathcal{A})}(x, y+\bm{e}_i)=j_{\theta,(A;\mathcal{A})}(T\bm{e}_i, x, y) \overset{\star}{\wedge} \omega_{\theta,(A,p,q;\mathcal{A})}(x, y) \overset{\star}{\wedge} j_{\theta,(A;\mathcal{A})}(T\bm{e}_i, x, y)^{-1} \notag \\
&\hspace{35.7mm} +j_{\theta,(A;\mathcal{A})}(T\bm{e}_i, x, y) \overset{\star}{\wedge} d\hspace{0.3mm} j_{\theta,(A;\mathcal{A})}(T\bm{e}_i, x, y)^{-1}, \label{connection_theta_2}
\end{align}
where $i=1, \ldots, n$.
\end{lemma}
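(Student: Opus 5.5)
Since every ingredient is an exponential of an affine-linear function of $(x,y)$, or a $1$-form whose coefficients are affine-linear in $(x,y)$, I plan to reduce both relations (\ref{connection_theta_1}) and (\ref{connection_theta_2}) to identities between affine-linear expressions, using the Bopp shift formula (\ref{bopp}). The forms $dx$ and $dy$ have constant coefficients, so $\overset{\star}{\wedge}$ acts only on the coefficient functions. Moreover, the Moyal product (\ref{nonformal_moyal}) involves only $\partial/\partial y$, so $x$ behaves as a central parameter throughout.

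\textbf{Step 1 (conjugation is a shift in $y$).} Write a transition function as $j=c(x)\,\mathrm{exp}(\mathbf{i}a^t y)$ with $a\in\mathbb{R}^n$ constant. For a smooth coefficient $f(x,y)$ I apply (\ref{bopp}) twice. First, $j\star f=j\cdot f(x,y-\frac{1}{4\pi}\theta a)$. Second, $g\star j^{-1}=g(x,y-\frac{1}{4\pi}\theta a)\,j^{-1}$. Together these give
\begin{equation*}
j\star f\star j^{-1}=f\left(x,y-\frac{1}{2\pi}\theta a\right).
\end{equation*}

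\textbf{Step 2 (the gauge term).} The $1$-form $d\,j^{-1}$ equals $j^{-1}$ times a $1$-form with constant coefficients, namely minus the differential of the exponent, and the prefactor $c(x)^{-1}$ does not interact with the $y$-derivatives. Hence $j\overset{\star}{\wedge}d\,j^{-1}=-d(\text{exponent of }j)$. Both relations therefore reduce to
\begin{equation*}
\omega(x+\bm{e}_i,y)=\omega\left(x,y-\frac{1}{2\pi}\theta a\right)-d(\text{exponent of }j(\bm{e}_i,\cdot)),
\end{equation*}
and to its analogue with $y+\bm{e}_i$ on the left and $j(T\bm{e}_i,\cdot)$ on the right.

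\textbf{Step 3 (coefficient check).} The two choices of $a$ are as follows.
\begin{itemize}
\item For (\ref{connection_theta_1}), $a=2\pi A\bm{e}_i$, so the shift is $y\mapsto y-\theta A\bm{e}_i$, and the gauge term is $\pi\mathbf{i}\bm{e}_i^tA^t\theta A\,dx-2\pi\mathbf{i}\bm{e}_i^tA^tdy$.
\item For (\ref{connection_theta_2}), $a=(\mathcal{A}^{\theta})^t\bm{e}_i$, so the shift is $y\mapsto y-\frac{1}{2\pi}\theta(\mathcal{A}^{\theta})^t\bm{e}_i$, and the gauge term is $\mathbf{i}\bm{e}_i^t\mathcal{A}^{\theta}\theta A\,dx-\mathbf{i}\bm{e}_i^t\mathcal{A}^{\theta}dy$.
\end{itemize}
In each case I will substitute into the explicit formula for $\omega_{\theta,(A,p,q;\mathcal{A})}$. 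The terms depending on $x$ and $y$ cancel identically. What remains is an equality of constant $1$-forms, compared separately in the $dx$- and $dy$-components.

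\textbf{Expected obstacle.} The constant bookkeeping in Step 3 is where I expect the work to lie, and I will rely on two facts. The first is the defining identity of $\mathcal{A}^{\theta}$:
\begin{equation*}
\mathcal{A}^{\theta}+\frac{1}{2\pi}\mathcal{A}^{\theta}\theta\mathcal{A}=\mathcal{A},
\end{equation*}
equivalently $(\mathcal{A}^{\theta})^t=\mathcal{A}(I_n+\frac{1}{2\pi}\theta\mathcal{A})^{-1}$, using $\mathcal{A}^t=\mathcal{A}$ and $\theta^t=-\theta$. The second is the antisymmetry of $\theta$.

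For (\ref{connection_theta_2}), the shift of the $-\mathbf{i}y^t\mathcal{A}\,dy$ and $\mathbf{i}y^t\mathcal{A}\theta A\,dx$ terms produces $\frac{\mathbf{i}}{2\pi}\bm{e}_i^t\mathcal{A}^{\theta}\theta\mathcal{A}\,dy$ and a corresponding $dx$-term. Adding the gauge term should reproduce the left-hand side's increments $-\mathbf{i}\bm{e}_i^t\mathcal{A}\,dy$ and $\mathbf{i}\bm{e}_i^t\mathcal{A}\theta A\,dx$, precisely by the displayed identity.

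For (\ref{connection_theta_1}), the $x$-increments coming from the quadratic $x^tA^t(I_n+\frac{1}{\pi}\theta\mathcal{A})\theta A\,dx$ term must be matched against the $y$-shift of $\mathbf{i}y^t\mathcal{A}\theta A\,dx$ together with the gauge term. I expect this to close using only the antisymmetry of $\theta$: the symmetric part of $A^t\theta\mathcal{A}\theta A$ contributes twice, and this accounts for the factor $\frac{1}{\pi}$.

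Finally, the flat part $p,q$ is absent from $\omega_{\theta,(A,p,q;\mathcal{A})}$ as written. If it is reinstated through $-2\pi\mathbf{i}(p^t+q^tT)dy$, it is constant and $y$-independent, so it passes through both relations unchanged.
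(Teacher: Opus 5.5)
Your approach is correct and is essentially the paper's proof. The paper also verifies both relations by direct Bopp-shift computation. Your Steps 1--2 make the mechanism explicit: star-conjugation by $c(x)\mathrm{exp}(\mathbf{i}a^ty)$ is the translation $y\mapsto y-\frac{1}{2\pi}\theta a$, and $j\overset{\star}{\wedge}d\,j^{-1}$ is minus the differential of the exponent. These two facts produce exactly the increments the paper lists.

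Three things need fixing before Step 3 closes.

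\textbf{The sign in the $dy$-term of (\ref{connection_theta_2}).} Since $\theta^t=-\theta$, shifting $-\mathbf{i}y^t\mathcal{A}\,dy$ by $y\mapsto y-\frac{1}{2\pi}\theta(\mathcal{A}^{\theta})^t\bm{e}_i$ gives $-\frac{\mathbf{i}}{2\pi}\bm{e}_i^t\mathcal{A}^{\theta}\theta\mathcal{A}\,dy$, not $+\frac{\mathbf{i}}{2\pi}\bm{e}_i^t\mathcal{A}^{\theta}\theta\mathcal{A}\,dy$. With your sign the $dy$-components would not match. With the correct sign, adding the gauge term gives $-\mathbf{i}\bm{e}_i^t\mathcal{A}^{\theta}(I_n+\frac{1}{2\pi}\theta\mathcal{A})\,dy=-\mathbf{i}\bm{e}_i^t\mathcal{A}\,dy$. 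The $dx$-part works the same way: $\frac{\mathbf{i}}{2\pi}\bm{e}_i^t\mathcal{A}^{\theta}\theta\mathcal{A}\theta A\,dx+\mathbf{i}\bm{e}_i^t\mathcal{A}^{\theta}\theta A\,dx=\mathbf{i}\bm{e}_i^t\mathcal{A}\theta A\,dx$.

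\textbf{The restatement of your identity.} $\mathcal{A}^{\theta}+\frac{1}{2\pi}\mathcal{A}^{\theta}\theta\mathcal{A}=\mathcal{A}$ is equivalent to the definition $\mathcal{A}^{\theta}=\mathcal{A}(I_n+\frac{1}{2\pi}\theta\mathcal{A})^{-1}$. It is not a statement about $(\mathcal{A}^{\theta})^t$. In fact $(\mathcal{A}^{\theta})^t=\mathcal{A}(I_n-\frac{1}{2\pi}\theta\mathcal{A})^{-1}$, and $\mathcal{A}^{\theta}$ is not symmetric in general (cf. the proof of Lemma \ref{cocycle_theta}).

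\textbf{The ``doubling'' in (\ref{connection_theta_1}).} No doubling is involved. The single shift $y\mapsto y-\theta A\bm{e}_i$ turns $\mathbf{i}y^t\mathcal{A}\theta A\,dx$ and $-\mathbf{i}y^t\mathcal{A}\,dy$ into exactly $\mathbf{i}\bm{e}_i^tA^t\theta\mathcal{A}\theta A\,dx$ and $-\mathbf{i}\bm{e}_i^tA^t\theta\mathcal{A}\,dy$. These are precisely the $x$-increments of the $\frac{1}{\pi}$- and $\frac{1}{2\pi}$-parts, and the gauge term supplies the $I_n$-parts.

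Your remark about the flat part is correct. The term $-2\pi\mathbf{i}(p^t+q^tT)dy$ is missing from the displayed $\omega_{\theta,(A,p,q;\mathcal{A})}$, and since it is constant it passes through both relations unchanged.
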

\begin{proof}
We take an arbitrary $i=1, \ldots, n$, and fix it. Let us verify that the relation (\ref{connection_theta_1}) is satisfied. By direct calculations, we have 
\begin{align*}
&j_{\theta,(A;\mathcal{A})}(\bm{e}_i, x, y) \overset{\star}{\wedge} \omega_{\theta,(A,p,q;\mathcal{A})}(x, y) \overset{\star}{\wedge} j_{\theta,(A;\mathcal{A})}(\bm{e}_i, x, y)^{-1}+j_{\theta,(A;\mathcal{A})}(\bm{e}_i, x, y) \overset{\star}{\wedge} d\hspace{0.3mm} j_{\theta,(A;\mathcal{A})}(\bm{e}_i, x, y)^{-1} \\
&=\omega_{\theta,(A,p,q;\mathcal{A})}(x, y)+\mathbf{i}\bm{e}_i^t A^t \theta \mathcal{A} \theta Adx-\mathbf{i}\bm{e}_i^t A^t \theta \mathcal{A}dy+\pi \mathbf{i}\bm{e}_i^t A^t \theta Adx-2\pi \mathbf{i} \bm{e}_i^t A^t dy \\
&=\omega_{\theta,(A,p,q;\mathcal{A})}(x, y)+\pi \mathbf{i}\bm{e}_i^t A^t \left( I_n+\frac{1}{\pi}\theta \mathcal{A} \right)\theta Adx-2\pi \mathbf{i}\bm{e}_i^t A^t \left( I_n+\frac{1}{2\pi}\theta \mathcal{A} \right)dy \\
&=\omega_{\theta,(A,p,q;\mathcal{A})}(x+\bm{e}_i, y),
\end{align*}
so indeed, the relation (\ref{connection_theta_1}) is satisfied. Similarly, we can calculate as follows:
\begin{align*}
&j_{\theta,(A;\mathcal{A})}(T\bm{e}_i, x, y) \overset{\star}{\wedge} \omega_{\theta,(A,p,q;\mathcal{A})}(x, y) \overset{\star}{\wedge} j_{\theta,(A;\mathcal{A})}(T\bm{e}_i, x, y)^{-1}+j_{\theta,(A;\mathcal{A})}(T\bm{e}_i, x, y) \overset{\star}{\wedge} d\hspace{0.3mm} j_{\theta,(A;\mathcal{A})}(T\bm{e}_i, x, y)^{-1} \\
&=\omega_{\theta,(A,p,q;\mathcal{A})}(x, y)+\frac{\mathbf{i}}{2\pi}\bm{e}_i^t \mathcal{A} \left( I_n+\frac{1}{2\pi}\theta \mathcal{A} \right)^{-1}\theta \mathcal{A} \theta Adx-\frac{\mathbf{i}}{2\pi}\bm{e}_i^t \mathcal{A} \left( I_n+\frac{1}{2\pi}\theta \mathcal{A} \right)^{-1}\theta \mathcal{A}dy \\
&\hspace{3.8mm} +\mathbf{i}\bm{e}_i^t \mathcal{A} \left( I_n+\frac{1}{2\pi}\theta \mathcal{A} \right)^{-1}\theta Adx-\mathbf{i}\bm{e}_i^t \mathcal{A}\left( I_n+\frac{1}{2\pi}\theta \mathcal{A} \right)^{-1}dy \\
&=\omega_{\theta,(A,p,q;\mathcal{A})}(x, y)+\mathbf{i}\bm{e}_i^t \mathcal{A}\left( I_n+\frac{1}{2\pi} \theta \mathcal{A} \right)^{-1} \left( I_n+\frac{1}{2\pi} \theta \mathcal{A} \right) \theta Adx \\
&\hspace{3.8mm} -\mathbf{i}\bm{e}_i^t \mathcal{A}\left( I_n+\frac{1}{2\pi} \theta \mathcal{A} \right)^{-1} \left( I_n+\frac{1}{2\pi} \theta \mathcal{A} \right)dy \\
&=\omega_{\theta,(A,p,q;\mathcal{A})}(x, y)+\mathbf{i}\bm{e}_i^t \mathcal{A} \theta Adx-\mathbf{i} \bm{e}_i^t \mathcal{A} dy \\
&=\omega_{\theta,(A,p,q;\mathcal{A})}(x, y+\bm{e}_i). 
\end{align*}
Thus, the relation (\ref{connection_theta_2}) is also satisfied. 
\end{proof}
By Lemma \ref{connection_theta}, we see that the differential operator $\nabla_{\theta,(A,p,q;\mathcal{A})}$ is compatible with the factor of automorphy $j_{\theta,(A;\mathcal{A})}$, namely, the differential operator $\nabla_{\theta,(A,p,q;\mathcal{A})}$ defines a connection on $E_{\theta,(A;\mathcal{A})}$. Then, although we can calculate the curvature form $\Omega_{\theta,(A,p,q;\mathcal{A})}$ of $\nabla_{\theta,(A,p,q;\mathcal{A})}$ locally, it is a little complicated. Explicitly, we have the following.
\begin{lemma} \label{curvature_theta}
The curvature form $\Omega_{\theta,(A,p,q;\mathcal{A})}$ of $\nabla_{\theta,(A,p,q;\mathcal{A})}$ is expressed locally as
\begin{equation*}
\Omega_{\theta,(A,p,q;\mathcal{A})}=\pi \mathbf{i}dx^t A^t \left( I_n-\left( \frac{1}{2\pi}\theta \mathcal{A} \right)^2 \right) \theta Adx-2\pi \mathbf{i} dx^t A^t \left( I_n-\left( \frac{1}{2\pi}\theta \mathcal{A} \right)^2 \right)dy+\frac{\mathbf{i}}{4\pi}dy^t \mathcal{A} \theta \mathcal{A} dy.
\end{equation*} 
\end{lemma}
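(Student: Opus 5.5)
The plan is a direct local computation of
\begin{equation*}
\Omega_{\theta,(A,p,q;\mathcal{A})}=d\omega_{\theta,(A,p,q;\mathcal{A})}+\omega_{\theta,(A,p,q;\mathcal{A})}\overset{\star}{\wedge}\omega_{\theta,(A,p,q;\mathcal{A})}.
\end{equation*}
I will split it into the ordinary exterior derivative and the purely noncommutative quadratic term. The point is that every coefficient of $\omega_{\theta,(A,p,q;\mathcal{A})}$ is affine in $(x,y)$. The Moyal product (\ref{nonformal_moyal}) involves only $y$-derivatives, so on such coefficients its expansion stops at first order. Hence for coefficient functions $f,g$ we get
\begin{equation*}
f\star g-g\star f=-\frac{\mathbf{i}}{2\pi}\Bigl(\frac{\partial f}{\partial y}\Bigr)^t\theta\,\frac{\partial g}{\partial y},
\end{equation*}
which is a constant. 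I would check this with the Bopp shift formula (\ref{bopp}) exactly as in the proof of Lemma \ref{cocycle_theta}. Consequently $\omega\overset{\star}{\wedge}\omega=\sum_{\mu<\nu}[\omega_\mu,\omega_\nu]_\star\,du_\mu\wedge du_\nu$ is a constant-coefficient $2$-form. It depends only on the $y$-linear parts of the coefficients, namely $\mathbf{i}y^t\mathcal{A}\theta A\,dx-\mathbf{i}y^t\mathcal{A}\,dy$.

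\textbf{Step 1.} Compute $d\omega_{\theta,(A,p,q;\mathcal{A})}$ classically, term by term.
\begin{itemize}
\item $d(\pi\mathbf{i}x^tM\,dx)=\pi\mathbf{i}\,dx^tM\,dx$ with $M=A^t(I_n+\frac1\pi\theta\mathcal{A})\theta A$. Only the antisymmetric part of $M$ survives. Using $\theta^t=-\theta$ and $\mathcal{A}^t=\mathcal{A}$, the piece $A^t\theta\mathcal{A}\theta A$ is symmetric and drops out, leaving $\pi\mathbf{i}\,dx^tA^t\theta A\,dx$.
\item $d(-2\pi\mathbf{i}x^tA^t(I_n+\frac{1}{2\pi}\theta\mathcal{A})dy)$ gives $-2\pi\mathbf{i}\,dx^tA^t(I_n+\frac{1}{2\pi}\theta\mathcal{A})dy$.
\item $d(\mathbf{i}y^t\mathcal{A}\theta A\,dx)$ gives $\mathbf{i}\,dy^t\mathcal{A}\theta A\,dx=-\mathbf{i}\,dx^tA^t\theta\mathcal{A}\,dy$.
\item $d(-\mathbf{i}y^t\mathcal{A}\,dy)=0$, since $\mathcal{A}$ is symmetric.
\end{itemize}

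\textbf{Step 2.} Compute the star-commutator term. In matrix form, the $y$-gradients of the $dx$-coefficients are the columns of $\mathbf{i}\mathcal{A}\theta A$, and those of the $dy$-coefficients are the columns of $-\mathbf{i}\mathcal{A}$. The three blocks then become
\begin{equation*}
-\frac{\mathbf{i}}{4\pi}\,dx^t(\mathbf{i}\mathcal{A}\theta A)^t\theta(\mathbf{i}\mathcal{A}\theta A)\,dx,\qquad
-\frac{\mathbf{i}}{2\pi}\,dx^t(\mathbf{i}\mathcal{A}\theta A)^t\theta(-\mathbf{i}\mathcal{A})\,dy,\qquad
-\frac{\mathbf{i}}{4\pi}\,dy^t(-\mathbf{i}\mathcal{A})^t\theta(-\mathbf{i}\mathcal{A})\,dy.
\end{equation*}
The factor $\frac12$ on the diagonal blocks accounts for the double counting in $\sum_{\mu,\nu}$. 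Simplifying with $\theta^t=-\theta$:
\begin{itemize}
\item the $dy\,dy$ block gives $\frac{\mathbf{i}}{4\pi}dy^t\mathcal{A}\theta\mathcal{A}\,dy$, which is exactly the last term of the statement;
\item the $dx\,dy$ block gives $\frac{\mathbf{i}}{2\pi}dx^tA^t\theta\mathcal{A}\theta\mathcal{A}\,dy$;
\item the $dx\,dx$ block gives $-\frac{\mathbf{i}}{4\pi}dx^tA^t\theta\mathcal{A}\theta\mathcal{A}\theta A\,dx$.
\end{itemize}

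\textbf{Step 3.} Collect terms. The $dx\,dy$ coefficient is
\begin{equation*}
-2\pi\mathbf{i}A^t\bigl(I_n+\tfrac{1}{2\pi}\theta\mathcal{A}\bigr)-\mathbf{i}A^t\theta\mathcal{A}+\tfrac{\mathbf{i}}{2\pi}A^t(\theta\mathcal{A})^2.
\end{equation*}
I must be careful here, since this naive sum does not obviously equal $-2\pi\mathbf{i}A^t(I_n-(\frac{1}{2\pi}\theta\mathcal{A})^2)$. The mismatch will tell me that I have to recheck two sign conventions:
\begin{itemize}
\item the sign of $dy^t\mathcal{A}\theta A\,dx$ when it is rewritten as $\pm dx^t(\cdot)dy$;
\item the order of the wedge in $\omega\overset{\star}{\wedge}\omega$, which determines the sign of the cross term.
\end{itemize}
The $\theta\mathcal{A}$-linear terms must cancel and the quadratic one must carry coefficient $+\frac{\mathbf{i}}{2\pi}$. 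The $dx\,dx$ block, $\pi\mathbf{i}A^t\theta A-\frac{\mathbf{i}}{4\pi}A^t(\theta\mathcal{A})^2\theta A$, matches the claimed form directly.

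I expect this sign bookkeeping to be the main obstacle: the transpose conventions, the wedge order for $dx$ versus $dy$, and the factor $\frac12$ on the diagonal blocks. To pin down the conventions I would first verify the whole computation in the case $\mathcal{A}=O$, where it must reproduce the formula for $\Omega_{\theta,(A,p,q)}$ from subsection 3.1. I would then keep exactly those conventions for general $\mathcal{A}$.
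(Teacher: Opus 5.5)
Your route is the same as the paper's: compute $d\omega_{\theta,(A,p,q;\mathcal{A})}$ classically and add the constant-coefficient term $\omega_{\theta,(A,p,q;\mathcal{A})}\overset{\star}{\wedge}\omega_{\theta,(A,p,q;\mathcal{A})}$. Your Step 2 agrees term by term with the paper's value of that star-wedge term: the commutator formula, the factor $\frac12$ on the diagonal blocks, and all three blocks.

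\textbf{The gap.} The proposal never closes the $dx\,dy$ block. It ends with a mismatch and a list of conventions to recheck, so as written it does not prove the lemma. The mismatch comes from a single sign error in the third bullet of Step 1. Since $dy^tM\,dx=-dx^tM^t\,dy$ and $(\mathcal{A}\theta A)^t=A^t\theta^t\mathcal{A}=-A^t\theta\mathcal{A}$, two signs appear, and
\begin{equation*}
\mathbf{i}\,dy^t\mathcal{A}\theta A\,dx=+\mathbf{i}\,dx^tA^t\theta\mathcal{A}\,dy ,
\end{equation*}
not $-\mathbf{i}\,dx^tA^t\theta\mathcal{A}\,dy$. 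The paper writes the $+$ sign. With this correction:
\begin{itemize}
\item the $dx\,dy$ part of $d\omega_{\theta,(A,p,q;\mathcal{A})}$ is just $-2\pi\mathbf{i}\,dx^tA^t\,dy$, because the $\theta\mathcal{A}$-linear terms cancel;
\item adding your cross term $\frac{\mathbf{i}}{2\pi}dx^tA^t(\theta\mathcal{A})^2dy$ gives exactly $-2\pi\mathbf{i}\,dx^tA^t\bigl(I_n-(\frac{1}{2\pi}\theta\mathcal{A})^2\bigr)dy$;
\item your $dx\,dx$ and $dy\,dy$ blocks were already correct.
\end{itemize}

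\textbf{Why your proposed checks would not find it.} Calibrating at $\mathcal{A}=O$ cannot detect this error, because the offending term $\mathbf{i}y^t\mathcal{A}\theta A\,dx$ vanishes there. The wedge order in the star term is also not the culprit: your cross-term sign is correct. Flipping it would produce $+(\frac{1}{2\pi}\theta\mathcal{A})^2$ in place of $-(\frac{1}{2\pi}\theta\mathcal{A})^2$, even after Step 1 is fixed. The repair is to redo the transpose in Step 1 while keeping track of $\theta^t=-\theta$.
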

\begin{proof}
We can check that
\begin{align*}
d\omega_{\theta,(A,p,q;\mathcal{A})}&=\pi \mathbf{i}dx^t A^t \left( I_n+\frac{1}{\pi}\theta \mathcal{A} \right) \theta Adx-2\pi \mathbf{i}dx^t A^t \left( I_n+\frac{1}{2\pi}\theta \mathcal{A} \right)dy+\mathbf{i}dy^t \mathcal{A} \theta Adx -\mathbf{i}dy^t \mathcal{A}dy \\
&=\pi \mathbf{i}dx^t A^t \theta Adx-2\pi \mathbf{i}dx^t A^t \left( I_n+\frac{1}{2\pi}\theta \mathcal{A} \right)dy+\mathbf{i}dx^t A^t \theta \mathcal{A}dy.
\end{align*}
Here, note that $dx^t A^t \theta^t \mathcal{A} \theta Adx$ and $dy^t \mathcal{A}dy$ vanish since $A^t \theta^t \mathcal{A} \theta A$, $\mathcal{A}\in \mathrm{Sym}(n;\mathbb{R})$. Moreover, we have
\begin{equation*}
\omega_{\theta,(A,p,q;\mathcal{A})}\overset{\star}{\wedge} \omega_{\theta,(A,p,q;\mathcal{A})}=-\frac{\mathbf{i}}{4\pi}dx^t A^t \theta \mathcal{A} \theta \mathcal{A} \theta Adx+\frac{\mathbf{i}}{2\pi}dx^t A^t \theta \mathcal{A} \theta \mathcal{A} dy+\frac{\mathbf{i}}{4\pi}dy^t \mathcal{A} \theta \mathcal{A}dy.
\end{equation*}
Thus, 
\begin{equation*}
\Omega_{\theta,(A,p,q;\mathcal{A})}=d\omega_{\theta,(A,p,q;\mathcal{A})}+\omega_{\theta,(A,p,q;\mathcal{A})}\overset{\star}{\wedge} \omega_{\theta,(A,p,q;\mathcal{A})}
\end{equation*}
can be calculated as follows:
\begin{align*}
\Omega_{\theta,(A,p,q;\mathcal{A})}&=\pi \mathbf{i}dx^t A^t \theta Adx-\frac{\mathbf{i}}{4\pi}dx^t A^t \theta \mathcal{A} \theta \mathcal{A} \theta Adx \\
&\hspace{3.8mm} -2\pi \mathbf{i}dx^t A^t \left( I_n+\frac{1}{2\pi}\theta \mathcal{A} \right)dy+\mathbf{i}dx^t A^t \theta \mathcal{A}dy+\frac{\mathbf{i}}{2\pi}dx^t A^t \theta \mathcal{A} \theta \mathcal{A} dy \\
&\hspace{3.8mm} +\frac{\mathbf{i}}{4\pi}dy^t \mathcal{A} \theta \mathcal{A}dy \\
&=\pi \mathbf{i}dx^t A^t \left( I_n-\left( \frac{1}{2\pi} \theta \mathcal{A} \right)^2 \right) \theta Adx \\
&\hspace{3.8mm} -2\pi \mathbf{i}dx^t A^t \left( I_n+\frac{1}{2\pi}\theta \mathcal{A} \right)dy+\mathbf{i}dx^t A^t \left( I_n+\frac{1}{2\pi}\theta \mathcal{A} \right) \theta \mathcal{A} dy \\
&\hspace{3.8mm} +\frac{\mathbf{i}}{4\pi}dy^t \mathcal{A} \theta \mathcal{A}dy \\
&=\pi \mathbf{i}dx^t A^t \left( I_n-\left( \frac{1}{2\pi} \theta \mathcal{A} \right)^2 \right) \theta Adx-2\pi \mathbf{i}dx^t A^t \left( I_n+\frac{1}{2\pi}\theta \mathcal{A} \right) \left( I_n-\frac{1}{2\pi}\theta \mathcal{A} \right) dy \\
&\hspace{3.8mm} +\frac{\mathbf{i}}{4\pi}dy^t \mathcal{A} \theta \mathcal{A}dy \\
&=\pi \mathbf{i}dx^t A^t \left( I_n-\left( \frac{1}{2\pi} \theta \mathcal{A} \right)^2 \right) \theta Adx-2\pi \mathbf{i}dx^t A^t \left( I_n-\left( \frac{1}{2\pi}\theta \mathcal{A} \right)^2 \right) dy \\
&\hspace{3.8mm} +\frac{\mathbf{i}}{4\pi}dy^t \mathcal{A} \theta \mathcal{A}dy. 
\end{align*}
This completes the proof.
\end{proof}
We denote a smooth complex line bundle $E_{\theta,(A;\mathcal{A})} \to X_{\theta}^n$ with a connection $\nabla_{\theta,(A,p,q;\mathcal{A})}$ by $E_{\theta,(A,p,q;\mathcal{A})}^{\nabla}$, i.e., $E_{\theta,(A,p,q;\mathcal{A})}^{\nabla}:=(E_{\theta,(A;\mathcal{A})}, \nabla_{\theta,(A,p,q;\mathcal{A})})$. 

For later convenience, we would like to explain the definition of $E_{\theta,(A,p,q;\mathcal{A})}^{\nabla}$ and $E_{\theta,(B,u,v;\mathcal{A})}^{\nabla}$ being isomorphic to each other. 

Before explaining it, let us first recall the commutative case. Let $M$ be a complex manifold and $\{ U_i \}_{i\in I}$ an open covering of $M$. We take two holomorphic vector bundles $\mathcal{E}^{\nabla}=( \{ \varphi_{ij} \}_{i,j\in I}, \{ \bar{\partial}+\omega_i^{(0,1)} \}_{i\in I})$, $\mathcal{F}^{\nabla}=( \{ \psi_{ij} \}_{i,j\in I}, \{ \bar{\partial}+\xi_i^{(0,1)} \}_{i\in I})$ on $M$, where $\{ \varphi_{ij} \}_{i,j\in I}$, $\{ \psi_{ij} \}_{i,j\in I}$ are families of transition functions, and $\{ \bar{\partial}+\omega_i^{(0,1)} \}_{i\in I}$, $\{ \bar{\partial}+\xi_i^{(0,1)} \}_{i\in I}$ are holomorphic structures. Then we say that $\mathcal{E}^{\nabla}$ is isomorphic to $\mathcal{F}^{\nabla}$ if there exists a family of invertible morphisms (invertible locally defined smooth functions) $\{ \phi_i : \mathcal{E}^{\nabla}|_{U_i} \to \mathcal{F}^{\nabla}|_{U_i} \}_{i\in I}$ satisfying the following conditions. Firstly, the equality $\phi_j \varphi_{ij}=\psi_{ij} \phi_i$ holds for any $i$, $j=1, \ldots, n$. Secondly, the relation $\bar{\partial}\phi_i+\xi_i^{(0,1)}\wedge \phi_i-\phi_i\wedge \omega_i^{(0,1)}=0$ is satisfied for each $i=1, \ldots, n$. 

In light of this definition, if there exists an invertible locally defined smooth function $\varphi$ satisfying the conditions
\begin{align}
&j_{\theta,(B;\mathcal{A})}(\bm{e}_i, x, y)\star \varphi(x, y)=\varphi(x+\bm{e}_i, y)\star j_{\theta,(A;\mathcal{A})}(\bm{e}_i, x, y), \notag \\
&j_{\theta,(B;\mathcal{A})}(T\bm{e}_i, x, y)\star \varphi(x, y)=\varphi(x, y+\bm{e}_i)\star j_{\theta,(A;\mathcal{A})}(T\bm{e}_i, x, y), \notag \\
&\bar{\partial}\varphi+\omega_{\theta,(B,u,v;\mathcal{A})}^{(0,1)} \overset{\star}{\wedge} \varphi-\varphi \overset{\star}{\wedge} \omega_{\theta,(A,p,q;\mathcal{A})}^{(0,1)}=0, \label{kernel}
\end{align} 
where $i=1, \ldots, n$, we say that $E_{\theta,(A,p,q;\mathcal{A})}^{\nabla}$ is isomorphic to $E_{\theta,(B,u,v;\mathcal{A})}^{\nabla}$, and write
\begin{equation*}
E_{\theta,(A,p,q;\mathcal{A})}^{\nabla}\cong E_{\theta,(B,u,v;\mathcal{A})}^{\nabla}.
\end{equation*} 
In particular, under this situation, $\varphi$ is called an isomorphism from $E_{\theta,(A,p,q;\mathcal{A})}^{\nabla}$ to $E_{\theta,(B,u,v;\mathcal{A})}^{\nabla}$: 
\begin{equation*}
\varphi : E_{\theta,(A,p,q;\mathcal{A})}^{\nabla} \stackrel{\sim}{\to} E_{\theta,(B,u,v;\mathcal{A})}^{\nabla}.
\end{equation*}

Here, we give a remark on this definition. As discussed in \cite{nc} (see also \cite{kajiura}), we can actually construct a curved dg-category consisting of noncommutative objects $E_{\theta,(A,p,q;\mathcal{A})}^{\nabla}$ as a deformation of the dg-category $DG_{X^n}$. Essentially, the curvature in this context is given by $\Omega_{\theta,(A,p,q;\mathcal{A})}^{(0,2)}$, and in general, $\Omega_{\theta,(A,p,q;\mathcal{A})}^{(0,2)}$ does not necessarily vanish, i.e., the holomorphicity of $E_{(A,p,q)}^{\nabla}$ does not necessarily preserved under the noncommutative deformation from $X^n$ to $X_{\theta}^n$. On the other hand, for a given curved dg-category $\mathscr{C}$, we can consider a dg-category consisting of objects which have the same (fixed) curvature as a full subcategory of $\mathscr{C}$. For this dg-category, we can take the cohomology of each space of morphisms. We now assume that $E_{\theta,(A,p,q;\mathcal{A})}^{\nabla}$ and $E_{\theta,(B,u,v;\mathcal{A})}^{\nabla}$ have the same curvature. Let us denote the 0-th cohomology of the space of morphisms from $E_{\theta,(A,p,q;\mathcal{A})}^{\nabla}$ to $E_{\theta,(B,u,v;\mathcal{A})}^{\nabla}$ in this context\footnote{In fact, the spaces of morphisms in this context are obtained by replacing the usual product in the spaces of morphisms of $DG_{X^n}$ with the Moyal star product (\ref{nonformal_moyal}). See \cite{kajiura} for details.} by $H^0(E_{\theta,(A,p,q;\mathcal{A})}^{\nabla}, E_{\theta,(B,u,v;\mathcal{A})}^{\nabla})$. Then a non-trivial representative of $H^0(E_{\theta,(A,p,q;\mathcal{A})}^{\nabla}, E_{\theta,(B,u,v;\mathcal{A})}^{\nabla})$ gives an isomorphism $E_{\theta,(A,p,q;\mathcal{A})}^{\nabla} \stackrel{\sim}{\to} E_{\theta,(B,u,v;\mathcal{A})}^{\nabla}$ (the condition (\ref{kernel}) indicates that this representative belongs to the kernel of the corresponding differential). 

As remarked at the beginning of this subsection, in general, $E_{\theta,(A,p,q)}^{\nabla}\not\cong E_{\theta,(A,p,q;\mathcal{A})}^{\nabla}$ even though $E_{(A,p,q)}^{\nabla}\cong E_{(A,p,q;\mathcal{A})}^{\nabla}$. However, exceptionally, there exists a case that the relation $E_{(A,p,q)}^{\nabla}\cong E_{(A,p,q;\mathcal{A})}^{\nabla}$ is preserved under the deformation from $X^n$ to $X_{\theta}^n$. We now explain this example. Let us take $(A,p,q)\in M(n;\mathbb{Z}) \times \mathbb{R}^n \times \mathbb{R}^n$, $\mathcal{A}\in \mathrm{Sym}(n;\mathbb{R})$, and fix them. Then, if $\mathcal{A}\theta \mathcal{A}=O$, we see that $\Omega_{\theta,(A,p,q;\mathcal{A})}$ coincides with $\Omega_{\theta,(A,p,q)}$ by Lemma \ref{curvature_theta}. Hence, it is expected that $E_{\theta,(A,p,q;\mathcal{A})}^{\nabla}$ and $E_{\theta,(A,p,q)}^{\nabla}$ are isomorphic to each other in this case. Actually, this expectation is correct as shown in the following proposition (cf. Remark \ref{main_proposition_analogue}). 
\begin{proposition} \label{main_proposition}
Assume that $\mathcal{A}\theta \mathcal{A}=O$. Then we have $E_{\theta,(A,p,q)}^{\nabla}\cong E_{\theta,(A,p,q;\mathcal{A})}^{\nabla}$, where an isomorphism $\varphi_{\theta, \mathcal{A}} : E_{\theta,(A,p,q)}^{\nabla} \stackrel{\sim}{\to} E_{\theta,(A,p,q;\mathcal{A})}^{\nabla}$ is expressed locally as
\begin{equation*}
\varphi_{\theta, \mathcal{A}}(x, y)=\mathrm{exp}\left( \frac{\mathbf{i}}{2}x^t A^t \theta^t \mathcal{A} \theta A x+\frac{\mathbf{i}}{2}y^t \mathcal{A} y+\mathbf{i} x^t A^t \theta \mathcal{A} y \right).
\end{equation*}  
\end{proposition}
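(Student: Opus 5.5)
We verify directly that $\varphi:=\varphi_{\theta,\mathcal{A}}$ satisfies the three conditions in (\ref{kernel}), with $B=A$, $(u,v)=(p,q)$, source $E_{\theta,(A,p,q)}^{\nabla}$ and target $E_{\theta,(A,p,q;\mathcal{A})}^{\nabla}$. Invertibility is clear: $\varphi$ is the exponential of a quadratic form in $(x,y)$, and its Moyal inverse is again such an exponential; a Bopp shift computation shows it is $\mathrm{exp}(-(\cdots))$ up to terms that vanish when $\mathcal{A}\theta\mathcal{A}=O$. Throughout, every star product will be computed with the Bopp shift formula (\ref{bopp}), exactly as in the proof of Lemma \ref{cocycle_theta}. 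All functions involved are exponentials of polynomials of degree at most two in $y$. Each product therefore reduces to adding exponents plus a correction coming from $\frac{\partial}{\partial y}$ of the left factor's exponent, shifted by $\mp\frac{\mathbf{i}}{4\pi}\theta\frac{\partial}{\partial y}$ acting on the right factor. Whenever this correction is quadratic in $\theta$ it involves $\mathcal{A}\theta\mathcal{A}$, and we set those terms to zero.

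A preliminary simplification: $\mathcal{A}\theta\mathcal{A}=O$ gives $(I_n+\frac{1}{2\pi}\mathcal{A}\theta)\mathcal{A}=\mathcal{A}$, hence $\mathcal{A}^{\theta}=\mathcal{A}$. Also $(\frac{1}{2\pi}\theta\mathcal{A})^2=O$ and the integrality condition (\ref{cocycle_theta_mat}) holds trivially. In particular
\begin{equation*}
j_{\theta,(A;\mathcal{A})}(T\bm{e}_i,x,y)=\mathrm{exp}\left( \frac{\mathbf{i}}{2}\bm{e}_i^t\mathcal{A}\bm{e}_i-\mathbf{i}\bm{e}_i^t\mathcal{A}\theta Ax+\mathbf{i}\bm{e}_i^t\mathcal{A}y \right),
\end{equation*}
and the connection form simplifies to
\begin{equation*}
\omega_{\theta,(A,p,q;\mathcal{A})}=\omega_{\theta,(A,p,q)}+\mathbf{i}x^tA^t\theta\mathcal{A}\theta Adx-\mathbf{i}x^tA^t\theta\mathcal{A}dy+\mathbf{i}y^t\mathcal{A}\theta Adx-\mathbf{i}y^t\mathcal{A}dy,
\end{equation*}
with the $p,q$ parts unchanged. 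These formulas suggest that $\varphi=\mathrm{exp}(\frac{\mathbf{i}}{2}(y+\theta Ax)^t\mathcal{A}(y+\theta Ax))$ up to sign conventions. Indeed $x^tA^t\theta^t\mathcal{A}\theta Ax$ matches the first term, and the cross term is $\mathbf{i}x^tA^t\theta^t\mathcal{A}y$. The statement writes $\theta$ rather than $\theta^t$ there, and I will track this sign carefully. $\varphi$ is thus the commutative isomorphism (\ref{trivialization}) evaluated in the shifted variable $y\mp\theta Ax$.

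The steps are as follows. First, check the $x$-translation condition $j_{\theta,(A;\mathcal{A})}(\bm{e}_i)\star\varphi(x,y)=\varphi(x+\bm{e}_i,y)\star j_{\theta,A}(\bm{e}_i)$. Both sides are exponentials of quadratics, and comparing exponents reduces to identities in $A,\theta,\mathcal{A}$. The Bopp correction from $2\pi\mathbf{i}\bm{e}_i^tA^ty$ is $y\mapsto y-\frac12\theta A\bm{e}_i$ inside $\varphi$, and this is what must reproduce $\varphi(x+\bm{e}_i)$'s shift by $\theta A\bm{e}_i$, split symmetrically between the two sides. Second, check the $y$-translation condition with $j_{\theta,A}(T\bm{e}_i)=1$. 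Here the right side is just $\varphi(x,y+\bm{e}_i)$, while the left side needs the correction $\mathbf{i}\bm{e}_i^t\mathcal{A}(-\frac{\mathbf{i}}{4\pi}\theta\partial_y)$ acting on $\varphi$. That correction produces terms $\propto\bm{e}_i^t\mathcal{A}\theta\mathcal{A}(\cdots)$, which vanish. Third, check the $\bar\partial$ condition. It suffices to prove the stronger identity $d\varphi+\omega_{\theta,(A,p,q;\mathcal{A})}\overset{\star}{\wedge}\varphi-\varphi\overset{\star}{\wedge}\omega_{\theta,(A,p,q)}=0$, i.e. that $\varphi$ gauge-transforms the full connection; the $(0,1)$-part then follows. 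Since the $1$-forms are linear in $(x,y)$, the commutator $\omega\star\varphi-\varphi\star\omega$ yields, via Bopp shifts, $\frac{\mathbf{i}}{2\pi}$ times $\theta$-contractions of the $y$-gradients. Combined with $d\varphi$, these should cancel exactly the difference $\omega_{\theta,(A,p,q;\mathcal{A})}-\omega_{\theta,(A,p,q)}$ listed above, again modulo $\mathcal{A}\theta\mathcal{A}$.

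The main obstacle is bookkeeping in the first and third steps. There, terms linear in $\theta$ must cancel exactly through the alternating property $\theta^t=-\theta$ and the symmetry of $\mathcal{A}$, and the sign of the cross term $x^tA^t\theta\mathcal{A}y$ versus $\theta^t$ is delicate. I would first test with $n=2$ and $A=I_2$ to fix signs, and if needed correct the cross term. The remaining constant phase discrepancies, such as $\frac{\mathbf{i}}{2}\bm{e}_i^t\mathcal{A}\bm{e}_i$, are absorbed by the quadratic term exactly as in the commutative identity $\varphi_{\mathcal{A}}(y+\bm{e}_i)=\mathrm{exp}(\frac{\mathbf{i}}{2}\bm{e}_i^t\mathcal{A}\bm{e}_i+\mathbf{i}\bm{e}_i^t\mathcal{A}y)\varphi_{\mathcal{A}}(y)$.
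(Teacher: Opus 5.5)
Your plan is correct and is essentially the paper's proof: verify the three relations in (\ref{kernel}) directly with Bopp shifts, after noting $\mathcal{A}^{\theta}=\mathcal{A}$. Your stronger full-connection identity in the third step also holds, because $\omega_{\theta,(A,p,q)}$ has $y$-independent coefficients and every Bopp correction in $(\omega_{\theta,(A,p,q;\mathcal{A})}-\omega_{\theta,(A,p,q)})\star\varphi$ contains $\mathcal{A}\theta\mathcal{A}$; the sign question resolves in favour of the statement as written, with no correction of the cross term needed, since $x^tA^t\theta=-(\theta Ax)^t$ gives $\varphi_{\theta,\mathcal{A}}(x,y)=\mathrm{exp}\bigl(\frac{\mathbf{i}}{2}(y-\theta Ax)^t\mathcal{A}(y-\theta Ax)\bigr)=\varphi_{\mathcal{A}}(y-\theta Ax)$, and this is precisely the shift matched by the term $-\mathbf{i}\bm{e}_i^t\mathcal{A}\theta Ax$ in $j_{\theta,(A;\mathcal{A})}(T\bm{e}_i,x,y)$ (your initial guess $y+\theta Ax$ would fail the $x$-translation condition).
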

\begin{proof}
We may prove that the invertible locally defined smooth function
\begin{equation*}
\varphi_{\theta, \mathcal{A}}(x, y):=\mathrm{exp}\left( \frac{\mathbf{i}}{2}x^t A^t \theta^t \mathcal{A} \theta A x+\frac{\mathbf{i}}{2}y^t \mathcal{A} y+\mathbf{i} x^t A^t \theta \mathcal{A} y \right)
\end{equation*}
satisfies the relations
\begin{align}
&j_{\theta,(A;\mathcal{A})}(\bm{e}_i, x, y)\star \varphi_{\theta, \mathcal{A}}(x, y)=\varphi_{\theta, \mathcal{A}}(x+\bm{e}_i, y)\star j_{\theta,A}(\bm{e}_i, x, y), \label{main_proposition_1} \\
&j_{\theta,(A;\mathcal{A})}(T\bm{e}_i, x, y)\star \varphi_{\theta, \mathcal{A}}(x, y)=\varphi_{\theta, \mathcal{A}}(x, y+\bm{e}_i)\star j_{\theta,A}(T\bm{e}_i, x, y), \label{main_proposition_2} \\
&\bar{\partial}\varphi_{\theta, \mathcal{A}}+\omega_{\theta,(A,p,q;\mathcal{A})}^{(0,1)} \overset{\star}{\wedge} \varphi_{\theta, \mathcal{A}}-\varphi_{\theta, \mathcal{A}} \overset{\star}{\wedge} \omega_{\theta,(A,p,q)}^{(0,1)}=0, \label{main_proposition_3}
\end{align}
where $i=1, \ldots, n$. Let us take an arbitrary $i=1, \ldots, n$, and fix it.

First, we consider the equality (\ref{main_proposition_1}). By the definitions of $j_{\theta,A}$ and $j_{\theta,(A;\mathcal{A})}$, it is enough to prove that the equality
\begin{equation*}
\mathrm{exp}\left( 2\pi \mathbf{i} \bm{e}_i^t A^t y \right)\star \varphi_{\theta, \mathcal{A}}(x, y)=\varphi_{\theta, \mathcal{A}}(x+\bm{e}_i, y)\star \mathrm{exp}\left( 2 \pi \mathbf{i} \bm{e}_i^t A^t y \right)
\end{equation*}
holds. The left hand side turns out to be
\begin{align*}
&\mathrm{exp}\left( 2\pi \mathbf{i} \bm{e}_i^t A^t y \right)\star \varphi_{\theta, \mathcal{A}}(x, y) \\
&=\mathrm{exp}\left( 2\pi \mathbf{i} \bm{e}_i^t A^t y+\left( \frac{\mathbf{i}}{2}x^t A^t \theta^t \mathcal{A} \theta A x+\frac{\mathbf{i}}{2}\left( y-\frac{1}{2}\theta A \bm{e}_i \right)^t \mathcal{A} \left( y-\frac{1}{2}\theta A \bm{e}_i \right)+\mathbf{i} x^t A^t \theta \mathcal{A} \left( y-\frac{1}{2}\theta A \bm{e}_i \right) \right) \right) \\
&=\mathrm{exp}\Biggl( 2\pi \mathbf{i} \bm{e}_i^t A^t y+\left( \frac{\mathbf{i}}{8}\bm{e}_i^t A^t \theta^t \mathcal{A} \theta A \bm{e}_i+\frac{\mathbf{i}}{2}\bm{e}_i^t A^t \theta^t \mathcal{A} \theta A x+\frac{\mathbf{i}}{2}\bm{e}_i^t A^t \theta \mathcal{A} y \right) \\
&\hspace{3.8mm} +\left( \frac{\mathbf{i}}{2}x^t A^t \theta^t \mathcal{A} \theta A x+\frac{\mathbf{i}}{2}y^t \mathcal{A} y+\mathbf{i} x^t A^t \theta \mathcal{A} y \right) \Biggr).
\end{align*}
On the other hand, the right hand side turns out to be
\begin{align*}
&\varphi_{\theta, \mathcal{A}}(x+\bm{e}_i, y)\star \mathrm{exp}\left( 2 \pi \mathbf{i} \bm{e}_i^t A^t y \right) \\
&=\mathrm{exp}\left( \left( \frac{\mathbf{i}}{2}\bm{e}_i^t A^t \theta^t \mathcal{A} \theta A \bm{e}_i+\mathbf{i}\bm{e}_i^t A^t \theta^t \mathcal{A} \theta A x+\mathbf{i}\bm{e}_i^t A^t \theta \mathcal{A} y \right)+\left( \frac{\mathbf{i}}{2}x^t A^t \theta^t \mathcal{A} \theta A x+\frac{\mathbf{i}}{2}y^t \mathcal{A} y+\mathbf{i} x^t A^t \theta \mathcal{A} y \right) \right) \\
&\hspace{3.8mm} \star \mathrm{exp}\left( 2 \pi \mathbf{i} \bm{e}_i^t A^t y \right) \\
&=\mathrm{exp}\Biggl( \left( \mathbf{i}\bm{e}_i^t A^t \theta \mathcal{A} \left( y+\frac{1}{2}\theta A\bm{e}_i \right)+\frac{\mathbf{i}}{2}\left( y+\frac{1}{2}\theta A \bm{e}_i \right)^t \mathcal{A} \left( y+\frac{1}{2}\theta A \bm{e}_i \right)+\mathbf{i} x^t A^t \theta \mathcal{A} \left( y+\frac{1}{2}\theta A \bm{e}_i \right) \right) \\
&\hspace{3.8mm} +\left( \frac{\mathbf{i}}{2}\bm{e}_i^t A^t \theta^t \mathcal{A} \theta A \bm{e}_i +\mathbf{i}\bm{e}_i^t A^t \theta^t \mathcal{A} \theta A x+\frac{\mathbf{i}}{2} x^t A^t \theta^t \mathcal{A} \theta A x \right)+2\pi \mathbf{i}\bm{e}_i^t A^t y \Biggr) \\
&=\mathrm{exp}\Biggl( 2\pi \mathbf{i} \bm{e}_i^t A^t y+\left( \frac{\mathbf{i}}{8}\bm{e}_i^t A^t \theta^t \mathcal{A} \theta A \bm{e}_i+\frac{\mathbf{i}}{2}\bm{e}_i^t A^t \theta^t \mathcal{A} \theta A x+\frac{\mathbf{i}}{2}\bm{e}_i^t A^t \theta \mathcal{A} y \right) \\
&\hspace{3.8mm} +\left( \frac{\mathbf{i}}{2}x^t A^t \theta^t \mathcal{A} \theta A x+\frac{\mathbf{i}}{2}y^t \mathcal{A} y+\mathbf{i} x^t A^t \theta \mathcal{A} y \right) \Biggr).
\end{align*}
Therefore, indeed, the equality
\begin{equation*}
\mathrm{exp}\left( 2\pi \mathbf{i} \bm{e}_i^t A^t y \right)\star \varphi_{\theta, \mathcal{A}}(x, y)=\varphi_{\theta, \mathcal{A}}(x+\bm{e}_i, y)\star \mathrm{exp}\left( 2 \pi \mathbf{i} \bm{e}_i^t A^t y \right)
\end{equation*}
holds.

Next, we consider the equality (\ref{main_proposition_2}). Note that 
\begin{equation*}
\mathcal{A}^{\theta}=\mathcal{A}\left( I_n+\frac{1}{2\pi}\theta \mathcal{A} \right)^{-1}=\mathcal{A}\left( I_n-\frac{1}{2\pi}\theta \mathcal{A} \right)=\mathcal{A}
\end{equation*}
since $\mathcal{A}\theta \mathcal{A}=O$, so we have
\begin{equation*}
j_{\theta,(A;\mathcal{A})}(T\bm{e}_i, x, y)=\mathrm{exp}\left( \frac{\mathbf{i}}{2}\bm{e}_i^t \mathcal{A} \bm{e}_i-\mathbf{i}\bm{e}_i^t \mathcal{A} \theta A x+\mathbf{i}\bm{e}_i^t \mathcal{A} y \right).
\end{equation*}
Hence, the equality (\ref{main_proposition_2}) holds if and only if the equality 
\begin{equation*}
\varphi_{\theta, \mathcal{A}}(x, y+\bm{e}_i)=\mathrm{exp}\left( \frac{\mathbf{i}}{2}\bm{e}_i^t \mathcal{A} \bm{e}_i-\mathbf{i}\bm{e}_i^t \mathcal{A} \theta A x+\mathbf{i}\bm{e}_i^t \mathcal{A} y \right)\star \varphi_{\theta, \mathcal{A}}(x, y)
\end{equation*}
holds. By direct calculations, although we see
\begin{align}
&\mathrm{exp}\left( \frac{\mathbf{i}}{2}\bm{e}_i^t \mathcal{A} \bm{e}_i-\mathbf{i}\bm{e}_i^t \mathcal{A} \theta A x+\mathbf{i}\bm{e}_i^t \mathcal{A} y \right)\star \varphi_{\theta, \mathcal{A}}(x, y) \notag \\
&=\mathrm{exp}\Biggl( \left( \frac{\mathbf{i}}{2}\bm{e}_i^t \mathcal{A} \bm{e}_i-\mathbf{i}\bm{e}_i^t \mathcal{A} \theta A x+\mathbf{i}\bm{e}_i^t \mathcal{A} y \right) \notag \\
&\hspace{3.8mm} +\left( \frac{\mathbf{i}}{2}x^t A^t \theta^t \mathcal{A} \theta A x+\frac{\mathbf{i}}{2}\left( y-\frac{1}{4\pi}\theta \mathcal{A} \bm{e}_i \right)^t \mathcal{A} \left( y-\frac{1}{4\pi}\theta \mathcal{A} \bm{e}_i \right)+\mathbf{i} x^t A^t \theta \mathcal{A} \left(y-\frac{1}{4\pi}\theta \mathcal{A} \bm{e}_i \right) \right) \Biggr), \label{main_proposition_2_1}
\end{align}
in fact, the local expression (\ref{main_proposition_2_1}) turns out to be
\begin{align*}
&\mathrm{exp}\left( \left( \frac{\mathbf{i}}{2}\bm{e}_i^t \mathcal{A} \bm{e}_i-\mathbf{i}\bm{e}_i^t \mathcal{A} \theta A x+\mathbf{i}\bm{e}_i^t \mathcal{A} y \right)+\left( \frac{\mathbf{i}}{2}x^t A^t \theta^t \mathcal{A} \theta A x+\frac{\mathbf{i}}{2}y^t \mathcal{A} y+\mathbf{i} x^t A^t \theta \mathcal{A} y \right) \right) \\
&=\varphi_{\theta, \mathcal{A}}(x, y+\bm{e}_i)
\end{align*}
since we assume that $\mathcal{A}\theta \mathcal{A}=O$.

Finally, we verify that $\varphi_{\theta, \mathcal{A}}$ satisfies the relation (\ref{main_proposition_3}). By the definition of $\varphi_{\theta, \mathcal{A}}$, we have 
\begin{equation*}
\bar{\partial}\varphi_{\theta, \mathcal{A}}=\mathbf{i} \Bigl( x^t A^t \theta^t \mathcal{A} \theta A-y^t \mathcal{A} \theta A \Bigr) \varphi_{\theta, \mathcal{A}} dx^{(0,1)}+\mathbf{i} \Bigl( y^t \mathcal{A}+x^t A^t \theta \mathcal{A} \Bigr) (-T^{-1}) \varphi_{\theta, \mathcal{A}} dx^{(0,1)}.
\end{equation*}
On the other hand, by the assumption $\mathcal{A}\theta \mathcal{A}=O$, $\omega_{\theta,(A,p,q;\mathcal{A})}^{(0,1)} \overset{\star}{\wedge} \varphi_{\theta, \mathcal{A}}-\varphi_{\theta, \mathcal{A}} \overset{\star}{\wedge} \omega_{\theta,(A,p,q)}^{(0,1)}$ turns out to be
\begin{align*}
&\omega_{\theta,(A,p,q;\mathcal{A})}^{(0,1)} \overset{\star}{\wedge} \varphi_{\theta, \mathcal{A}}-\varphi_{\theta, \mathcal{A}} \overset{\star}{\wedge} \omega_{\theta, (A,p,q)}^{(0,1)} \\
&=\mathbf{i} \left( -x^t A^t \theta^t \mathcal{A} \theta A+y^t \mathcal{A} \theta A+\frac{1}{4\pi} \Bigl( y^t \mathcal{A}+x^t A^t \theta^t \mathcal{A} \Bigr) \theta \mathcal{A} \theta A \right) \varphi_{\theta, \mathcal{A}} dx^{(0,1)} \\
&\hspace{3.8mm} +\mathbf{i} \left( -x^t A^t \theta \mathcal{A}-y^t \mathcal{A}-\frac{1}{4\pi} \Bigl( y^t \mathcal{A}+x^t A^t \theta^t \mathcal{A} \Bigr) \theta \mathcal{A} \right) (-T^{-1}) \varphi_{\theta, \mathcal{A}} dx^{(0,1)} \\
&=\mathbf{i} \Bigl( -x^t A^t \theta^t \mathcal{A} \theta A+y^t \mathcal{A} \theta A \Bigr) \varphi_{(\mathcal{A}, \theta)} dx^{(0,1)}+\mathbf{i} \Bigl( -y^t \mathcal{A}-x^t A^t \theta \mathcal{A} \Bigr) (-T^{-1}) \varphi_{\theta, \mathcal{A}} dx^{(0,1)}.
\end{align*}
Thus, we see
\begin{equation*}
\bar{\partial}\varphi_{\theta, \mathcal{A}}+\omega_{\theta,(A,p,q;\mathcal{A})}^{(0,1)} \overset{\star}{\wedge} \varphi_{\theta, \mathcal{A}}-\varphi_{\theta, \mathcal{A}} \overset{\star}{\wedge} \omega_{\theta,(A,p,q)}^{(0,1)}=0,
\end{equation*}
namely, $\varphi_{\theta, \mathcal{A}}$ satisfies the relation (\ref{main_proposition_3}).
\end{proof}
Clearly, the isomorphism $\varphi_{\theta, \mathcal{A}}$ in Proposition \ref{main_proposition} is a generalization of the isomorphism (\ref{trivialization}) to the noncommutative case, i.e., we can obtain the isomorphism (\ref{trivialization}) by considering the case $\theta=O$ for $\varphi_{\theta, \mathcal{A}}$ in Proposition \ref{main_proposition}: 
\begin{equation*}
\varphi_{\theta=O, \mathcal{A}}=\varphi_{\mathcal{A}}.
\end{equation*}

\subsection{The moduli space of $E_{\theta,(A,p,q;\mathcal{A})}^{\nabla}$}
The purpose of this subsection is to specify the moduli space of $E_{\theta,(A,p,q;\mathcal{A})}^{\nabla}$ for fixed matrices $A\in M(n;\mathbb{Z})$, $\mathcal{A}\in \mathrm{Sym}(n;\mathbb{R})$. More precisely, in the following Theorem \ref{main_theorem_1}, for fixed matrices $A\in M(n;\mathbb{Z})$, $\mathcal{A}\in \mathrm{Sym}(n;\mathbb{R})$, we determine the condition with respect to $p$, $q$, $p'$, $q'\in \mathbb{R}^n$ such that $E_{\theta,(A,p,q;\mathcal{A})}^{\nabla}\cong E_{\theta,(A,p',q';\mathcal{A})}^{\nabla}$. In particular, this result is also used in section 4 in order to construct the object which is mirror dual to each $E_{\theta,(A,p,q;\mathcal{A})}^{\nabla}$.
\begin{theo} \label{main_theorem_1}
We take matrices $A\in M(n;\mathbb{Z})$, $\mathcal{A}\in \mathrm{Sym}(n;\mathbb{R})$ arbitrary, and fix them. Then two objects $E_{\theta,(A,p,q;\mathcal{A})}^{\nabla}$, $E_{\theta,(A,p',q';\mathcal{A})}^{\nabla}$ are isomorphic to each other,
\begin{equation*}
E_{\theta,(A,p,q;\mathcal{A})}^{\nabla}\cong E_{\theta,(A,p',q';\mathcal{A})}^{\nabla},
\end{equation*} 
if and only if there exist $k$, $l\in \mathbb{Z}^n$ such that
\begin{equation*}
\left( \begin{array}{cc} p' \\ q' \end{array} \right)=\left( \begin{array}{cc} p \\ q \end{array} \right)+\left( \begin{array}{cc} \left( I_n-\left( \frac{1}{2\pi}\theta \mathcal{A} \right)^2 \right)^t & O \\ -A^t \theta \left( I_n-\left( \frac{1}{2\pi}\theta \mathcal{A} \right)^2 \right)^t & I_n \end{array} \right) \left( \begin{array}{cc} k \\ l \end{array} \right). 
\end{equation*}
\end{theo}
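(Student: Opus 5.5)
The plan is to work directly with the definition (\ref{kernel}) of isomorphism, specialized to the case $B=A$. Both objects have the same factor of automorphy $j_{\theta,(A;\mathcal{A})}$. By Lemma \ref{curvature_theta} they also have the same curvature, since the curvature does not depend on $(p,q)$. The connection $1$-forms differ only by the flat part
\begin{equation*}
\omega_{\theta,(A,p',q';\mathcal{A})}-\omega_{\theta,(A,p,q;\mathcal{A})}=-2\pi \mathbf{i}\Bigl( (p'-p)^t+(q'-q)^t T \Bigr)dy .
\end{equation*}
Here I am reading $\omega_{\theta,(A,p,q;\mathcal{A})}$ as containing the term $-2\pi\mathbf{i}(p^t+q^tT)dy$ inherited from $\omega_{(A,p,q)}$. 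So the problem reduces to deciding for which constant vectors $(\Delta p,\Delta q):=(p'-p,q'-q)$ there is an invertible $\varphi$ satisfying three conditions:
\begin{itemize}
\item $\varphi$ intertwines $j_{\theta,(A;\mathcal{A})}$ with itself under $\star$;
\item $\bar{\partial}\varphi$ plus the twisted commutator $\omega'^{(0,1)}\overset{\star}{\wedge}\varphi-\varphi\overset{\star}{\wedge}\omega^{(0,1)}$ vanishes.
\end{itemize}

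For sufficiency I would use the ansatz $\varphi_{(a,b)}(x,y)=\exp\bigl(2\pi\mathbf{i}(a^t x+b^t y)\bigr)$ with $a,b\in\mathbb{R}^n$, and compute each condition with the Bopp shift formula (\ref{bopp}), exactly as in the proofs of Lemma \ref{cocycle_theta} and Proposition \ref{main_proposition}.

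First, the intertwining relation with $j_{\theta,(A;\mathcal{A})}(T\bm{e}_i,\cdot)$ requires $\varphi(x,y+\bm{e}_i)$ to equal $\varphi$ up to the star-commutator phase of $\exp(\mathbf{i}\bm{e}_i^t\mathcal{A}^\theta y)$ with $\exp(2\pi\mathbf{i}b^ty)$. This gives a condition of the form $b-\frac{1}{2\pi}\mathcal{A}^\theta\theta b\in\mathbb{Z}^n$, up to transposes. The relation with $j_{\theta,(A;\mathcal{A})}(\bm{e}_i,\cdot)$ gives $a+(\text{a }\theta A\text{-correction of }b)\in\mathbb{Z}^n$.

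Second, the $(0,1)$-equation only involves the star-commutators of the linear parts of $\omega_{\theta,(A,p,q;\mathcal{A})}$ with $\varphi$. These are translations by $\frac{1}{2}\theta b$ of the terms $-2\pi\mathbf{i}x^tA^t(I_n+\frac{1}{2\pi}\theta\mathcal{A})dy$, $\mathbf{i}y^t\mathcal{A}\theta A\,dx$ and $-\mathbf{i}y^t\mathcal{A}\,dy$. Together with $d\varphi$, they produce the constant $1$-form that must cancel the flat difference above. The difference contains no $dx$ term, but $dx$ and $dy$ are not independent in the $(0,1)$-part. So I would first rewrite the difference as $2\pi\mathbf{i}(\text{const})dy$ plus $(q'-q)$ times a holomorphic combination, as in the commutative case where $\exp(-2\pi\mathbf{i}l^tx)$ shifts $q$ by $l$.

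Solving these linear relations, I expect the integer data to be $k=b$-type and $l=a$-type, with the shifts $\Delta p=(I_n-(\frac{1}{2\pi}\theta\mathcal{A})^2)^tk$ and $\Delta q=l-A^t\theta(I_n-(\frac{1}{2\pi}\theta\mathcal{A})^2)^tk$. The factor $I_n-(\frac{1}{2\pi}\theta\mathcal{A})^2$ should arise exactly as in Lemma \ref{curvature_theta}, from combining $(I_n+\frac{1}{2\pi}\theta\mathcal{A})$ with the $(I_n-\frac{1}{2\pi}\theta\mathcal{A})$ coming from Bopp shifts. The $-A^t\theta$ block should come from the $-\pi\mathbf{i}\bm{e}_i^tA^t\theta Ax$ part of $j_{\theta,(A;\mathcal{A})}(\bm{e}_i,\cdot)$, which is what forces an $x$-dependent factor.

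For necessity, suppose $\varphi$ is any isomorphism. I would multiply it by $\varphi_{(a,b)}^{-1}$, using the $\star$-inverse, to reduce to showing that an automorphism-type equation forces $(\Delta p,\Delta q)$ into the lattice. Concretely, set $\psi:=\varphi$ and note that the two intertwining relations say that $\psi$ is a section of the ``degree zero'' bundle twisted by $j_{\theta,(A;\mathcal{A})}$ on both sides. Because $\exp(\mathbf{i}\bm{e}_i^t\mathcal{A}^\theta y)$ and $\exp(2\pi\mathbf{i}\bm{e}_i^tA^ty)$ act on plane waves by Bopp translations, $\psi$ admits a Fourier expansion $\sum_{\lambda,\mu}c_{\lambda\mu}\exp(2\pi\mathbf{i}(\lambda^tx+\mu^ty))$, with $(\lambda,\mu)$ in the lattice determined in the sufficiency step. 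Applying the $(0,1)$-equation termwise gives $c_{\lambda\mu}\cdot(\text{linear form in }(\lambda,\mu,\Delta p,\Delta q))=0$. Since $\psi\neq0$, some coefficient is nonzero, and the vanishing of that linear form puts $(\Delta p,\Delta q)$ in the stated lattice.

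The main obstacle is this necessity step. One must justify the Fourier expansion and the claim that the star product acts diagonally on the modes, given that the phases $\exp(-\mathbf{i}\bm{e}_i^t\mathcal{A}^\theta\theta Ax)$ mix $x$ and $y$. This is also where the standing assumptions $\det(I_n+\frac{1}{2\pi}\theta\mathcal{A})\neq0$ and (\ref{cocycle_theta_mat}) must be used. I would try first to conjugate by an explicit Gaussian-type function, in the spirit of $\varphi_{\theta,\mathcal{A}}$ from Proposition \ref{main_proposition}, to make the automorphy factors $x$-independent before expanding.
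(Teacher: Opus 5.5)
Your proposal is correct, but it organizes the argument differently from the paper.

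\textbf{The paper's route.} The paper does not separate the two directions.
\begin{itemize}
\item It expands $\phi_{\theta}$ in $y$ only, using (\ref{main_theorem_1_2}) alone. That relation amounts to periodicity in $y$ with respect to $(I_n-\frac{1}{2\pi}\theta\mathcal{A})^{-1}\mathbb{Z}^n$.
\item It then reads (\ref{main_theorem_1_3}) as a first-order ODE in $x$ for each coefficient $\phi_{\theta,k}$. The solution is an exponential whose exponent contains $T^{-1}$, so it is complex in general.
\item Only at the end does it impose (\ref{main_theorem_1_1}). This single step forces the exponent to be real, which fixes $p'-p$. It matches the exponent with the character $\exp(-2\pi\mathbf{i}k^t(I_n-\frac{1}{2\pi}\theta\mathcal{A})\theta A\bm{e}_i)$, which fixes $q'-q$ up to $l\in\mathbb{Z}^n$. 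It also kills all but one mode.
\end{itemize}
So both implications, and the one-dimensionality of the solution space, come out of one computation.

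\textbf{Your route.} You check sufficiency with a real plane-wave ansatz. This is exactly the isomorphism the paper ends up with, namely $b=(I_n-\frac{1}{2\pi}\theta\mathcal{A})^tk$ and $a=A^t\theta b-l$. For necessity you use both intertwining relations first, expand in both $x$ and $y$, and turn (\ref{main_theorem_1_3}) into an algebraic condition mode by mode. Your route replaces the ODE by linear algebra; the paper's route avoids any Fourier expansion in $x$.

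\textbf{The obstacle you flag is not a real one.} The Gaussian conjugation you propose is unnecessary. In the spirit of Proposition \ref{main_proposition}, it would in any case only be available when $\mathcal{A}\theta\mathcal{A}=O$. The argument closes as follows.
\begin{itemize}
\item Both objects carry the same $j_{\theta,(A;\mathcal{A})}$, and $\star$ differentiates only in $y$. Hence the factors $\exp(-\mathbf{i}\bm{e}_i^t\mathcal{A}^{\theta}\theta Ax)$, $\exp(-\pi\mathbf{i}\bm{e}_i^tA^t\theta Ax)$ and the constants are $\star$-central. They cancel in $j\star\psi\star j^{-1}$ and do not mix $x$ and $y$.
\item Conjugation by $\exp(\mathbf{i}\alpha^ty)$ acts as the translation $y\mapsto y-\frac{1}{2\pi}\theta\alpha$. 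So the two intertwining relations say precisely that $\psi$ is invariant under the lattice $\Gamma\subset\mathbb{R}^{2n}$ generated by $(0,(I_n-\frac{1}{2\pi}\theta\mathcal{A})^{-1}\bm{e}_i)$ and $(\bm{e}_i,\theta A\bm{e}_i)$.
\item $\Gamma$ has full rank, because $\det(I_n-\frac{1}{2\pi}\theta\mathcal{A})=\det(I_n+\frac{1}{2\pi}\theta\mathcal{A})\neq0$.
\item Thus $\psi$ is a smooth function on the torus $\mathbb{R}^{2n}/\Gamma$, and its Fourier modes are indexed by the dual lattice, which is exactly your lattice.
\item The left side of (\ref{main_theorem_1_3}) is a constant-coefficient first-order operator, since the $\star$-commutator with $c^ty$ is $-\frac{\mathbf{i}}{2\pi}c^t\theta\frac{\partial}{\partial y}$. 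Hence it acts diagonally on modes.
\end{itemize}

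\textbf{One step to make explicit.} For a nonzero mode $(\lambda,\mu)$, the condition is one complex $n$-vector equation in the real unknowns $(p'-p,q'-q)$.
\begin{itemize}
\item Its imaginary part, together with invertibility of $\mathrm{Im}(T^{-1})=-T^{-1}Y\bar{T}^{-1}$, gives $p'-p=(I_n+\frac{1}{2\pi}\theta\mathcal{A})^t\mu$.
\item Its real part then gives $q'-q$.
\end{itemize}
This splitting is what lets a single nonzero coefficient pin $(p'-p,q'-q)$ to the stated lattice. The paper uses the same fact implicitly when it requires its exponent to be real.
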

\begin{proof}
Let us compute a morphism $\phi_{\theta} : E_{\theta,(A,p,q;\mathcal{A})}^{\nabla} \to E_{\theta,(A,p',q';\mathcal{A})}^{\nabla}$ explicitly, namely, it is enough to compute a locally defined smooth function $\phi_{\theta}$ satisfying the conditions
\begin{align}
&\phi_{\theta}(x+\bm{e}_i, y)=\mathrm{exp}\left( 2\pi \mathbf{i} \bm{e}_i^t A^t y \right)\star \phi_{\theta}(x, y)\star \mathrm{exp}\left( -2\pi \mathbf{i} \bm{e}_i^t A^t y \right), \label{main_theorem_1_1} \\
&\phi_{\theta}(x, y+\bm{e}_i)=\mathrm{exp}\left( \mathbf{i} \bm{e}_i^t \mathcal{A}^{\theta} y \right)\star \phi_{\theta}(x, y)\star \mathrm{exp}\left( -\mathbf{i} \bm{e}_i^t \mathcal{A}^{\theta} y \right), \label{main_theorem_1_2} \\
&\bar{\partial}\phi_{\theta}+\omega_{\theta,(A,p',q';\mathcal{A})}^{(0,1)} \overset{\star}{\wedge} \phi_{\theta}-\phi_{\theta} \overset{\star}{\wedge} \omega_{\theta,(A,p,q;\mathcal{A})}^{(0,1)}=0, \label{main_theorem_1_3}   
\end{align}
where $i=1, \ldots, n$. Let us take an arbitrary $i=1, \ldots, n$, and fix it.

First, we consider the condition (\ref{main_theorem_1_2}). By the formula (\ref{bopp}), we see that $\phi_{\theta}$ can be expressed locally as follows (the Fourier expansion): 
\begin{equation}
\phi_{\theta}(x, y)=\sum_{k\in \mathbb{Z}^n} \phi_{\theta,k}(x) \ \mathrm{exp}\left( 2\pi \mathbf{i} k^t \left(I_n-\frac{1}{2\pi}\theta \mathcal{A} \right) y \right), \label{fourier_theta}
\end{equation} 
where $\phi_{\theta,k}$ is a locally defined smooth function of $x$. Then, in fact, we have
\begin{align*}
&\mathrm{exp}\left( \mathbf{i} \bm{e}_i^t \mathcal{A}^{\theta} y \right)\star \phi_{\theta}(x, y) \\
&=\sum_{k\in \mathbb{Z}^n} \phi_{\theta,k}(x) \ \mathrm{exp}\left( \mathbf{i}\bm{e}_i^t \mathcal{A}^{\theta} y+2\pi \mathbf{i}k^t \left( I_n-\frac{1}{2\pi}\theta \mathcal{A} \right) \left( y-\frac{1}{4\pi}\theta (\mathcal{A}^{\theta})^t \bm{e}_i \right) \right) \\
&=\sum_{k\in \mathbb{Z}^n} \phi_{\theta,k}(x) \ \mathrm{exp}\left( -\frac{\mathbf{i}}{2}k^t \theta \mathcal{A} \bm{e}_i+\mathbf{i}\left( \bm{e}_i^t \mathcal{A}^{\theta} +2\pi k^t \left( I_n-\frac{1}{2\pi}\theta \mathcal{A} \right) \right)y \right),
\end{align*}
so 
\begin{align*}
&\mathrm{exp}\left( \mathbf{i} \bm{e}_i^t \mathcal{A}^{\theta} y \right)\star \phi_{\theta}(x, y)\star \mathrm{exp}\left( -\mathbf{i} \bm{e}_i^t \mathcal{A}^{\theta} y \right) \\
&=\sum_{k\in \mathbb{Z}^n} \phi_{\theta,k}(x) \ \mathrm{exp}\left( -\frac{\mathbf{i}}{2}k^t \theta \mathcal{A} \bm{e}_i+\mathbf{i}\left( \bm{e}_i^t \mathcal{A}^{\theta} +2\pi k^t \left( I_n-\frac{1}{2\pi}\theta \mathcal{A} \right) \right) \left( y-\frac{1}{4\pi}\theta (\mathcal{A}^{\theta})^t \bm{e}_i \right)-\mathbf{i} \bm{e}_i^t \mathcal{A}^{\theta} y \right) \\
&=\sum_{k\in \mathbb{Z}^n} \phi_{\theta,k}(x) \ \mathrm{exp}\left( -\mathbf{i}k^t \theta \mathcal{A} \bm{e}_i+2\pi \mathbf{i} k^t \left(I_n-\frac{1}{2\pi}\theta \mathcal{A} \right) y \right) \\
&=\phi_{\theta}(x, y+\bm{e}_i).
\end{align*}
Therefore, indeed, the local expression (\ref{fourier_theta}) satisfies the condition (\ref{main_theorem_1_2}).

Next, we consider the condition (\ref{main_theorem_1_3}). We put
\begin{equation*}
\alpha^{\theta}:=\mathcal{A}(-T^{-1}-\theta A).
\end{equation*}
By the definitions of $\omega_{\theta,(A,p,q;\mathcal{A})}^{(0,1)}$ and $\omega_{\theta,(A,p',q';\mathcal{A})}^{(0,1)}$, now, it is enough to consider the following relation:
\begin{align}
\bar{\partial}\phi_{\theta}&+\left( -\mathbf{i}y^t \alpha^{\theta}-2\pi \mathbf{i} \left( p'^t (-T^{-1})-q'^t \right) \right) dx^{(0,1)}\overset{\star}{\wedge} \phi_{\theta} \notag \\
&-\phi_{\theta} \overset{\star}{\wedge} \left( -\mathbf{i}y^t \alpha^{\theta}-2\pi \mathbf{i} \left( p^t (-T^{-1})-q^t \right) \right) dx^{(0,1)}=0. \label{diff_eq_phi_theta}
\end{align}
Furthermore, by substituting the local expression (\ref{fourier_theta}) to the relation (\ref{diff_eq_phi_theta}), we obtain the following system of differential equations, where $i=1, \ldots, n$:
\begin{align*}
\frac{\partial \phi_{\theta,k}}{\partial x_i}(x)+\Biggl( &2\pi \mathbf{i}k^t \left( I_n-\frac{1}{2\pi}\theta \mathcal{A} \right) \left( \left( I_n+\frac{1}{2\pi}\theta \mathcal{A} \right) (-T^{-1})-\frac{1}{2\pi}\theta \mathcal{A} \theta A \right) \\
&-2\pi \mathbf{i}\left( \left( p'^t-p^t \right) (-T^{-1})-q'^t+q^t \right) \Biggr)\bm{e}_i \ \phi_{\theta,k}(x)=0.   
\end{align*} 
Hence, we obtain the solution
\begin{align*}
\phi_{\theta,k}(x)=C_k \ \mathrm{exp}\Biggl( &2\pi \mathbf{i} \left( k^t \left( I_n-\frac{1}{2\pi}\theta \mathcal{A} \right) \left( I_n+\frac{1}{2\pi}\theta \mathcal{A} \right)-p'^t+p^t \right)T^{-1}x \\
&-2\pi \mathbf{i} \left( -\frac{1}{2\pi}k^t \left( I_n-\frac{1}{2\pi}\theta \mathcal{A} \right)\theta \mathcal{A} \theta A+q'^t-q^t \right)x \Biggr), 
\end{align*}
where $C_k\in \mathbb{C}$ is an arbitrary constant. For simplicity, we set
\begin{align*}
&\bm{a}_{(p,p')}:=\left( I_n+\frac{1}{2\pi}\theta \mathcal{A} \right)^t \left( I_n-\frac{1}{2\pi}\theta \mathcal{A} \right)^t k-p'+p, \\
&\bm{b}_{(q,q')}:=-\frac{1}{2\pi}A^t \theta^t \mathcal{A} \theta^t \left( I_n-\frac{1}{2\pi}\theta \mathcal{A} \right)^t k+q'-q,
\end{align*}
namely,
\begin{equation*}
\phi_{\theta,k}(x)=C_k \ \mathrm{exp}\left( 2\pi \mathbf{i} \bm{a}_{(p,p')}^t T^{-1}x-2\pi \mathbf{i} \bm{b}_{(q,q')}^t x \right).
\end{equation*}

Finally, we consider the condition (\ref{main_theorem_1_1}). Simlarly as in the case of the condition (\ref{main_theorem_1_2}), we can compute as follows:
\begin{align*}
&\mathrm{exp}\left( 2\pi \mathbf{i} \bm{e}_i^t A^t y \right)\star \phi_{\theta}(x, y)\star \mathrm{exp}\left( -2\pi \mathbf{i} \bm{e}_i^t A^t y \right) \\
&=\sum_{k\in \mathbb{Z}^n} \phi_{\theta,k}(x) \ \mathrm{exp}\left( -2\pi \mathbf{i} k^t \left( I_n-\frac{1}{2\pi}\theta \mathcal{A} \right) \theta A \bm{e}_i+2\pi \mathbf{i} k^t \left(I_n-\frac{1}{2\pi}\theta \mathcal{A} \right) y \right).
\end{align*}
On the other hand, it is clear that 
\begin{equation*}
\phi_{\theta}(x+\bm{e}_i, y)=\sum_{k\in \mathbb{Z}^n} \phi_{\theta,k}(x+\bm{e}_i) \ \mathrm{exp}\left( 2\pi \mathbf{i} k^t \left(I_n-\frac{1}{2\pi}\theta \mathcal{A} \right) y \right).
\end{equation*}
Thus, if there exist $k$, $l\in \mathbb{Z}^n$ such that 
\begin{equation*}
2\pi \mathbf{i} \bm{a}_{(p,p')}^t T^{-1}-2\pi \mathbf{i} \bm{b}_{(q,q')}^t =-2\pi \mathbf{i} k^t \left( I_n-\frac{1}{2\pi}\theta \mathcal{A} \right) \theta A-2\pi \mathbf{i}l^t, 
\end{equation*}
i.e., if there exist $k$, $l\in \mathbb{Z}^n$ such that
\begin{equation*}
\left( \begin{array}{cc} p' \\ q' \end{array} \right)=\left( \begin{array}{cc} p \\ q \end{array} \right)+\left( \begin{array}{cc} \left( I_n-\left( \frac{1}{2\pi}\theta \mathcal{A} \right)^2 \right)^t & O \\ -A^t \theta \left( I_n-\left( \frac{1}{2\pi}\theta \mathcal{A} \right)^2 \right)^t & I_n \end{array} \right) \left( \begin{array}{cc} k \\ l \end{array} \right),
\end{equation*}
then we should put $C_{k'}=0$ for any $k'\not=k$ ($k'\in \mathbb{Z}^n$):
\begin{equation*}
\phi_{\theta}(x, y)=C \ \mathrm{exp}\left( -2\pi \mathbf{i} \left( k^t \left( I_n-\frac{1}{2\pi}\theta \mathcal{A} \right)\theta A+l^t \right)x+2\pi \mathbf{i}k^t \left( I_n-\frac{1}{2\pi}\theta \mathcal{A} \right)y \right), 
\end{equation*}
where $C:=C_k\in \mathbb{C}$. If not, we must set $C_k=0$ for any $k\in \mathbb{Z}^n$ in order to construct $\phi_{\theta}$ satisfying the conditions (\ref{main_theorem_1_1}), (\ref{main_theorem_1_2}), (\ref{main_theorem_1_3}), namely, $\phi_{\theta}$ which satisfies the conditions (\ref{main_theorem_1_1}), (\ref{main_theorem_1_2}), (\ref{main_theorem_1_3}) is $\phi_{\theta}=0$ only. This completes the proof. 

In particular, the statement of this theorem implies that the moduli space of $E_{\theta,(A,p,q;\mathcal{A})}^{\nabla}$ for fixed matrices $A\in M(n;\mathbb{Z})$, $\mathcal{A}\in \mathrm{Sym}(n;\mathbb{R})$ is homeomorphic to the $2n$-dimensional real torus
\begin{equation*}
\Biggl. \mathbb{R}^{2n} \Biggr/ \left( \begin{array}{cc} \left( I_n-\left( \frac{1}{2\pi}\theta \mathcal{A} \right)^2 \right)^t & O \\ -A^t \theta \left( I_n-\left( \frac{1}{2\pi}\theta \mathcal{A} \right)^2 \right)^t & I_n \end{array} \right) \mathbb{Z}^{2n}. 
\end{equation*}
\end{proof}

We denote the set of isomorphism classes (the moduli space) of noncommutative objects $E_{\theta,(A,p,q;\mathcal{A})}^{\nabla}$ for fixed matrices $A\in M(n;\mathbb{Z})$, $\mathcal{A}\in \mathrm{Sym}(n;\mathbb{R})$ by
\begin{equation*}
\mathcal{M}_{\theta}(A;\mathcal{A}).
\end{equation*} 

We comment on several results related to Theorem \ref{main_theorem_1}. By the theorem of Appell-Humbert, we can define the holomorphic line bundle on $X^n=\mathbb{C}^n/(\mathbb{Z}^n\oplus T\mathbb{Z}^n)$ for a given pair consisting of a Hermitian metric $H$ on $\mathbb{C}^n$ satisfying an appropriate condition and a semi-representation $U : \mathbb{Z}^n\oplus T\mathbb{Z}^n \to U(1)$. Also, the corresponding factor of automorphy is described explicitly by using those $H$ and $U$. Moreover, the theorem of Appell-Humbert can be generalized to the higher rank case when we focus on projectively flat (semi-homogeneous) bundles on $X^n$ \cite{dg-vect}. In our prior results \cite{bijection}, by using the classification result of simple projectively flat bundles on complex tori \cite[Theorem 6.1]{proj-flat}, \cite[Proposition 6.17 (1)]{semi-hom}, semi-representations (that cannot be determined by the first Chern character) of (a certain) simple projectively flat bundles on $X^n$ are investigated, including the higher rank case\footnote{Precisely speaking, we need to rescale the lattice $\mathbb{Z}^n \oplus T\mathbb{Z}^n$ in $\mathbb{C}^n$ to the lattice $2\pi (\mathbb{Z}^n \oplus T\mathbb{Z}^n)$ in $\mathbb{C}^n$ in order to make our setting correspond to the arguments in \cite{bijection}.}. This result is also used in order to determine the data in $Fuk(\check{X}^n)$ corresponding to semi-representations \cite{bijection}. The statement of Theorem \ref{main_theorem_1} in the case $\theta=O$ can be regarded as a special case of \cite[Theorem 3.4]{bijection}. On the other hand, when we consider the case of elliptic curves, i.e., the case $n=1$, the terms in the Poisson bivector (\ref{P_bivector}) corresponding $\theta_1$, $\theta_3\in \mathrm{Alt}(n;\mathbb{R})$ vanish, so it becomes
\begin{equation*}
\sum_{i, j=1}^n (\theta_2)_{ij} \frac{\partial}{\partial x_i}\wedge \frac{\partial}{\partial y_j}.
\end{equation*}
The noncommutative deformation of $X^1$ treated in \cite{p-s-nc} is essentially associated to the nonformal deformation quantization by this Poisson bivector, and \cite[Proposition 2.1 (b)]{p-s-nc} is an analogue of Theorem \ref{main_theorem_1} in this case.

\section{A deformation of objects of the Fukaya category over $\check{X}^n$}
In this section, we construct the deformation of objects $\mathcal{L}_{(A,p,q)}^{\nabla}$ of $Fuk_{\rm sub}(\check{X}^n)$ which are mirror dual to noncommutative objects $E_{\theta,(A,p,q;\mathcal{A})}^{\nabla}$, and study some properties of them. In particular, the analogue of Theorem \ref{main_theorem_1} is given in Theorem \ref{main_theorem_2}, and a generalization of the SYZ transform 
\begin{equation*}
\mathcal{L}_{(A,p,q)}^{\nabla} \ \longmapsto \ E_{(A,p,q)}^{\nabla}
\end{equation*}
to the setting which includes the noncommutative parameter $\theta$ is given in Theorem \ref{main_theorem_3}.

\subsection{A deformation of $\mathcal{L}_{(A,p,q)}^{\nabla}$}
The purpose of this subsection is to construct the deformation of objects $\mathcal{L}_{(A,p,q)}^{\nabla}$ of $Fuk_{\rm sub}(\check{X}^n)$ associated to the deformation from $\check{X}^n$ to the mirror partner of $X_{\theta}^n$. This construction is based on an idea which is proposed in our previous paper \cite{b-field}.

In order to achieve this purpose, we first need to consider the mirror partner of $X_{\theta}^n$ according to the mirror transform (\ref{mirror}). In the complex geometry side, the Poisson bivector $\Pi_{\theta}$ (see the formula (\ref{theta})) causes the $\beta$-field transform 
\begin{align}
&\mathcal{I}_T(\Pi_{\theta}) \left( \frac{\partial}{\partial x}^t, \frac{\partial}{\partial y}^t, dx^t, dy^t \right) \notag \\
&=\left( \frac{\partial}{\partial x}^t, \frac{\partial}{\partial y}^t, dx^t, dy^t \right) \left( \begin{array}{cccc} I_n & O & O & O \\ O & I_n & O & -\theta \\ O & O & I_n & O \\ O & O & O & I_n \end{array} \right) \mathcal{I}_T \left( \begin{array}{cccc} I_n & O & O & O \\ O & I_n & O & \theta \\ O & O & I_n & O \\ O & O & O & I_n \end{array} \right) \label{g_c_c_theta}
\end{align}
of $\mathcal{I}_T$ over $X^n$ (see the formula (\ref{g_c_c}) for the details of the definition of $\mathcal{I}_T$) from the viewpoint of generalized complex geometry. Hence, we can calculate
\begin{align}
&\check{\mathcal{I}}_T(\Pi_{\theta}) \left( \frac{\partial}{\partial \check{x}}^t, \frac{\partial}{\partial \check{y}}^t, d\check{x}^t, d\check{y}^t \right) \notag \\
&=\left( \frac{\partial}{\partial \check{x}}^t, \frac{\partial}{\partial \check{y}}^t, d\check{x}^t, d\check{y}^t \right) \left( \begin{array}{cccc} I_n & O & O & O \\ O & I_n & O & O \\ O & O & I_n & O \\ O & -\theta & O & I_n \end{array} \right) \check{\mathcal{I}}_T \left( \begin{array}{cccc} I_n & O & O & O \\ O & I_n & O & O \\ O & O & I_n & O \\ O & \theta & O & I_n \end{array} \right) \label{g_c_s_theta}
\end{align}
via the mirror transform (\ref{mirror}) (see the formula (\ref{g_c_s}) for the details of the definition of $\check{\mathcal{I}}_T$). This fact implies that the mirror partner of $X_{\theta}^n$ is given by the $2n$-dimensional real torus $\mathbb{R}^{2n}/\mathbb{Z}^{2n}$ with the complexified symplectic form $\tilde{\omega}_{\theta}^{\vee}$ which is obtained by twisting $\tilde{\omega}^{\vee}$ with a B-field depending on $\theta$: 
\begin{equation*}
\tilde{\omega}_{\theta}^{\vee}:=\tilde{\omega}^{\vee}+\frac{1}{2}d\check{y}^t \theta d\check{y}=\mathbf{i}\omega^{\vee}+\left(B^{\vee}+\frac{1}{2}d\check{y}^t \theta d\check{y} \right).
\end{equation*}
In particular, note that the symplectic form $\omega^{\vee}$ is preserved. For later convenience, let us put
\begin{equation*}
B_{\theta}^{\vee}:=\frac{1}{2}d\check{y}^t \theta d\check{y},
\end{equation*}
namely,
\begin{equation*}
\tilde{\omega}_{\theta}^{\vee}=\mathbf{i}\omega^{\vee}+\bigl( B^{\vee}+B_{\theta}^{\vee} \bigr).
\end{equation*}
Hereafter, we denote this complexified symplectic torus (the mirror partner of $X_{\theta}^n$ which is obtained by using the mirror transform (\ref{mirror})) by
\begin{equation*}
\check{X}_{\theta}^n:=\left( \mathbb{R}^{2n}/\mathbb{Z}^{2n}, \ \tilde{\omega}_{\theta}^{\vee}=d\check{x}^t (-(T^{-1})^t) d\check{y}+\frac{1}{2}d\check{y}^t \theta d\check{y} \right).
\end{equation*}
Of course, we can regard $\{ \check{O}_m^l \}_{(l;m)\in I}$ as an open covering of $\check{X}_{\theta}^n$.

We construct the deformation of objects $\mathcal{L}_{(A,p,q)}^{\nabla}$ of $Fuk_{\rm sub}(\check{X}^n)$ associated to the deformation from $\check{X}^n$ to $\check{X}_{\theta}^n$. They are also mirrors of noncommutative objects $E_{\theta,(A,p,q;\mathcal{A})}^{\nabla}$. 

Before starting main discussions, we need to give a remark. As recalled at the beginning of subsection 2.3, we usually take a smooth complex line bundle $\mathcal{L}\to L$ with a unitary connection $\nabla_{\mathcal{L}}$ which satisfies the condition (\ref{curv_b_field}):
\begin{equation*}
\Omega_{\nabla_{\mathcal{L}}}=\Bigl. 2\pi \mathbf{i}B \Bigr|_L
\end{equation*}
as an object of the Fukaya category over a given complexified symplectic manifold $M_{\omega, B}$, where $L$ is a Lagrangian submanifold in $M_{\omega, B}$, and $\Omega_{\nabla_{\mathcal{L}}}$ denotes the curvature form of $\nabla_{\mathcal{L}}$ (see also Remark \ref{fuk_ob_tori}). Here, note that there exists such a unitary connection $\nabla_{\mathcal{L}}$ if and only if 
\begin{equation}
[B]\in H^2(L,\mathbb{Z}). \label{integrity}
\end{equation}
On the other hand, under the deformation from $\check{X}^n$ to $\check{X}_{\theta}^n$, although the symplectic form $\omega^{\vee}$ is preserved, the B-field $B^{\vee}$ is twisted by $B_{\theta}^{\vee}$. In particular, we see
\begin{equation*}
\Bigl. 2\pi \mathbf{i} \bigl( B^{\vee}+B_{\theta}^{\vee} \bigr) \Bigr|_{L_{(A,p)}}=\pi \mathbf{i} d\check{x}^t A^t \theta A d\check{x}
\end{equation*}
by the assumption $AT\in \mathrm{Sym}(n;\mathbb{C})$. This implies that it is natural to consider $L_{(A,p)}$ is preserved and $\mathcal{O}_{(A,p,q)}^{\nabla}$ is deformed to something. However, in general, for a given $\theta$, there exists a case such that a given Lagrangian submanifold $L_{(A,p)}$ in $\check{X}_{\theta}^n$ does not satisfy the integrity condition (\ref{integrity}):
\begin{equation*}
\bigl[ B^{\vee}+B_{\theta}^{\vee} \bigr]\not\in H^2(L_{(A,p)},\mathbb{Z})
\end{equation*}  
even though we can consider the deformation $E_{\theta,(A,p,q;\mathcal{A})}^{\nabla}$ of each $E_{(A,p,q)}^{\nabla}$ depending on $\theta$ (consider the case that $\theta \in M(n;\mathbb{R})\backslash M(n;\mathbb{Q})$ and $A=I_n\in M(n;\mathbb{Z})$ for instance). In order to overcome this problem, in our previous paper \cite{b-field}, we proposed to employ a ``twisted'' line bundle on each Lagrangian submanifold $L$ associated to the flat gerbe whose 1-connection is $2\pi \mathbf{i}B|_L$, instead of ``usual'' line bundles. Concerning these facts, in this paper, we construct the deformation of objects $\mathcal{L}_{(A,p,q)}^{\nabla}$ based on this idea\footnote{Strictly speaking, we do not discuss the compatibility of our proposal and some structures required to define (an analogue of) the Fukaya category over $\check{X}_{\theta}^n$. However, maybe, our proposal is actually compatible with such requirements. See also subsection 5.4 in \cite{b-field}.}. Also, perhapes it is no coincidence that we can apply the idea proposed in \cite{b-field} to our setting. We explain the relation between this paper and \cite{b-field} in section 5. 

Hereafter, for simplicity, we denote the symplectic torus $(\mathbb{R}^{2n}/\mathbb{Z}^{2n}, \omega^{\vee})$ by $\check{X}_{\omega^{\vee}}^n$. By using the B-field $B^{\vee}+B_{\theta}^{\vee}$, we define a flat gerbe on the Lagrangian submanifold $L_{(A,p)}$ as follows. In fact, the effect of $B^{\vee}$ vanishes on $L_{(A,p)}$, i.e.,
\begin{equation*}
\Bigl. B^{\vee} \Bigr|_{L_{(A,p)}}=0
\end{equation*}  
since we assume that $AT\in \mathrm{Sym}(n;\mathbb{C})$, so essentially, it is enough to consider the effect of $B_{\theta}^{\vee}$ only on $L_{(A,p)}$. Moreover, note that the flat gerbe given in this subsection can also be regarded as the restriction of the flat gerbe defined globally on $\check{X}_{\omega^{\vee}}^n$ whose 1-connection is $2\pi \mathbf{i}(B^{\vee}+B_{\theta}^{\vee})$ to $L_{(A,p)}$. Let us consider an open covering of $L_{(A,p)}$. By taking a suitable subset $I'$ of $I$, we can construct an open covering $\{ \check{O}_m^l(A,p) \}_{(l;m)\in I'}$ of $L_{(A,p)}$:
\begin{equation*}
\check{O}_m^l(A,p):=\check{O}_m^l \cap L_{(A,p)}, 
\end{equation*}
where $(l;m)\in I'$. We set
\begin{equation*}
B_{\theta,A}^{\vee}:=\Bigl. B_{\theta}^{\vee} \Bigr|_{L_{(A,p)}},
\end{equation*}
i.e.,
\begin{equation*}
B_{\theta,A}^{\vee}=\frac{1}{2}d\check{x}^t A^t \theta A d\check{x}.
\end{equation*}
For $B_{\theta,A}^{\vee}$, let us take a 1-form $\beta_{\theta,A}^{\vee}$ which is expressed locally as
\begin{equation*}
\beta_{\theta,A}^{\vee}=\frac{1}{2}\check{x}^t A^t \theta A d\check{x},
\end{equation*}
and it is clear that $B_{\theta,A}^{\vee}=d\beta_{\theta,A}^{\vee}$. We sometimes use the notation $\beta_{\theta,A}^{\vee}(\check{x})$ instead of $\beta_{\theta,A}^{\vee}$ in order to specify the coordinate system $\check{x}$. For each $i=1, \ldots, n$, let us locally define $\omega_{\theta,A}^{\vee}(\bm{e}_i)$ by
\begin{equation*}
\omega_{\theta,A}^{\vee}(\bm{e}_i)=\beta_{\theta,A}^{\vee}(\check{x}+\bm{e}_i)-\beta_{\theta,A}^{\vee}(\check{x})=\frac{1}{2}\bm{e}_i^t A^t \theta A d\check{x}.
\end{equation*}
Now, for an arbitrary $i=1, \ldots, n$ and points $([\check{x}], [A\check{x}+p])$, $([\check{x}+\bm{e}_i], [A(\check{x}+\bm{e}_i)+p])\in L_{(A,p)}$, there exist $(l;m)$, $(l';m')\in I'$ such that $([\check{x}], [A\check{x}+p])\in \check{O}_m^l(A,p)$, $([\check{x}+\bm{e}_i], [A(\check{x}+\bm{e}_i)+p])\in \check{O}_{m'}^{l'}(A,p)$, so let us denote the open set $\check{O}_m^l(A,p)$ including $([\check{x}], [A\check{x}+p])$ and the open set $\check{O}_{m'}^{l'}(A,p)$ including $([\check{x}+\bm{e}_i], [A(\check{x}+\bm{e}_i)+p])$ by $\check{O}_{[\check{x}]}$ and $\check{O}_{[\check{x}+\bm{e}_i]}$, respectively. Then $\omega_{\theta,A}^{\vee}(\bm{e}_1), \ldots, \omega_{\theta,A}^{\vee}(\bm{e}_n)$ can be considered as 1-forms which are defined on $\check{O}_{[\check{x}]}\cap \check{O}_{[\check{x}+\bm{e}_1]}, \ldots, \check{O}_{[\check{x}]}\cap \check{O}_{[\check{x}+\bm{e}_n]}$, respectively. For each $i=1, \ldots, n$, let us consider a function
\begin{equation*}
\xi_{\theta,A}^{\vee}(\bm{e}_i,\check{x}):=\mathrm{exp}\Bigl( -\pi \mathbf{i} \bm{e}_i^t A^t \theta A \check{x} \Bigr)
\end{equation*}
defined on $\check{O}_{[\check{x}]}\cap \check{O}_{[\check{x}+\bm{e}_i]}$ which satisfies the differential equation
\begin{equation*}
\left( d+2\pi \mathbf{i}\omega_{\theta,A}^{\vee}(\bm{e}_i) \right) \left( \xi_{\theta,A}^{\vee}(\bm{e}_i,\check{x}) \right)=0.
\end{equation*}  
We further define $\xi_{\theta,A}^{\vee}(\bm{e}_i+\bm{e}_j,\check{x})$ by
\begin{equation*}
\xi_{\theta,A}^{\vee}(\bm{e}_i+\bm{e}_j,\check{x})=\xi_{\theta,A}^{\vee}(\bm{e}_i,\check{x}+\bm{e}_j)\xi_{\theta,A}^{\vee}(\bm{e}_j,\check{x}), 
\end{equation*}
where $i$, $j=1, \ldots, n$. Now, by using them, we consider the following constant functions defined on $\check{O}_{[\check{x}]}\cap \check{O}_{[\check{x}+\bm{e}_i]}\cap \check{O}_{[\check{x}+\bm{e}_i+\bm{e}_j]}$, where $i$, $j=1, \ldots, n$:
\begin{equation*}
\alpha_{\theta,A}^{\vee}(\bm{e}_i,\bm{e}_j):=\xi_{\theta,A}^{\vee}(\bm{e}_i+\bm{e}_j,\check{x})^{-1}\xi_{\theta,A}^{\vee}(\bm{e}_j,\check{x}+\bm{e}_i)\xi_{\theta,A}^{\vee}(\bm{e}_i,\check{x})=\mathrm{exp}\Bigl( 2\pi \mathbf{i}\bm{e}_i^t A^t \theta A \bm{e}_j \Bigr).
\end{equation*}
Moreover, we have the relations 
\begin{equation*}
2\pi \mathbf{i}\omega_{\theta,A}^{\vee}(\bm{e}_i)+2\pi \mathbf{i}\omega_{\theta,A}^{\vee}(\bm{e}_j)-2\pi \mathbf{i}\omega_{\theta,A}^{\vee}(\bm{e}_i+\bm{e}_j)=\alpha_{\theta,A}^{\vee}(\bm{e}_i,\bm{e}_j)d\alpha_{\theta,A}^{\vee}(\bm{e}_i,\bm{e}_j)^{-1}=0,
\end{equation*}
where 
\begin{equation*}
\omega_{\theta,A}^{\vee}(\bm{e}_i+\bm{e}_j):=\omega_{\theta,A}^{\vee}(\bm{e}_i)+\omega_{\theta,A}^{\vee}(\bm{e}_j), 
\end{equation*}
and $i$, $j=1, \ldots, n$. For simplicity, let us put
\begin{equation*}
\alpha_{\theta,A}^{\vee}:=\Bigl\{ \alpha_{\theta,A}^{\vee}(\bm{e}_i,\bm{e}_j) \Bigr\}_{i,j=1,\ldots,n}, \ \ \ \nabla_{\theta,A}^{\vee}:=\Bigl\{ d+2\pi \mathbf{i}\omega_{\theta,A}^{\vee}(\bm{e}_i) \Bigr\}_{i=1,\ldots,n}.
\end{equation*}
Summarizing the above arguments, we see that the data $(\alpha_{\theta,A}^{\vee}, \nabla_{\theta,A}^{\vee}, 2\pi \mathbf{i}B_{\theta,A}^{\vee})$ defines a flat gerbe $\check{\mathcal{G}}_{\theta,A}^{\nabla}$ in the sense of Hitchin-Chatterjee \cite{chat, h} on $L_{(A,p)}$:
\begin{equation*}
\check{\mathcal{G}}_{\theta,A}^{\nabla}:=\Bigl( \alpha_{\theta,A}^{\vee}, \ \nabla_{\theta,A}^{\vee}, \ 2\pi \mathbf{i}B_{\theta,A}^{\vee} \Bigr).
\end{equation*}
Here, the family $\nabla_{\theta,A}^{\vee}$ gives a 0-connection over the gerbe on $L_{(A,p)}$ which is determined by the family $\alpha_{\theta,A}^{\vee}$, and the globally defined 2-form $2\pi \mathbf{i}B_{\theta,A}^{\vee}$ is the 1-connection which is compatible with the 0-connection $\nabla_{\theta,A}^{\vee}$.

\begin{rem} \label{lieblich}
Originally, gerbes are introduced in \textup{\cite{giraud}} as stacks satisfying appropriate conditions, and Hitchin-Chatterjee gives a differential geometric interpretation for gerbes in \textup{\cite{chat, h}}. As mentioned in the above, the definition of the flat gerbe $\check{\mathcal{G}}_{\theta,A}^{\nabla}$ is based on Hitchin-Chatterjee's work \textup{\cite{chat, h}}. Let $X$ be a smooth manifold and $\mathscr{A}$ an abelian sheaf on $X$. We consider an $\mathscr{A}$-gerbe $\mathscr{G}$ \textup{(}as a stack\textup{)} on $X$ and an element $\alpha \in H^2(X,\mathscr{A})$ which corresponds to $\mathscr{G}$. Then, as proved in \textup{\cite[Proposition 2.1.3.3]{lieb}}, it is known that twisted sheaves on $\mathscr{G}$ can be interpreted as $\alpha$-twisted sheaves on $X$ in the sense of C\u{a}ld\u{a}raru \textup{\cite{cal}}. 
\end{rem}

We consider the deformation of each object $\mathcal{L}_{(A,p,q)}^{\nabla}$ of $Fuk_{\rm sub}(\check{X}^n)$ which is mirror dual to $E_{\theta,(A,p,q;\mathcal{A})}^{\nabla}$. As remarked in the above, it is natural to consider $L_{(A,p)}$ is preserved since the symplectic form $\omega^{\vee}$ is preserved under the deformation from $\check{X}^n$ to $\check{X}_{\theta}^n$. Hence, we may consider the deformation of $\mathcal{O}_{(A,p,q)}^{\nabla}$ only, and it is realized as the following $\alpha_{\theta,A}^{\vee}$-twisted smooth complex line bundle on $L_{(A,p)}$ (see also Remark \ref{lieblich}). First, the family of the trivial transition functions is twisted as follows, where $j_{\theta,(A;\mathcal{A})}^{\vee}$ is a map from $\mathbb{Z}^n\times \mathbb{R}^n$ to $\mathbb{C}^{\times}$, and $i=1, \cdots, n$:
\begin{align*}
1 \mapsto j_{\theta,(A;\mathcal{A})}^{\vee}(\bm{e}_i, \check{x})&:=\mathrm{exp}\left( -\frac{\mathbf{i}}{4\pi}\bm{e}_i^t \mathcal{A}^{\theta} \theta \left( \mathcal{A}^{\theta} \right)^t \check{x} \right) \xi_{\theta,A}^{\vee}(\bm{e}_i, \check{x}) \\
&=\mathrm{exp}\left( -\frac{\mathbf{i}}{4\pi}\bm{e}_i^t \mathcal{A}^{\theta} \theta \left( \mathcal{A}^{\theta} \right)^t \check{x}-\pi \mathbf{i}\bm{e}_i^t A^t \theta A \check{x} \right). 
\end{align*}
Furthermore, $j_{\theta,(A;\mathcal{A})}^{\vee}(\bm{e}_i+\bm{e}_j, \check{x})$ is defined by $j_{\theta,(A;\mathcal{A})}^{\vee}(\bm{e}_i, \check{x}+\bm{e}_j) j_{\theta,(A;\mathcal{A})}^{\vee}(\bm{e}_j, \check{x})$:
\begin{equation*}
j_{\theta,(A;\mathcal{A})}^{\vee}(\bm{e}_i+\bm{e}_j, \check{x}):=j_{\theta,(A;\mathcal{A})}^{\vee}(\bm{e}_i, \check{x}+\bm{e}_j) j_{\theta,(A;\mathcal{A})}^{\vee}(\bm{e}_j, \check{x}),
\end{equation*}
where $i$, $j=1, \ldots, n$. By the assumption (\ref{cocycle_theta_mat}), we can easily check that the map $j_{\theta,(A;\mathcal{A})}^{\vee} : \mathbb{Z}^n\times \mathbb{R}^n \to \mathbb{C}^{\times}$ satisfies the relations 
\begin{equation*}
j_{\theta,(A;\mathcal{A})}^{\vee}(\bm{e}_i+\bm{e}_j, \check{x})^{-1} j_{\theta,(A;\mathcal{A})}^{\vee}(\bm{e}_j, \check{x}+\bm{e}_i) j_{\theta,(A;\mathcal{A})}^{\vee}(\bm{e}_i, \check{x})=\alpha_{\theta,A}^{\vee}(\bm{e}_i, \bm{e}_j)\cdot 1,
\end{equation*}
where $i$, $j=1, \ldots, n$. They imply that the map $j_{\theta,(A;\mathcal{A})}^{\vee}$ defines an $\alpha_{\theta,A}^{\vee}$-twisted smooth complex line bundle on $L_{(A,p)}$, so let us denote it by $\mathcal{O}_{\theta,(A,p;\mathcal{A})}\to L_{(A,p)}$ or $\mathcal{O}_{\theta,(A,p;\mathcal{A})}$ for short. On the other hand, as the deformation of the 1-form $\omega_{(A,p,q)}^{\vee}$, we consider the 1-form $\omega_{\theta,(A,p,q;\mathcal{A})}^{\vee}$ which is locally defined by
\begin{equation*}
\omega_{\theta,(A,p,q;\mathcal{A})}^{\vee}=\omega_{(A,p,q)}^{\vee}+\frac{\mathbf{i}}{4\pi} \check{x}^t \mathcal{A}^{\theta} \theta \left( \mathcal{A}^{\theta} \right)^t d\check{x}=2\pi \mathbf{i}q^t d\check{x}+\frac{\mathbf{i}}{4\pi} \check{x}^t \mathcal{A}^{\theta} \theta \left( \mathcal{A}^{\theta} \right)^t d\check{x}.
\end{equation*}
Actually, this 1-form $\omega_{\theta,(A,p,q;\mathcal{A})}^{\vee}$ satisfies the relation
\begin{equation*}
\omega_{\theta,(A,p,q;\mathcal{A})}^{\vee}(\check{x}+\bm{e}_i)=\omega_{\theta,(A,p,q;\mathcal{A})}^{\vee}(\check{x})+j_{\theta,(A;\mathcal{A})}^{\vee}(\bm{e}_i, \check{x})d\hspace{0.3mm} j_{\theta,(A;\mathcal{A})}^{\vee}(\bm{e}_i, \check{x})^{-1}-2\pi \mathbf{i}\omega_{\theta,A}^{\vee}(\bm{e}_i)
\end{equation*}
for each $i=1, \ldots, n$. Also, we use the notation $\omega_{\theta,(A,p,q;\mathcal{A})}^{\vee}(\check{x})$ instead of $\omega_{\theta,(A,p,q;\mathcal{A})}^{\vee}$ in order to specify the coordinate system $\check{x}$. Therefore, 
\begin{equation*}
\nabla_{\theta,(A,p,q;\mathcal{A})}^{\vee}:=d+\omega_{\theta,(A,p,q;\mathcal{A})}^{\vee}
\end{equation*}
gives a connection on $\mathcal{O}_{\theta,(A,p,;\mathcal{A})}$. Here, we set $\mathcal{O}_{\theta,(A,p,q;\mathcal{A})}^{\nabla}:=(\mathcal{O}_{\theta,(A,p;\mathcal{A})}, \nabla_{\theta,(A,p,q;\mathcal{A})}^{\vee})$. Let us denote the pair of the Lagrangian submanifold $L_{(A,p)}$ in $\check{X}_{\theta}^n$ and the $\alpha_{\theta,A}^{\vee}$-twisted smooth complex line bundle with the connection $\mathcal{O}_{\theta,(A,p,q;\mathcal{A})}^{\nabla}$ by $\mathcal{L}_{\theta,(A,p,q;\mathcal{A})}^{\nabla}$: $\mathcal{L}_{\theta,(A,p,q;\mathcal{A})}^{\nabla}:=(L_{(A,p)}, \mathcal{O}_{\theta,(A,p,q;\mathcal{A})}^{\nabla})$. 
\begin{rem} \label{main_proposition_analogue}
When we denote $\mathcal{L}_{\theta,(A,p,q;\mathcal{A})}^{\nabla}=(L_{(A,p)}, \mathcal{O}_{\theta,(A,p,q;\mathcal{A})}^{\nabla})$ in the case $\mathcal{A}=O$ by $\mathcal{L}_{\theta,(A,p,q)}^{\nabla}=(L_{(A,p)}, \mathcal{O}_{\theta,(A,p,q)}^{\nabla})$, it is clear that $\mathcal{O}_{\theta,(A,p,q;\mathcal{A})}^{\nabla}$ is identified with $\mathcal{O}_{\theta,(A,p,q)}^{\nabla}$ under the assumption $\mathcal{A}\theta \mathcal{A}=O$. This fact indicates that $\mathcal{L}_{\theta,(A,p,q;\mathcal{A})}^{\nabla}$ coincides with $\mathcal{L}_{\theta,(A,p,q)}^{\nabla}$ in the case $\mathcal{A}\theta \mathcal{A}=O$. It is also an analogue of Proposition \ref{main_proposition} in the symplectic geometry side. 
\end{rem}
The curvature form $\Omega_{\theta,(A,p,q;\mathcal{A})}^{\vee}$ of the connection $\nabla_{\theta,(A,p,q;\mathcal{A})}^{\vee}$ is expressed locally as
\begin{align*}
\Omega_{\theta,(A,p,q;\mathcal{A})}^{\vee}&=d\omega_{\theta,(A,p,q;\mathcal{A})}^{\vee}+\omega_{\theta,(A,p,q;\mathcal{A})}^{\vee}\wedge \omega_{\theta,(A,p,q;\mathcal{A})}^{\vee}+2\pi \mathbf{i}B_{(\theta,A)}^{\vee} \\
&=\frac{\mathbf{i}}{4\pi} d\check{x}^t \mathcal{A}^{\theta} \theta \left( \mathcal{A}^{\theta} \right)^t d\check{x}+\pi \mathbf{i}d\check{x}^t A^t \theta A d\check{x}.
\end{align*}
In this context, sometimes the 2-form $d\omega_{\theta,(A,p,q;\mathcal{A})}^{\vee}+\omega_{\theta,(A,p,q;\mathcal{A})}^{\vee}\wedge \omega_{\theta,(A,p,q;\mathcal{A})}^{\vee}$ is called the local curvature form of $\nabla_{\theta,(A,p,q;\mathcal{A})}^{\vee}$ \cite{k-theory}. Then, we can easily verify that the equality
\begin{equation*}
\Omega_{\theta,(A,p,q;\mathcal{A})}^{\vee}-d\omega_{\theta,(A,p,q;\mathcal{A})}^{\vee}=\Bigl. 2\pi \mathbf{i} \bigl( B^{\vee}+B_{\theta}^{\vee} \bigr) \Bigr|_{L_{(A,p)}}
\end{equation*}
holds, and this is a generalization of the condition (\ref{curv_b_field}) to the setting which includes the deformation parameter $\theta$.

\subsection{The moduli space of $\mathcal{L}_{\theta,(A,p,q;\mathcal{A})}^{\nabla}$}
The purpose of this subsection is to specify the moduli space of $\mathcal{L}_{\theta,(A,p,q;\mathcal{A})}^{\nabla}$ for fixed matrices $A\in M(n;\mathbb{Z})$, $\mathcal{A}\in \mathrm{Sym}(n;\mathbb{R})$. In particular, in Theorem \ref{main_theorem_2}, we prove the analogue of Theorem \ref{main_theorem_1} in the symplectic geometry side. Moreover, by using those Theorem \ref{main_theorem_1} and Theorem \ref{main_theorem_2}, in Theorem \ref{main_theorem_3}, we give a generalization of the SYZ transform 
\begin{equation*}
\mathcal{L}_{(A,p,q)}^{\nabla} \ \longmapsto \ E_{(A,p,q)}^{\nabla}
\end{equation*}
to the setting which includes the noncommutative parameter $\theta$.

First, we give the following lemma.
\begin{lemma} \label{curvature_compare}
For given parameters $(A,p,q)\in M(n;\mathbb{Z})\times \mathbb{R}^n\times \mathbb{R}^n$, $\mathcal{A}$, $\mathcal{B}\in \mathrm{Sym}(n;\mathbb{R})$, 
\begin{equation*}
\Omega_{\theta,(A,p,q;\mathcal{A})}=\Omega_{\theta,(A,p,q;\mathcal{B})}
\end{equation*}
holds if and only if
\begin{equation*}
\Omega_{\theta,(A,p,q;\mathcal{A})}^{\vee}=\Omega_{\theta,(A,p,q;\mathcal{B})}^{\vee}
\end{equation*}
holds.
\end{lemma}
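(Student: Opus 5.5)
We compare the two curvature forms directly, using Lemma \ref{curvature_theta} on the complex side and the explicit formula for $\Omega^{\vee}_{\theta,(A,p,q;\mathcal{A})}$ from subsection 4.1 on the symplectic side. On the symplectic side, the term $\pi\mathbf{i}\,d\check{x}^tA^t\theta A\,d\check{x}$ does not depend on $\mathcal{A}$. Hence $\Omega^{\vee}_{\theta,(A,p,q;\mathcal{A})}=\Omega^{\vee}_{\theta,(A,p,q;\mathcal{B})}$ holds if and only if the 2-forms $d\check{x}^t\mathcal{A}^{\theta}\theta(\mathcal{A}^{\theta})^t d\check{x}$ and $d\check{x}^t\mathcal{B}^{\theta}\theta(\mathcal{B}^{\theta})^t d\check{x}$ coincide. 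Since $\mathcal{A}^{\theta}\theta(\mathcal{A}^{\theta})^t$ is alternating, this is equivalent to
\begin{equation*}
\mathcal{A}^{\theta}\theta(\mathcal{A}^{\theta})^t=\mathcal{B}^{\theta}\theta(\mathcal{B}^{\theta})^t .
\end{equation*}
The first step is to rewrite this matrix in a more usable form. Put $P:=\frac{1}{2\pi}\theta\mathcal{A}$. Then $\mathcal{A}^{\theta}=\mathcal{A}(I_n+P)^{-1}$, and $(\mathcal{A}^{\theta})^t=(I_n+\frac{1}{2\pi}\mathcal{A}\theta)^{-1}\mathcal{A}$ after using $\theta^t=-\theta$. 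Therefore $\mathcal{A}^{\theta}\theta(\mathcal{A}^{\theta})^t=\mathcal{A}(I_n+P)^{-1}\theta(I_n-P^t)^{-1}\mathcal{A}$, up to sign conventions that must be tracked carefully. I would aim to express it as $2\pi\mathcal{A}(I_n+P)^{-1}P\theta^{-1}\cdots$, or, better, to rewrite the condition as one on $\mathcal{A}\theta\mathcal{A}$ and $\theta\mathcal{A}\theta\mathcal{A}$ only.

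On the complex side, Lemma \ref{curvature_theta} shows that $\Omega_{\theta,(A,p,q;\mathcal{A})}$ depends on $\mathcal{A}$ through two quantities: the matrix $(\theta\mathcal{A})^2$, in the $dx\,dx$ and $dx\,dy$ parts, and the alternating matrix $\mathcal{A}\theta\mathcal{A}$, in the $dy\,dy$ part. Because $dx$ and $dy$ are independent, $\Omega_{\theta,(A,p,q;\mathcal{A})}=\Omega_{\theta,(A,p,q;\mathcal{B})}$ holds if and only if three conditions hold:
\begin{equation*}
\mathcal{A}\theta\mathcal{A}=\mathcal{B}\theta\mathcal{B},\qquad A^t(\theta\mathcal{A})^2=A^t(\theta\mathcal{B})^2,\qquad A^t(\theta\mathcal{A})^2\theta A=A^t(\theta\mathcal{B})^2\theta A .
\end{equation*}
The first condition implies $(\theta\mathcal{A})^2=\theta(\mathcal{A}\theta\mathcal{A})=\theta(\mathcal{B}\theta\mathcal{B})=(\theta\mathcal{B})^2$, so the other two follow. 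The complex-side equality is therefore exactly $\mathcal{A}\theta\mathcal{A}=\mathcal{B}\theta\mathcal{B}$. It remains to show
\begin{equation*}
\mathcal{A}^{\theta}\theta(\mathcal{A}^{\theta})^t=\mathcal{B}^{\theta}\theta(\mathcal{B}^{\theta})^t\iff \mathcal{A}\theta\mathcal{A}=\mathcal{B}\theta\mathcal{B}.
\end{equation*}

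For this equivalence I would use the identity from the end of the proof of Lemma \ref{cocycle_theta}, which expresses $\mathcal{A}^{\theta}\theta(\mathcal{A}^{\theta})^t$ as $(I_n+\frac{1}{2\pi}\mathcal{A}\theta)^{-1}\mathcal{A}\theta\mathcal{A}\,((I_n+\frac{1}{2\pi}\mathcal{A}\theta)^{-1})^t$. Writing $M:=\mathcal{A}\theta\mathcal{A}$, the factor $I_n+\frac{1}{2\pi}\mathcal{A}\theta$ should be expressible through $M$ alone; the aim is to obtain $\mathcal{A}^{\theta}\theta(\mathcal{A}^{\theta})^t=f(M)$ for an injective matrix function $f$. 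A candidate is $f(M)=M(I_n-\frac{1}{4\pi^2}\theta M)^{-1}$, using $(I_n+Q)^{-1}M(I_n-Q^t)^{-1}$ with $Q=\frac{1}{2\pi}\mathcal{A}\theta$ and $Q^t=-\frac{1}{2\pi}\theta\mathcal{A}$.

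\textbf{Main obstacle.} The hardest step is this last one: the conjugating factor involves $\mathcal{A}$ itself and not only $M$. If pure algebra fails, I would try to invert the relation explicitly and recover $M$ from $N:=\mathcal{A}^{\theta}\theta(\mathcal{A}^{\theta})^t$, for example $M=N(I_n+\frac{1}{4\pi^2}\theta N)^{-1}$, checked using $\mathcal{A}=\mathcal{A}^{\theta}(I_n+P)$ and the symmetry of $\mathcal{A}$. This would give both directions of the equivalence at once.
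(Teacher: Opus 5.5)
Your reduction is the same as the paper's: both curvature identities are shown to be equivalent to $\mathcal{A}\theta\mathcal{A}=\mathcal{B}\theta\mathcal{B}$. Your complex-side argument is complete and more explicit than the paper's ``it is clear'': the $dy\,dy$ coefficient forces the equality, and conversely $(\theta\mathcal{A})^2=\theta(\mathcal{A}\theta\mathcal{A})$ controls the other two coefficients.

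The gap is the step you flag as the main obstacle. It is left as a guess, and the difficulty you see comes from a transpose error. Since $\mathcal{A}^{\theta}=(I_n+\frac{1}{2\pi}\mathcal{A}\theta)^{-1}\mathcal{A}$, its transpose is $(\mathcal{A}^{\theta})^t=\mathcal{A}(I_n-\frac{1}{2\pi}\theta\mathcal{A})^{-1}$; what you wrote for $(\mathcal{A}^{\theta})^t$ is $\mathcal{A}^{\theta}$ itself. So the right-hand factor is $(I_n+Q^t)^{-1}$, not $(I_n-Q^t)^{-1}$, and the same slip appears in your $(I_n-P^t)^{-1}$. With your sign you would get $(I_n+Q)^{-2}M$, which really does depend on $\mathcal{A}$ and not only on $M$; that is the obstacle you ran into.

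With the correct sign, the missing idea is simply to move $M$ across the right factor, using $(I_n-\frac{1}{2\pi}\mathcal{A}\theta)M=M(I_n-\frac{1}{2\pi}\theta\mathcal{A})$:
\begin{equation*}
\mathcal{A}^{\theta}\theta(\mathcal{A}^{\theta})^t=(I_n+Q)^{-1}(I_n-Q)^{-1}M=(I_n-Q^2)^{-1}M=\Bigl(I_n-\frac{1}{4\pi^2}M\theta\Bigr)^{-1}M,
\end{equation*}
since $Q^2=\frac{1}{4\pi^2}\mathcal{A}\theta\mathcal{A}\theta=\frac{1}{4\pi^2}M\theta$. This is exactly the paper's formula, and it equals your candidate $f(M)=M(I_n-\frac{1}{4\pi^2}\theta M)^{-1}$. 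The inverse exists because $I_n+Q$ and $I_n-Q$ both have determinant $\det(I_n+\frac{1}{2\pi}\theta\mathcal{A})\neq 0$.

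For injectivity you do not need to guess an inverse. Put $M_{\mathcal{A}}=\mathcal{A}\theta\mathcal{A}$ and $M_{\mathcal{B}}=\mathcal{B}\theta\mathcal{B}$. The paper writes the $\mathcal{A}$-matrix with the inverse on the left, $(I_n-\frac{1}{4\pi^2}M_{\mathcal{A}}\theta)^{-1}M_{\mathcal{A}}$, and the $\mathcal{B}$-matrix with the inverse on the right, $M_{\mathcal{B}}(I_n-\frac{1}{4\pi^2}\theta M_{\mathcal{B}})^{-1}$. Clearing the two inverses (the step the paper compresses into an ``i.e.'') turns their equality into
\begin{equation*}
M_{\mathcal{A}}-\frac{1}{4\pi^2}M_{\mathcal{A}}\theta M_{\mathcal{B}}=M_{\mathcal{B}}-\frac{1}{4\pi^2}M_{\mathcal{A}}\theta M_{\mathcal{B}},
\end{equation*}
i.e.\ $M_{\mathcal{A}}=M_{\mathcal{B}}$, which gives both directions at once.

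Your proposed inverse $M=N(I_n+\frac{1}{4\pi^2}\theta N)^{-1}$ is also correct, since one checks $I_n+\frac{1}{4\pi^2}\theta N=(I_n-\frac{1}{4\pi^2}\theta M)^{-1}$. So your fallback would close the argument too. As submitted, however, the proposal does not prove the key equivalence, and the route you sketch through $(I_n-Q^t)^{-1}$ would not reach it.
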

\begin{proof}
Let us first recall the definitions of $\Omega_{\theta,(A,p,q;\mathcal{A})}$ and $\Omega_{\theta,(A,p,q;\mathcal{A})}^{\vee}$. As computed in Lemma \ref{curvature_theta}, $\Omega_{\theta,(A,p,q;\mathcal{A})}$ is expressed locally as
\begin{equation*}
\Omega_{\theta,(A,p,q;\mathcal{A})}=\pi \mathbf{i}dx^t A^t \left( I_n-\left( \frac{1}{2\pi}\theta \mathcal{A} \right)^2 \right) \theta Adx-2\pi \mathbf{i} dx^t A^t \left( I_n-\left( \frac{1}{2\pi}\theta \mathcal{A} \right)^2 \right)dy+\frac{\mathbf{i}}{4\pi}dy^t \mathcal{A} \theta \mathcal{A} dy.
\end{equation*} 
On the other hand, as verified at the last of subsection 4.1, $\Omega_{\theta,(A,p,q;\mathcal{A})}^{\vee}$ is expressed locally as
\begin{equation*}
\Omega_{\theta,(A,p,q;\mathcal{A})}^{\vee}=\frac{\mathbf{i}}{4\pi} d\check{x}^t \mathcal{A}^{\theta} \theta \left( \mathcal{A}^{\theta} \right)^t d\check{x}+\pi \mathbf{i}d\check{x}^t A^t \theta A d\check{x}.
\end{equation*}
In particular, we see that $\Omega_{\theta,(A,p,q;\mathcal{A})}^{(0,2)}$ and $\Omega_{\theta,(A,p,q;\mathcal{A})}^{\vee}$ are determined by the same matrix $A^t \theta A\in \mathrm{Alt}(n;\mathbb{R})$ in the case $\mathcal{A}=O$:
\begin{equation*}
\Omega_{\theta,(A,p,q;\mathcal{A})}^{(0,2)}=\pi \mathbf{i} \left( dx^{(0,1)} \right)^t A^t \theta A dx^{(0,1)}, \ \ \ \Omega_{\theta,(A,p,q;\mathcal{A})}^{\vee}=\pi \mathbf{i}d\check{x}^t A^t \theta A d\check{x}.
\end{equation*}
Note that $\mathcal{A}^{\theta} \theta (\mathcal{A}^{\theta})^t$ and $\mathcal{B}^{\theta} \theta (\mathcal{B}^{\theta})^t$ turn out to be
\begin{align*}
\mathcal{A}^{\theta} \theta \left( \mathcal{A}^{\theta} \right)^t&=\left( I_n+\frac{1}{2\pi}\mathcal{A} \theta \right)^{-1} \left( I_n-\frac{1}{2\pi}\mathcal{A} \theta \right)^{-1} \mathcal{A} \theta \mathcal{A} \\
&=\left( I_n-\left( \frac{1}{2\pi}\mathcal{A} \theta \right)^2 \right)^{-1} \mathcal{A} \theta \mathcal{A}
\end{align*}
and
\begin{align*}
\mathcal{B}^{\theta} \theta \left( \mathcal{B}^{\theta} \right)^t&=\mathcal{B} \theta \mathcal{B} \left( I_n+\frac{1}{2\pi}\theta \mathcal{B} \right)^{-1} \left( I_n-\frac{1}{2\pi}\theta \mathcal{B} \right)^{-1} \\
&=\mathcal{B} \theta \mathcal{B} \left( I_n-\left( \frac{1}{2\pi}\theta \mathcal{B} \right)^2 \right)^{-1}, 
\end{align*}
respectively. Hence, $\mathcal{A}^{\theta} \theta (\mathcal{A}^{\theta})^t=\mathcal{B} \theta (\mathcal{B})^t$ if and only if
\begin{equation*}
\left( I_n-\left( \frac{1}{2\pi}\mathcal{A} \theta \right)^2 \right)^{-1} \mathcal{A} \theta \mathcal{A}=\mathcal{B} \theta \mathcal{B} \left( I_n-\left( \frac{1}{2\pi}\theta \mathcal{B} \right)^2 \right)^{-1},
\end{equation*} 
i.e., $\mathcal{A} \theta \mathcal{A}=\mathcal{B} \theta \mathcal{B}$. In other words, the condition that $\Omega_{\theta,(A,p,q;\mathcal{A})}^{\vee}=\Omega_{\theta,(A,p,q;\mathcal{B})}^{\vee}$ holds is equivalent to that the equality $\mathcal{A} \theta \mathcal{A}=\mathcal{B} \theta \mathcal{B}$ holds. Moreover, by the definition of $\Omega_{\theta,(A,p,q;\mathcal{A})}$, it is clear that $\mathcal{A} \theta \mathcal{A}=\mathcal{B} \theta \mathcal{B}$ holds if and only if $\Omega_{\theta,(A,p,q;\mathcal{A})}=\Omega_{\theta,(A,p,q;\mathcal{B})}$ holds. This completes the proof.
\end{proof}

Now, based on \cite{bijection}, for two fixed matrices $A\in M(n;\mathbb{Z})$, $\mathcal{A}\in \mathrm{Sym}(n;\mathbb{R})$ and $p$, $q$, $p'$, $q'\in \mathbb{R}^n$, we give the definition of $\mathcal{L}_{\theta,(A,p,q;\mathcal{A})}^{\nabla}$ and $\mathcal{L}_{\theta,(A,p',q';\mathcal{A})}^{\nabla}$ being isomorphic to each other as follows. If there exists a symplectic automorphism $\Phi : \check{X}_{\theta}^n \stackrel{\sim}{\to} \check{X}_{\theta}^n$ such that
\begin{align}
&\Phi^{-1}\left( L_{(A,p')} \right)=L_{(A,p)}, \label{L_equiv_1} \\
&\Phi^*\mathcal{O}_{\theta,(A,p',q';\mathcal{A})}^{\nabla}\cong \mathcal{O}_{\theta,(A,p,q;\mathcal{A})}^{\nabla}, \label{L_equiv_2}
\end{align} 
we say that $\mathcal{L}_{\theta,(A,p,q;\mathcal{A})}^{\nabla}$ is isomorphic to $\mathcal{L}_{\theta,(A,p',q';\mathcal{A})}^{\nabla}$, and write
\begin{equation*}
\mathcal{L}_{\theta,(A,p,q;\mathcal{A})}^{\nabla}\cong \mathcal{L}_{\theta,(A,p',q';\mathcal{A})}^{\nabla}.
\end{equation*}
Here, we present the analogue of Theorem \ref{main_theorem_1} in the symplectic geometry side.
\begin{theo} \label{main_theorem_2}
We take matrices $A\in M(n;\mathbb{Z})$, $\mathcal{A}\in \mathrm{Sym}(n;\mathbb{R})$ arbitrary, and fix them. Then two objects $\mathcal{L}_{\theta,(A,p,q;\mathcal{A})}^{\nabla}$ and $\mathcal{L}_{\theta,(A,p',q';\mathcal{A})}^{\nabla}$ are isomorphic to each other,
\begin{equation*}
\mathcal{L}_{\theta,(A,p,q;\mathcal{A})}^{\nabla}\cong \mathcal{L}_{\theta,(A,p',q';\mathcal{A})}^{\nabla},
\end{equation*}
if and only if there exist $k$, $l\in \mathbb{Z}^n$ such that
\begin{equation*}
\left( \begin{array}{cc} p' \\ q' \end{array} \right)=\left( \begin{array}{cc} p \\ q \end{array} \right)+\left( \begin{array}{cc} k \\ l \end{array} \right).
\end{equation*}
\end{theo}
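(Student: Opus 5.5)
Following \cite{bijection}, I would split the argument into the two conditions (\ref{L_equiv_1}) and (\ref{L_equiv_2}), and restrict attention to the symplectic automorphisms that are translations of $\check{X}_{\theta}^n$. Such translations preserve $\omega^{\vee}$, and since $B^{\vee}+B_{\theta}^{\vee}$ has constant coefficients they also preserve the complexified form $\tilde{\omega}_{\theta}^{\vee}$.

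\textbf{Condition (\ref{L_equiv_1}).} The set $L_{(A,p)}$ is the image of the graph $\check{y}=A\check{x}+p$. Two such images coincide exactly when $p'-p\in \mathbb{Z}^n+A\mathbb{R}^n$ modulo the reparametrization $\check{x}\mapsto \check{x}+c$. Adopting the convention of \cite{bijection}, I fix the parametrization by the base coordinate $\check{x}$. The relevant $\Phi$ is then a translation $\Phi(\check{x},\check{y})=(\check{x},\check{y}+c)$ in the fiber direction. Condition (\ref{L_equiv_1}) holds precisely when $c\equiv p'-p$, and the pair $(L_{(A,p)},\text{parametrization})$ depends only on $p$ modulo $\mathbb{Z}^n$. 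This gives the $k$-part of the statement, $p'=p+k$.

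\textbf{Condition (\ref{L_equiv_2}).} The pullback $\Phi^*$ acts trivially on $\check{x}$, so $\Phi^*\mathcal{O}_{\theta,(A,p',q';\mathcal{A})}^{\nabla}$ has the same factor of automorphy $j^{\vee}_{\theta,(A;\mathcal{A})}$ and the same gerbe data $\alpha_{\theta,A}^{\vee}$. Both of these are independent of $p$ and $q$. An isomorphism of $\alpha_{\theta,A}^{\vee}$-twisted bundles is therefore a locally defined invertible function $\psi(\check{x})$ satisfying two conditions:
\begin{equation*}
\psi(\check{x}+\bm{e}_i)=\psi(\check{x}) \quad (i=1,\ldots,n), \qquad d\psi+\left(\omega_{\theta,(A,p',q';\mathcal{A})}^{\vee}-\omega_{\theta,(A,p,q;\mathcal{A})}^{\vee}\right)\psi=0 .
\end{equation*}
The first condition holds because the transition functions commute and cancel, so $\psi$ descends to an honest function on $\mathbb{R}^n/\mathbb{Z}^n$. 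In the connection difference, the $\theta$-dependent terms $\frac{\mathbf{i}}{4\pi}\check{x}^t\mathcal{A}^{\theta}\theta(\mathcal{A}^{\theta})^t d\check{x}$ cancel exactly, leaving $2\pi\mathbf{i}(q'-q)^t d\check{x}$. The equation then forces
\begin{equation*}
\psi=C\exp\left(-2\pi\mathbf{i}(q'-q)^t\check{x}\right),
\end{equation*}
and $\mathbb{Z}^n$-periodicity with $C\neq 0$ holds if and only if $q'-q=l\in\mathbb{Z}^n$. This is the usual flat unitary line bundle classification on $T^n$. Conversely, given $k,l$, the pair $\Phi=$ translation by $k$ (which is the identity on the torus) and $\psi=\exp(-2\pi\mathbf{i}l^t\check{x})$ gives the isomorphism.

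\textbf{Main obstacle.} The delicate step is justifying that only fiber translations need to be considered, and that the identification of $L_{(A,p)}$ should keep track of $p\in\mathbb{R}^n/\mathbb{Z}^n$ rather than $p$ modulo $\mathbb{Z}^n+A\mathbb{R}^n$. The plan is to make this rigorous by following the convention used in \cite{bijection} for the $\theta=O$ case. Once that convention is fixed, the key check for the deformed case is that neither the gerbe twist nor the term involving $\mathcal{A}^{\theta}$ depends on $(p,q)$. That independence reduces everything to the undeformed computation, which explains why the lattice here is the standard $\mathbb{Z}^{2n}$, in contrast with Theorem \ref{main_theorem_1}.
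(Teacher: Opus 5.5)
Your treatment of condition (\ref{L_equiv_2}) matches the paper. The two twisted bundles share the scalar factor of automorphy $j^{\vee}_{\theta,(A;\mathcal{A})}$, so compatibility reduces to $\mathbb{Z}^n$-periodicity. The $\mathcal{A}^{\theta}$-terms cancel in the connection difference, and $C\exp(-2\pi\mathbf{i}(q'-q)^t\check{x})$ is periodic if and only if $q'-q\in\mathbb{Z}^n$.

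The gap is in the $p$-part. Suppose you allow fiber translations $\Phi(\check{x},\check{y})=(\check{x},\check{y}+c)$ with arbitrary $c\in\mathbb{R}^n$, as you do; they preserve $\tilde{\omega}^{\vee}_{\theta}$ since its coefficients are constant. Then $c=p'-p$ gives $\Phi^{-1}(L_{(A,p')})=L_{(A,p)}$ for \emph{every} pair $p,p'$. As you observe yourself, $j^{\vee}_{\theta,(A;\mathcal{A})}$, $\alpha^{\vee}_{\theta,A}$ and $\omega^{\vee}_{\theta,(A,p',q';\mathcal{A})}$ do not depend on $p'$, so (\ref{L_equiv_2}) then reduces to $q'-q\in\mathbb{Z}^n$ alone. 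Your framework therefore proves $\mathcal{L}_{\theta,(A,p,q;\mathcal{A})}^{\nabla}\cong\mathcal{L}_{\theta,(A,p',q';\mathcal{A})}^{\nabla}$ for all $p,p'$, which contradicts the ``only if'' direction for $p$. The sentence ``this gives the $k$-part'' does not follow, and an unspecified ``convention of \cite{bijection}'' does not repair it.

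Your criterion for when the images coincide, $p'-p\in\mathbb{Z}^n+A\mathbb{R}^n$, is also false. The reparametrization $\check{x}\mapsto\check{x}+c$ leaves the set $\tilde{L}_{(A,p)}$ unchanged. For $A=I_n$ your criterion would make all $L_{(I_n,p)}$ equal, but they are pairwise disjoint whenever $p'-p\notin\mathbb{Z}^n$.

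The missing step is the paper's: take $\Phi=\mathrm{id}_{\check{X}_{\theta}^n}$, so (\ref{L_equiv_1}) becomes the set equality $L_{(A,p)}=L_{(A,p')}$, and show this holds if and only if $p'-p\in\mathbb{Z}^n$. Since $A\in M(n;\mathbb{Z})$, one has $\tilde{L}_{(A,p)}+(k,m)=\tilde{L}_{(A,p+m-Ak)}$ for $(k,m)\in\mathbb{Z}^{2n}$. Hence $\pi^{-1}(L_{(A,p)})$ is the disjoint union of the parallel affine planes $\tilde{L}_{(A,p+m)}$, $m\in\mathbb{Z}^n$. The connected plane $\tilde{L}_{(A,p')}$ lies in this union if and only if $p'\in p+\mathbb{Z}^n$.

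You were right that the definition only yields the $p$-condition once $\Phi$ is restricted, since general translations would remove the $p$-dependence. The restriction that works is to the identity, which is what the paper's proof uses; fiber translations do not work.
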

\begin{proof}
Since two Lagrangian submanifolds $L_{(A,p)}$ and $L_{(A,p')}$ which are defined by using a fixed matrix $A\in M(n;\mathbb{Z})$ are affine, we can take the map
\begin{equation*}
\Phi=\mathrm{id}_{\check{X}_{\theta}^n}
\end{equation*}
as a symplectic automorphism $\Phi : \check{X}_{\theta}^n \stackrel{\sim}{\to} \check{X}_{\theta}^n$ satisfying the condition (\ref{L_equiv_1}), in the case $L_{(A,p)}=L_{(A,p')}$ only. In particular, $L_{(A,p)}=L_{(A,p')}$ holds if and only if there exists $k\in \mathbb{Z}^n$ such that $p'=p+k$. Then the condition (\ref{L_equiv_2}) becomes
\begin{equation*}
\mathcal{O}_{\theta,(A,p,q;\mathcal{A})}^{\nabla}\cong \mathcal{O}_{\theta,(A,p',q';\mathcal{A})}^{\nabla},
\end{equation*}   
so it is enough to discuss when $\mathcal{O}_{\theta,(A,p,q;\mathcal{A})}^{\nabla}\cong \mathcal{O}_{\theta,(A,p',q';\mathcal{A})}^{\nabla}$ holds on $L_{(A,p)}=L_{(A,p')}$. 

Below, we compute an isomorphism $\varphi : \mathcal{O}_{\theta,(A,p,q;\mathcal{A})}^{\nabla}\stackrel{\sim}{\to} \mathcal{O}_{\theta,(A,p',q';\mathcal{A})}^{\nabla}$ explicitly. By solving the differential equation
\begin{equation*}
d\varphi+2\pi \mathbf{i}q'^t d\check{x}\wedge \varphi-\varphi \wedge 2\pi \mathbf{i}q^t d\check{x}=0
\end{equation*}
which is equivalent to $\nabla_{\theta,(A,p',q';\mathcal{A})}^{\vee}\varphi-\varphi \nabla_{\theta,(A,p,q;\mathcal{A})}^{\vee}=0$, we have the solution which is expressed locally as
\begin{equation*}
\varphi(\check{x})=C \ \mathrm{exp} \left( -2\pi \mathbf{i} (q'^t-q^t)\check{x} \right), 
\end{equation*}
where $C\in \mathbb{C}$ is an arbitrary constant. Besides, although we need to consider the condition such that $\varphi$ is compatible with the factors of automorphy corresponding to $\mathcal{O}_{\theta,(A,p,q;\mathcal{A})}^{\nabla}$ and $\mathcal{O}_{\theta,(A,p',q';\mathcal{A})}^{\nabla}$, it is easy since $\mathcal{O}_{\theta,(A,p,q;\mathcal{A})}^{\nabla}$ coincides with $\mathcal{O}_{\theta,(A,p',q';\mathcal{A})}^{\nabla}$ when we forget the differential structures $\nabla_{\theta,(A,p,q;\mathcal{A})}^{\vee}$, $\nabla_{\theta,(A,p',q';\mathcal{A})}^{\vee}$ of them. Namely, we may consider when the equality
\begin{equation}
\varphi(\check{x}+\bm{e}_i)=\varphi(\check{x}) \label{main_theorem_2_1}
\end{equation} 
holds for each $i=1, \ldots, n$. Clearly, for each fixed $i=1, \ldots, n$, we see 
\begin{equation*}
\varphi(\check{x}+\bm{e}_i)=\mathrm{exp} \left( -2\pi \mathbf{i} (q'^t-q^t)\bm{e}_i \right) \varphi(\check{x}).
\end{equation*}
Hence, if there exists $l\in \mathbb{Z}^n$ such that $q'-q=l$, the relation (\ref{main_theorem_2_1}) is satisfied since $\mathrm{exp}(-2\pi \mathbf{i} (q'^t-q^t)\bm{e}_i)=\mathrm{exp}(-2\pi \mathbf{i} l^t \bm{e}_i)=1$, and then, the isomorphism $\varphi$ is expressed locally as
\begin{equation*}
\varphi(\check{x})=C \ \mathrm{exp} \left( -2\pi \mathbf{i} l^t \check{x} \right)
\end{equation*}
with $C\not=0$. If not, we must set $C=0$ in order to satisfy the relation (\ref{main_theorem_2_1}) since there exists $i=1, \ldots, n$ such that $\mathrm{exp}(-2\pi \mathbf{i} (q'^t-q^t)\bm{e}_i)\not=1$. This completes the proof. 

In particular, the statement of this theorem implies that the moduli space of $\mathcal{L}_{\theta,(A,p,q;\mathcal{A})}^{\nabla}$ for fixed matrices $A\in M(n;\mathbb{Z})$, $\mathcal{A}\in \mathrm{Sym}(n;\mathbb{R})$ is homeomorphic to the $2n$-dimensional real torus
\begin{equation*}
\mathbb{R}^{2n}/\mathbb{Z}^{2n}\approx \bigl( \mathbb{R}^n/\mathbb{Z}^n \bigr) \times \bigl( \mathbb{R}^n/\mathbb{Z}^n \bigr).
\end{equation*}  
\end{proof}

We denote the set of isomorphism classes (the moduli space) of objects $\mathcal{L}_{\theta,(A,p,q;\mathcal{A})}^{\nabla}$ for fixed matrices $A\in M(n;\mathbb{Z})$, $\mathcal{A}\in \mathrm{Sym}(n;\mathbb{R})$ by
\begin{equation*}
\mathcal{M}_{\theta}^{\vee}(A;\mathcal{A}).
\end{equation*}

As a conclusion, for fixed matrices $A\in M(n;\mathbb{Z})$, $\mathcal{A}\in \mathrm{Sym}(n;\mathbb{R})$, we see that both the moduli space of $E_{\theta,(A,p,q;\mathcal{A})}^{\nabla}$ and the moduli space of $\mathcal{L}_{\theta,(A,p,q;\mathcal{A})}^{\nabla}$ are homeomorphic to $2n$-dimensional real tori by Theorem \ref{main_theorem_1} and Theorem \ref{main_theorem_2}. Thus, we obtain the following theorem which is also a part of the statement of the homological mirror symmetry for $(X_{\theta}^n, \check{X}_{\theta}^n)$.   
\begin{theo} \label{main_theorem_3}
We take matrices $A\in M(n;\mathbb{Z})$, $\mathcal{A}\in \mathrm{Sym}(n;\mathbb{R})$ arbitrary, and fix them. Then there exists a bijection between $\mathcal{M}_{\theta}^{\vee}(A;\mathcal{A})$ and $\mathcal{M}_{\theta}(A;\mathcal{A})$.
\end{theo}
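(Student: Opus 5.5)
The plan is to read off both moduli spaces from Theorem \ref{main_theorem_1} and Theorem \ref{main_theorem_2}, and to write down an explicit linear map on the parameter space $\mathbb{R}^{2n}\ni (p,q)$ that intertwines the two lattices. Put
\begin{equation*}
\Lambda_{\theta}:=\left( \begin{array}{cc} \left( I_n-\left( \frac{1}{2\pi}\theta \mathcal{A} \right)^2 \right)^t & O \\ -A^t \theta \left( I_n-\left( \frac{1}{2\pi}\theta \mathcal{A} \right)^2 \right)^t & I_n \end{array} \right).
\end{equation*}
By Theorem \ref{main_theorem_2}, $\mathcal{M}_{\theta}^{\vee}(A;\mathcal{A})$ is $\mathbb{R}^{2n}/\mathbb{Z}^{2n}$ through $[\mathcal{L}_{\theta,(A,p,q;\mathcal{A})}^{\nabla}]\mapsto [(p,q)]$. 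By Theorem \ref{main_theorem_1}, $\mathcal{M}_{\theta}(A;\mathcal{A})$ is $\mathbb{R}^{2n}/\Lambda_{\theta}\mathbb{Z}^{2n}$ through $[E_{\theta,(A,p,q;\mathcal{A})}^{\nabla}]\mapsto [(p,q)]$. Both identifications are well defined and injective precisely because of the ``if and only if'' in those theorems, and they are surjective by construction.

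Next I check that $\Lambda_{\theta}$ is invertible. It is block lower triangular with diagonal blocks $(I_n-(\frac{1}{2\pi}\theta\mathcal{A})^2)^t$ and $I_n$. The first block factors as
\begin{equation*}
\left(I_n+\frac{1}{2\pi}\theta\mathcal{A}\right)\left(I_n-\frac{1}{2\pi}\theta\mathcal{A}\right).
\end{equation*}
The factor $I_n+\frac{1}{2\pi}\theta\mathcal{A}$ is invertible by the standing assumption of subsection 3.2. For the other factor I argue as follows. The matrix $\theta\mathcal{A}$ is similar to $\mathcal{A}^{1/2}\theta\mathcal{A}^{1/2}$ when $\mathcal{A}$ is definite, which is alternating, so its spectrum is symmetric under $\lambda\mapsto-\lambda$. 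More generally, $(\theta\mathcal{A})^t=-\mathcal{A}\theta$ has the same spectrum as $-\theta\mathcal{A}$, because $\mathcal{A}\theta$ and $\theta\mathcal{A}$ share characteristic polynomials. Hence
\begin{equation*}
\det\left(I_n-\frac{1}{2\pi}\theta\mathcal{A}\right)=\det\left(I_n+\frac{1}{2\pi}(\theta\mathcal{A})^t\right)=\det\left(I_n+\frac{1}{2\pi}\theta\mathcal{A}\right)\neq 0 .
\end{equation*}
So $\Lambda_{\theta}\in GL(2n;\mathbb{R})$.

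The bijection is then the map $\mathbb{R}^{2n}/\mathbb{Z}^{2n}\to\mathbb{R}^{2n}/\Lambda_{\theta}\mathbb{Z}^{2n}$, $[v]\mapsto[\Lambda_{\theta}v]$. It is well defined and injective because $v'-v\in\mathbb{Z}^{2n}$ if and only if $\Lambda_{\theta}v'-\Lambda_{\theta}v\in\Lambda_{\theta}\mathbb{Z}^{2n}$. It is surjective because $\Lambda_{\theta}$ is invertible, and it is moreover a homeomorphism. Composing with the two identifications above gives
\begin{equation*}
\mathcal{M}_{\theta}^{\vee}(A;\mathcal{A})\to\mathcal{M}_{\theta}(A;\mathcal{A}),\quad [\mathcal{L}_{\theta,(A,p,q;\mathcal{A})}^{\nabla}]\mapsto[E_{\theta,(A,\tilde p,\tilde q;\mathcal{A})}^{\nabla}],\quad (\tilde p,\tilde q)^t=\Lambda_{\theta}(p,q)^t .
\end{equation*}
When $\theta=O$ this map is the identity on parameters, so it recovers the SYZ transform $\mathcal{L}_{(A,p,q)}^{\nabla}\mapsto E_{(A,p,q)}^{\nabla}$ of Proposition \ref{ob_mirror}.

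The only substantive step is the invertibility of $\Lambda_{\theta}$, specifically of $I_n-\frac{1}{2\pi}\theta\mathcal{A}$. The transpose argument above settles it, and the rest is formal. If one wants only a bijection of sets, invertibility is not even needed: both quotients are $2n$-dimensional real tori as soon as the lattice has full rank, which is again the same determinant condition.
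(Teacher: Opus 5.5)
Your proposal is correct and is essentially the paper's proof. The paper uses exactly the map $(p,q)\mapsto \Lambda_{\theta}(p,q)$ obtained from Theorems \ref{main_theorem_1} and \ref{main_theorem_2}, together with its explicit inverse $(p,q)\mapsto \bigl( ((I_n-(\frac{1}{2\pi}\theta\mathcal{A})^2)^{-1})^t p,\ A^t\theta p+q \bigr)$.

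Your argument that $\mathrm{det}(I_n-\frac{1}{2\pi}\theta\mathcal{A})=\mathrm{det}(I_n+\frac{1}{2\pi}\theta\mathcal{A})\neq 0$ (the spectrum of $\theta\mathcal{A}$ is symmetric under $\lambda\mapsto-\lambda$) is a useful addition. It justifies the invertibility of $I_n-(\frac{1}{2\pi}\theta\mathcal{A})^2$, which the paper uses tacitly when it writes down that inverse.

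Only your closing aside is muddled. A bare set bijection exists for cardinality reasons alone, whereas your explicit map does need the determinant condition.
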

\begin{proof}
This is essentially a corollary of Theorem \ref{main_theorem_1} and Theorem \ref{main_theorem_2}. By Theorem \ref{main_theorem_1} and Theorem \ref{main_theorem_2}, it is easy to verify that the map which is defined by
\begin{equation*}
\mathcal{L}_{\theta,(A,p,q;\mathcal{A})}^{\nabla} \ \longmapsto \ E_{\theta,(A,(I_n-(\frac{1}{2\pi}\theta \mathcal{A})^2)^t p,-A^t \theta (I_n-(\frac{1}{2\pi}\theta \mathcal{A})^2)^t p+q;\mathcal{A})}^{\nabla}
\end{equation*}
induces a bijection $\mathcal{M}_{\theta}^{\vee}(A;\mathcal{A}) \stackrel{\sim}{\to} \mathcal{M}_{\theta}(A;\mathcal{A})$. Actually, the map which is defined by
\begin{equation*}
E_{\theta,(A,p,q;\mathcal{A})}^{\nabla} \ \longmapsto \ \mathcal{L}_{\theta,(A,((I_n-(\frac{1}{2\pi}\theta \mathcal{A})^2)^{-1})^t p,A^t \theta p+q;\mathcal{A})}^{\nabla}.
\end{equation*}
induces a map $\mathcal{M}_{\theta}(A;\mathcal{A}) \to \mathcal{M}_{\theta}^{\vee}(A;\mathcal{A})$, and this is the inverse map of the above map $\mathcal{M}_{\theta}^{\vee}(A;\mathcal{A}) \to \mathcal{M}_{\theta}(A;\mathcal{A})$.
\end{proof}
Finally, by combining Theorem \ref{main_theorem_3} with Lemma \ref{curvature_compare}, we see that the correspondence
\begin{equation*}
\mathcal{L}_{\theta,(A,p,q;\mathcal{A})}^{\nabla} \ \longmapsto \ E_{\theta,(A,(I_n-(\frac{1}{2\pi}\theta \mathcal{A})^2)^t p,-A^t \theta (I_n-(\frac{1}{2\pi}\theta \mathcal{A})^2)^t p+q;\mathcal{A})}^{\nabla}
\end{equation*}
is a generalization of the SYZ transform
\begin{equation*}
\mathcal{L}_{(A,p,q)}^{\nabla} \ \longmapsto \ E_{(A,p,q)}^{\nabla}
\end{equation*}
on a mirror pair $(X^n, \check{X}^n)$ to the setting which includes the noncommutative parameter $\theta$.

\section{Concluding remarks}
In this paper, based on \cite{nc}, we have constructed the deformation $E_{\theta,(A,p,q;\mathcal{A})}^{\nabla}$ of $E_{(A,p,q)}^{\nabla}$ including the effect by ambiguity arising from the isomorphism (\ref{trivialization}). Moreover, we have computed the objects $\mathcal{L}_{\theta,(A,p,q;\mathcal{A})}^{\nabla}$ which are mirror dual to noncommutative objects $E_{\theta,(A,p,q;\mathcal{A})}^{\nabla}$, and actually, the SYZ transform between them is also described in Theorem \ref{main_theorem_3} explicitly.

On the other hand, as mentioned in subsection 4.1, the construction of the deformed objects $\mathcal{L}_{\theta,(A,p,q;\mathcal{A})}^{\nabla}$ stands on the idea which is proposed in our previous paper \cite{b-field}. Perhaps it is no coincidence that we can apply the idea proposed in \cite{b-field} to our setting, and it seems that they are related by the Fourier-Mukai transform. Below, by employing generalized complex geometry, we explain this perspective briefly.

As a toy model, let us first focus on the mirror pair of elliptic curves $(X^1, \check{X}^1)$. Note that $X^1$ is not deformed in the sense described in this paper\footnote{As discussed in \cite{p-s-nc}, we can consider the (non-trivial) noncommutative deformation of $X^1$ associated to the Poisson bivector (\ref{P_bivector}) such that $\theta_2 \in M(n;\mathbb{R})$ and $\theta_1=\theta_3=O$.} since $\Pi_{\theta}=0$. In general, we can consider the $SL(2;\mathbb{Z})$-action (up to shifts) over $D^b(Coh(X^1))$. On the other hand, in the symplectic geometry side, this $SL(2;\mathbb{Z})$-action over $D^b(Coh(X^1))$ is translated to the $SL(2;\mathbb{Z})$-action on $\check{X}^1$ via the homological mirror symmetry. Precisely speaking, for a given element $g\in SL(2;\mathbb{Z})$, we can define a symplectic automorphism $\varphi_g : \check{X}^1 \stackrel{\sim}{\to} \check{X}^1$ by $\varphi_g(\check{x})=g\check{x}$, and this $\varphi_g$ induces autoequivalences over $Fuk(\check{X}^1)$ and $Tr(Fuk(\check{X}^1))$:
\begin{equation*}
\left( L_{(A,p)}, \ \mathcal{O}_{(A,p,q)}^{\nabla} \right) \ \longmapsto \ \left( \varphi_g \left( L_{(A,p)} \right), \ \left( \varphi_g^{-1} \right)^* \mathcal{O}_{(A,p,q)}^{\nabla} \right).
\end{equation*}
In particular, a generator
\begin{equation*}
g_{FM}:=\left( \begin{array}{cc} 0 & 1 \\ -1 & 0 \end{array} \right)\in SL(2;\mathbb{Z})
\end{equation*}
corresponds to the Fourier-Mukai transform over $D^b(Coh(X^1))$ and its mirror transform $\varphi_{g_{FM}} : \check{X}^1 \stackrel{\sim}{\to} \check{X}^1$. Here, we denote the representation matrices with respect to the standard bases in the formulas (\ref{g_c_c}), (\ref{g_c_s}) by the same notations $\mathcal{I}_T$, $\check{\mathcal{I}}_T$, respectively. Then, from the viewpoint of generalized complex geometry, the symplectic automorphism $\varphi_{g_{FM}}$ induces the transform of $\check{\mathcal{I}}_T$:
\begin{equation*}
\check{\mathcal{I}}_T \ \longmapsto \ \hat{\check{\mathcal{I}}}_T:=\left( \begin{array}{cccc} 0 & 1 & 0 & 0 \\ -1 & 0 & 0 & 0 \\ 0 & 0 & 0 & 1 \\ 0 & 0 & -1 & 0 \end{array} \right) \check{\mathcal{I}}_T \left( \begin{array}{cccc} 0 & -1 & 0 & 0 \\ 1 & 0 & 0 & 0 \\ 0 & 0 & 0 & -1 \\ 0 & 0 & 1 & 0 \end{array} \right),
\end{equation*}
and by the mirror transform (\ref{mirror}), we have
\begin{equation*}
M_{(1)} \hspace{0.5mm} \hat{\check{\mathcal{I}}}_T \hspace{0.5mm} M_{(1)}=\left( \begin{array}{cccc} 0 & 0 & 0 & 1 \\ 0 & 0 & -1 & 0 \\ 0 & 1 & 0 & 0 \\ -1 & 0 & 0 & 0 \end{array} \right) \mathcal{I}_T \left( \begin{array}{cccc} 0 & 0 & 0 & -1 \\ 0 & 0 & 1 & 0 \\ 0 & -1 & 0 & 0 \\ 1 & 0 & 0 & 0 \end{array} \right).
\end{equation*}
Therefore, it is natural to expect that the transform
\begin{equation*}
\mathcal{I}_T \ \longmapsto \ \left( \begin{array}{cccc} 0 & 0 & 0 & 1 \\ 0 & 0 & -1 & 0 \\ 0 & 1 & 0 & 0 \\ -1 & 0 & 0 & 0 \end{array} \right) \mathcal{I}_T \left( \begin{array}{cccc} 0 & 0 & 0 & -1 \\ 0 & 0 & 1 & 0 \\ 0 & -1 & 0 & 0 \\ 1 & 0 & 0 & 0 \end{array} \right)
\end{equation*}
corresponds to the Fourier-Mukai transform over $D^b(Coh(X^1))$ from the viewpoint of generalized complex geometry.

Now, based on this toy model, let us consider the following transform:
\begin{equation*}
\mathcal{I}_T \ \longmapsto \ \hat{\mathcal{I}}_T:=\left( \begin{array}{cccc} O & O & O & I_n \\ O & O & -I_n & O \\ O & I_n & O & O \\ -I_n & O & O & O \end{array} \right) \mathcal{I}_T \left( \begin{array}{cccc} O & O & O & -I_n \\ O & O & I_n & O \\ O & -I_n & O & O \\ I_n & O & O & O \end{array} \right)
\end{equation*}
It seems that this transform corresponds to the Fourier-Mukai transform. In fact, we can interpret $\hat{\mathcal{I}}_T$ as the generalized complex structure induced from the complex structure of $\mathbb{C}^n/(\mathbb{Z}^n\oplus T^t \mathbb{Z}^n)$ which is the dual of $X^n$ (the dual of $X^1$ is $X^1$ itself). Moreover, we have
\begin{equation}
M_{(n)} \hspace{0.5mm} \hat{\mathcal{I}}_T \hspace{0.5mm} M_{(n)}=\left( \begin{array}{cccc} O & I_n & O & O \\ -I_n & O & O & O \\ O & O & O & I_n \\ O & O & -I_n & O \end{array} \right) \check{\mathcal{I}}_T \left( \begin{array}{cccc} O & -I_n & O & O \\ I_n & O & O & O \\ O & O & O & -I_n \\ O & O & I_n & O \end{array} \right) \label{f_m_s}
\end{equation}
via the mirror transform (\ref{mirror}). For simplicity, we put
\begin{equation*}
\hat{\check{\mathcal{I}}}_T:=\left( \begin{array}{cccc} O & I_n & O & O \\ -I_n & O & O & O \\ O & O & O & I_n \\ O & O & -I_n & O \end{array} \right) \check{\mathcal{I}}_T \left( \begin{array}{cccc} O & -I_n & O & O \\ I_n & O & O & O \\ O & O & O & -I_n \\ O & O & I_n & O \end{array} \right).
\end{equation*}
The right hand side of the equality (\ref{f_m_s}) implies that the transform $\check{\mathcal{I}}_T \mapsto \hat{\check{\mathcal{I}}}_T$ is induced from the map which is defined by
\begin{equation}
\check{x} \ \longmapsto \ \left( \begin{array}{cc} O & I_n \\ -I_n & O \end{array} \right) \check{x}, \label{FM_symplectic}
\end{equation}
and this is a generalization of the $SL(2;\mathbb{Z})$-action $\varphi_{g_{FM}}$ on $\check{X}^1$ to the higher dimensional case. Here, note that it is no longer a symplectic automorphism on $\check{X}^n$.

Let us consider the above transforms on the mirror pair $(X_{\theta}^n, \check{X}_{\theta}^n)$. Similarly as in the cases of $\mathcal{I}_T$ and $\check{\mathcal{I}}_T$, we denote the representation matrices with respect to the standard bases in the formulas (\ref{g_c_c_theta}), (\ref{g_c_s_theta}) by $\mathcal{I}_T(\Pi_{\theta})$, $\check{\mathcal{I}}_T(\Pi_{\theta})$, respectively. Then, by direct calculations, we see that the equality
\begin{align*}
&M_{(n)} \left\{ \left( \begin{array}{cccc} O & O & O & I_n \\ O & O & -I_n & O \\ O & I_n & O & O \\ -I_n & O & O & O \end{array} \right) \mathcal{I}_T(\Pi_{\theta}) \left( \begin{array}{cccc} O & O & O & -I_n \\ O & O & I_n & O \\ O & -I_n & O & O \\ I_n & O & O & O \end{array} \right) \right\} M_{(n)} \\
&=\left( \begin{array}{cccc} O & I_n & O & O \\ -I_n & O & O & O \\ O & O & O & I_n \\ O & O & -I_n & O \end{array} \right) \check{\mathcal{I}}_T(\Pi_{\theta}) \left( \begin{array}{cccc} O & -I_n & O & O \\ I_n & O & O & O \\ O & O & O & -I_n \\ O & O & I_n & O \end{array} \right) 
\end{align*}
holds. This equality implies that the mirror transform (\ref{mirror}) is compatible with both the Fourier-Mukai transforms in the above sense. Moreover, we have
\begin{align*}
&\left( \begin{array}{cccc} O & I_n & O & O \\ -I_n & O & O & O \\ O & O & O & I_n \\ O & O & -I_n & O \end{array} \right) \check{\mathcal{I}}_T(\Pi_{\theta}) \left( \begin{array}{cccc} O & -I_n & O & O \\ I_n & O & O & O \\ O & O & O & -I_n \\ O & O & I_n & O \end{array} \right) \\
&=\left( \begin{array}{cccc} I_n & O & O & O \\ O & I_n & O & O \\ -\theta & O & I_n & O \\ O & O & O & I_n \end{array} \right) \hat{\check{\mathcal{I}}}_T \left( \begin{array}{cccc} I_n & O & O & O \\ O & I_n & O & O \\ \theta & O & I_n & O \\ O & O & O & I_n \end{array} \right),
\end{align*}
and the right hand side can be interpreted as the B-field transform of $\hat{\check{\mathcal{I}}}_T$ in the symplectic geometry side in the context of our previous paper \cite{b-field}. On the other hand, in the complex geometry side, we have 
\begin{align*}
&\left( \begin{array}{cccc} O & O & O & I_n \\ O & O & -I_n & O \\ O & I_n & O & O \\ -I_n & O & O & O \end{array} \right) \mathcal{I}_T(\Pi_{\theta}) \left( \begin{array}{cccc} O & O & O & -I_n \\ O & O & I_n & O \\ O & -I_n & O & O \\ I_n & O & O & O \end{array} \right) \\
&=\left( \begin{array}{cccc} I_n & O & O & O \\ O & I_n & O & O \\ -\theta & O & I_n & O \\ O & O & O & I_n \end{array} \right) \hat{\mathcal{I}}_T \left( \begin{array}{cccc} I_n & O & O & O \\ O & I_n & O & O \\ \theta & O & I_n & O \\ O & O & O & I_n \end{array} \right).
\end{align*}
Here, the right hand side can be interpreted as the B-field transform of $\hat{\mathcal{I}}_T$ in the context of our previous paper \cite{b-field}. Thus, by summarizing the above discussions, we can expect that the Fourier-Mukai partner of the noncommutative complex torus $X_{\theta}^n$ is a gerby deformation of the Fourier-Mukai partner of $X^n$ in the sense of \cite{b-field}. In particular, in the symplectic geometry side, the flat gerbe $\check{\mathcal{G}}_{\theta,A}^{\nabla}$ on $L_{(A,p)}$ and a flat gerbe such as in \cite{b-field} are related by the map (\ref{FM_symplectic}). We will comment on this description again at the last of subsection 6.1.  

Finally, we comment on several related works. In \cite{nc-fm}, by focusing on the noncommutative deformations associated to the formal deformation quantizations, the Fourier-Mukai partners of noncommutative complex tori are investigated. On the other hand, in light of \cite{nc-fm}, the Fourier-Mukai partners of noncommutative complex tori for nonformal parameters are discussed in \cite{okuda} (see also \cite{block-1, block-2}). 

\section{Appendices}
As commented in section 5, our previous paper \cite{b-field} is closely related to this paper, and in \cite{gerby}, another gerby deformation of $X^n$ is treated. However, during this study period, similar errors are found in both the articles \cite{b-field, gerby} even though they are already published. Throughout this section, we improve such errors.

In order to explain the details, let us consider a B-field on $X^n$ which is expressed locally as
\begin{equation}
\sum_{i, j=1}^n \left( \frac{1}{2} (\tau_1)_{ij} dx_i \wedge dx_j +(\tau_2)_{ij} dx_i \wedge dy_j +\frac{1}{2} (\tau_3)_{ij} dy_i \wedge dy_j \right), \label{b_field_X_n} 
\end{equation}
where $\tau_1$, $\tau_3 \in \mathrm{Alt}(n;\mathbb{R})$ and $\tau_2 \in M(n;\mathbb{R})$. Explicitly, in \cite{b-field}, the case of $\tau_1$ type, i.e., the case that $\tau_1 \in \mathrm{Alt}(n;\mathbb{R})$ and $\tau_2 =\tau_3 =O$ is discussed, and in \cite{gerby}, we focus on the case of $\tau_2$ type, i.e., the case that $\tau_2 \in M(n;\mathbb{R})$ and $\tau_1 =\tau_3 =O$. In those papers, anyway, we use the flat gerbes whose 1-connections are determined by the B-field (\ref{b_field_X_n}). However, precisely, in this context, what we should consider is the flat gerbes whose 1-connections are determined by the (0,2)-part of the B-field (\ref{b_field_X_n}) rather than the flat gerbes which are employed in \cite{b-field, gerby}. In particular, although both \cite[Proposition 5.6]{b-field} and \cite[Proposition 4.4]{gerby} state that the holomorphicity of each $E_{(A,p,q)}^{\nabla}$ is preserved under the corresponding gerby deformations, actually, we obtain the conclusion that those statements are also incorrect when we consider them correctly. In this section, we modify these arguments.

\subsection{The gerby deformation of type $\tau_1$}
We take an arbitrary $\tau \in \mathrm{Alt}(n;\mathbb{R})$, and fix it. In this subsection, we revise several results written in \cite{b-field}.

\subsubsection{Complex geometry side}
Let us consider the B-field (\ref{b_field_X_n}) in the case that $\tau_1=\tau$ and $\tau_2=\tau_3=O$, and denote it by ${}_{\tau}B$:
\begin{equation*}
{}_{\tau}B:=\frac{1}{2} dx^t \tau dx.
\end{equation*}
From the viewpoint of generalized complex geometry, this B-field ${}_{\tau}B$ causes the B-field transform of $\mathcal{I}_T$ over $X^n$:
\begin{align*}
&\mathcal{I}_T({}_{\tau}B) \left( \frac{\partial}{\partial x}^t, \frac{\partial}{\partial y}^t, dx^t, dy^t \right) \\
&=\left( \frac{\partial}{\partial x}^t, \frac{\partial}{\partial y}^t, dx^t, dy^t \right) \left( \begin{array}{cccc} I_n & O & O & O \\ O & I_n & O & O \\ -\tau & O & I_n & O \\ O & O & O & I_n \end{array} \right) \mathcal{I}_T \left( \begin{array}{cccc} I_n & O & O & O \\ O & I_n & O & O \\ \tau & O & I_n & O \\ O & O & O & I_n \end{array} \right).
\end{align*}
Then the following proposition holds.
\begin{proposition} \label{comp_B_field}
The B-field transform associated to the B-field ${}_{\tau}B$ preserves the generalized complex structure $\mathcal{I}_T$ over $X^n$, i.e., 
\begin{equation*}
\mathcal{I}_T({}_{\tau}B)=\mathcal{I}_T
\end{equation*}
if and only if $\tau=O$.
\end{proposition}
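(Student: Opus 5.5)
The claim is that $e^{-B}\mathcal{I}_T e^{B}=\mathcal{I}_T$ with $B=\begin{pmatrix}\tau&O\\O&O\end{pmatrix}$ acting on $(x,y)$ forces $\tau=O$. I will write $\mathcal{I}_T$ in block form as
\begin{equation*}
\mathcal{I}_T=\left( \begin{array}{cc} J & O \\ O & -J^t \end{array} \right), \qquad J:=\left( \begin{array}{cc} -XY^{-1} & -Y-XY^{-1}X \\ Y^{-1} & Y^{-1}X \end{array} \right),
\end{equation*}
so that $J$ is the matrix of the complex structure on the tangent bundle and $J^2=-I_{2n}$. This block form can be read off directly from (\ref{g_c_c}).

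Next I let $\mathcal{B}:=\left( \begin{array}{cc} \tau & O \\ O & O \end{array} \right)\in \mathrm{Alt}(2n;\mathbb{R})$. The transform in question is then conjugation of $\mathcal{I}_T$ by $\left( \begin{array}{cc} I_{2n} & O \\ -\mathcal{B} & I_{2n} \end{array} \right)$. Multiplying out the $2\times 2$ block matrices gives
\begin{equation*}
\mathcal{I}_T({}_{\tau}B)=\left( \begin{array}{cc} J & O \\ -\mathcal{B}J-J^t\mathcal{B} & -J^t \end{array} \right).
\end{equation*}
Hence $\mathcal{I}_T({}_{\tau}B)=\mathcal{I}_T$ if and only if $\mathcal{B}J+J^t\mathcal{B}=O$, that is, if and only if the 2-form $\mathcal{B}$ is of type $(1,1)$ with respect to $J$.

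The "if" direction is trivial. For the "only if" direction I only need the upper-left $n\times n$ block of the equation $\mathcal{B}J+J^t\mathcal{B}=O$. The first block row of $\mathcal{B}J$ is $(-\tau XY^{-1},\ -\tau(Y+XY^{-1}X))$. Since $\mathcal{B}$ has nonzero entries only in its upper-left block, $J^t\mathcal{B}$ has nonzero entries only in its first block column, namely $J^t\mathcal{B}=\left( \begin{array}{cc} -(Y^{-1})^tX^t\tau & O \\ -(Y+XY^{-1}X)^t\tau & O \end{array} \right)$.

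The main step is to compare the upper-right block. There $J^t\mathcal{B}$ contributes nothing, so the condition reduces to $\tau(Y+XY^{-1}X)=O$. The paper's standing assumption $\det T\neq 0$ is equivalent to $\det(Y+XY^{-1}X)\neq 0$, as noted in subsection 2.1. Therefore $\tau=O$.

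The only real obstacle is bookkeeping: I need to be sure that the conjugating matrices in the displayed definition of $\mathcal{I}_T({}_{\tau}B)$ act as the block-lower-triangular matrix above, so that $\mathcal{B}$ really does multiply $J$ from the left in the upper-right block. Once that is confirmed, the argument is the single block computation above.
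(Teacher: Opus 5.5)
Your proposal is correct and matches the paper's proof essentially line for line. The paper also conjugates $\mathcal{I}_T$ by the block-triangular matrices to obtain the off-diagonal block $-({}_{\tau}\underline{B} I_T+I_T^t\,{}_{\tau}\underline{B})$. It then reads off $\tau(Y+XY^{-1}X)=O$ from the upper-right block and concludes $\tau=O$ using $\det(Y+XY^{-1}X)\neq 0$.

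One small wording slip: you announce that only the upper-left $n\times n$ block is needed, but the block you actually use, correctly, is the upper-right one.
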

\begin{proof}
For simplicity, we set
\begin{equation*}
I_T:=\left( \begin{array}{cc} -XY^{-1} & -Y-XY^{-1}X \\ Y^{-1} & Y^{-1}X \end{array} \right), \ \ \ {}_{\tau}\underline{B}:=\left( \begin{array}{cc} \tau & O \\ O & O \end{array} \right).
\end{equation*}
Then, we may prove the following statement: the equality
\begin{equation}
\left( \begin{array}{cc} I_{2n} & O \\ -{}_{\tau}\underline{B} & I_{2n} \end{array} \right) \left( \begin{array}{cc} I_T & O \\ O & -I_T^t \end{array} \right) \left( \begin{array}{cc} I_{2n} & O \\ {}_{\tau}\underline{B} & I_{2n} \end{array} \right)=\left( \begin{array}{cc} I_T & O \\ O & -I_T^t \end{array} \right) \label{B_eq}
\end{equation}
holds if and only if $\tau=O$. Here, the equality (\ref{B_eq}) holds if and only if ${}_{\tau}\underline{B} I_T+I_T^t {}_{\tau}\underline{B}=O$ since we can calculate
\begin{equation*}
\left( \begin{array}{cc} I_{2n} & O \\ -{}_{\tau}\underline{B} & I_{2n} \end{array} \right) \left( \begin{array}{cc} I_T & O \\ O & -I_T^t \end{array} \right) \left( \begin{array}{cc} I_{2n} & O \\ {}_{\tau}\underline{B} & I_{2n} \end{array} \right)=\left( \begin{array}{cc} I_T & O \\ -({}_{\tau}\underline{B} I_T+I_T^t {}_{\tau}\underline{B}) & -I_T^t \end{array} \right),
\end{equation*}
and the matrix ${}_{\tau}\underline{B} I_T+I_T^t {}_{\tau}\underline{B}$ turns out to be
\begin{equation*}
{}_{\tau}\underline{B} I_T+I_T^t {}_{\tau}\underline{B}=\left( \begin{array}{cc} -\tau XY^{-1}-(Y^{-1})^t X^t \tau & -\tau (Y+XY^{-1}X) \\ -(Y^t+X^t (Y^{-1})^t X^t)\tau & O \end{array} \right).
\end{equation*}
Therefore, we may consider the two conditions
\begin{align}
&-\tau XY^{-1}-(Y^{-1})^t X^t \tau =O, \label{B_cond_1} \\
&-\tau (Y+XY^{-1}X)=O. \label{B_cond_2}
\end{align}
As remarked in subsection 2.1, now, $\mathrm{det}(Y+XY^{-1}X)\not=0$ since $\mathrm{det}T\not=0$. Hence, the condition (\ref{B_cond_2}) indicates $\tau=O$, and then, the condition (\ref{B_cond_1}) is automatically satisfied. Conversely, it is clear that the two conditions (\ref{B_cond_1}), (\ref{B_cond_2}) are satisfied if $\tau=O$. Thus, indeed, the equality (\ref{B_eq}) holds if and only if $\tau=O$.
\end{proof}
On the other hand, by the definition of ${}_{\tau}B$, the vanishing of the (0,2)-part ${}_{\tau}B^{(0,2)}$ is equivalent to $\tau \in \mathrm{Sym}(n;\mathbb{R})$, i.e., $\tau=O$ (note that $\tau \in \mathrm{Alt}(n;\mathbb{R})$). As a result, we immediately obtain the following as a corollary of Proposition \ref{comp_B_field}. 
\begin{corollary} \label{comp_B_field_cor}
The B-field transform associated to the B-field ${}_{\tau}B$ preserves the generalized complex structure $\mathcal{I}_T$ over $X^n$, i.e., 
\begin{equation*}
\mathcal{I}_T({}_{\tau}B)=\mathcal{I}_T
\end{equation*}
if and only if the \textup{(}0,2\textup{)}-part ${}_{\tau}B^{(0,2)}$ vanishes.
\end{corollary}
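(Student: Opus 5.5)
The corollary should follow by combining Proposition \ref{comp_B_field} with a direct computation of the (0,2)-part of ${}_{\tau}B$. Proposition \ref{comp_B_field} already says that $\mathcal{I}_T({}_{\tau}B)=\mathcal{I}_T$ holds if and only if $\tau=O$. So it remains to show that ${}_{\tau}B^{(0,2)}=0$ if and only if $\tau=O$.

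For this I will express $dx$ in terms of $dz$ and $d\bar z$. From $z=x+Ty$ and $\bar z=x+\bar T y$ we get $y=(T-\bar T)^{-1}(z-\bar z)$. Hence
\begin{equation*}
x=z-T(T-\bar T)^{-1}(z-\bar z),
\end{equation*}
and the (0,1)-part of $dx$ is $T(T-\bar T)^{-1}d\bar z$. Substituting into ${}_{\tau}B=\frac12 dx^t\tau dx$ gives
\begin{equation*}
{}_{\tau}B^{(0,2)}=\frac12 d\bar z^t\bigl((T-\bar T)^{-1}\bigr)^tT^t\tau T(T-\bar T)^{-1}d\bar z.
\end{equation*}
This is the same manipulation used earlier in the proof that $\Omega_{\theta,(A,p,q)}^{(0,2)}$ vanishes if and only if $A^t\theta A=O$.

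A 2-form $d\bar z^t M d\bar z$ vanishes exactly when the antisymmetric part of $M$ is zero. Here $M=((T-\bar T)^{-1})^tT^t\tau T(T-\bar T)^{-1}$ is itself antisymmetric, because $\tau\in\mathrm{Alt}(n;\mathbb{R})$. So ${}_{\tau}B^{(0,2)}=0$ if and only if $M=O$. Since $\det T\neq0$ and $T-\bar T=2\mathbf{i}Y$ is invertible, this holds if and only if $\tau=O$. Put differently, the form vanishes precisely when $\tau$ is also symmetric, which forces $\tau=O$.

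Chaining the two equivalences gives the corollary. The only step needing care is the computation of the (0,1)-part of $dx$, together with the invertibility of $T(T-\bar T)^{-1}$, which is where the standing assumption $\det T\neq0$ is used.
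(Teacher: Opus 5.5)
Your proposal is correct and follows the paper's route: combine Proposition \ref{comp_B_field} (that $\mathcal{I}_T({}_{\tau}B)=\mathcal{I}_T$ iff $\tau=O$) with the fact that ${}_{\tau}B^{(0,2)}=\frac{1}{2}d\bar{z}^t \Lambda(T,\tau) d\bar{z}$ vanishes iff $\tau=O$. The paper only asserts this second equivalence, whereas you verify it explicitly. Your check uses that $\Lambda(T,\tau)$ is an invertible congruence of the alternating matrix $\tau$, which relies on $\det T\neq 0$.
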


Below, we give the strict definition of the flat gerbe whose 1-connection is determined by ${}_{\tau}B^{(0,2)}$. We set
\begin{equation*}
\Lambda(T,\tau):=((T-\bar{T})^{-1})^t T^t \tau T (T-\bar{T})^{-1},
\end{equation*}
namely,
\begin{equation*}
{}_{\tau}B^{(0,2)}=\frac{1}{2}d\bar{z}^t \Lambda(T,\tau) d\bar{z}.
\end{equation*}
For ${}_{\tau}B^{(0,2)}$, let us take a 1-form ${}_{\tau}\beta$ which is expressed locally as
\begin{equation*}
{}_{\tau}\beta=\frac{1}{2}\bar{z}^t \Lambda(T,\tau) d\bar{z},
\end{equation*}
and it is clear that ${}_{\tau}B^{(0,2)}=\bar{\partial}{}_{\tau}\beta$. We sometimes use the notation ${}_{\tau}\beta(z)$ instead of ${}_{\tau}\beta$ in order to specify the complex coordinate system $z$. For each $i=1, \ldots, n$, let us locally define ${}_{\tau}\omega(\bm{e}_i)$, ${}_{\tau}\omega(T\bm{e}_i)$ by
\begin{align*}
&{}_{\tau}\omega(\bm{e}_i)={}_{\tau}\beta(z+\bm{e}_i)-{}_{\tau}\beta(z)=\frac{1}{2}\bm{e}_i^t \Lambda(T,\tau) d\bar{z}, \\
&{}_{\tau}\omega(T\bm{e}_i)={}_{\tau}\beta(z+T\bm{e}_i)-{}_{\tau}\beta(z)=\frac{1}{2}\bm{e}_i^t \bar{T}^t \Lambda(T,\tau) d\bar{z},
\end{align*}
respectively. Now, for an arbitrary $i=1, \ldots, n$ and points $[z]$, $[z+\bm{e}_i]\in X^n$, there exist $(l;m)$, $(l';m')\in I$ such that $[z]\in O_m^l$, $[z+\bm{e}_i]\in O_{m'}^{l'}$, so let us denote the open set $O_m^l$ including $[z]$ and the open set $O_{m'}^{l'}$ including $[z+\bm{e}_i]$ by $O_{[z]}$ and $O_{[z+\bm{e}_i]}$, respectively. We use the notations $O_{[z+T\bm{e}_1]}, \ldots, O_{[z+T\bm{e}_n]}$ in this sense. Then ${}_{\tau}\omega(\bm{e}_1), \ldots, {}_{\tau}\omega(\bm{e}_n)$, ${}_{\tau}\omega(T\bm{e}_1), \ldots, {}_{\tau}\omega(T\bm{e}_n)$ can be considered as 1-forms which are defined on $O_{[z]}\cap O_{[z+\bm{e}_1]}, \ldots, O_{[z]}\cap O_{[z+\bm{e}_n]}$, $O_{[z]}\cap O_{[z+T\bm{e}_1]}, \ldots, O_{[z]}\cap O_{[z+T\bm{e}_n]}$, respectively. Let us consider the following locally defined functions for each $i=1, \ldots, n$:
\begin{equation*}
{}_{\tau}\xi(\bm{e}_i,z):=\mathrm{exp}\Bigl( -\pi \mathbf{i} \bm{e}_i^t \Lambda(T,\tau) \bar{z} \Bigr), \ \ \ {}_{\tau}\xi(T\bm{e}_i,z):=\mathrm{exp}\Bigl( -\pi \mathbf{i} \bm{e}_i^t \bar{T}^t \Lambda(T,\tau) \bar{z} \Bigr).
\end{equation*}
Since they satisfy the differential equations
\begin{equation*}
\left( \bar{\partial}+2\pi \mathbf{i}{}_{\tau}\omega(\bm{e}_i) \right) \bigl( \xi_{\tau}(\gamma_i,z) \bigr)=0, \ \ \ \left( \bar{\partial}+2\pi \mathbf{i}{}_{\tau}\omega(T\bm{e}_i) \right) \bigl( {}_{\tau}\xi(T\bm{e}_i,z) \bigr)=0,
\end{equation*}  
we can regard ${}_{\tau}\xi(\bm{e}_i,z)$ and ${}_{\tau}\xi(T\bm{e}_i,z)$ as a holomorphic section of the trivial complex line bundle on $O_{[z]}\cap O_{[z+\bm{e}_i]}$ equipped with the holomorphic structure $\bar{\partial}+2\pi \mathbf{i}{}_{\tau}\omega(\bm{e}_i)$ and a holomorphic section of the trivial complex line bundle on $O_{[z]}\cap O_{[z+T\bm{e}_i]}$ equipped with the holomophic structure $\bar{\partial}+2\pi \mathbf{i}{}_{\tau}\omega(T\bm{e}_i)$, respectively. We further define ${}_{\tau}\xi(\bm{e}_i+\bm{e}_j,z)$, ${}_{\tau}\xi(\bm{e}_i+T\bm{e}_j,z)$, ${}_{\tau}\xi(T\bm{e}_i+\bm{e}_j,z)$, ${}_{\tau}\xi(T\bm{e}_i+T\bm{e}_j,z)$ by
\begin{align*}
&{}_{\tau}\xi(\bm{e}_i+\bm{e}_j,z)={}_{\tau}\xi(\bm{e}_i,z+\bm{e}_j){}_{\tau}\xi(\bm{e}_j,z), \\
&{}_{\tau}\xi(\bm{e}_i+T\bm{e}_j,z)={}_{\tau}\xi(\bm{e}_i,z+T\bm{e}_j){}_{\tau}\xi(T\bm{e}_j,z), \\
&{}_{\tau}\xi(T\bm{e}_i+\bm{e}_j,z)={}_{\tau}\xi(T\bm{e}_i,z+\bm{e}_j){}_{\tau}\xi(\bm{e}_j,z), \\
&{}_{\tau}\xi(T\bm{e}_i+T\bm{e}_j,z)={}_{\tau}\xi(T\bm{e}_i,z+T\bm{e}_j){}_{\tau}\xi(T\bm{e}_j,z), 
\end{align*}
respectively, where $i$, $j=1, \ldots, n$. Now, by using them, we consider the following locally defined constant functions, where $i$, $j=1, \ldots, n$:
\begin{align*}
&{}_{\tau}\alpha(\bm{e}_i,\bm{e}_j):={}_{\tau}\xi(\bm{e}_i+\bm{e}_j,z)^{-1}{}_{\tau}\xi(\bm{e}_j,z+\bm{e}_i){}_{\tau}\xi(\bm{e}_i,z)=\mathrm{exp}\Bigl( 2\pi \mathbf{i}\bm{e}_i^t \Lambda(T,\tau) \bm{e}_j \Bigr), \\
&{}_{\tau}\alpha(\bm{e}_i,T\bm{e}_j):={}_{\tau}\xi(\bm{e}_i+T\bm{e}_j,z)^{-1}{}_{\tau}\xi(T\bm{e}_j,z+\bm{e}_i){}_{\tau}\xi(\bm{e}_i,z)=\mathrm{exp}\Bigl( 2\pi \mathbf{i}\bm{e}_i^t \Lambda(T,\tau) \bar{T}\bm{e}_j \Bigr), \\
&{}_{\tau}\alpha(T\bm{e}_i,\bm{e}_j):={}_{\tau}\xi(T\bm{e}_i+\bm{e}_j,z)^{-1}{}_{\tau}\xi(\bm{e}_j,z+T\bm{e}_i){}_{\tau}\xi(T\bm{e}_i,z)=\mathrm{exp}\Bigl( 2\pi \mathbf{i}\bm{e}_i^t \bar{T}^t \Lambda(T,\tau) \bm{e}_j \Bigr), \\
&{}_{\tau}\alpha(T\bm{e}_i,T\bm{e}_j):={}_{\tau}\xi(T\bm{e}_i+T\bm{e}_j,z)^{-1}{}_{\tau}\xi(T\bm{e}_j,z+T\bm{e}_i){}_{\tau}\xi(T\bm{e}_i,z)=\mathrm{exp}\Bigl( 2\pi \mathbf{i}\bm{e}_i^t \bar{T}^t \Lambda(T,\tau) \bar{T}\bm{e}_j \Bigr).
\end{align*}
In particular, they are non-trivial if and only if $\Lambda(T,\tau)\not\in \mathrm{Sym}(n;\mathbb{R})$. These ${}_{\tau}\alpha(\bm{e}_i,\bm{e}_j)$, ${}_{\tau}\alpha(\bm{e}_i,T\bm{e}_j)$, ${}_{\tau}\alpha(T\bm{e}_i,\bm{e}_j)$, ${}_{\tau}\alpha(T\bm{e}_i,T\bm{e}_j)$ can be regarded as holomorphic sections of the trivial complex line bundles on $O_{[z]}\cap O_{[z+\bm{e}_i]}\cap O_{[z+\bm{e}_i+\bm{e}_j]}$, $O_{[z]}\cap O_{[z+\bm{e}_i]}\cap O_{[z+\bm{e}_i+T\bm{e}_j]}$, $O_{[z]}\cap O_{[z+T\bm{e}_i]}\cap O_{[z+T\bm{e}_i+\bm{e}_j]}$, $O_{[z]}\cap O_{[z+T\bm{e}_i]}\cap O_{[z+T\bm{e}_i+T\bm{e}_j]}$ equipped with the trivial holomorphic structures, respectively. Moreover, we have the relations 
\begin{align*}
&2\pi \mathbf{i}{}_{\tau}\omega(\bm{e}_i)+2\pi \mathbf{i}{}_{\tau}\omega(\bm{e}_j)-2\pi \mathbf{i}{}_{\tau}\omega(\bm{e}_i+\bm{e}_j)={}_{\tau}\alpha(\bm{e}_i,\bm{e}_j)\bar{\partial}{}_{\tau}\alpha(\bm{e}_i,\bm{e}_j)^{-1}=0, \\
&2\pi \mathbf{i}{}_{\tau}\omega(\bm{e}_i)+2\pi \mathbf{i}{}_{\tau}\omega(T\bm{e}_j)-2\pi \mathbf{i}{}_{\tau}\omega(\bm{e}_i+T\bm{e}_j)={}_{\tau}\alpha(\bm{e}_i,T\bm{e}_j)\bar{\partial}{}_{\tau}\alpha(\bm{e}_i,T\bm{e}_j)^{-1}=0, \\
&2\pi \mathbf{i}{}_{\tau}\omega(T\bm{e}_i)+2\pi \mathbf{i}{}_{\tau}\omega(\bm{e}_j)-2\pi \mathbf{i}{}_{\tau}\omega(T\bm{e}_i+\bm{e}_j)={}_{\tau}\alpha(T\bm{e}_i,\bm{e}_j)\bar{\partial}{}_{\tau}\alpha(T\bm{e}_i,\bm{e}_j)^{-1}=0, \\
&2\pi \mathbf{i}{}_{\tau}\omega(T\bm{e}_i)+2\pi \mathbf{i}{}_{\tau}\omega(T\bm{e}_j)-2\pi \mathbf{i}{}_{\tau}\omega(T\bm{e}_i+T\bm{e}_j)={}_{\tau}\alpha(T\bm{e}_i,T\bm{e}_j)\bar{\partial}{}_{\tau}\alpha(T\bm{e}_i,T\bm{e}_j)^{-1}=0,
\end{align*}
where 
\begin{align*}
&{}_{\tau}\omega(\bm{e}_i+\bm{e}_j):={}_{\tau}\omega(\bm{e}_i)+{}_{\tau}\omega(\bm{e}_j), \ \ \ {}_{\tau}\omega(\bm{e}_i+T\bm{e}_j):={}_{\tau}\omega(\bm{e}_i)+{}_{\tau}\omega(T\bm{e}_j), \\
&{}_{\tau}\omega(T\bm{e}_i+\bm{e}_j):={}_{\tau}\omega(T\bm{e}_i)+{}_{\tau}\omega(\bm{e}_j), \ \ \ {}_{\tau}\omega(T\bm{e}_i+T\bm{e}_j):={}_{\tau}\omega(T\bm{e}_i)+{}_{\tau}\omega(T\bm{e}_j),
\end{align*}
and $i$, $j=1, \ldots, n$. For simplicity, let us put
\begin{align*}
&{}_{\tau}\alpha:=\Bigl\{ {}_{\tau}\alpha(\bm{e}_i,\bm{e}_j), \ {}_{\tau}\alpha(\bm{e}_i,T\bm{e}_j), \ {}_{\tau}\alpha(T\bm{e}_i,\bm{e}_j), \ {}_{\tau}\alpha(T\bm{e}_i,T\bm{e}_j) \Bigr\}_{i,j=1,\ldots,n}, \\
&{}_{\tau}\nabla:=\Bigl\{ \bar{\partial}+2\pi \mathbf{i}{}_{\tau}\omega(\bm{e}_i), \ \bar{\partial}+2\pi \mathbf{i}{}_{\tau}\omega(T\bm{e}_i) \Bigr\}_{i=1,\ldots,n}.
\end{align*}
Summarizing the above arguments, we see that the data $({}_{\tau}\alpha, {}_{\tau}\nabla, 2\pi \mathbf{i}{}_{\tau}B^{(0,2)})$ defines a flat gerbe ${}_{\tau}\mathcal{G}^{\nabla}$ in the sense of Hitchin-Chatterjee \cite{chat, h} on $X^n$:
\begin{equation*}
{}_{\tau}\mathcal{G}^{\nabla}:=\Bigl( {}_{\tau}\alpha, \ {}_{\tau}\nabla, \ 2\pi \mathbf{i}{}_{\tau}B^{(0,2)} \Bigr).
\end{equation*}
Here, the family ${}_{\tau}\nabla$ gives a 0-connection over the gerbe on $X^n$ which is determined by the family ${}_{\tau}\alpha$, and the globally defined 2-form $2\pi \mathbf{i}{}_{\tau}B^{(0,2)}$ is the 1-connection which is compatible with the 0-connection ${}_{\tau}\nabla$.

As remarked in the above, ${}_{\tau}\alpha$ is non-trivial if and only if $\Lambda(T, \tau)\not\in \mathrm{Sym}(n;\mathbb{R})$, and the condition $\Lambda(T,\tau)\in \mathrm{Sym}(n;\mathbb{R})$ is equivalent to the condition $\tau\in \mathrm{Sym}(n;\mathbb{R})$, i.e., $\tau=O$ (note that $\tau\in \mathrm{Sym}(n;\mathbb{R})$). As a consequence, twisting $X^n$ by using ${}_{\tau}\mathcal{G}^{\nabla}$ and considering the B-field transform $\mathcal{I}_T({}_{\tau}B)$ of $\mathcal{I}_T$ associated to ${}_{\tau}B$ are essentially the same (see Proposition \ref{comp_B_field} and Corollary \ref{comp_B_field_cor}). Thus, when we denote this gerby deformed complex torus by ${}_{\tau}X^n$, the following notation makes sense:
\begin{equation*}
{}_{\tau}X^n=\Bigl( X^n, \ \mathcal{I}_T({}_{\tau}B) \Bigr).
\end{equation*}

Associated to the deformation from $X^n$ to ${}_{\tau}X^n$, we can twist each holomorphic line bundle $E_{(A,p,q)}^{\nabla}$ to the following ${}_{\tau}\alpha$-twisted smooth complex line bundle on $X^n$ with the connection. First, the family of the transition functions, i.e., the factor of automorphy $j_A$ is twisted as follows, where $i=1, \ldots, n$, and note that $\mathrm{exp}(2\pi \mathbf{i}\bm{e}_i^t A^t (T-\bar{T})^{-1}(z-\bar{z}))=\mathrm{exp}(2\pi \mathbf{i}\bm{e}_i^t A^t y)$:
\begin{align*}
&j_A(\bm{e}_i, z) \\
&\mapsto {}_{\tau}j_A(\bm{e}_i, z):={}_{\tau}\xi(\bm{e}_i, z)j_A(\bm{e}_i, z)=\mathrm{exp}\left( -\pi \mathbf{i}\bm{e}_i^t \Lambda(T,\tau)\bar{z}+2\pi \mathbf{i}\bm{e}_i^t A^t \left( T-\bar{T} \right)^{-1}(z-\bar{z}) \right), \\
& j_A(T\bm{e}_i, z) \\
&\mapsto {}_{\tau}j_A(T\bm{e}_i, z):={}_{\tau}\xi(T\bm{e}_i, z)j_A(T\bm{e}_i, z)=\mathrm{exp}\left( -\pi \mathbf{i}\bm{e}_i^t \bar{T}^t \Lambda(T,\tau)\bar{z} \right).
\end{align*}  
We further define
\begin{align*}
&{}_{\tau}j_A(\bm{e}_i+\bm{e}_j,z):={}_{\tau}j_A(\bm{e}_i,z+\bm{e}_j){}_{\tau}j_A(\bm{e}_j,z), \\
&{}_{\tau}j_A(\bm{e}_i+T\bm{e}_j,z):={}_{\tau}j_A(\bm{e}_i,z+T\bm{e}_j){}_{\tau}j_A(T\bm{e}_j,z), \\
&{}_{\tau}j_A(T\bm{e}_i+\bm{e}_j,z):={}_{\tau}j_A(T\bm{e}_i,z+\bm{e}_j){}_{\tau}j_A(\bm{e}_j,z), \\
&{}_{\tau}j_A(T\bm{e}_i+T\bm{e}_j,z):={}_{\tau}j_A(T\bm{e}_i,z+T\bm{e}_j){}_{\tau}j_A(T\bm{e}_j,z),
\end{align*}
and then, we can easily check that the map ${}_{\tau}j_A : (\mathbb{Z}^n \oplus T\mathbb{Z}^n)\times \mathbb{C}^n \to \mathbb{C}^{\times}$ satisfies the relations
\begin{align*}
&{}_{\tau}j_A(\bm{e}_i+\bm{e}_j, z)^{-1}{}_{\tau}j_A(\bm{e}_j, z+\bm{e}_i){}_{\tau}j_A(\bm{e}_i, z)={}_{\tau}\alpha(\bm{e}_i, \bm{e}_j)\cdot 1, \\
&{}_{\tau}j_A(\bm{e}_i+T\bm{e}_j, z)^{-1}{}_{\tau}j_A(T\bm{e}_j, z+\bm{e}_i){}_{\tau}j_A(\bm{e}_i, z)={}_{\tau}\alpha(\bm{e}_i, T\bm{e}_j)\cdot 1, \\
&{}_{\tau}j_A(T\bm{e}_i+\bm{e}_j, z)^{-1}{}_{\tau}j_A(\bm{e}_j, z+T\bm{e}_i){}_{\tau}j_A(T\bm{e}_i, z)={}_{\tau}\alpha(T\bm{e}_i, \bm{e}_j)\cdot 1, \\
&{}_{\tau}j_A(T\bm{e}_i+T\bm{e}_j, z)^{-1}{}_{\tau}j_A(T\bm{e}_j, z+T\bm{e}_i){}_{\tau}j_A(T\bm{e}_i, z)={}_{\tau}\alpha(T\bm{e}_i, T\bm{e}_j)\cdot 1, 
\end{align*}
where $i$, $j=1, \ldots, n$. They imply that the map ${}_{\tau}j_A$ defines an ${}_{\tau}\alpha$-twisted smooth complex line bundle on $X^n$, so let us denote it by ${}_{\tau}E_A \to X^n$ or ${}_{\tau}E_A$ for short. Moreover, as will be explained later, we can check that the differential operator of the form (\ref{conn}) again defines a connection on ${}_{\tau}E_A$. However, we denote this differential operator by ${}_{\tau}\nabla_{(A,p,q)}=d+{}_{\tau}\omega_{(A,p,q)}$ because we consider the deformation depending on $\tau$ in this context. It is easy to check that the 1-form ${}_{\tau}\omega_{(A,p,q)}$ satisfies the following relations for each $i=1, \ldots, n$:
\begin{align*}
&{}_{\tau}\omega_{(A,p,q)}(x+\bm{e}_i, y)={}_{\tau}\omega_{(A,p,q)}(x, y)+{}_{\tau}j_A(\bm{e}_i, z)d\hspace{0.3mm}{}_{\tau}j_A(\bm{e}_i, z)^{-1}-2\pi \mathbf{i}{}_{\tau}\omega(\bm{e}_i), \\
&{}_{\tau}\omega_{(A,p,q)}(x, y+\bm{e}_i)={}_{\tau}\omega_{(A,p,q)}(x, y)+{}_{\tau}j_A(T\bm{e}_i, z)d\hspace{0.3mm}{}_{\tau}j_A(T\bm{e}_i, z)^{-1}-2\pi \mathbf{i}{}_{\tau}\omega(T\bm{e}_i).
\end{align*}
Here, we use the notation ${}_{\tau}\omega_{(A,p,q)}(x, y)$ instead of ${}_{\tau}\omega_{(A,p,q)}$ in order to specify the coordinate system $(x,y)\in \mathbb{R}^{2n}$, and for each $i=1, \ldots, n$, note that the shifts $x\mapsto x+\bm{e}_i$, $y\mapsto y+\bm{e}_i$ correspond to the shifts $z\mapsto z+\bm{e}_i$, $z\mapsto z+T\bm{e}_i$, respectively. We denote the ${}_{\tau}\alpha$-twisted smooth complex line bundle ${}_{\tau}E_A\to X^n$ with the connection ${}_{\tau}\nabla_{(A,p,q)}$ by ${}_{\tau}E_{(A,p,q)}^{\nabla}$, i.e., ${}_{\tau}E_{(A,p,q)}^{\nabla}:=({}_{\tau}E_A, {}_{\tau}\nabla_{(A,p,q)})$. 

Let us consider the holomorphicity of ${}_{\tau}E_{(A,p,q)}^{\nabla}$. The curvature form ${}_{\tau}\Omega_{(A,p,q)}$ of the connection ${}_{\tau}\nabla_{(A,p,q)}$ is expressed locally as
\begin{align*}
{}_{\tau}\Omega_{(A,p,q)}&=d{}_{\tau}\omega_{(A,p,q)}+{}_{\tau}\omega_{(A,p,q)}\wedge {}_{\tau}\omega_{(A,p,q)}+2\pi \mathbf{i}{}_{\tau}B^{(0,2)} \\
&=-2\pi \mathbf{i}dx^t A^t dy+\pi \mathbf{i}d\bar{z}^t \Lambda(T,\tau) d\bar{z}. 
\end{align*}
Now, we prepare the following proposition.
\begin{proposition} \label{twisted_hol}
The \textup{(}0,2\textup{)}-part ${}_{\tau}\Omega_{(A,p,q)}^{(0,2)}$ vanishes if and only if $\tau=O$.
\end{proposition}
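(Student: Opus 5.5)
We need the (0,2)-part of ${}_{\tau}\Omega_{(A,p,q)}=-2\pi \mathbf{i}dx^t A^t dy+\pi \mathbf{i}d\bar{z}^t \Lambda(T,\tau) d\bar{z}$. The plan is to split this expression into its two summands and treat them separately.

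The first summand $-2\pi \mathbf{i}dx^t A^t dy$ is exactly the curvature of the undeformed connection $\nabla_{(A,p,q)}$. By Proposition \ref{hol} together with the standing assumption $AT\in \mathrm{Sym}(n;\mathbb{C})$, its (0,2)-part vanishes. Concretely, one writes $dx$ and $dy$ in terms of $dz=dx+Tdy$ and $d\bar{z}=dx+\bar{T}dy$, that is,
\begin{equation*}
dy=(T-\bar{T})^{-1}(dz-d\bar{z}), \qquad dx=-\bar{T}(T-\bar{T})^{-1}dz+T(T-\bar{T})^{-1}d\bar{z}.
\end{equation*}
The $d\bar{z}\wedge d\bar{z}$ coefficient is then $-((T-\bar{T})^{-1})^t T^t A^t (T-\bar{T})^{-1}$. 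This matrix is symmetric because $AT$ is symmetric, so its wedge pairing against $d\bar{z}$ vanishes.

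The second summand $\pi \mathbf{i}d\bar{z}^t \Lambda(T,\tau) d\bar{z}$ is already of pure type (0,2). Therefore
\begin{equation*}
{}_{\tau}\Omega_{(A,p,q)}^{(0,2)}=\pi \mathbf{i}d\bar{z}^t \Lambda(T,\tau) d\bar{z}.
\end{equation*}
This form vanishes if and only if the antisymmetric part of $\Lambda(T,\tau)$ is zero, i.e.\ $\Lambda(T,\tau)\in \mathrm{Sym}(n;\mathbb{C})$. Note that the $d\bar{z}_i$ are linearly independent, so a 2-form $d\bar{z}^tMd\bar{z}$ is zero exactly when $M-M^t=O$.

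It remains to show that $\Lambda(T,\tau)=((T-\bar{T})^{-1})^tT^t\tau T(T-\bar{T})^{-1}$ is symmetric iff $\tau=O$. This is the only substantive point, and the same congruence argument appears in the proof of the proposition on $\Omega_{\theta,(A,p,q)}^{(0,2)}$ in subsection 3.1. Put $P:=T(T-\bar{T})^{-1}$, which is invertible since $\det T\neq0$. Then $\Lambda=P^t\tau P$ is a congruence of $\tau$, and congruence preserves both symmetry and antisymmetry. Since $\tau$ is alternating, $\Lambda^t=P^t\tau^tP=-\Lambda$. So $\Lambda$ is symmetric iff $\Lambda=O$, iff $\tau=O$ by invertibility of $P$. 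Conversely, if $\tau=O$ then $\Lambda=O$ and the (0,2)-part vanishes.

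The only care needed is bookkeeping the change of basis between $(dx,dy)$ and $(dz,d\bar{z})$ in the first summand. That computation is routine and is essentially already carried out in the earlier proposition.
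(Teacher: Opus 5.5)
Your proposal is correct and follows the paper's proof essentially step for step. The paper likewise writes the $(0,2)$-part as $2\pi\mathbf{i}\,d\bar{z}^t((T-\bar{T})^{-1})^t(AT)^t(T-\bar{T})^{-1}d\bar{z}+\pi\mathbf{i}\,d\bar{z}^t\Lambda(T,\tau)d\bar{z}$, discards the first term because $AT\in\mathrm{Sym}(n;\mathbb{C})$, and concludes from $\Lambda(T,\tau)\in\mathrm{Sym}(n;\mathbb{C})$ if and only if $\tau=O$; your congruence $\Lambda=P^t\tau P$ with $P=T(T-\bar{T})^{-1}$ simply makes explicit the step the paper calls clear.
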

\begin{proof}
The (0,2)-part ${}_{\tau}\Omega_{(A,p,q)}^{(0,2)}$ turns out to be 
\begin{align*}
{}_{\tau}\Omega_{(A,p,q)}^{(0,2)}&=2\pi \mathbf{i}d\bar{z}^t ((T-\bar{T})^{-1})^t (AT)^t (T-\bar{T})^{-1} d\bar{z}+\pi \mathbf{i}d\bar{z}^t \Lambda(T,\tau) d\bar{z} \\
&=\pi \mathbf{i} d\bar{z}^t \Lambda(T,\tau) d\bar{z}
\end{align*}
since we assume that $AT\in \mathrm{Sym}(n;\mathbb{C})$. By the definition of $\Lambda(T,\tau)$, it is clear that $\Lambda(T,\tau)\in \mathrm{Sym}(n;\mathbb{C})$ if and only if $\tau\in \mathrm{Sym}(n;\mathbb{R})$, i.e., $\tau=O$ (note that $\tau\in \mathrm{Alt}(n;\mathbb{R})$). This completes the proof.
\end{proof}
By \cite[Lemma 2.18, Lemma 2.19]{k-htwisted}, ${}_{\tau}E_{(A,p,q)}^{\nabla}$ becomes an ${}_{\tau}\alpha$-twisted holomorphic line bundle on $X^n$ with a connection if and only if ${}_{\tau}\Omega_{(A,p,q)}^{(0,2)}$ vanishes. Thus, Proposition \ref{twisted_hol} implies that ${}_{\tau}E_{(A,p,q)}^{\nabla}$ is not holomorphic when we consider the non-trivial gerby deformation, i.e., the case $\tau \not=O$.

\subsubsection{Symplectic geometry side}
Let us first compute the mirror partner of ${}_{\tau}X^n$ by using the mirror transform (\ref{mirror}). As a result, we have 
\begin{align*}
&\check{\mathcal{I}}_T({}_{\tau}B) \left( \frac{\partial}{\partial \check{x}}^t, \frac{\partial}{\partial \check{y}}^t, d\check{x}^t, d\check{y}^t \right) \notag \\
&=\left( \frac{\partial}{\partial \check{x}}^t, \frac{\partial}{\partial \check{y}}^t, d\check{x}^t, d\check{y}^t \right) \left( \begin{array}{cccc} I_n & O & O & O \\ O & I_n & O & O \\ -\tau & O & I_n & O \\ O & O & O & I_n \end{array} \right) \check{\mathcal{I}}_T \left( \begin{array}{cccc} I_n & O & O & O \\ O & I_n & O & O \\ \tau & O & I_n & O \\ O & O & O & I_n \end{array} \right), 
\end{align*}
and this implies that the symplectic form $\omega^{\vee}$ is preserved and the B-field $B^{\vee}$ is twisted by
\begin{equation*}
{}_{\tau}B^{\vee}:=\frac{1}{2}d\check{x}^t \tau d\check{x}.
\end{equation*}
Let us put
\begin{equation*}
{}_{\tau}\tilde{\omega}^{\vee}:=\tilde{\omega}^{\vee}+{}_{\tau}B^{\vee}=\mathbf{i}\omega^{\vee}+\bigl( B^{\vee}+{}_{\tau}B^{\vee} \bigr).
\end{equation*}
Hereafter, we denote this complexified symplectic torus (the mirror partner of ${}_{\tau}X^n$ which is obtained by using the mirror transform (\ref{mirror})) by
\begin{equation*}
{}_{\tau}\check{X}^n:=\left( \mathbb{R}^{2n}/\mathbb{Z}^{2n}, \ {}_{\tau}\tilde{\omega}^{\vee}=d\check{x}^t (-(T^{-1})^t) d\check{y}+\frac{1}{2}d\check{x}^t \tau d\check{x} \right).
\end{equation*}

We construct the deformation of objects $\mathcal{L}_{(A,p,q)}^{\nabla}$ of $Fuk_{\rm sub}(\check{X}^n)$ which are mirror dual to ${}_{\tau}\alpha$-twisted smooth complex line bundles ${}_{\tau}E_{(A,p,q)}^{\nabla}$. It is reasonable to consider that $L_{(A,p)}$ is preserved and $\mathcal{O}_{(A,p,q)}^{\nabla}$ is deformed to something since the symplectic form $\omega^{\vee}$ is preserved and the B-field $B^{\vee}$ is twisted by ${}_{\tau}B^{\vee}$. Anyway, we should be careful with the condition (\ref{integrity}) when we consider the deformation of $\mathcal{O}_{(A,p,q)}^{\nabla}$. For example, in the case $\tau \in M(n;\mathbb{R})\backslash M(n;\mathbb{Q})$, it is easy to check that 
\begin{equation*}
\bigl[ B^{\vee}+{}_{\tau}B^{\vee} \bigr]\not\in H^2(L_{(A,p)},\mathbb{Z})
\end{equation*}  
holds for any Lagrangian submanifold $L_{(A,p)}$ in ${}_{\tau}\check{X}^n$. Therefore, it is natural to employ a twisted line bundle as a deformation of each $\mathcal{O}_{(A,p,q)}^{\nabla}$ as discussed in subsection 4.1. In fact, the effect of $B^{\vee}$ vanishes on $L_{(A,p)}$ since we assume that $AT\in \mathrm{Sym}(n;\mathbb{C})$, so essentially, it is enough to consider the effect of ${}_{\tau}B^{\vee}$ only on $L_{(A,p)}$. Moreover, note that the flat gerbe given here can also be regarded as the restriction of the flat gerbe defined globally on $\check{X}_{\omega^{\vee}}^n$ whose 1-connection is $2\pi \mathbf{i}(B^{\vee}+{}_{\tau}B^{\vee})$ to $L_{(A,p)}$. We set
\begin{equation*}
{}_{\tau}B_A^{\vee}:=\Bigl. {}_{\tau}B^{\vee} \Bigr|_{L_{(A,p)}},
\end{equation*}
i.e.,
\begin{equation*}
{}_{\tau}B_A^{\vee}=\frac{1}{2}d\check{x}^t \tau d\check{x}.
\end{equation*}
For ${}_{\tau}B_A^{\vee}$, let us take a 1-form ${}_{\tau}\beta_A^{\vee}$ which is expressed locally as
\begin{equation*}
{}_{\tau}\beta_A^{\vee}=\frac{1}{2}\check{x}^t \tau d\check{x},
\end{equation*}
and it is clear that ${}_{\tau}B_A^{\vee}=d{}_{\tau}\beta_A^{\vee}$. We sometimes use the notation ${}_{\tau}\beta_A^{\vee}(\check{x})$ instead of ${}_{\tau}\beta_A^{\vee}$ in order to specify the coordinate system $\check{x}$. Then, similarly as in the cases of $\check{\mathcal{G}}_{\theta,A}^{\nabla}$, ${}_{\tau}\mathcal{G}^{\nabla}$, the locally expressed data ${}_{\tau}\alpha_A^{\vee}:=\{ {}_{\tau}\alpha_A^{\vee}(\bm{e}_i,\bm{e}_j) \}_{i,j=1,\ldots,n}$, ${}_{\tau}\nabla_A^{\vee}:=\{ d+2\pi \mathbf{i}{}_{\tau}\omega_A^{\vee}(\bm{e}_i) \}_{i=1,\ldots,n}$: 
\begin{align*}
&{}_{\tau}\alpha_A^{\vee}(\bm{e}_i,\bm{e}_j):={}_{\tau}\xi_A^{\vee}(\bm{e}_i+\bm{e}_j,\check{x})^{-1}{}_{\tau}\xi_A^{\vee}(\bm{e}_j,\check{x}+\bm{e}_i){}_{\tau}\xi_A^{\vee}(\bm{e}_i,\check{x})=\mathrm{exp}\Bigl( 2\pi \mathbf{i}\bm{e}_i^t \tau \bm{e}_j \Bigr), \\
&{}_{\tau}\omega_A^{\vee}(\bm{e}_i):={}_{\tau}\beta_A^{\vee}(\check{x}+\bm{e}_i)-{}_{\tau}\beta_A^{\vee}(\check{x})=\frac{1}{2}\bm{e}_i^t \tau d\check{x}
\end{align*}
and $2\pi \mathbf{i}{}_{\tau}B_A^{\vee}$ defines a flat gerbe ${}_{\tau}\check{\mathcal{G}}_A^{\nabla}$ in the sense of Hitchin-Chatterjee \cite{chat, h} on $L_{(A,p)}$:
\begin{equation*}
{}_{\tau}\check{\mathcal{G}}_A^{\nabla}:=\Bigl( {}_{\tau}\alpha_A^{\vee}, \ {}_{\tau}\nabla_A^{\vee}, \ 2\pi \mathbf{i}{}_{\tau}B_A^{\vee} \Bigr),
\end{equation*}
where 
\begin{equation*}
{}_{\tau}\xi_A^{\vee}(\bm{e}_i,\check{x}):=\mathrm{exp}\Bigl( -\pi \mathbf{i} \bm{e}_i^t \tau \check{x} \Bigr), \ \ \ {}_{\tau}\xi_A^{\vee}(\bm{e}_i+\bm{e}_j,\check{x}):={}_{\tau}\xi_A^{\vee}(\bm{e}_i,\check{x}+\bm{e}_j){}_{\tau}\xi_A^{\vee}(\bm{e}_j,\check{x}) 
\end{equation*}
for any $i$, $j=1, \ldots, n$. In particular, the family ${}_{\tau}\nabla_A^{\vee}$ gives a 0-connection over the gerbe on $L_{(A,p)}$ which is determined by the family ${}_{\tau}\alpha_A^{\vee}$, and the globally defined 2-form $2\pi \mathbf{i}{}_{\tau}B_A^{\vee}$ is the 1-connection which is compatible with the 0-connection ${}_{\tau}\nabla_A^{\vee}$.

Let us twist each $\mathcal{O}_{(A,p,q)}^{\nabla} \to L_{(A,p)}$ by using the flat gerbe ${}_{\tau}\check{\mathcal{G}}_A^{\nabla}$. We consider a map ${}_{\tau}j_A^{\vee} : \mathbb{Z}^n \times \mathbb{R}^n \to \mathbb{C}^{\times}$ which is defined by
\begin{equation*}
{}_{\tau}j_A^{\vee}(\bm{e}_i, \check{x}):={}_{\tau}\xi_A^{\vee}(\bm{e}_i, \check{x})=\mathrm{exp}\Bigl( -\pi \mathbf{i} \bm{e}_i^t \tau \check{x} \Bigr),
\end{equation*}
where $i=1, \ldots, n$. We further define
\begin{equation*}
{}_{\tau}j_A^{\vee}(\bm{e}_i+\bm{e}_j, \check{x}):={}_{\tau}j_A^{\vee}(\bm{e}_i, \check{x}+\bm{e}_j) {}_{\tau}j_A^{\vee}(\bm{e}_j, \check{x})
\end{equation*}
for any $i$, $j=1, \ldots, n$, and then, it is clear that the map ${}_{\tau}j_A^{\vee}$ satisfies the relations
\begin{equation*}
{}_{\tau}j_A^{\vee}(\bm{e}_i+\bm{e}_j, \check{x})^{-1} {}_{\tau}j_A^{\vee}(\bm{e}_j, \check{x}+\bm{e}_i) {}_{\tau}j_A^{\vee}(\bm{e}_i, \check{x})={}_{\tau}\alpha_A^{\vee}(\bm{e}_i, \bm{e}_j)\cdot 1
\end{equation*}
by the definition of ${}_{\tau}\alpha_A^{\vee}$. Hence, the map ${}_{\tau}j_A^{\vee}$ defines an ${}_{\tau}\alpha_A^{\vee}$-twisted smooth complex line bundle on $L_{(A,p)}$, so let us denote it by ${}_{\tau}\mathcal{O}_{(A,p)}\to L_{(A,p)}$ or ${}_{\tau}\mathcal{O}_{(A,p)}$ for short. On the other hand, it is natural to consider that the connection $\nabla_{(A,p,q)}^{\vee}$ on $\mathcal{O}_{(A,p)}$ is preserved:
\begin{equation*}
{}_{\tau}\omega_{(A,p,q)}^{\vee}:=\omega_{(A,p,q)}^{\vee}, \ \ \ {}_{\tau}\nabla_{(A,p,q)}^{\vee}:=d+{}_{\tau}\omega_{(A,p,q)}^{\vee}.
\end{equation*}
In fact, we have the relation
\begin{equation*}
{}_{\tau}\omega_{(A,p,q)}^{\vee}(\check{x}+\bm{e}_i)={}_{\tau}\omega_{(A,p,q)}^{\vee}(\check{x})+{}_{\tau}j_A^{\vee}(\bm{e}_i, \check{x})d{}_{\tau}j_A^{\vee}(\bm{e}_i, \check{x})^{-1}-2\pi \mathbf{i}{}_{\tau}\omega_A^{\vee}(\bm{e}_i)
\end{equation*}
for each $i=1, \ldots, n$, and we use the notation ${}_{\tau}\omega_A^{\vee}(\check{x})$ instead of ${}_{\tau}\omega_A^{\vee}$ in order to specify the coordinate system $\check{x}$. Here, we set ${}_{\tau}\mathcal{O}_{(A,p,q)}^{\nabla}:=({}_{\tau}\mathcal{O}_{\theta,(A,p)}, {}_{\tau}\nabla_{(A,p,q)}^{\vee})$. Let us denote the pair of the Lagrangian submanifold $L_{(A,p)}$ in ${}_{\tau}\check{X}^n$ and the ${}_{\tau}\alpha_A^{\vee}$-twisted smooth complex line bundle with the connection ${}_{\tau}\mathcal{O}_{(A,p,q)}^{\nabla}$ by ${}_{\tau}\mathcal{L}_{(A,p,q)}^{\nabla}$: ${}_{\tau}\mathcal{L}_{(A,p,q)}^{\nabla}:=(L_{(A,p)}, {}_{\tau}\mathcal{O}_{(A,p,q)}^{\nabla})$.

The curvature form ${}_{\tau}\Omega_{(A,p,q)}^{\vee}$ of the connection ${}_{\tau}\nabla_{(A,p,q)}^{\vee}$ is expressed locally as
\begin{equation*}
{}_{\tau}\Omega_{(A,p,q)}^{\vee}=d{}_{\tau}\omega_{(A,p,q)}^{\vee}+{}_{\tau}\omega_{(A,p,q)}^{\vee}\wedge {}_{\tau}\omega_{(A,p,q)}^{\vee}+2\pi \mathbf{i}{}_{\tau}B_A^{\vee}=\pi \mathbf{i}d\check{x}^t \tau d\check{x}.
\end{equation*}
Then we can easily verify that the equality
\begin{equation*}
{}_{\tau}\Omega_{(A,p,q)}^{\vee}=\Bigl. 2\pi \mathbf{i} \bigl( B^{\vee}+{}_{\tau}B^{\vee} \bigr) \Bigr|_{L_{(A,p)}}
\end{equation*}
holds. In particular, we see that ${}_{\tau}\Omega_{(A,p,q)}^{(0,2)}$ and ${}_{\tau}\Omega_{(A,p,q)}^{\vee}$ are determined by the same matrix $\tau \in \mathrm{Alt}(n;\mathbb{R})$:
\begin{equation*}
{}_{\tau}\Omega_{(A,p,q)}^{(0,2)}=\pi \mathbf{i} \left( dx^{(0,1)} \right)^t \tau dx^{(0,1)}, \ \ \ {}_{\tau}\Omega_{(A,p,q)}^{\vee}=\pi \mathbf{i} d\check{x}^t \tau d\check{x}.
\end{equation*}

We explain the relation between ${}_{\tau}\check{\mathcal{G}}_A^{\nabla}$ and $\check{\mathcal{G}}_{\theta,A}^{\nabla}$ briefly. For simplicity, we assume that $\mathrm{det}A\not=0$. Let us denote the Fourier-Mukai partner of $X^n$ by $\hat{X}^n$, i.e., $\hat{X}^n=\mathbb{C}^n/(\mathbb{Z}^n \oplus T^t \mathbb{Z}^n)$. The symplectic form of its mirror partner $\check{\hat{X}}^n$ is given by $\hat{\omega}^{\vee}:=d\check{x}^t (\underline{\omega}^{\vee})^t d\check{y}$, and the pullback of $\hat{\omega}^{\vee}$ by the map (\ref{FM_symplectic}) coincides with $\omega^{\vee}$. Moreover, each Lagrangian submanifold $L_{(A,p)}$ in $\check{X}_{\omega^{\vee}}^n$ maps to the Lagrangian submanifold $L_{(-A^{-1},A^{-1}p)}$ in the symplectic torus $\check{\hat{X}}_{\hat{\omega}^{\vee}}^n:=(\mathbb{R}^{2n}/\mathbb{Z}^{2n}, \hat{\omega}^{\vee})$ by the map (\ref{FM_symplectic}). Let us focus on the gerby deformation of $\check{\hat{X}}_{\hat{\omega}^{\vee}}^n$ by the B-field ${}_{\theta}B^{\vee}$. We denote the flat gerbe on $\check{\hat{X}}_{\hat{\omega}^{\vee}}^n$ which is determined by ${}_{\theta}B^{\vee}$ by ${}_{\theta}\check{\mathcal{G}}^{\nabla}$, namely, ${}_{\theta}\check{\mathcal{G}}^{\nabla}|_{L_{(-A^{-1},A^{-1}p)}}={}_{\theta}\check{\mathcal{G}}_{-A^{-1}}^{\nabla}$. Then, by direct calculations, we see that the restriction of the pullback\footnote{When we regard ${}_{\theta}\check{\mathcal{G}}^{\nabla}$ as the data $({}_{\theta}\alpha^{\vee}, {}_{\theta}\nabla^{\vee}, 2\pi \mathbf{i}{}_{\theta}B^{\vee})$ similarly as in other examples (${}_{\theta}\alpha^{\vee}\in H^2(\check{\hat{X}}_{\hat{\omega}^{\vee}}^n, C^{\infty}(\check{\hat{X}}_{\hat{\omega}^{\vee}}^n)^{\times})$, ${}_{\theta}\nabla^{\vee}$ is the 0-connection over this gerbe, $C^{\infty}(\check{\hat{X}}_{\hat{\omega}^{\vee}}^n)^{\times}$ is the sheaf of nowhere vanishing smooth functions on $\check{\hat{X}}_{\hat{\omega}^{\vee}}^n$), we call the data (the flat gerbe on $\check{X}_{\omega^{\vee}}^n$) which are given by the pullback of ${}_{\theta}\alpha^{\vee}$, ${}_{\theta}\nabla^{\vee}$, $2\pi \mathbf{i}{}_{\theta}B^{\vee}$ by the map (\ref{FM_symplectic}) the pullback of ${}_{\theta}\check{\mathcal{G}}^{\nabla}$ by the map (\ref{FM_symplectic}).} of ${}_{\theta}\check{\mathcal{G}}^{\nabla}$ by the map (\ref{FM_symplectic}) to $L_{(A,p)}$ coincides with $\check{\mathcal{G}}_{\theta,A}^{\nabla}$. We can expect that this description corresponds to the Fourier-Mukai transform from the noncommutative complex torus $X_{\theta}^n$ to the gerby deformed complex torus ${}_{\theta}\hat{X}^n$.  

\subsection{The gerby deformation of type $\tau_2$}
We take an arbitrary $\tau \in M(n;\mathbb{R})$, and fix it. In this subsection, we revise several results written in \cite{gerby}.

\subsubsection{Complex geometry side}
Let us consider the B-field (\ref{b_field_X_n}) in the case that $\tau_2=\tau$ and $\tau_1=\tau_3=O$, and denote it by ${}_{\tau}B$:
\begin{equation*}
{}_{\tau}B:=dx^t \tau dy.
\end{equation*}
Here, precisely speaking, in \cite{gerby}, note that $\tau$ in this ${}_{\tau}B$ is replaced with $\tau^t$. From the viewpoint of generalized complex geometry, this B-field ${}_{\tau}B$ causes the B-field transform of $\mathcal{I}_T$ over $X^n$:
\begin{align*}
&\mathcal{I}_T({}_{\tau}B) \left( \frac{\partial}{\partial x}^t, \frac{\partial}{\partial y}^t, dx^t, dy^t \right) \\
&=\left( \frac{\partial}{\partial x}^t, \frac{\partial}{\partial y}^t, dx^t, dy^t \right) \left( \begin{array}{cccc} I_n & O & O & O \\ O & I_n & O & O \\ O & -\tau & I_n & O \\ \tau^t & O & O & I_n \end{array} \right) \mathcal{I}_T \left( \begin{array}{cccc} I_n & O & O & O \\ O & I_n & O & O \\ O & \tau & I_n & O \\ -\tau^t & O & O & I_n \end{array} \right).
\end{align*}
Then, the following statements hold, and we omit the proofs of them because they can be proved similarly as in Proposition \ref{comp_B_field} and Corollary \ref{comp_B_field_cor}.
\begin{proposition} \label{comp_B_field_1} 
The B-field transform associated to the B-field ${}_{\tau}B$ preserves the generalized complex structure $\mathcal{I}_T$ over $X^n$, i.e., 
\begin{equation*}
\mathcal{I}_T({}_{\tau}B)=\mathcal{I}_T
\end{equation*}
if and only if $\tau^t T\in \mathrm{Sym}(n;\mathbb{C})$.
\end{proposition}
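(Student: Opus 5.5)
We follow the scheme of the proof of Proposition \ref{comp_B_field}. With the same $I_T$ as there, put
\begin{equation*}
{}_{\tau}\underline{B}:=\left( \begin{array}{cc} O & \tau \\ -\tau^t & O \end{array} \right)\in \mathrm{Alt}(2n;\mathbb{R}),
\end{equation*}
which is the lower-left block of the $4\times 4$ block matrices in the displayed B-field transform. The same block computation as in (\ref{B_eq}) shows that $\mathcal{I}_T({}_{\tau}B)=\mathcal{I}_T$ if and only if ${}_{\tau}\underline{B} I_T+I_T^t {}_{\tau}\underline{B}=O$. The overall sign convention does not matter, since this condition is homogeneous in ${}_{\tau}\underline{B}$.

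The first step is to compute this $2n\times 2n$ matrix block by block. We get
\begin{equation*}
{}_{\tau}\underline{B} I_T=\left( \begin{array}{cc} \tau Y^{-1} & \tau Y^{-1}X \\ \tau^t XY^{-1} & \tau^t (Y+XY^{-1}X) \end{array} \right),
\end{equation*}
and $I_T^t {}_{\tau}\underline{B}=-({}_{\tau}\underline{B} I_T)^t$. Hence the condition says exactly that ${}_{\tau}\underline{B} I_T$ is symmetric. Written out, this is the following system:
\begin{itemize}
\item $\tau Y^{-1}\in \mathrm{Sym}(n;\mathbb{R})$;
\item $\tau Y^{-1}X=(\tau^t XY^{-1})^t$;
\item $\tau^t (Y+XY^{-1}X)\in \mathrm{Sym}(n;\mathbb{R})$.
\end{itemize}

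Since $\tau^t T=\tau^t X+\mathbf{i}\tau^t Y$ with $X,Y$ real, the target condition $\tau^t T\in \mathrm{Sym}(n;\mathbb{C})$ is equivalent to the two real conditions $\tau^t Y\in \mathrm{Sym}(n;\mathbb{R})$ and $\tau^t X\in \mathrm{Sym}(n;\mathbb{R})$.

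The key observation is a congruence:
\begin{equation*}
\tau Y^{-1}=(Y^{-1})^t (Y^t \tau) Y^{-1}.
\end{equation*}
So $\tau Y^{-1}$ is symmetric if and only if $Y^t\tau=(\tau^t Y)^t$ is symmetric, i.e. if and only if $\tau^t Y$ is symmetric. Here we use $\det Y\neq 0$, which holds because $Y$ is positive definite. Thus the first condition alone is equivalent to $\tau^t Y\in \mathrm{Sym}(n;\mathbb{R})$.

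Next, assuming $\tau^t Y$ is symmetric, we reduce the second condition. Use $\tau=(Y^t)^{-1}\tau^t Y$ to rewrite $\tau Y^{-1}X=(Y^t)^{-1}\tau^t X$. The second condition then becomes
\begin{equation*}
(Y^t)^{-1}\tau^t X=(Y^{-1})^t X^t \tau,
\end{equation*}
i.e. $\tau^t X=X^t\tau$. So, given the first condition, the second condition is equivalent to $\tau^t X\in \mathrm{Sym}(n;\mathbb{R})$.

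Finally, we check that the third condition is then automatic. If $\tau^t X$ and $\tau^t Y$ are both symmetric, then
\begin{equation*}
\tau^t XY^{-1}X=(X^t\tau)Y^{-1}X=X^t (Y^{-1})^t (Y^t\tau)Y^{-1}X,
\end{equation*}
which is a congruence of the symmetric matrix $Y^t\tau=\tau^t Y$, hence symmetric.

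Putting these together, the system holds if and only if $\tau^t X,\tau^t Y\in \mathrm{Sym}(n;\mathbb{R})$, which is $\tau^t T\in \mathrm{Sym}(n;\mathbb{C})$. For the converse direction, we run the same three identities backwards.

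The main obstacle is bookkeeping rather than ideas. We have to get the transposes right in $I_T^t$ and in the off-diagonal block identity. We also have to make sure we invoke only $\det Y\neq 0$, and not $\det(Y+XY^{-1}X)\neq 0$ as in Proposition \ref{comp_B_field}: here the third block is a consequence of the first two rather than the decisive condition. The companion corollary, comparing with ${}_{\tau}B^{(0,2)}=0$, would follow afterwards by the same type of computation as for $\Lambda(T,\tau)$.
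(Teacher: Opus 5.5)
Your proof is correct and follows the scheme the paper intends; the paper omits this proof, saying only that it goes as for Proposition \ref{comp_B_field}. That scheme is to reduce $\mathcal{I}_T({}_{\tau}B)=\mathcal{I}_T$ to ${}_{\tau}\underline{B} I_T+I_T^t\,{}_{\tau}\underline{B}=O$ with ${}_{\tau}\underline{B}=\left(\begin{array}{cc} O & \tau \\ -\tau^t & O\end{array}\right)$ and then solve the block conditions. Your repackaging of that condition as symmetry of ${}_{\tau}\underline{B}I_T$ is a clean way to carry it out, and so is your observation that only $\det Y\neq 0$ is needed, the bottom-right block condition being a consequence of the other two.
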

\begin{corollary} \label{comp_B_field_cor_1} 
The B-field transform associated to the B-field ${}_{\tau}B$ preserves the generalized complex structure $\mathcal{I}_T$ over $X^n$, i.e., 
\begin{equation*}
\mathcal{I}_T({}_{\tau}B)=\mathcal{I}_T
\end{equation*}
if and only if the \textup{(}0,2\textup{)}-part ${}_{\tau}B^{(0,2)}$ vanishes.
\end{corollary}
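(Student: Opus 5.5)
The statement is Corollary~\ref{comp_B_field_cor_1}, so the plan is to reduce it to Proposition~\ref{comp_B_field_1}. It then suffices to show that the vanishing of ${}_{\tau}B^{(0,2)}$ is equivalent to $\tau^t T\in \mathrm{Sym}(n;\mathbb{C})$. This is the same pattern as Corollary~\ref{comp_B_field_cor}, where Proposition~\ref{comp_B_field} was combined with an explicit computation of the $(0,2)$-part of the B-field.

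First I compute ${}_{\tau}B^{(0,2)}$ in terms of $d\bar{z}$. From $z=x+Ty$ we have $y=(T-\bar{T})^{-1}(z-\bar{z})$ and $x=z-Ty$. Hence the $(0,1)$-parts of the real coframes are
\begin{equation*}
dy^{(0,1)}=-(T-\bar{T})^{-1}d\bar{z}, \ \ \ dx^{(0,1)}=T(T-\bar{T})^{-1}d\bar{z}.
\end{equation*}
Substituting these into ${}_{\tau}B=dx^t \tau dy$ gives
\begin{equation*}
{}_{\tau}B^{(0,2)}=-d\bar{z}^t \bigl((T-\bar{T})^{-1}\bigr)^t T^t \tau (T-\bar{T})^{-1} d\bar{z}.
\end{equation*}
This is the same manipulation used in the proof of Proposition~\ref{twisted_hol} for $(AT)^t$.

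Next, a 2-form $d\bar{z}^t M d\bar{z}$ vanishes if and only if $M$ is symmetric. Set $P:=(T-\bar{T})^{-1}$, which is invertible since $\mathrm{Im}T$ is positive definite. The matrix above is $-P^t (T^t\tau) P$, and congruence by an invertible matrix preserves symmetry. So ${}_{\tau}B^{(0,2)}=0$ if and only if $T^t\tau\in \mathrm{Sym}(n;\mathbb{C})$. Transposing, this holds if and only if $\tau^t T\in \mathrm{Sym}(n;\mathbb{C})$, which is exactly the condition in Proposition~\ref{comp_B_field_1}. That proposition then gives the claim.

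The only step needing care is the $(0,1)$ decomposition and its signs. Since the conclusion only involves the symmetry of the coefficient matrix, the signs are harmless. The real content is Proposition~\ref{comp_B_field_1} itself, which we are allowed to assume.
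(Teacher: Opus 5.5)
Your proposal is correct and is essentially the argument the paper intends; the paper omits this proof as analogous to Corollary~\ref{comp_B_field_cor}. Your coefficient matrix $-P^tT^t\tau P$ defines the same 2-form as the paper's $\Lambda(T,\tau)=P^t\tau^tTP$ with $P=(T-\bar{T})^{-1}$, so ${}_{\tau}B^{(0,2)}=0$ exactly when $\tau^tT\in\mathrm{Sym}(n;\mathbb{C})$, and Proposition~\ref{comp_B_field_1} finishes the proof.
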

Below, we give the strict definition of the flat gerbe whose 1-connection is determined by ${}_{\tau}B^{(0,2)}$. We set
\begin{equation*}
\Lambda(T,\tau):=((T-\bar{T})^{-1})^t \tau^t T (T-\bar{T})^{-1},
\end{equation*}
namely,
\begin{equation*}
{}_{\tau}B^{(0,2)}=d\bar{z}^t \Lambda(T,\tau) d\bar{z}.
\end{equation*}
For ${}_{\tau}B^{(0,2)}$, let us take a 1-form ${}_{\tau}\beta$ which is expressed locally as
\begin{equation*}
{}_{\tau}\beta=\bar{z}^t \Lambda(T,\tau) d\bar{z},
\end{equation*}
and it is clear that ${}_{\tau}B^{(0,2)}=\bar{\partial}{}_{\tau}\beta$. We sometimes use the notation ${}_{\tau}\beta(z)$ instead of ${}_{\tau}\beta$ in order to specify the complex coordinate system $z$. Then the locally expressed data ${}_{\tau}\alpha:=\{ {}_{\tau}\alpha(\bm{e}_i,\bm{e}_j), \ {}_{\tau}\alpha(\bm{e}_i,T\bm{e}_j), \ {}_{\tau}\alpha(T\bm{e}_i,\bm{e}_j), \ {}_{\tau}\alpha(T\bm{e}_i,T\bm{e}_j) \}_{i,j=1,\ldots,n}$, ${}_{\tau}\nabla:=\{ \bar{\partial}+2\pi \mathbf{i}{}_{\tau}\omega(\bm{e}_i), \ \bar{\partial}+2\pi \mathbf{i}{}_{\tau}\omega(T\bm{e}_i) \}_{i=1,\ldots,n}$:
\begin{align*}
&{}_{\tau}\alpha(\bm{e}_i,\bm{e}_j):={}_{\tau}\xi(\bm{e}_i+\bm{e}_j,z)^{-1}{}_{\tau}\xi(\bm{e}_j,z+\bm{e}_i){}_{\tau}\xi(\bm{e}_i,z)=\mathrm{exp}\Bigl( 2\pi \mathbf{i}\bm{e}_i^t (\Lambda(T,\tau)-\Lambda(T,\tau)^t) \bm{e}_j \Bigr), \\
&{}_{\tau}\alpha(\bm{e}_i,T\bm{e}_j):={}_{\tau}\xi(\bm{e}_i+T\bm{e}_j,z)^{-1}{}_{\tau}\xi(T\bm{e}_j,z+\bm{e}_i){}_{\tau}\xi(\bm{e}_i,z)=\mathrm{exp}\Bigl( 2\pi \mathbf{i}\bm{e}_i^t (\Lambda(T,\tau)-\Lambda(T,\tau)^t) \bar{T}\bm{e}_j \Bigr), \\
&{}_{\tau}\alpha(T\bm{e}_i,\bm{e}_j):={}_{\tau}\xi(T\bm{e}_i+\bm{e}_j,z)^{-1}{}_{\tau}\xi(\bm{e}_j,z+T\bm{e}_i){}_{\tau}\xi(T\bm{e}_i,z)=\mathrm{exp}\Bigl( 2\pi \mathbf{i}\bm{e}_i^t \bar{T}^t (\Lambda(T,\tau)-\Lambda(T,\tau)^t) \bm{e}_j \Bigr), \\
&{}_{\tau}\alpha(T\bm{e}_i,T\bm{e}_j):={}_{\tau}\xi(T\bm{e}_i+T\bm{e}_j,z)^{-1}{}_{\tau}\xi(T\bm{e}_j,z+T\bm{e}_i){}_{\tau}\xi(T\bm{e}_i,z)=\mathrm{exp}\Bigl( 2\pi \mathbf{i}\bm{e}_i^t \bar{T}^t (\Lambda(T,\tau)-\Lambda(T,\tau)^t) \bar{T}\bm{e}_j \Bigr), \\
&{}_{\tau}\omega(\bm{e}_i):={}_{\tau}\beta(z+\bm{e}_i)-{}_{\tau}\beta(z)=\bm{e}_i^t \Lambda(T,\tau) d\bar{z}, \\
&{}_{\tau}\omega(T\bm{e}_i):={}_{\tau}\beta(z+T\bm{e}_i)-{}_{\tau}\beta(z)=\bm{e}_i^t \bar{T}^t \Lambda(T,\tau) d\bar{z}
\end{align*}
and $2\pi \mathbf{i}{}_{\tau}B^{(0,2)}$ defines a flat gerbe ${}_{\tau}\mathcal{G}^{\nabla}$ in the sense of Hitchin-Chatterjee \cite{chat, h} on $X^n$:
\begin{equation*}
{}_{\tau}\mathcal{G}^{\nabla}:=\Bigl( {}_{\tau}\alpha, \ {}_{\tau}\nabla, \ 2\pi \mathbf{i}{}_{\tau}B^{(0,2)} \Bigr),
\end{equation*}
where 
\begin{align*}
&{}_{\tau}\xi(\bm{e}_i,z):=\mathrm{exp}\Bigl( -2\pi \mathbf{i} \bm{e}_i^t \Lambda(T,\tau) \bar{z} \Bigr), \\
&{}_{\tau}\xi(T\bm{e}_i,z):=\mathrm{exp}\Bigl( -2\pi \mathbf{i} \bm{e}_i^t \bar{T}^t \Lambda(T,\tau) \bar{z} \Bigr), \\
&{}_{\tau}\xi(\bm{e}_i+\bm{e}_j,z)={}_{\tau}\xi(\bm{e}_i,z+\bm{e}_j){}_{\tau}\xi(\bm{e}_j,z), \\
&{}_{\tau}\xi(\bm{e}_i+T\bm{e}_j,z)={}_{\tau}\xi(\bm{e}_i,z+T\bm{e}_j){}_{\tau}\xi(T\bm{e}_j,z), \\
&{}_{\tau}\xi(T\bm{e}_i+\bm{e}_j,z)={}_{\tau}\xi(T\bm{e}_i,z+\bm{e}_j){}_{\tau}\xi(\bm{e}_j,z), \\
&{}_{\tau}\xi(T\bm{e}_i+T\bm{e}_j,z)={}_{\tau}\xi(T\bm{e}_i,z+T\bm{e}_j){}_{\tau}\xi(T\bm{e}_j,z), 
\end{align*}
for any $i$, $j=1, \ldots, n$. In particular, the family ${}_{\tau}\nabla$ gives a 0-connection over the gerbe on $X^n$ which is determined by the family ${}_{\tau}\alpha$, and the globally defined 2-form $2\pi \mathbf{i}{}_{\tau}B^{(0,2)}$ is the 1-connection which is compatible with the 0-connection ${}_{\tau}\nabla$. Moreover, when we denote this gerby deformed complex torus by ${}_{\tau}X^n$, the notation 
\begin{equation*}
{}_{\tau}X^n=\Bigl( X^n, \ \mathcal{I}_T({}_{\tau}B) \Bigr).
\end{equation*}
makes sense since we have Proposition \ref{comp_B_field_1} and Corolary \ref{comp_B_field_cor_1}.

Let us twist each $E_{(A,p,q)}^{\nabla} \to X^n$ by using the flat gerbe ${}_{\tau}\mathcal{G}^{\nabla}$. We consider a map ${}_{\tau}j_A : (\mathbb{Z}^n \oplus T\mathbb{Z}^n)\times \mathbb{C}^n \to \mathbb{C}^{\times}$ which is defined by
\begin{align*}
&{}_{\tau}j_A(\bm{e}_i, z):={}_{\tau}\xi(\bm{e}_i, z)j_A(\bm{e}_i, z)=\mathrm{exp}\left( -2\pi \mathbf{i}\bm{e}_i^t \Lambda(T,\tau)\bar{z}+2\pi \mathbf{i}\bm{e}_i^t A^t \left( T-\bar{T} \right)^{-1}(z-\bar{z}) \right), \\
&{}_{\tau}j_A(T\bm{e}_i, z):={}_{\tau}\xi(T\bm{e}_i, z)j_A(T\bm{e}_i, z)=\mathrm{exp}\left( -2\pi \mathbf{i}\bm{e}_i^t \bar{T}^t \Lambda(T,\tau)\bar{z} \right),
\end{align*}  
where $i=1, \ldots, n$, and note that $\mathrm{exp}(2\pi \mathbf{i}\bm{e}_i^t A^t (T-\bar{T})^{-1}(z-\bar{z}))=\mathrm{exp}(2\pi \mathbf{i}\bm{e}_i^t A^t y)$. We further define
\begin{align*}
&{}_{\tau}j_A(\bm{e}_i+\bm{e}_j,z):={}_{\tau}j_A(\bm{e}_i,z+\bm{e}_j){}_{\tau}j_A(\bm{e}_j,z), \\
&{}_{\tau}j_A(\bm{e}_i+T\bm{e}_j,z):={}_{\tau}j_A(\bm{e}_i,z+T\bm{e}_j){}_{\tau}j_A(T\bm{e}_j,z), \\
&{}_{\tau}j_A(T\bm{e}_i+\bm{e}_j,z):={}_{\tau}j_A(T\bm{e}_i,z+\bm{e}_j){}_{\tau}j_A(\bm{e}_j,z), \\
&{}_{\tau}j_A(T\bm{e}_i+T\bm{e}_j,z):={}_{\tau}j_A(T\bm{e}_i,z+T\bm{e}_j){}_{\tau}j_A(T\bm{e}_j,z),
\end{align*}
and then, we can easily check that the map ${}_{\tau}j_A$ satisfies the relations
\begin{align*}
&{}_{\tau}j_A(\bm{e}_i+\bm{e}_j, z)^{-1}{}_{\tau}j_A(\bm{e}_j, z+\bm{e}_i){}_{\tau}j_A(\bm{e}_i, z)={}_{\tau}\alpha(\bm{e}_i, \bm{e}_j)\cdot 1, \\
&{}_{\tau}j_A(\bm{e}_i+T\bm{e}_j, z)^{-1}{}_{\tau}j_A(T\bm{e}_j, z+\bm{e}_i){}_{\tau}j_A(\bm{e}_i, z)={}_{\tau}\alpha(\bm{e}_i, T\bm{e}_j)\cdot 1, \\
&{}_{\tau}j_A(T\bm{e}_i+\bm{e}_j, z)^{-1}{}_{\tau}j_A(\bm{e}_j, z+T\bm{e}_i){}_{\tau}j_A(T\bm{e}_i, z)={}_{\tau}\alpha(T\bm{e}_i, \bm{e}_j)\cdot 1, \\
&{}_{\tau}j_A(T\bm{e}_i+T\bm{e}_j, z)^{-1}{}_{\tau}j_A(T\bm{e}_j, z+T\bm{e}_i){}_{\tau}j_A(T\bm{e}_i, z)={}_{\tau}\alpha(T\bm{e}_i, T\bm{e}_j)\cdot 1
\end{align*}
by the definition of ${}_{\tau}\alpha$, where $i$, $j=1, \ldots, n$. Hence, the map ${}_{\tau}j_A$ defines an ${}_{\tau}\alpha$-twisted smooth complex line bundle on $X^n$, so let us denote it by ${}_{\tau}E_A \to X^n$ or ${}_{\tau}E_A$ for short. On the other hand, it is natural to consider the connection $\nabla_{(A,p,q)}$ on $E_A$ is preserved:
\begin{equation*}
{}_{\tau}\omega_{(A,p,q)}:=\omega_{(A,p,q)}, \ \ \ {}_{\tau}\nabla_{(A,p,q)}:=d+{}_{\tau}\omega_{(A,p,q)}.
\end{equation*} 
In fact, we have the relations
\begin{align*}
&{}_{\tau}\omega_{(A,p,q)}(x+\bm{e}_i, y)={}_{\tau}\omega_{(A,p,q)}(x, y)+{}_{\tau}j_A(\bm{e}_i, z)d\hspace{0.3mm}{}_{\tau}j_A(\bm{e}_i, z)^{-1}-2\pi \mathbf{i}{}_{\tau}\omega(\bm{e}_i), \\
&{}_{\tau}\omega_{(A,p,q)}(x, y+\bm{e}_i)={}_{\tau}\omega_{(A,p,q)}(x, y)+{}_{\tau}j_A(T\bm{e}_i, z)d\hspace{0.3mm}{}_{\tau}j_A(T\bm{e}_i, z)^{-1}-2\pi \mathbf{i}{}_{\tau}\omega(T\bm{e}_i).
\end{align*}
for each $i=1, \ldots, n$, and we use the notation ${}_{\tau}\omega_{(A,p,q)}(x, y)$ instead of ${}_{\tau}\omega_{(A,p,q)}$ in order to specify the coordinate system $(x,y)\in \mathbb{R}^{2n}$. Here, for each $i=1, \ldots, n$, note that the shifts $x\mapsto x+\bm{e}_i$, $y\mapsto y+\bm{e}_i$ correspond to the shifts $z\mapsto z+\bm{e}_i$, $z\mapsto z+T\bm{e}_i$, respectively. We denote the ${}_{\tau}\alpha$-twisted smooth complex line bundle ${}_{\tau}E_A\to X^n$ with the connection ${}_{\tau}\nabla_{(A,p,q)}$ by ${}_{\tau}E_{(A,p,q)}^{\nabla}$, i.e., ${}_{\tau}E_{(A,p,q)}^{\nabla}:=({}_{\tau}E_A, {}_{\tau}\nabla_{(A,p,q)})$. 

Let us consider the holomorphicity of ${}_{\tau}E_{(A,p,q)}^{\nabla}$. The curvature form ${}_{\tau}\Omega_{(A,p,q)}$ of the connection ${}_{\tau}\nabla_{(A,p,q)}$ is expressed locally as
\begin{align*}
{}_{\tau}\Omega_{(A,p,q)}&=d{}_{\tau}\omega_{(A,p,q)}+{}_{\tau}\omega_{(A,p,q)}\wedge {}_{\tau}\omega_{(A,p,q)}+2\pi \mathbf{i}{}_{\tau}B^{(0,2)} \\
&=-2\pi \mathbf{i}dx^t A^t dy+2\pi \mathbf{i}d\bar{z}^t \Lambda(T,\tau) d\bar{z}. 
\end{align*}
Then the following proposition holds, and we omit its proof because it can be proved similarly as in Proposition \ref{twisted_hol}.
\begin{proposition} \label{twisted_hol_1} 
The \textup{(}0,2\textup{)}-part ${}_{\tau}\Omega_{(A,p,q)}^{(0,2)}$ vanishes if and only if $\tau^t T\in \mathrm{Sym}(n;\mathbb{C})$.
\end{proposition}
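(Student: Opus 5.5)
The plan mirrors the proof of Proposition \ref{twisted_hol}: compute the $(0,2)$-part of ${}_{\tau}\Omega_{(A,p,q)}$ explicitly, and reduce its vanishing to a symmetry condition on a complex matrix. The starting point is the local expression
\begin{equation*}
{}_{\tau}\Omega_{(A,p,q)}=-2\pi \mathbf{i}dx^t A^t dy+2\pi \mathbf{i}d\bar{z}^t \Lambda(T,\tau) d\bar{z}.
\end{equation*}
The second term is already of type $(0,2)$. For the first term, I will substitute $dx$ and $dy$ in terms of $dz$, $d\bar{z}$. Since $z=x+Ty$, we have $dy=(T-\bar{T})^{-1}(dz-d\bar{z})$ and $dx=dz-T dy$.

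Extracting the $d\bar{z}\wedge d\bar{z}$ component, as in the proof of Proposition \ref{twisted_hol}, gives
\begin{equation*}
2\pi \mathbf{i}d\bar{z}^t ((T-\bar{T})^{-1})^t (AT)^t (T-\bar{T})^{-1} d\bar{z}.
\end{equation*}
This vanishes because of the standing assumption $AT\in \mathrm{Sym}(n;\mathbb{C})$: a quadratic form with symmetric coefficient matrix vanishes when wedged. Hence
\begin{equation*}
{}_{\tau}\Omega_{(A,p,q)}^{(0,2)}=2\pi \mathbf{i}d\bar{z}^t \Lambda(T,\tau) d\bar{z}.
\end{equation*}

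Next I use that $d\bar{z}^t M d\bar{z}=0$ if and only if $M$ is symmetric, i.e. $M-M^t=O$; only the antisymmetric part of $M$ survives the wedge. So the vanishing is equivalent to $\Lambda(T,\tau)\in \mathrm{Sym}(n;\mathbb{C})$. Writing $P:=(T-\bar{T})^{-1}$, we have $\Lambda(T,\tau)=P^t(\tau^t T)P$ with $P$ invertible. Congruence by an invertible matrix preserves and reflects symmetry, since $P^tMP-(P^tMP)^t=P^t(M-M^t)P$. Therefore $\Lambda(T,\tau)$ is symmetric if and only if $\tau^t T\in \mathrm{Sym}(n;\mathbb{C})$, which gives the claim. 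This is consistent with Proposition \ref{comp_B_field_1} and Corollary \ref{comp_B_field_cor_1}.

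The only step needing care is the first: correctly identifying the $(0,2)$-component of $dx^t A^t dy$ and checking that the symmetry of $AT$ kills it. I would verify it directly by expanding $dx^tA^tdy$ with the substitutions above and collecting the $\bar{z}\bar{z}$ terms. The result is $d\bar{z}^t P^t(\bar{T}^tA^t - \ldots)Pd\bar{z}$, which reduces to the displayed form up to symmetric pieces that drop out. Everything else is linear algebra.
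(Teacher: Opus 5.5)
Your proof is correct and is exactly the argument the paper intends: the paper omits the proof and refers to Proposition \ref{twisted_hol}, whose computation you reproduce. The $(0,2)$-part of $-2\pi\mathbf{i}\,dx^tA^tdy$ vanishes because $AT\in\mathrm{Sym}(n;\mathbb{C})$, and $d\bar{z}^t\Lambda(T,\tau)d\bar{z}=0$ holds iff $\Lambda(T,\tau)$ is symmetric, which by congruence with the invertible $(T-\bar{T})^{-1}$ is equivalent to $\tau^tT\in\mathrm{Sym}(n;\mathbb{C})$. Your closing sketch with $\bar{T}^tA^t-\ldots$ is vague but unnecessary, since $dx^{(0,1)}=T(T-\bar{T})^{-1}d\bar{z}$ and $dy^{(0,1)}=-(T-\bar{T})^{-1}d\bar{z}$ give the displayed $(0,2)$-component directly.
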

Similarly as in the case of the gerby deformation of type $\tau_1$, by \cite[Lemma 2.18, Lemma 2.19]{k-htwisted}, ${}_{\tau}E_{(A,p,q)}^{\nabla}$ becomes an ${}_{\tau}\alpha$-twisted holomorphic line bundle on $X^n$ with a connection if and only if ${}_{\tau}\Omega_{(A,p,q)}^{(0,2)}$ vanishes. Thus, Proposition \ref{twisted_hol_1} implies that ${}_{\tau}E_{(A,p,q)}^{\nabla}$ is not holomorphic when we consider the deformation by ${}_{\tau}\mathcal{G}^{\nabla}$ under the assumption $\tau^t T\not\in \mathrm{Sym}(n;\mathbb{C})$.

\subsubsection{Symplectic geometry side}
Let us first compute the mirror partner of ${}_{\tau}X^n$ by using the mirror transform (\ref{mirror}). As a result, the representation matrix of $\check{\mathcal{I}}_T({}_{\tau}B)$ with respect to the standard bases turns out to be
\begin{align*}
&\left( \begin{array}{cccc} I_n & O & O & O \\ O & I_n & O & O \\ \underline{B}^{\vee}\tau^t-\tau (\underline{B}^{\vee})^t & -\underline{B}^{\vee} & I_n & O \\ (\underline{B}^{\vee})^t & O & O & I_n \end{array} \right) \\ 
&\left( \begin{array}{cccc} O & O & O & -((\underline{\omega}^{\vee})^{-1})^t \\ O & O & (\underline{\omega}^{\vee})^{-1} & (\underline{\omega}^{\vee})^{-1}\tau-\tau^t ((\underline{\omega}^{\vee})^{-1})^t \\ \underline{\omega}^{\vee}\tau^t-\tau (\underline{\omega}^{\vee})^t & -\underline{\omega}^{\vee} & O & O \\ (\underline{\omega}^{\vee})^t & O & O & O \end{array} \right) \\ 
&\left( \begin{array}{cccc} I_n & O & O & O \\ O & I_n & O & O \\ -\underline{B}^{\vee}\tau^t+\tau (\underline{B}^{\vee})^t & \underline{B}^{\vee} & I_n & O \\ -(\underline{B}^{\vee})^t & O & O & I_n \end{array} \right).
\end{align*}
This implies that the symplectic form $\omega^{\vee}$ and the B-field $B^{\vee}$ are twisted by
\begin{equation*}
{}_{\tau}\omega^{\vee}:=-d\check{x}^t \underline{\omega}^{\vee} \tau^t d\check{x}
\end{equation*}
and
\begin{equation*}
{}_{\tau}B^{\vee}:=-d\check{x}^t \underline{B}^{\vee} \tau^t d\check{x},
\end{equation*}
respectively. Let us put
\begin{equation*}
{}_{\tau}\tilde{\omega}^{\vee}:=\tilde{\omega}^{\vee}+\mathbf{i}{}_{\tau}\omega^{\vee}+{}_{\tau}B^{\vee}=\mathbf{i} \bigl( \omega^{\vee}+{}_{\tau}\omega^{\vee} \bigr)+\bigl( B^{\vee}+{}_{\tau}B^{\vee} \bigr).
\end{equation*}
Hereafter, we denote this complexified symplectic torus (the mirror partner of ${}_{\tau}X^n$ which is obtained by using the mirror transform (\ref{mirror})) by
\begin{equation*}
{}_{\tau}\check{X}^n:=\Bigl( \mathbb{R}^{2n}/\mathbb{Z}^{2n}, \ {}_{\tau}\tilde{\omega}^{\vee}=d\check{x}^t (-(T^{-1})^t) d\check{y}-d\check{x}^t (-(T^{-1})^t) \tau^t d\check{x} \Bigr).
\end{equation*}

Finally, we comment on objects which are mirror dual to ${}_{\tau}E_{(A,p,q)}^{\nabla}$. Although we do not assume that Lagrangian submanifolds are affine in \cite{gerby}, here, we focus on affine Lagrangian submanifolds only. Let us consider the Lagrangian submanifold
\begin{equation*}
{}_{\tau}L_{(A,p)}:=\left\{ \left( \begin{array}{cc} \lbrack \check{x} \rbrack \\ \lbrack \left( A+\tau^t \right) \check{x}+p \rbrack \end{array} \right)\in {}_{\tau}\check{X}^n \right\}
\end{equation*}
in ${}_{\tau}\check{X}^n$ as a deformation of each $L_{(A,p)}$, and actually, ${}_{\tau}\omega^{\vee}$ vanishes on this ${}_{\tau}L_{(A,p)}$ since $AT\in \mathrm{Sym}(n;\mathbb{C})$:
\begin{equation*}
\Bigl. {}_{\tau}\omega^{\vee} \Bigr|_{{}_{\tau}L_{(A,p)}}=d\check{x}^t \omega^{\vee} \left( A+\tau^t \right) d\check{x}-d\check{x}^t \omega^{\vee} \tau^t d\check{x}=0.
\end{equation*}
On the other hand, in \cite{gerby}, we conclude that each $\mathcal{O}_{(A,p,q)}^{\nabla}$ is preserved under the deformation from $\check{X}^n$ to ${}_{\tau}\check{X}^n$:
\begin{equation*}
\Bigl. 2\pi \mathbf{i} {}_{\tau}B^{\vee} \Bigr|_{{}_{\tau}L_{(A,p)}}=2\pi \mathbf{i} d\check{x}^t B^{\vee} \left( A+\tau^t \right) d\check{x}-2\pi \mathbf{i} d\check{x}^t B^{\vee} \tau^t d\check{x}=0,
\end{equation*}
where $0$ in the right hand side is interpreted as the curvature form of the flat connection $\nabla_{(A,p,q)}^{\vee}$. Moreover, note that the deformation in the complex geometry side can be interpreted as the tensor product by ${}_{\tau}E_{(O,0,0)}^{\nabla}$. 

This construction is motivated by the $SL(2;\mathbb{Z})$-action on $D^b(Coh(X^1))$ by a generator
\begin{equation*}
\left( \begin{array}{cc} 1 & 0 \\ d & 1 \end{array} \right)\in SL(2;\mathbb{Z})
\end{equation*}
which corresponds to the tensor product by the holomorphic line bundle $E_{(d,0,0)}^{\nabla}$ of degree $d\in \mathbb{Z}$, and the integrity of $\tau$ comes from the integrity of this action. From the viewpoint of the homological mirror symmetry, we can interpret it as the $SL(2;\mathbb{Z})$-action on $\check{X}^1$ which is defined by
\begin{equation*}
\check{x} \ \longmapsto \ \left( \begin{array}{cc} 1 & 0 \\ d & 1 \end{array} \right) \check{x}
\end{equation*}
similarly as in the case of $g_{FM}\in SL(2;\mathbb{Z})$ (see section 5). In particular, the slope $a\in \mathbb{Z}$ of a given Lagrangian submanifold $L_{(a,p)}$ in $\check{X}^1$ is shifted to $a+d\in \mathbb{Z}$ associated to this $SL(2;\mathbb{Z})$ action.

However, concerning the general case $\tau \in M(n;\mathbb{R})$, perhaps, the description in the symplectic geometry side given in \cite{gerby} is not optimal. In general, $\check{X}^n$ is equipped with a foliation structure whose leaves are the fibers when we regard $\check{X}^n$ as the trivial torus fibration $\check{X}^n \to \mathbb{R}^n/\mathbb{Z}^n$ ; $([\check{x}], [\check{y}]) \mapsto [\check{x}]$. Focusing on this point of view, for general $\tau \in M(n;\mathbb{R})$, it seems to be more natural to interpret the deformation in the symplectic geometry side as follows: not only the symplectic structure but also the foliation structure is modified in the sense of \cite{kajiura}, and each object $\mathcal{L}_{(A,p,q)}^{\nabla}$ is also deformed to something associated to such a modification.

\section*{Acknowledgment}
I would like to thank Hiroshige Kajiura for discussions related to noncommutative deformations. This work is supported by JSPS KAKENHI Grant Number 25K17251.


\begin{thebibliography}{99}
\bibitem{abouzaid}
M. Abouzaid, I. Smith, \textit{Homological mirror symmetry for the four-torus}, Duke Mathematical Journal, 152.3 (2010), 373-440.
\bibitem{A-P}
D. Arinkin, A. Polishchuk, \textit{Fukaya category and Fourier transform}, AMS IP STUDIES IN ADVANCED MATHEMATICS, 2001, 23: 261-274.
\bibitem{part1}
O. Ben-Bassat, \textit{Mirror symmetry and generalized complex manifolds\textup{:} Part I. The transform on vector bundles, spinors, and branes}, Journal of Geometry and Physics, 56(4), 533-558, 2006.
\bibitem{part2}
O. Ben-Bassat, \textit{Mirror symmetry and generalized complex manifolds\textup{:} Part I\hspace{-.1em}I. Integrability and the transform for torus bundles}, Journal of Geometry and Physics, 56(7), 1096-1115, 2006.
\bibitem{nc-fm}
O. Ben-Bassat, J. Block, T. Pantev, \textit{Noncommutative tori and Fourier-Mukai duality}, Compositio Mathematica 143.2 (2007): 423-475.
\bibitem{block-1}
J. Block, \textit{Duality and equivalence of module categories in noncommutative geometry I}, A celebration of the mathematical legacy of Raoul Bott, CRM Proc. Lecture Notes, vol. 50, Amer. Math. Soc., Providence, RI, 2010, pp.311-339. MR 2648899 1.
\bibitem{block-2}
J. Block, \textit{Duality and equivalence of module categories in noncommutative geometry I\hspace{-.1em}I\textup{:} Mukai duality for holomorphic noncommutative tori}, arXiv: math.QA/0604296.
\bibitem{bondal}
A. Bondal, M. Kapranov, \textit{Enhanced triangulated categories}, Math. USSR Sbornik 70:93-107, 1991.
\bibitem{steven}
S.B. Bradlow, \textit{Special metrics and stability for holomorphic bundles with global sections}, Journal of Differential Geometry, 33(1), 169-213.
\bibitem{cal}
A.H. C\u{a}ld\u{a}raru, \textit{Derived categories of twisted sheaves on Calabi-Yau manifolds}, Cornell University, 2000.
\bibitem{con-rie}
A. Connes, M. Rieffel, \textit{Yang-Mills for non-commutative two-tori}, Contemp. Math., 62:237-265, 1987.
\bibitem{chat}
D.S. Chatterjee, \textit{On gerbes}, A dissertation submitted towards the degree of Doctor of Philosophy at the. University of Oxford, Trinity College (1998).
\bibitem{Fukaya category}
K. Fukaya, \textit{Morse homotopy, $A^{\infty }$-category, and Floer homologies}, In: Proceedings of GARC Workshop on Geometry and Topology '93 (Seoul, 1993). Lecture Notes in Series, vol. 18, pp. 1-102. Seoul Nat. Univ., Seoul (1993).
\bibitem{Fuk}
K. Fukaya, \textit{Mirror symmetry of abelian varieties and multi theta functions}, J. Alg. Geom. 11, 393-512 (2002).
\bibitem{giraud}
J. Giraud, \textit{Cohomologie non-ab\'{e}lienne}, Grundlehren 179, Springer (Berlin 1971).
\bibitem{Gual}
M. Gualtieri, \textit{Generalized complex geometry}, PhD thesis, Oxford University, 2003, arXiv: math.DG/0401221.
\bibitem{h}
N. Hitchin, \textit{Lectures on special Lagrangian submanifolds}, Winter school on mirror symmetry, vector bundles and Lagrangian submanifolds (Cambridge, MA, 1999) (C. Vafa and S.-T. Yau, eds.), AMS/IP Stud. Adv. Math. vol. 23, Amer. Math. Soc., Providence, RI, 2001, pp. 151-182.
\bibitem{hitchin}
N. Hitchin, \textit{Generalized Calabi-Yau manifolds}, Quarterly Journal of Mathematics, 2003, 54.3: 281-308.
\bibitem{hms-nc-two-tori}
H. Kajiura, \textit{Homological mirror symmetry on noncommutative two-tori}, arXiv: hep-th/0406233.
\bibitem{star}
H. Kajiura, \textit{Star product formula of theta functions}, Letters in Mathematical Physics 75.3 (2006): 279-292.
\bibitem{nc}
H. Kajiura, \textit{Categories of holomorphic line bundles on higher dimensional noncommutative complex tori}, Journal of mathematical physics 48.5 (2007).
\bibitem{Kaj}
H. Kajiura, \textit{Noncommutative tori and mirror symmetry}, In New development of Operator Algebras, volume 1587 of RIMS K\^{o}Ky\^{u}roku, pages 27–72. RIMS, Kyoto Univ., 2008.
\bibitem{kajiura}
H. Kajiura, \textit{On some deformation of Fukaya categories}, in: Symplectic, Poisson, and Noncommutative Geometry, in: MSRI Publ., vol. 62, Cambridge Univ. Press, New York, 2014, pp. 93-130.
\bibitem{kap1}
A. Kapustin, D.O. Orlov, \textit{Vertex algebras, mirror symmetry, and D-branes\textup{:} the case of complex tori}, Communications in mathematical physics 233. 1 (2003): 79-136.
\bibitem{kap2}
A. Kapustin, D.O. Orlov, \textit{Lectures on mirror symmetry, derived categories, and D-branes}, Russian Mathematical Surveys 59(5) (2004): 907.
\bibitem{k-theory}
M. Karoubi, \textit{Twisted bundles and twisted K-theory}, Topics in noncommutative geometry. Vol. 16. Amer. Math. Soc. Providence, RI, 2012. 223-257.
\bibitem{kim}
E. Kim, H. Kim, \textit{Mirror duality and noncommutative tori}, Journal of Physics A: Mathematical and Theoretical 42.1 (2008): 015206.
\bibitem{kazushi}
K. Kobayashi, \textit{Remarks on the homological mirror symmetry for tori}, Journal of Geometry and Physics 164 (2021): 104190.
\bibitem{bijection}
K. Kobayashi, \textit{The bijectivity of mirror functors on tori}, Kyoto Journal of Mathematics 62(3) (2022) 655-682.
\bibitem{gerby}
K. Kobayashi, \textit{A gerby deformation of complex tori and the homological mirror symmetry}, Kodai Mathematical Journal 46(3) (2023) 291-323.
\bibitem{b-field}
K. Kobayashi, \textit{On a B-field transform of generalized complex structures over complex tori}, Journal of Geometry and Physics 206 (2024): 105336.
\bibitem{beta}
K. Kobayashi, \textit{Deformations of Poisson structures on complexified symplectic tori and the homological mirror symmetry}, In preparation.
\bibitem{dg-vect}
S. Kobayashi, \textit{Differential geometry of complex vector bundles}, Princeton University Press, 1987.
\bibitem{Kon}
M. Kontsevich, \textit{Homological algebra of mirror symmetry}, In Proceedings of the International Congress of Mathematicians, Vol. 1, 2 (Z\"{u}rich, 1994), Birkh\"{a}user, 1995, pages 120-139.
\bibitem{dg}
M. Kontsevich, Y. Soibelman, \textit{Homological mirror symmetry and torus fibrations}, In Symplectic geometry and mirror symmetry (Seoul, 2000), pages 203-263. World Sci.Publishing, River Edge, NJ, 2001. 
\bibitem{leung}
N.C. Leung, S.T. Yau, E. Zaslow, \textit{From special Lagrangian to Hermitian-Yang-Mills via Fourier-Mukai transform}, Adv. Theor. Math. Phys. 4: 13191341, 2000.
\bibitem{lieb}
M. Lieblich, \textit{Moduli of twisted sheaves}, Duke Math. J. 138 (2007), no. 1, 23-118.
\bibitem{proj-flat}
Y. Matsushima, \textit{Heisenberg groups and holomorphic vector bundles over a complex torus}, Nagoya Math. J. Vol. 61 (1976), 161-195.
\bibitem{semi-hom}
S. Mukai, \textit{Semi-homogeneous vector bundles on an abelian variety}, J. Math. Kyoto Univ. 18 (1978), no. 2, 239-272.
\bibitem{fm}
S. Mukai, \textit{Duality between $D(X)$ and $D(\hat{X})$ with its application to Picard sheaves}, Nagoya Mathematical Journal 81 (1981): 153-175.
\bibitem{k-htwisted}
A. Perego, \textit{Kobayashi-Hitchin correspondence for twisted vector bundles}, Complex Manifolds, 8(1) (2021) 1-95.
\bibitem{okuda}
N. Okuda, \textit{Fourier-Mukai transforms for non-commutative complex tori}, arXiv: math.AG/2301.03745.
\bibitem{orlov}
D.O. Orlov, \textit{Remarks on generators and dimensions of triangulated categories}, Moscow Mathematical Journal 9.1 (2009): 143-149.
\bibitem{p-s-nc}
A. Polishchuk, A. Schwarz, \textit{Categories of holomorphic vector bundles on noncommutative two-tori}, Communications in mathematical physics, 236(1), 135-159.
\bibitem{elliptic}
A. Polishchuk, E. Zaslow, \textit{Categorical mirror symmetry\textup{:} the elliptic curve}, Adv. Theor. Math. Phys. 2, 443-470 (1998).
\bibitem{SYZ}
A. Strominger, S.T. Yau, E. Zaslow, \textit{Mirror Symmetry is T-duality}, Nucl. Phys. B, 479: 243-259, 1996.
\end{thebibliography}
\end{document}